\documentclass{amsart}

\usepackage{amsmath, amsthm, amsfonts, amssymb, graphicx}
\usepackage{color, url}
\usepackage[all]{xy}
\usepackage{comment}
\usepackage{tikz-cd} 
\usepackage{tikz} 
\usepackage{float}
\usepackage[normalem]{ulem}
\theoremstyle{definition}

\theoremstyle{plain}

\newcommand{\R}{\mathbb{R}}

\newcommand{\ZZ}{\mathbb{Z}}

\newcommand{\bs}{\setminus}
\newcommand{\ve}{\varepsilon}

\newtheorem{theorem}{Theorem}[section]
\newtheorem{lemma}[theorem]{Lemma}

\newtheorem{proposition}[theorem]{Proposition}

\newtheorem{corollary}[theorem]{Corollary}

\theoremstyle{definition}
\newtheorem{definition}[theorem]{Definition}

\newtheorem{example}[theorem]{Example}

\newtheorem{defi}[theorem]{Definition}

\newtheorem{thm}[theorem]{Theorem}
\newtheorem{lem}[theorem]{Lemma}

\newtheorem{rem}[theorem]{Remark}

\theoremstyle{remark}
\newtheorem{remark}[theorem]{Remark}

\numberwithin{equation}{section}

\usepackage[backend=biber,hyperref=true,isbn=false,maxnames=7,maxalphanames=4,minalphanames=4,style=alphabetic,doi=false,url=false]{biblatex}

\begin{document}

\author{Qiyue Chen}
\address{School of Mathematics \\ University of Minnesota \\ Minneapolis\\ MN 55455\\ USA}
\email{chen8448@umn.edu}

\author{Gregg Musiker}
\address{School of Mathematics \\ University of Minnesota \\ Minneapolis\\ MN 55455\\ USA}
\email{musiker@umn.edu}

\thanks{Q.C. is partially supported by a Graduate Endowment Merit Fellowship and G.M. is partially supported by a Simons MPS Travel grant.}
\title{Gale-Robinson Quivers and Principal Coefficients}

\keywords{cluster algebras, principal coefficients, $F$-polynomials, Aztec
diamonds, Gale-Robnsion Recurrence, perfect matchings, brane tilings, Seiberg dualities}
\date{\today}

\maketitle

\begin{abstract}
In this paper, we provide a combinatorial interpretation for Laurent polynomials obtained by iteratively mutating a certain periodic quiver that has been framed with frozen vertices. This yields a family of cluster variables with principal coefficients associated to a family of integer sequences known as Gale-Robinson sequences.  The work of this paper completes arguments for preliminary results announced in earlier work of Jeong-Musiker-Zhang, and relates to works of Bousquet-M\'elou-Propp-West, Speyer, Vichitkunakorn, and of Eager-Franco.      
\end{abstract}

\tableofcontents

\section{Introduction}
In this paper, we consider a variant of the Gale-Robinson sequence \cite{Gale1991strange}, i.e. $\{x_n\}$ satisfying $x_n x_{n-N} = x_{n-r}x_{n-N+r} + x_{n-s} x_{n-N+s}$, where we include a second alphabet of coefficients $\{y_1,y_2,\dots, y_n\}$ that breaks the symmetry of this recurrence.  This deformation is motivated by the the theory of \emph{cluster algebras with principal coefficients}.

The results of this paper were first announced in an extended conference abstract  \cite{MusikerJeongZhang2013Jan} by the second author (and two REU students In-Jee Jeong and Sicong Zhang) for \emph{Formal Power Series and Algebraic Combinatorics} in 2013.  The present work completes the argument that was laid out therein, where the main proof was broken into three steps, as will be  discussed later.  

The undeformed version of the Gale-Robinson sequence had previoulsy been studied by several authors \cite{ProppMelouWest2009, Speyer2007May}.  For example, Bousquet-M{\'e}lou, Propp, and West \cite{ProppMelouWest2009} describe sequences of graphs, termed \emph{pinecones}, such that the $n$th term in the associated Gale-Robinson sequence enumerates perfect matchings in the $n$th pinecone graph.  Such pinecones can also be constructed by using Speyer's ``crosses-and-wrenches'' method \cite{Speyer2007May}, which provides graph theoretical formulas for Laurent expansions of expressions satisfying the \emph{Octahedron recurrence}.  In particular, if one chooses the appropriate plane of initial conditions, then one can build graphs that are known by experts to be isomorphic (modulo elementary transformations) to the pinecones (see also Remark \ref{Rem:Speyer}).  

We also note work such as that of Panupong Vichitkunakorn \cite{Vichitkunakorn2016Jun}
``Solutions to the T-Systems with Principal Coefficients'' 
in the intervening years, who gives solutions in terms of perfect matchings, non-intersecting paths and networks. 
The $A_\infty$ $T$-system considered in \cite{Vichitkunakorn2016Jun} is equivalent to the Octahedron recurrence, and while the Gale-Robinson sequences studied in this paper are nontrivial specializations of the Octahedron recurrence,  adapting these specializations to the setting with principal coefficients takes additional work. 
Furthermore, in contrast to Vichitkunakorn's work, the formula in this paper for the Gale-Robinson sequences utilizes the theory of brane tilings from the physics literature which motivates future study of more general families of examples.
More precisely, our solution states that:

\begin{theorem} 
\label{Thm:GR-WH-1}
Let $\widehat{\mathcal{A}}_{Q_N^{(r,s)}} \subset \mathbb{Q}[y_1,y_2, \dots, y_N][x_1^\pm,x_2^\pm, \dots, x_N^\pm]$ denote the cluster algebra with principal coefficients associated to the Gale-Robinson quiver of type $(r,s,N)$.  For $n \in \{N+1,N+2,\dots\}$, define the cluster variables $\widehat{x_n}$ by mutating the initial seed $(\widehat{Q}_N^{(r,s)},\{x_1,x_2,\dots, x_N,y_1,y_2,\dots, y_N\})$ periodically by the sequence $1,2,3,\dots, N, 1,2,\dots$.  
Let $G_n^{(r,s,N)}$ (which equals $P(n;r,N-r,s,N-s)$ up to vertical reflection) be the pinecone graph constructed in Section \ref{Sec:pinecones} or in \cite{ProppMelouWest2009}.
Then, for $n\geq N+1$, the Laurent expansion of $\widehat{x_n}$ is given by the following combinatorial formula
\begin{eqnarray*} \widehat{x_n} = cm(G_n^{(r,s,N)})\left(\sum_{M \mathrm{~is~a~perfect~matching~of~} G_{n}^{(r,s,N)}} x(M)y(M)\right)\end{eqnarray*}
where $x(M)$, $y(M)$ are the weights and heights of perfect matching $M$, respectively.  
\end{theorem}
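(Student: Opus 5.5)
The plan is induction on $n$. The induction is driven by the exchange relation that $\widehat{x_n}$ satisfies in the cluster algebra with principal coefficients, and the key step is to show that the weighted perfect-matching sums of the pinecone graphs satisfy the same relation.

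First I would pin down the recurrence. Since $Q_N^{(r,s)}$ is mutation-periodic under $1,2,\dots,N$ (mutating at vertex $1$ yields the quiver obtained by cyclically relabeling), each new variable satisfies
\[
\widehat{x_n}\,\widehat{x_{n-N}} = Y^{+}_n\, \widehat{x_{n-r}}\widehat{x_{n-N+r}} + Y^{-}_n\, \widehat{x_{n-s}}\widehat{x_{n-N+s}},
\]
where $Y^\pm_n$ are monomials in $y_1,\dots,y_N$. These monomials are read off from the arrows between mutable and frozen vertices of the $(n-N)$th seed, which is the $c$-vector (tropical) data. The first step is to compute these coefficient monomials explicitly. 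I would track the framing through the periodic mutations: by sign-coherence, each $c$-vector of the mutated vertex is $\pm$ a $\{0,1\}$-combination of $e_i$. Then I would identify $Y^+_n/Y^-_n$ with a product of $y_i$ indexed by a region of $G_n^{(r,s,N)}$.

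Second, on the combinatorial side, I would construct $G_n$ from $G_{n-1}$ by adding pieces as in Section~\ref{Sec:pinecones}, or equivalently via Speyer's crosses-and-wrenches with the appropriate stepped surface. I would then apply Kuo's graphical condensation with four boundary vertices $a,b,c,d$ chosen so that the six subgraphs $G-\{a,b,c,d\}$, $G-\{a,b\}$, $G-\{c,d\}$, and so on reduce, after removing forced edges, to $G_{n-N}$, $G_{n-r}$, $G_{n-N+r}$, $G_{n-s}$, $G_{n-N+s}$. This is essentially the argument of Bousquet-M\'elou--Propp--West \cite{ProppMelouWest2009}. Condensation gives the bilinear relation for the unweighted or edge-weighted sums. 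For the $x$-weights, the forced edges account for exactly the discrepancy in covering monomials $cm(\cdot)$, which I would check face by face.

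Third, I would incorporate heights. I define $y(M)$ as the product of $y_i$ over faces enclosed by the superposition $M\ominus M_{\min}$ of $M$ with the minimal matching. This height function is additive under the condensation superposition bijection, except for a global shift on each term. I would show that the two terms of Kuo's identity acquire relative shifts $Y^+_n$ and $Y^-_n$ respectively, since the minimal matchings of the subgraphs glue to the minimal matching of $G_n$ up to the faces labeled by $c$-vector data. Finally, I would verify the base cases $N+1\le n\le 2N$, or use the convention that $G_n$ is a single edge (or empty) for $n\le N$ so that the induction starts.

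The main obstacle is this third step: matching the face labels of $G_n$ with the tropical $y$-coefficients, in particular proving that the minimal matchings of the pieces glue compatibly. My first approach would be to use the brane tiling / height-function description, realizing the faces of $G_n$ as vertices of the quiver via Eager--Franco Seiberg dualities, so that the $c$-vector of each mutation is the indicator of the set of faces flipped between the minimal matchings.
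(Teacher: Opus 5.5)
\textbf{Verdict: there is a genuine gap, and it is in the coefficient side.} Your skeleton is the paper's: a principal-coefficient exchange relation read off from $\mathbf{c}$-vectors; Kuo condensation on the pinecone, whose five subgraphs have cores $G_{n-N},G_{n-r},G_{n-N+r},G_{n-s},G_{n-N+s}$; forced edges absorbing the mismatch in covering monomials; and heights shifting by a constant on each Kuo term. But you never determine the coefficient monomial, and the one concrete claim you make about it is false.

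Sign-coherence constrains only signs. Here the $\mathbf{c}$-vector of the mutated vertex is $E_{n-N}=\sum_j d(n-N-j,r,N-r)\,\mathbf{e}_j$. Its entries are values of the restricted partition function and grow without bound. Already for Somos-4, $(r,s,N)=(1,2,4)$, the fourth mutation gives $\widehat{x_8}\,x_4=\widehat{x_5}\widehat{x_7}+y_1^2y_2y_3y_4\,\widehat{x_6}^{\,2}$.

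Correspondingly, the region carrying the shift is the central strip $H_n$ counted with multiplicity, since face labels repeat along it. So an ``indicator of the set of flipped faces'' is the wrong object. Your claim that the minimal matchings glue ``up to the faces labeled by $\mathbf{c}$-vector data'' presupposes exactly the identification that has to be proved.

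The paper supplies two independent computations that you are missing:
\begin{enumerate}
\item an explicit induction along the periodic mutation sequence, giving every $\mathbf{c}_i^{(\ell)}$ in terms of $E_{\underline{i}}$ and $F_{\underline{i}}$ via the identity $F_{\underline{i}}=E_{\underline{i}+N}-E_{\underline{i}+N-r}$;
\item a count showing that $H_n$ has exactly $d(n-N-i,r,N-r)$ faces labeled $i$. This works because $d(m,r,N-r)$ exceeds $d(m-r,r,N-r)$ by one exactly when $m\ge 0$ and $N-r\mid m$, which matches the way $H_n$ extends $H_{n-r}$ by one hexagon or two squares.
\end{enumerate}
Appealing to Eager--Franco Seiberg dualities does not substitute for either computation.

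On the height side, your additivity claim is correct but only asserted, and it must come with the explicit shift. The paper proves $v_{M_A}+v_{M_C}=v_{M_N}+v_{M_S}+1_H$ and $v_{M_A}+v_{M_C}=v_{M_W}+v_{M_E}$ in three steps:
\begin{enumerate}
\item It computes the minimal pairs directly. Under Kuo's bijection, $(M_N^-,M_S^-)$ corresponds to $(M_A,M_C^-)$ with $v_{M_A}=1_H$, while $(M_W^-,M_E^-)$ corresponds to the minimal pair.
\item It checks that the free choices on cycles in Kuo's bijection do not affect $v_{M_A}+v_{M_C}$.
\item It propagates through Propp's distributive lattice by face twists, using that the sub-pinecones sit at even offsets in $G$, so up-twists remain up-twists.
\end{enumerate}
A flow argument would also give the matching-independence. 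Kuo's bijection preserves $\widehat{M_A}\uplus\widehat{M_C}=\widehat{M_N}\uplus\widehat{M_S}$, and $v_M$ is the face potential of the white-to-black flow of $M$ minus that of the minimal matching. The sign convention is common to all five graphs by the same even-offset observation.

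Only the shift $1_H$, together with the label count of $H_n$ and the $\mathbf{c}$-vector formula, makes the combinatorial recurrence coincide with the cluster one. Without these, your induction does not close.
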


The terms used in the statement of this theorem are developed through the course of our work.  The paper is organized as follows: In Section \ref{sec:prelim}, we introduce the definitions of quiver mutation, cluster algebra, periodic quiver and coefficients that provide the framework for generating the sequences of Laurent polynomials that we study. In Section \ref{Sec:GR}, we introduce Gale-Robinson sequences, their associated periodic quivers, and construct brane tilings and pinecone subgraphs whose perfect matchings yield our desired combinatorial interpretations.   After defining several auxiliary functions in Section \ref{sec:main} we state our main theorem and list the propositions needed to prove it. The propositions naturally separate our proof into three steps, with Section \ref{sec:Step1}  devoted to the proof of Proposition \ref{prop:GRprinc}, Propositions \ref{Prop:cent-strip} and \ref{Prop:borders} proved in Section \ref{sec:Step2}, and Section \ref{sec:Step3} deals with Proposition \ref{Prop:superpositions}. We conclude with the proof of Theorem \ref{Thm:GR-WH} in Section \ref{sec:conclusions} where further remarks and directions are also provided.

\section{Preliminaries: Periodic Quivers and Cluster Mutation}

\label{sec:prelim}

In this section, we review the necessary background material on cluster mutation and periodic quivers from \cite{Fomin2002Apr} and \cite{Fordy2010}.  
A \emph{quiver} $Q = (Q_0,Q_1)$ is a directed finite graph with vertex set $Q_0$ and edge set $Q_1$ (also known as the set of arrows).  If $a$ is the arrow directed from vertex $v$ to vertex $w$, we refer to $w$ as the head of $a$, denoted $h(a) = w$ and $v$ as the tail, i.e. $t(a)=v$.  In this paper, we will usually assume 
that quivers have no $1$-cycles nor $2$-cycles, and state when this restriction is relaxed.  Let $|Q_0|=N$ and arrange the vertices on a regular $N$-gon in clockwise order.
 
\begin{definition} [Quiver Mutation] \label{Def:QM} The mutation of $Q$ at vertex $k$, denoted by $\mu_k Q$ is constructed (from $Q$) as follows:
 
1) For every $2$-path $i \to k \to j$ in $Q$, add an arrow $i \to j$.
 
2) Reverse the direction of all arrows incident to vertex $k$.
 
3) Remove any $2$-cycles that were created by implementing the previous steps.
\end{definition}
 
To any quiver, we can associate a \emph{cluster algebra}\footnote{This initial description is only for the skew-symmetric case with trivial coefficients.  Later in this section, we discuss the case of principal coefficients.} defined as follows.  First, we associate a variable, which we denote as $x_i$, to each vertex of $Q$.  Therefore, we have a subset $\{x_1,x_2,\dots, x_N\}$ associated to $Q$.  This is known as an \emph{initial cluster}.  We then define a cluster mutation that proceeds alongside the aforementioned quiver mutation. 

\begin{definition} [Cluster Mutation] Given a quiver $Q$ and a cluster 
 
\noindent $X=\{X_1,X_2,\dots, X_N\}$, we define the mutation of the \emph{cluster seed} $(Q,X)$ in the direction $k$  as $\mu_k(Q,X) = (\mu_k Q, X')$ where $X' = X \setminus \{X_k\} \cup \{X_k'\}$ and $X_k'$ is defined as 
$$\frac{ \prod_{i \to k \mathrm{~in~}Q} X_i + \prod_{k \to j \mathrm{~in~}Q} X_j}{X_k}$$
where multiplicities of arrows $i \to k$ or $k \to j$ are possible and yield variables raised to exponents.
\end{definition}
 
Given our initial cluster and quiver $Q$, we have an initial seed which can be mutated in $N = |Q_0|$ directions.  These newly constructed seeds can then be again mutated in $N$ directions, noting that $\mu_k^2 = id$.  There will possibly be cycles in this mutation graph, but we generically get an infinite tree where each vertex has degree $N$.
 
\begin{definition} [Cluster variables and algebras] The set of \emph{cluster variables} is the union of all clusters obtained via all finite sequences of mutations.  
The \emph{cluster algebra} $\mathcal{A}_Q$ associated with the intial seed $(Q,\{x_1,\dots, x_N\})$ is the subalgebra of $\mathbb{Q}(x_1,\dots, x_N)$, the field of rational functions in $N$ variables, generated by the set of cluster variables.
\end{definition}
 
See \cite{Fomin2002Apr,gekhtman2010cluster} for more details about cluster algebras in general.  We now introduce Fordy and Marsh's notion of periodic quivers \cite{Fordy2010}.  Let $\rho$ denote the permutation $(1,N,N-1,N-2,\dots, 3,2)$, which corresponds to clockwise rotation of the quiver $Q$.
 
\begin{definition} [Periodic Quiver] We say that a quiver $Q$ is \emph{periodic}, of period $m$, if $Q^{(m)} = \mu_{m}\circ \dots \circ \mu_2 \circ \mu_1(Q)$ equals $\rho^m(Q)$.
In other words, the quiver obtained by mutating by $1, 2, \dots, m$ in sequence is equal to the quiver obtained by cyclically permutating the vertex labels of $Q$.  
\end{definition}
 
In particular, a quiver $Q$ is of \emph{period} $1$ if and only if mutating at vertex $1$ and then applying $\rho^{-1}$ (sending $2 \to 1, 3 \to 2, \dots, N \to N-1, 1 \to N$) yields back the original quiver $Q$.  The importance of period $1$ quivers is that as long as we mutate at $1,2,3,\dots$ in sequence and periodically, the quivers obtained by mutation are equivalent to one another, up to cyclic permutation.

When $Q$ is a periodic quiver, we may start with an intial cluster $\{x_1,x_2,\dots, x_N\}$, and define $x_n$, for all $n \geq 1$ by mutating periodically at $1,2,3,\dots$.  
For example, we denote the new clusters $\mu_{1}( \{x_1,x_2,\dots, x_N\}, Q)$ and $\mu_2 \circ \mu_1 (\{x_1,x_2,\dots, x_N\}, Q)$ as 
$\{x_{N+1}, x_2,\dots, x_N\}$ and $\{x_{N+1}, x_{N+2},\dots, x_N\}$, respectively.  
 
More generally, for $n =mq+r$, we define $x_n$ to be the $r$th element of the cluster obtained by $\mu_r \circ \mu_{r-1}\circ \dots \circ \mu_1 \circ (\mu_m \mu_{m-1}\circ \dots \mu_1)^q(\{x_1,x_2,\dots, x_N\},Q)$.  We therefore obtain a one-parameter infinite subsequence of cluster variables, which we index by the set of postive integers.
 
If $Q$ is of period $1$, then there is a single recurrence relation 
$$x_n x_{n-N} = \prod_{i \to 1 \mathrm{~in~}Q} x_{n-i} + \prod_{1 \to j \mathrm{~in~}Q} x_{n-j}$$
that is satisfied by all $n\geq N+1$.  For higher period, there will be a family of $m$ interlaced recurrence relations instead.    

A coefficient system for a cluster algebra can be constructed by enlarging the set of initial cluster variables by including so called \emph{frozen variables}.  These variables correspond to new vertices at which mutation is disallowed.  A system of \emph{principal coefficients} is a special case where the arrows incident to the new vertices are particularly simple.  By Theorem 3.7 of \cite{Fomin2007Jan}, it follows that any coefficient system of geometric type can be algebraically deduced from a system of principal coefficients.

\begin{definition} [Quiver with Principal Coefficients]
Given a quiver $Q$ with $N$ vertices, we let $\widehat{Q}$ denote the quiver on $2N$ vertices that 

\begin{itemize}
 \item Contains $Q$ as an induced subgraph on vertices $\{1,2,\dots, N\}$.

 \item  For each vertex $v \in \{N+1,N+2,\dots, 2N\}$, include a single arrow $v \to v-N$.  
\end{itemize}
 \end{definition}

\noindent We then let $\widehat{\mathcal{A}_Q}$ denote the cluster algebra $\mathcal{A}_{\widehat{Q}}$, which we refer to as 
the \emph{cluster algebra for $Q$ with principal coefficients}.
We obtain an infinite sequence of cluster variables by mutating the expanded quiver $\widehat{Q}$ at $1,2,\dots, N, 1,2,\dots$ periodically.  
We let $\{x_1,x_2,\dots, x_N, y_1,y_2,\dots, y_N\}$ denote the corresponding initial cluster, and let $\{\widehat{x_{N+1}},x_2,\dots, x_N, y_1,y_2,\dots, y_N\}$ and $\{\widehat{x_{N+1}},\widehat{x_{N+2}},\dots, x_N, y_1,y_2,\dots, y_N\}$ denote 
the next two clusters.  Continuing in this way, we let  $\{\widehat{x_n}:n \geq N+1\}$ denote the infinite sequence of non-initial cluster variables obtained by this periodic mutation sequence.  Since we never mutate at vertex $v$ for $v \in \{N+1,N+2,\dots, 2N\}$, it follows that all of the $\widehat{x_n}$'s are Laurent polynomials whose denominators are free of $y_i$'s.  
\begin{rem}[$\bf{c}$-vectors]\label{rem:cvectors}
     We can keep track of how $\widehat{Q}$ changes as we apply a mutation sequence by using $\bf{c}$-vectors of \cite{Fomin2007Jan}. For $1 \leq i \leq N$, the {\bf c}\emph{-vector}, $\mathbf{c}_i$, is the length $N$ vector whose $j$th entry signifies the number of arrows $y_j \to x_i$ in $Q$.  This entry is negative if the arrows go from $x_i \to y_j$ instead.  In particular, for the initial quiver $\widehat{Q}$, each $\mathbf{c}_i$ equals the $i$th  unit vector $\mathbf{e}_i$.  We let $\mathbf{c}_i^{(\ell)}$ denote the $i$th $\mathbf{c}$-vector after mutating $\ell$ times periodically along the sequence $1,2,\dots, N, 1,2, \dots$.    

One of the fundamental results in the theory of cluster algebras is the \emph{sign-coherence} of $\bf{c}$-vectors stating that the components of each $\bf{c}$-vector are either all non-positive or all non-negative, which is proven in \cite{Derksen2010Jul,GrossHackingKeelKontsevich18}.
\end{rem}

We will be using $\bf{c}$-vectors extensively in Section \ref{sec:Step1}.

\section{Gale-Robinson Sequences} \label{Sec:GR}

We now introduce our main object of study, \emph{Gale-Robinson Sequences} and generalize Theorem 9 of \cite{ProppMelouWest2009} by utilizing the theory of cluster algebras with principal coefficients with respect to a certain two-parameter family of Fordy-Marsh Period $1$ Quivers. 
The cluster variables resulting from iterated mutation of such quivers correspond to the Gale-Robinson sequence \cite{Gale1991strange}, which were studied, implicitly, in work by Bosquet-Melou, Propp, and West \cite{ProppMelouWest2009}.  The Somos 4 and Somos 5 sequences (due to M. Somos as described in \cite{Gale1991strange}) appear as special cases.  Any Gale-Robinson sequence can also be shown to be a specialization of the Octahedron Recurrence \cite{Speyer2007May}, also known as a T-system \cite{DiFrancesco2014}.  See Remark \ref{Rem:Speyer} for details.

\begin{definition} [Gale-Robinson Sequences] \label{Def:GRS}
For positive integers $r \leq s \leq N/2$, the \emph{Gale-Robinson seqeunce} of type $(r,s,N)$ is defined to be the sequence $\{x_n : n\geq 1\}$ satisfying the recurrence relation 
\begin{eqnarray} \label{eq:GaleRob} x_n x_{n-N} = x_{n-r} x_{n-N+r} + x_{n-s} x_{n-N+s}\end{eqnarray} for $n\geq N+1$.
\end{definition}

As explained in Example 8.7 of \cite{Fordy2010}\footnote{Technically, we use the opposite quiver here so that our later statements regarding minimal matchings are simpler.}, for each triple of positive integers $(r,s,N)$ with $r < s \leq N/2$, there is a unique period $1$ quiver whose mutations yield the sequence $x_n$ satisfying recurrence relation (\ref{eq:GaleRob}).  We let $Q_N^{(r,s)}$ denote this quiver, and we review its construction below for the reader's convenience.

\begin{definition} [The Gale-Robinson Quiver] \label{def:GRQ} For $(r,s,N)$ as above, we let $Q_N^{(r,s)}$ denote the quiver constructed by the following four step process, starting with the edge-less quiver on $N$ vertices.  

\begin{enumerate}
 \item For all $1\leq i\leq N-r$, draw an arrow $i \to i+r$, and for all $1\leq j\leq r$, draw an arrow $j \to N-r+j$.

 \item For all $1 \leq i\leq N-s$, draw an arrow $s+i \to i$, and for all $1 \leq j\leq s$, draw an arrow $N-s+j \to j$.

 \item For all $1 \leq i \leq N-r-s$, draw an arrow from $r+i \to s+i$ and for all $1 \leq j\leq s-r$, draw an arrow $r+j \to N-s+j$.

 \item Erase any $2$-cycles created in $Q_N^{(r,s)}$.
\end{enumerate}
Note that there might be multiple arrows between vertices $i$ and $j$.  Such multiplicities definitely occur in the special case that $N$ is even and $s=N/2$, but for small $N$, there could be other coincidences.  See Figure \ref{Fig:237-GR} for an example.
\end{definition}

\begin{remark} \label{Rem:gcd}
If the parameters $r$, $s$, and $N$ share a common factor (say their $\gcd$ is $d$) then $Q_N^{(r,s)}$ is a disconnected graph with $d$ connected components.  
Each component is isomorphic to $Q_{N/d}^{(r/d,s/d)}$.  Thus without loss of generality through the rest of this paper, we assume that $\gcd(r,s,N)=1$.  
\end{remark}

\begin{figure}
\begin{center}
\includegraphics[height=1in]{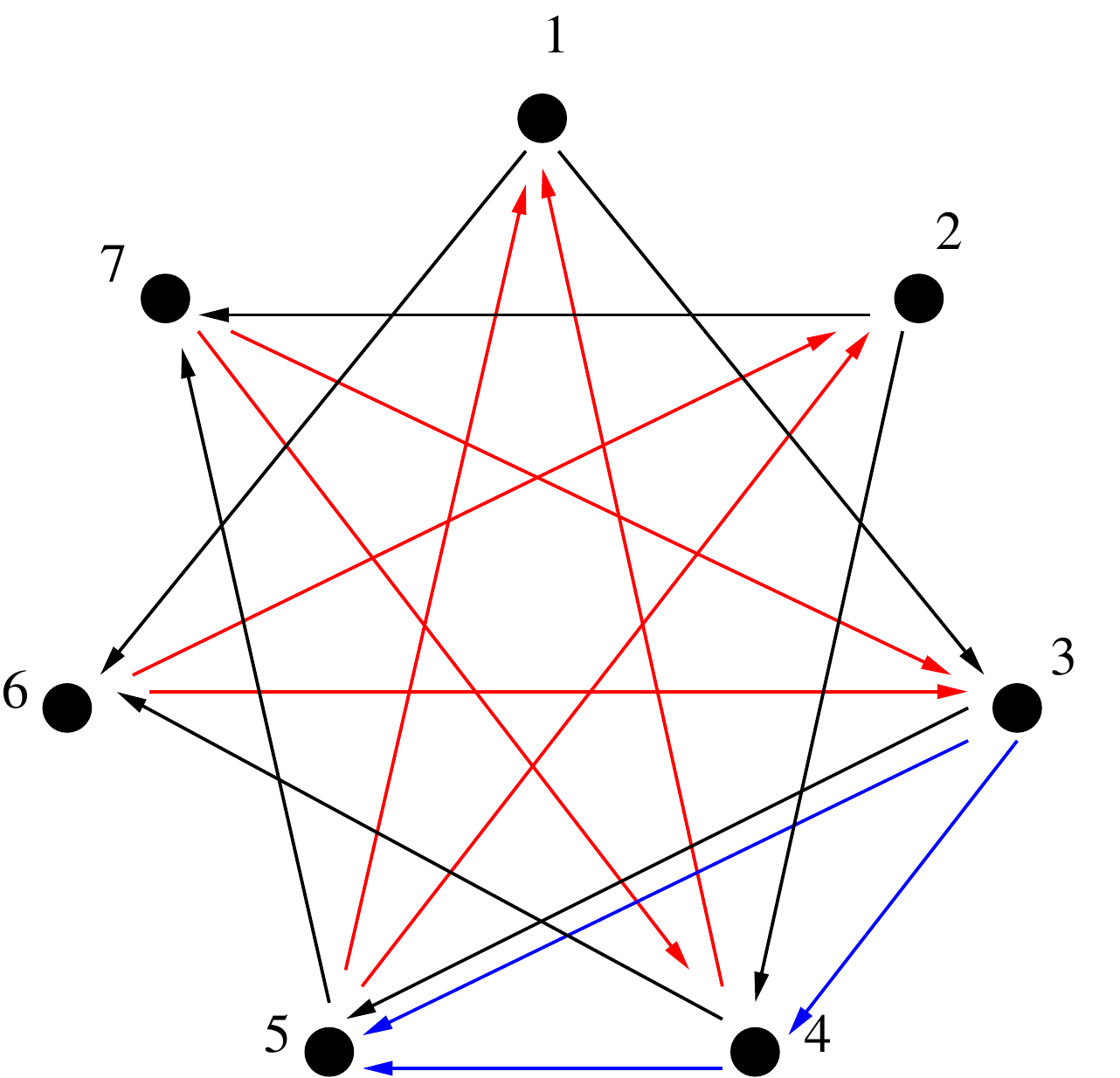} \hspace{0.0em}$=$\hspace{0.0em}  
\includegraphics[height=1in]{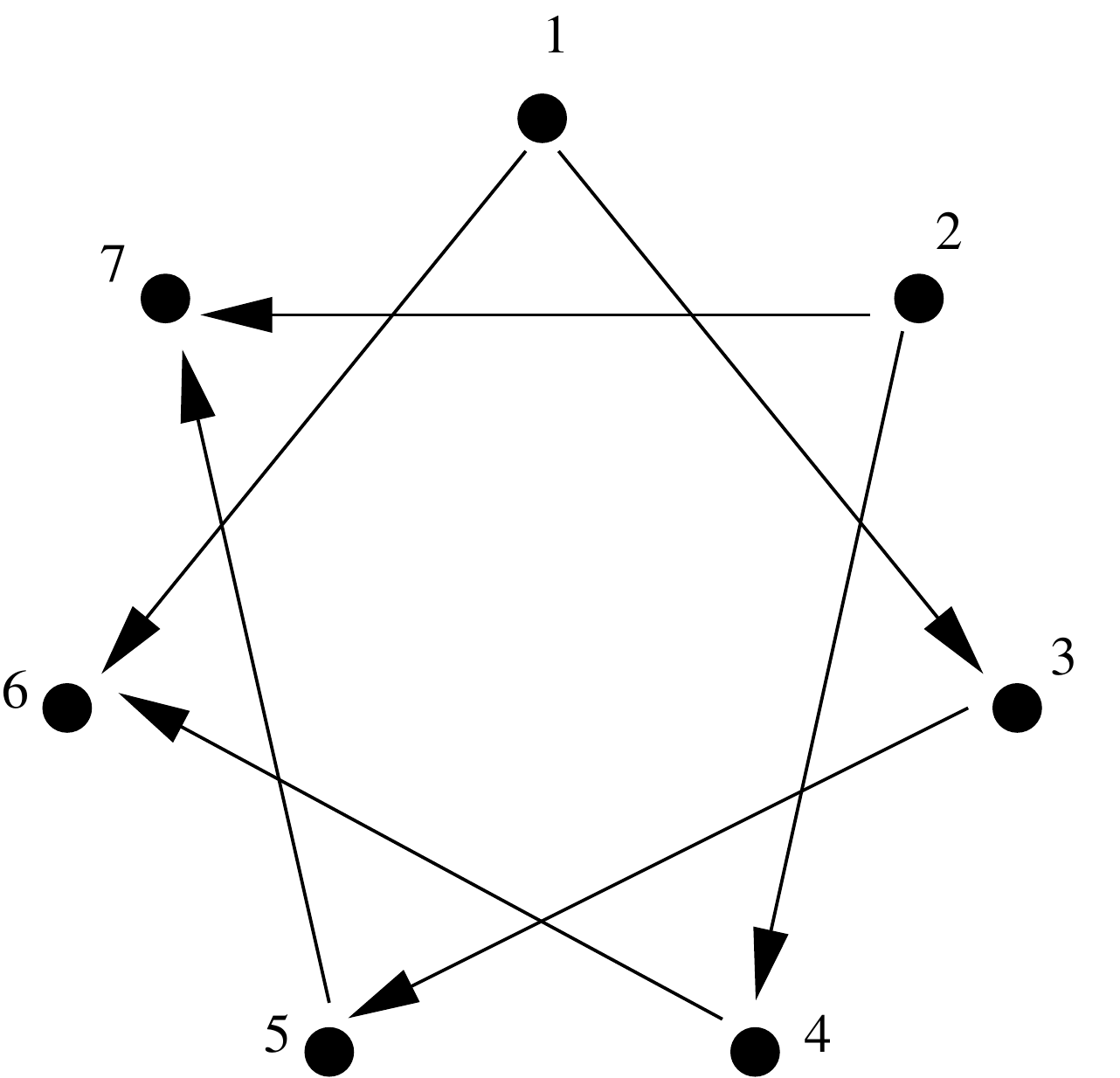} \hspace{0.0em}$+$\hspace{0.0em}
\includegraphics[height=1in]{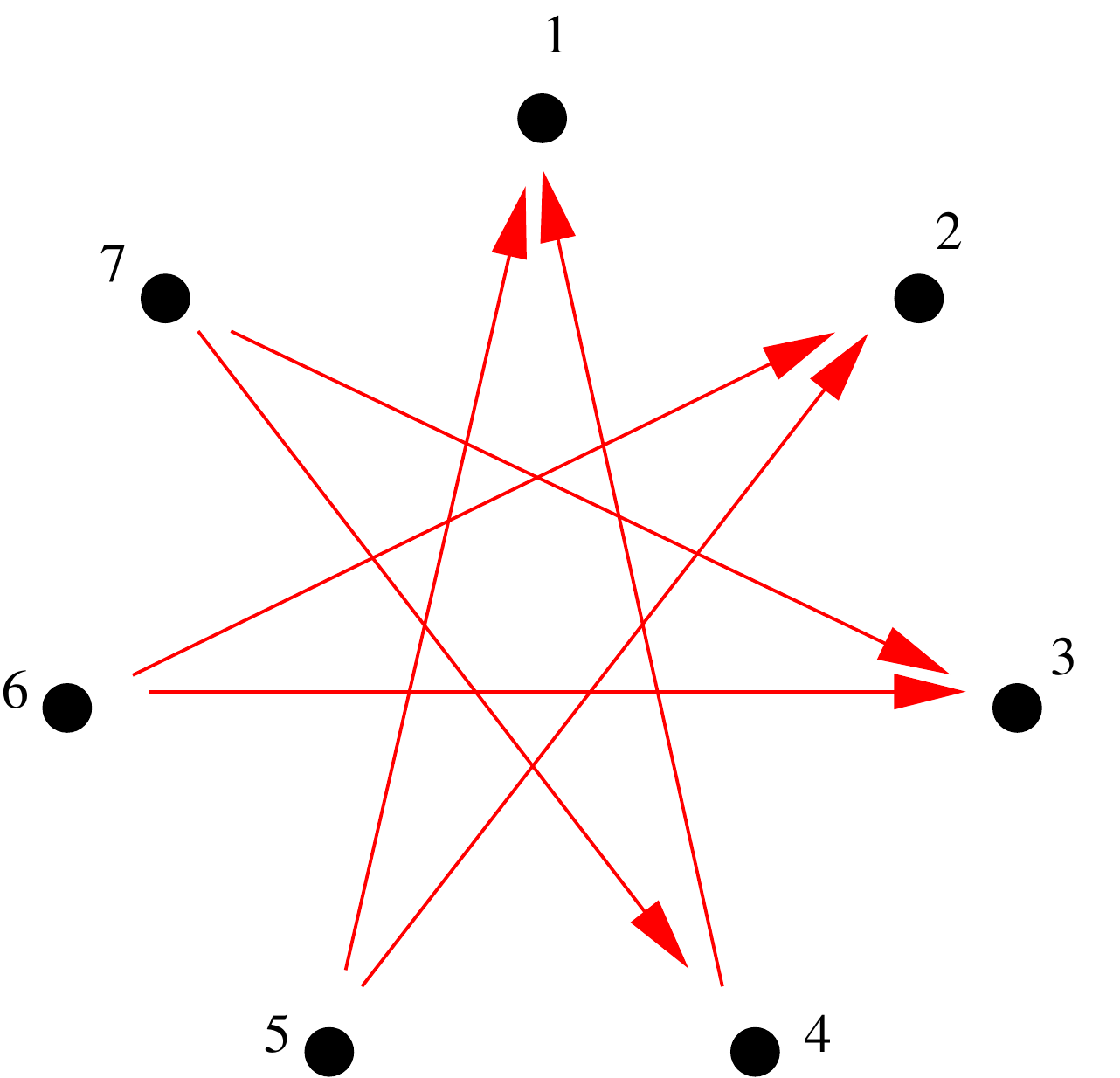} \hspace{0.0em}$+$\hspace{0.0em}
\includegraphics[height=1in]{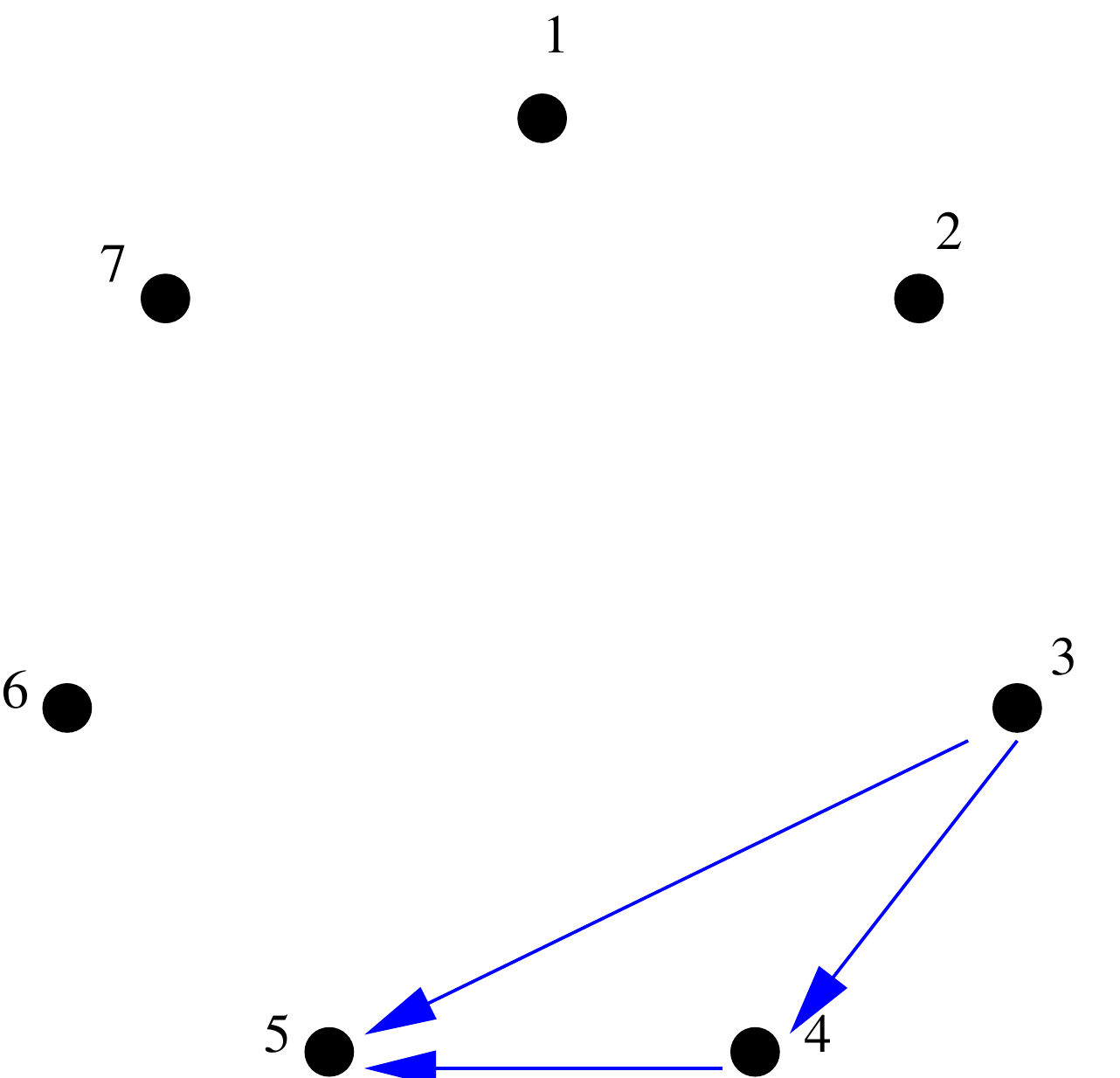} 
\end{center}

\caption{The Gale-Robinson Quiver $Q^{(2,3)}_7$ as a sum of the primitive period $1$ quivers
$-P_7^{(2)}$, $+P_7^{(3)}$, and $-P_{\{4,5,6\}}^{(1)}$.}
\label{Fig:237-GR}
\end{figure}
 
\begin{remark} Using the language of \cite{Fordy2010}, each of these three steps corresponds to adjoining a primitive period $1$ quiver.  In particular, let
$P_N^{(k)}$ denote the unique primitive period $1$ quiver on $N$ vertices with an arrow $k \to 1$ and 
$P_{\{i_1,i_2,\dots, i_M\}}^{(k)}$ denotes the primitive quiver $P_M^{(k)}$ where the vertices are relabeled using $i_1$ through $i_M$.
Then 
\begin{eqnarray*} Q_N^{(r,s)} &=& P_N^{(s)} - P_N^{(r)} - P_{\{r+1,r+2,\dots, N-r\}}^{(s-r)} \mathrm{~~if~~} s < N/2, \mathrm{~and}\\ 
Q_N^{(r,s)} &=& 2P_N^{(N/2)} - P_N^{(r)} - 2P_{\{r+1,r+2,\dots, N-r\}}^{(N/2-r)} \mathrm{~~if~~} s = N/2.
\end{eqnarray*}
\end{remark}

In \cite{ProppMelouWest2009}, the authors provide a combinatorial interpretation for the sequence $\{x_n: n\geq 1\}$ with the initial conditions $x_1=x_2=\dots = x_N=1$.  In particular, each $x_n$ is an integer, which is a non-trivial fact since the recurrence relation (\ref{eq:GaleRob}) involves division.  This was proven directly in \cite{Gale1991strange} but also follows from Fomin and Zelevinsky's Laurent Phenomenon \cite{Fomin2002Apr}, which states that every cluster variable is a \emph{Laurent polynomial} in terms of the initial cluster.

More specifically, in \cite{ProppMelouWest2009}, they introduce a family of graphs, known as \emph{pinecones}.  For each quadruple of positive integers $(n, r,s, N)$ such that $r < s \leq N/2$ and $n > N$, they define $P(n;r,N-r,s,N-s)$ so that the specialized cluster variable $x_n(x_1=x_2=\dots=x_N=1)$ counts the number of perfect matchings in $P(n;r,N-r,s,N-s)$.  In the next section, we provide an alternate construction of pinecones that is motivated by recent literature on supersymmetric quiver gauge theories.  For example, see \cite{Eager2012, FENG2001165, Franco2006Jan, Franco2006Jan2, Hanany2012Jun, Hanany2012Aug}.

\begin{remark} \label{Rem:Speyer} While it has not been written down explicitly in print, the pinecone graphs constructed in \cite{ProppMelouWest2009} are equivalent to the subgraphs obtained in \cite{Speyer2007May} by David Speyer using his method of ``crosses and wrenches''.  
In fact, Speyer goes further, and gives a method to constructing families of graphs that can be associated to any sequence of cluster variables $\{f(a,b,c)\}$ that come from a specialization of the \emph{Octahedron Recurrence}
$$f(n,i,j) f(n-2,i,j) = f(n-1,i-1,j)f(n-1,i+1,j) - f(n-1,i,j-1)f(n-1,i,j+1).$$
In terms of the notation of this paper, Speyer defines a family of $f(n,i,j)$'s satisfying the Octahedron Recurrence by defining
$$f(n,i,j) = x_m \mathrm{~~for~~} m=\frac{nN+(2r-N)i + (2s-N)j}{2}$$ for each $n$, $i$, $j$.  Hence, as explained in \cite[Section 1.3]{Speyer2007May}), one can use an almost plane of initial conditions, $(n,i,j)$
satisfying  
$-N < \frac{Nn + (2r-N)i+(2s-N)j}{2} \leq 0$, to build
``crosses-and-wrenches'' graphs equivalent to the family of pinecones. By solving the above inequalities for a solution so that $n$ and $i+j$ have the same parity modulo $2$,  we obtain that $n$ is a funciton of $i$ and $j$ defined by $n(i,j)=i+j-2\left\lceil\frac{ri+sj}{N}\right\rceil.$
\end{remark}

\subsection{From Gale-Robinson Quivers to Brane Tilings} \label{Sec:Brane}

We now describe how to use techniques from Supersymmetric Quiver Gauge Theories to obtain the pinecones more directly.  By letting $r = a$ and $s=c$, the Gale-Robinson sequence $\{x_n\}$ defined above agrees with the $\{Z_n\}$'s appearing in \cite[Section 9.1]{Eager2012}.  In the quiver gauge theory and brane tiling literature, $Z_n$ denotes a \emph{Pyramid Partition Function} (cluster variable) associated to a certain \emph{cascade of Seiberg dualties} (mutation sequence).  

The example highlighted in Section 9.1 of \cite{Eager2012} is inspired by a $L^{a,b,c}$-geometry which comes from a toric Calabi-Yau $3$-manifold.  See \cite{Franco2006Jan} for more on the construction of the $L^{a,b,c}$-geometry and how to obtain a corresponding brane tiling.  Further details also appear in \cite{Eager2011}, which describes connections to \cite{Speyer2007May}, as in Remark \ref{Rem:Speyer}, in this language. 

In particular, when $\gcd(r,s,N)=1$, the Gale-Robinson quiver $Q_N^{(r,s)}$ is the quiver corresponding to the geometry of $L^{r,N-r,s}$ (i.e. let $a=r, b = N-r, c = s$.  There is a fourth parameter often included when discussing an $L^{a,b,c}$ geometry, namely $d = a+b-c$.  In this dictionary, $d =N-s$ and indeed the Gale-Robinson recurrence
(\ref{eq:GaleRob}) looks very symmetric as $x_n x_{a+b} = x_{n-a} x_{n-b} + x_{n-c} x_{n-d}$.
As a special case, the $Y^{p,q}$ geometry corresponds to $L^{p-q,p+q,p}$ and hence $Q_{2p}^{p-q,p}$ where we assume $1 \leq q \leq p-1$.  

Most simply stated, a \emph{brane tiling} is a tiling of the torus, which we visualize as a doubly-periodic tiling of its universal cover, the infinite plane.  For the reader's convenience, we now summarize a step-by-step process to go from a Gale-Robinson quiver to an associated brane tiling.  
Towards this end, we must now allow quivers with $2$-cycles.  In particular, let 
$\overline{Q_N^{(r,s)}}$ denote the quiver obtained by following steps (1)-(3) of Definition \ref{def:GRQ}.  By abuse of notation, we will also refer to $\overline{Q_N^{(r,s)}}$ as a Gale-Robinson quiver, since $2$-cycles do not affect the associated recurrence. 

\begin{enumerate}
\item 
Firstly, since $\overline{Q_N^{(r,s)}}$ is highly symmetric, we can embed it onto the surface of a torus.  Then using the fact that the torus has $\mathbb{R}^2$ as its 
universal cover, we can unfold $\overline{Q_N^{(r,s)}}$ onto the plane as a doubly-periodic infinite quiver.  (Note that in contrast to the previous sections, the word ``periodic'' has a geometric rather than an algebraic or combinatorial meaning here.) 
Since $\overline{Q_N^{(r,s)}}$ has such a nice structure, the unfolded infinite quiver, which we denote as $\widetilde{Q}_N^{(r,s)}$, is quite simple to describe:

a) Start with the $\mathbb{Z}^2$ lattice as an undirected graph, connecting $(a,b)$ with $(a \pm 1, b)$ and $(a, b \pm 1)$.
{We will refer to this as the $\ZZ^2$ square grid, and the connected components of its complement can be naturally viewed as the dual graph of the $\ZZ^2$ square grid graph.}

b) Label the vertex at the origin $(0,0)$ as $1$.  For all integer points $(A,B)$, we label the corresponding vertex as $(1+ Ar + Bs) (\mathrm{mod}~ N) \in \{1,2,\dots, N\}$.  

c) We now turn this lattice into a directed graph.  For all horizontal edges, we orient $i \to j$ if and only if $i < j$.  
For all vertical edges, we do the opposite (orient $i \to j$ if and only if $i > j$). 

d) Lastly, we add diagonal arrows as needed so that all triangles or squares in this planar directed graph are cyclically oriented.  
Proposition \ref{Prop:sqorient} ensures that this process is well-defined.

\item Secondly, we take the planar dual of $\widetilde{Q}_N^{(r,s)}$, and label its faces using the labels of vertices of $\widetilde{Q}_N^{(r,s)}$.  We use a black/white coloring with the convention that \emph{white is on the right}.
\end{enumerate}

The resulting doubly-periodic tiling of the plane is the \emph{brane tiling} $\mathcal{T}_N^{(r,s)}$.  See Figure \ref{Fig:237-BT} for an example.

\begin{proposition} \label{Prop:sqorient}
Consider a square $S$ with vertices corresponding to $i, i+s, i+r+s, i+r \in \{1,2,3,\dots, N\}$, taken modulo $N$ and in clockwise order starting from the lower-left.  Orient the four edges of the square using the convention of (1c).  Then, as in Figure \ref{Fig:FourSquares}, either the edges of $S$ form an oriented $4$-cycle, or can be split into two cyclically oriented triangles by adding a single oriented diagonal.
\end{proposition}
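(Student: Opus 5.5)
We reduce the statement to a case analysis on the residue of $i$ modulo $N$. Write $a=i$, $b=i+s$, $c=i+r+s$, $d=i+r$, all reduced to representatives in $\{1,\dots,N\}$. The square has lower-left $a$, upper-left $b$ (vertical step $+s$), upper-right $c$, and lower-right $d$ (horizontal step $+r$). By convention (1c):
\begin{itemize}
\item the horizontal edges are $a$--$d$ and $b$--$c$, each oriented from the smaller label to the larger;
\item the vertical edges are $a$--$b$ and $d$--$c$, each oriented from the larger label to the smaller.
\end{itemize}
Adding $r$ or $s$ to a label in $\{1,\dots,N\}$ either increases it or wraps around, since $r,s<N$. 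So the orientation of every edge is decided by a single ``wrap'' bit.

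\textbf{Encoding by wrap bits.} For the horizontal edge from $a$, the arrow is $a\to d$ exactly when $i+r\le N$, i.e.\ no wrap. For a vertical edge, the arrow points from $b$ down to $a$ exactly when $b>a$, i.e.\ no wrap. Going clockwise around the square $a\to b\to c\to d\to a$, define four bits, each equal to $1$ when the corresponding addition wraps past $N$:
\begin{itemize}
\item $\alpha$ for the step $a\to b$ (adding $s$);
\item $\beta$ for the step $b\to c$ (adding $r$);
\item $\gamma$ for the step $d\to c$ (adding $s$);
\item $\delta$ for the step $a\to d$ (adding $r$).
\end{itemize}

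\textbf{The key constraint.} Going from $a$ to $c$ either way adds $r+s$, so $\alpha+\beta=\gamma+\delta\in\{0,1,2\}$. Moreover, since $r+s\le N$, the total number of wraps is at most $1$ unless $r+s=N$. Because $r<s\le N/2$, we have $r+s<N$ except in the degenerate situation, so generically $\alpha+\beta=\gamma+\delta\in\{0,1\}$.

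\textbf{Case analysis.} Now translate each bit into an orientation relative to a chosen cyclic direction (say clockwise):
\begin{itemize}
\item the edge $a$--$b$ runs clockwise (upward $a\to b$) iff $\alpha=1$;
\item the edge $b$--$c$ runs clockwise ($b\to c$) iff $\beta=0$;
\item the edge $c$--$d$ runs clockwise (downward $c\to d$) iff $d<c$, i.e.\ iff $\gamma=0$;
\item the edge $d$--$a$ runs clockwise ($d\to a$) iff $\delta=1$.
\end{itemize}
There are three cases.
\begin{itemize}
\item If all bits are $0$, then $b\to c$ and $c\to d$ are clockwise while $b\to a$ and $a\to d$ are counterclockwise. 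The square has a source $b$ and a sink $d$, and the diagonal $d\to b$ closes two oriented triangles $b\to c\to d\to b$ and $d\to b\to a\to d$.
\item If $\alpha+\beta=\gamma+\delta=1$, there are four subcases. Either all four edges are consistently oriented, giving an oriented 4-cycle (this happens for $(\alpha,\beta,\gamma,\delta)=(1,0,0,1)$ clockwise and $(0,1,1,0)$ counterclockwise), or a source and a sink sit at opposite corners and one diagonal splits the square into two oriented triangles. Checking each subcase is mechanical.
\item If $r+s=N$, bits summing to $2$ may occur. The same check applies.
\end{itemize}

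\textbf{Remaining care.} Ties such as $a=c$ cannot occur because $r+s<N$; when $r+s=N$, they correspond to the multiple-arrow/2-cycle situation already excluded in step (4), so that case needs a separate remark. The main obstacle is bookkeeping: showing that the pattern ``source and sink opposite'' always arises in the non-4-cycle cases, rather than adjacent ones. This follows because the wrap constraint $\alpha+\beta=\gamma+\delta$ forbids patterns with exactly one or exactly three clockwise edges, which are the only ones giving adjacent source and sink. Finally, we would match the resulting cases to the pictures in Figure \ref{Fig:FourSquares}.
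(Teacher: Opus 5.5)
Your proof is correct and is essentially the paper's own case analysis, encoded differently. Your admissible wrap patterns $(\alpha,\beta,\gamma,\delta)=(0,0,0,0)$, $(0,1,1,0)$, $(1,0,1,0)$, $(1,0,0,1)$ are exactly the paper's Cases 1--4, which it classifies by where $N$ falls in $i<i+r<i+s<i+r+s$. The fifth pattern $(0,1,0,1)$ is excluded by $r<s$, though it would also work.

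One correction to your closing paragraph. One or three clockwise edges are not the only bad patterns: the alternating pattern with two opposite clockwise edges (two sources, two sinks) also admits no good diagonal. So your parity argument needs one more observation. The clockwise count is $2+2(\alpha-\gamma)$, so two clockwise edges force $\alpha=\gamma$ and hence $\beta=\delta$, which makes exactly one side of each opposite pair clockwise. Alternatively, just carry out the explicit five-pattern check you already set up. Also, the $r+s=N$ discussion is vacuous here, and bits can sum to $2$ only when $r+s>N$.
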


\begin{figure}
\includegraphics[height=1in]{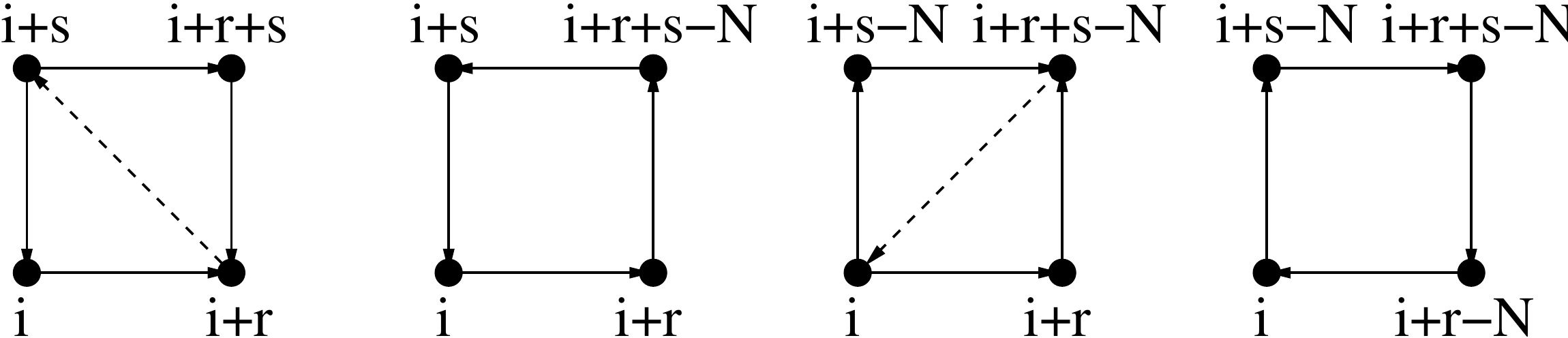}
\caption{The four possible local configurations in $\widetilde{Q}_N^{(r,s)}$.}
\label{Fig:FourSquares}
\end{figure}

\begin{figure}

\scalebox{0.69}{
\xymatrix{
& & & & & & & & & \\
& \ar@{<-}[l]\ar@{->}[u]{5} \ar@{->}[r] \ar@{<--}[rd] \ar@{<--}[ur] & {7} \ar@{->}[u] \ar@{<-}[r] & {2} \ar@{->}[r] \ar@{<-}[u] & {4} \ar@{<-}[u] \ar@{->}[r] \ar@{<--}[rd] \ar@{-->}[lu] & {6} \ar@{<-}[r] \ar@{->}[u] & {1} \ar@{<-}[u] \ar@{->}[r] & 
{3} \ar@{<-}[u] \ar@{->}[r] \ar@{-->}[ld] \ar@{-->}[lu] & {5} \ar@{->}[u] \ar@{->}[r] \ar@{<--}[rd] \ar@{<--}[ur]& \\
&\ar@{<-}[u] \ar@{<-}[d] \ar@{->}[l] {2} \ar@{->}[r] & {4} \ar@{->}[d] \ar@{<-}[u] \ar@{->}[r] \ar@{<--}[rd] & {6} \ar@{<-}[r] \ar@{->}[u]\ar@{->}[d] & {1} \ar@{<-}[d]\ar@{<-}[u] \ar@{->}[r] & 
{3} \ar@{<-}[d]\ar@{<-}[u] \ar@{->}[r] \ar@{-->}[ld]& {5} \ar@{->}[d]\ar@{->}[u] \ar@{->}[r] \ar@{<--}[rd] &
{7} \ar@{->}[u] \ar@{<-}[r] \ar@{->}[d] & {2} \ar@{<-}[u] \ar@{<-}[d] \ar@{->}[r] & \\
& & & & & & & & &
}}

\includegraphics[height=1.45in]{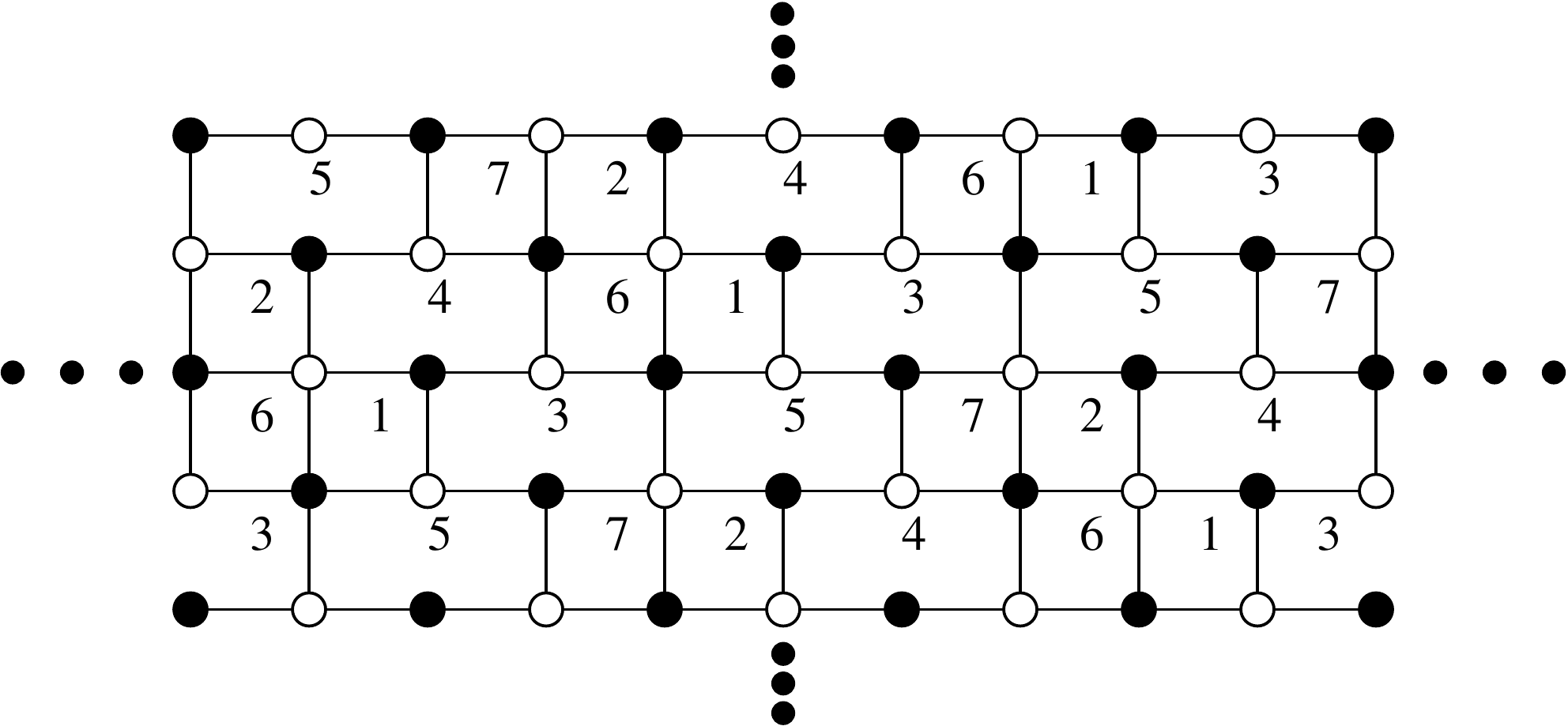}

\caption{The unfolded quiver ${\widetilde{Q}^{(2,3)}_7}$ and brane tiling $\mathcal{T}_7^{(2,3)}$.}
\label{Fig:237-BT}
\end{figure}

\begin{proof}
Since $r < s \leq N/2$, we note that $i+r < i+s < i+r+s$ all lie between $1$ and $2N-1$.  Consequently, we have four cases: 

1) $i < i+r < i+s < i+r+s \leq N$,

2) $i < i+r < i+s \leq N < i+r + s < 2N$,

3) $i < i+r \leq N < i+s < i+r + s < 2N$, or

4) $i \leq N < i+r < i+s < i+r + s < 2N$.

\noindent Since we label the vertices with their value modulo $N$, the ordering of the vertex labels in thse four cases are as follows:

1) $i < i+r < i+s < i+r+s$,

2) $i + r + s - N < i < i+r < i+s$,

3) $i + s - N < i+ r + s - N < i < i+r $, or

4) $i+r - N < i+s - N < i+r + s - N < i$.

\noindent Using the edge orientation rules listed above, we obtain a counter-clockwise (resp. clockwise) $4$-cycle in case 2 (resp. 4).  Further, adding a diagonal edge $i+r \to i+s$ properly orients case 1, and adding a diagonal edge $i+r+s-N \to i$ properly orients case 3.
\end{proof}

\begin{remark} The above four configurations correspond to local configurations appearing in the statistical mechanical \emph{square-ice} or \emph{six-vertex} models \cite{lieb1967exact}.  
\end{remark}

We now prove the correctness of the above construction.

\begin{proposition} Construct $\widetilde{Q}_N^{(r,s)}$ as above and then identify vertices with the same labels.  The resulting folded-up quiver exactly agrees with the Gale-Robinson quiver $\overline{Q_N^{(r,s)}}$ (possibly with $2$-cycles). 
\end{proposition}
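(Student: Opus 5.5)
The plan is to compare arrows one fundamental domain at a time. The labeling $(A,B)\mapsto 1+Ar+Bs \pmod N$ is invariant under the lattice $\Lambda=\{(A,B): Ar+Bs\equiv 0 \pmod N\}$. When $\gcd(r,s,N)=1$ (Remark \ref{Rem:gcd}), each label $1,\dots,N$ appears exactly once per fundamental domain of $\Lambda$, since the map $\ZZ^2\to\ZZ/N$ is surjective. So folding means taking the quotient $\widetilde{Q}_N^{(r,s)}/\Lambda$, which is a quiver on the torus with vertex set $\{1,\dots,N\}$. Every edge of $\widetilde{Q}_N^{(r,s)}$ is either horizontal, vertical, or a diagonal added in step (1d). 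I will show that the three kinds of arrows in one fundamental domain correspond exactly to steps (1), (2), (3) of Definition \ref{def:GRQ}.

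\textbf{Horizontal edges.} Each horizontal edge joins a label $i$ to $i+r \pmod N$. There is one such edge per vertex in a fundamental domain, so there are $N$ of them, one for each $i\in\{1,\dots,N\}$. If $i\le N-r$, then $i<i+r$, and rule (1c) gives $i\to i+r$. If $i=N-r+j$ with $1\le j\le r$, the partner label is $j<i$, so the rule gives $j\to N-r+j$. These are precisely the $N$ arrows of step (1).

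\textbf{Vertical edges.} Each vertical edge joins $i$ to $i+s \pmod N$, oriented from larger to smaller label. For $i\le N-s$ this gives $s+i\to i$. For $i=N-s+j$ the partner is $j$, giving $N-s+j\to j$. These are exactly the arrows of step (2).

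\textbf{Diagonal edges.} Each unit square of the grid has corners $i, i+s, i+r+s, i+r$ in clockwise order from the lower left. By the case analysis in the proof of Proposition \ref{Prop:sqorient}:
\begin{itemize}
\item In case 1, $i+r+s\le N$, and the added diagonal is $i+r\to i+s$. Writing $i+r=r+i'$ with $1\le i'\le N-r-s$, this is the first family of step (3).
\item In case 3, $i+r\le N<i+s$, and the added diagonal is $i+r+s-N\to i$. Set $j=i+r+s-N$. The conditions $i\le N-r$ and $i>N-s$ translate to $s-r<j\le s$. I then need to match these against the arrows $r+j'\to N-s+j'$ for $1\le j'\le s-r$.
\item Cases 2 and 4 add no diagonal.
\end{itemize}

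The main obstacle is the index bookkeeping in case 3. The case-3 diagonal as literally stated is $i+r+s-N\to i$, and its endpoints do not obviously match the second family of step (3), $r+j\to N-s+j$. I would first recheck the orientation. The square's other diagonal joins $i+r$ and $i+s-N$; because of the cyclic-orientation requirement, the correct splitting in case 3 may use that diagonal instead. Setting $j=i+s-N\in[1,s-r]$ then gives $i+r=N-s+j+r$. So I would recompute which diagonal makes both triangles cyclic, choosing it so that the diagonal runs between labels $r+j$ and $N-s+j$ (modulo relabeling). This would fix the parametrization $j\in[1,s-r]$.

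Once each square's diagonal is identified with a unique arrow of step (3), a counting check finishes the proof. There are $N$ squares per fundamental domain. Case 1 occurs for $N-r-s$ values of $i$, case 3 for $s-r$ values, and cases 2 and 4 for the rest. So the diagonals give exactly $N-2r$ arrows, matching the count in step (3).

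Finally, distinct edges in a fundamental domain map to distinct arrows, since they are parametrized bijectively by the starting label $i$. The folded quiver therefore equals $\overline{Q_N^{(r,s)}}$ arrow for arrow, multiplicities included. Any $2$-cycles, for instance when $s=N/2$, survive because we never cancel them.
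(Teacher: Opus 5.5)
Your decomposition is the same as the paper's. Horizontal edges correspond to step (1), vertical edges to step (2), and the Case 1 and Case 3 diagonals of Proposition~\ref{Prop:sqorient} to the two families of step (3). Your fundamental-domain count is a useful sharpening of the paper's ``adjacent iff $|i-j|=r$'' phrasing, since it tracks multiplicities.

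The gap is Case 3. You leave it unresolved, and the repair you propose would fail. The diagonal $i+r+s-N\to i$ cannot be replaced by the other one. In Case 3 the four sides are:
bottom $i\to i+r$, top $i+s-N\to i+r+s-N$, left $i\to i+s-N$, and right $i+r\to i+r+s-N$.
So the lower-left corner $i$ is a source and the upper-right corner $i+r+s-N$ is a sink. Only the diagonal from upper-right to lower-left produces two cyclic triangles. The diagonal joining $i+r$ and $i+s-N$ leaves $i$ as a source of one triangle and $i+r+s-N$ as a sink of the other, whichever way it is oriented. Its labels also differ by $N-s+r$, which exceeds both differences $s-r$ and $N-s-r$ that occur in step (3), so it could never match an arrow there.

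Moreover, ``choosing'' the diagonal so that it runs between $r+j$ and $N-s+j$ is circular. The diagonal is forced by step (1d), and showing that this forced arrow coincides with step (3) is exactly what the proposition asserts.

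What you are missing is only the right reindexing, obscured by an arithmetic slip. For $j=i+r+s-N$, the Case 3 conditions $N-s<i\le N-r$ give $r<j\le s$, not $s-r<j\le s$. Now set $j'=j-r=i+s-N$. Then:
\begin{itemize}
\item the Case 3 condition is equivalent to $1\le j'\le s-r$;
\item the tail is $i+r+s-N=r+j'$;
\item the head is $i=N-s+j'$.
\end{itemize}
Thus the Case 3 diagonal is literally the arrow $r+j'\to N-s+j'$ of the second family in step (3). The map $i\mapsto i+s-N$ is a bijection from the Case 3 squares of a fundamental domain onto that family. With this in place, your Case 1 matching, the count $N-2r$, and the rest of your argument go through as you describe.
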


\begin{proof}
When defining oriented edges in step 1 of $\overline{Q_N^{(r,s)}}$'s construction, we see that two vertices $i$ and $j$ are adjacent if and only if $|i-j| = r$, and the edge is oriented so that the smaller label points to the larger one.  This definition exactly agrees with the construction of the horizontal edges defined in step (1c) of the construction of $\widetilde{Q}_N^{(r,s)}$. 

Similarly, in step 2, two vertices $i$ and $j$ are adjacent if and only if $|i-j| = s$, but this time the larger label points towards the smaller one.  This coincides with the construction of the vertical edges in step (1c) of the construction of $\widetilde{Q}_N^{(r,s)}$.

It thus suffices to show that the oriented edges in $\overline{Q_N^{(r,s)}}$ constructed in step 3 agree with the oriented diagonal edges defined in step (1d) of $\widetilde{Q}_N^{(r,s)}$.  As in Cases 1 and 3 of Proposition \ref{Prop:sqorient}, the oriented edges we adjoin are $i+r \to i+s$ if $i \geq 1$ satisfies $i+r+s \leq N$, and $j+r+s-N \to j$ when $j$ satisfies $j+r \leq N$ but $j+s > N$.  However, these are just simple rephrasing of the rules in step 3 of $\overline{Q_N^{(r,s)}}$'s construction.    
\end{proof}

\begin{remark} If we worked with the $2$-cycle-less $Q_N^{(r,s)}$ instead of unfolding $\overline{Q_N^{(r,s)}}$, we would be missing some of the diagonal edges which are 
relevant for obtaining a regular pattern of hexagons.  See Example \ref{Ex:Somos5} for details.
\end{remark}

\begin{corollary} \label{Cor:sqhex}
For $1\leq i \leq r$, and $N-r \leq i \leq N$, the faces labeled with an $i$ are squares.  All other faces of the brane tiling are hexagons.
\end{corollary}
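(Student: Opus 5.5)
The plan is to use planar duality: the face of $\mathcal{T}_N^{(r,s)}$ labeled $i$ is dual to a vertex of $\widetilde{Q}_N^{(r,s)}$ labeled $i$. Its number of sides is therefore the degree of that vertex in the planar quiver $\widetilde{Q}_N^{(r,s)}$, with diagonals included. Every lattice vertex has exactly four lattice edges, so the statement reduces to the following claim. A vertex labeled $v$ meets no diagonals when $v\le r$ or $v>N-r$, and exactly two diagonals when $r<v\le N-r$. The labels $v\le r$ and $v>N-r$ are exactly those in the statement (up to the boundary convention at $N-r$), so this gives the corollary.

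To count diagonals, note that a lattice vertex labeled $v$ is a corner of exactly four unit squares. It is the lower-left corner of the square with base label $j=v$, the lower-right corner of the square with base $j\equiv v-r$, the upper-left corner of the square with base $j\equiv v-s$, and the upper-right corner of the square with base $j\equiv v-r-s$ (all taken mod $N$ in $\{1,\dots,N\}$). By the proof of Proposition~\ref{Prop:sqorient}, a square with base $j$ receives a diagonal in only two cases:
\begin{itemize}
\item Case 1, when $j+r+s\le N$: the diagonal $j+r\to j+s$ joins the lower-right and upper-left corners.
\item Case 3, when $j+r\le N<j+s$: the diagonal $j+r+s-N\to j$ joins the upper-right and lower-left corners.
\end{itemize}
I then translate each of the four positions into a condition on $v$:
\begin{itemize}
\item $v$ is the lower-left end of a diagonal iff $N-s<v\le N-r$.
\item $v$ is the lower-right end iff $j=v-r\ge 1$ is in Case 1, i.e. $r+1\le v\le N-s$.
\item $v$ is the upper-left end iff $s+1\le v\le N-r$.
\item $v$ is the upper-right end iff the base $j\in(N-s,N-r]$ is in Case 3 with $j+r+s-N=v$, i.e. $r<v\le s$.
\end{itemize}

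The remaining step is to sum these four indicators, using $r<s\le N/2$ so that $s\le N-s$. If $r<v\le s$, only the upper-right and lower-right conditions hold, giving $2$. If $s<v\le N-s$, only the lower-right and upper-left conditions hold, giving $2$. If $N-s<v\le N-r$, only the lower-left and upper-left conditions hold, giving $2$. If $v\le r$ or $v>N-r$, none of the four conditions hold, giving $0$. So the degree is $6$ for $r<v\le N-r$ and $4$ otherwise, which yields hexagons and squares respectively.

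The main obstacle is the bookkeeping in translating each corner position into an interval of labels. The delicate points are the mod-$N$ reduction for the upper-right position and the degenerate case $s=N/2$, where the intervals touch. I would also check that the duality really matches the number of sides of a face with the number of incident edges. This holds because $\widetilde{Q}_N^{(r,s)}$ is a planar embedding with no repeated edges between the same pair of lattice points, so each incident edge contributes exactly one side.
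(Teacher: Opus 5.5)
Your proof is correct and is essentially the paper's argument: both use planar duality to reduce the claim to showing that each vertex has degree $4$ or $6$. Your four corner conditions are exactly the step-(3) arrows of Definition~\ref{def:GRQ} at $v$ (lower-right/upper-left ends are the tail/head of $r+i\to s+i$, upper-right/lower-left ends the tail/head of $r+j\to N-s+j$), so you have carried out explicitly in the unfolded lattice the degree count that the paper only asserts via the primitive-quiver decomposition of $\overline{Q_N^{(r,s)}}$.

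One correction to your framing: since $r<N/2<N-r$, your count shows that the label $N-r$ is a hexagon. This is therefore not a boundary convention but a typo in the corollary, whose square range should read $N-r+1\le i\le N$, consistent with the proof of Proposition~\ref{Prop:cent-strip}.
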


\begin{proof}
By considering the planar dual, this is equivalent to considering the degrees of the various vertices in the doubly periodic quiver.  Since this quiver looks locally like its folded version, the degree of vertex $i$ equals its degree in $\overline{Q_N^{(r,s)}}$.  However, by thinking of this quiver as the sum of three primitive period 1 quivers (except that the third has a restricted vertex set) exactly gives 
us vertices of degrees $4$ and $6$ as desired.
\end{proof}

Note: When drawing brane tilings or their subgraphs, we will typically depict hexagonal faces as horizontal rectangles of height one and width two.

\subsection{From Brane Tilings to Pinecones} \label{Sec:pinecones}

We now describe how to obtain the pinecone graphs, $P(n;r,N-r,s,N-s)$, constructed in \cite{ProppMelouWest2009}, that were mentioned above in Section \ref{Sec:GR}.  Our construction will 
produce these graphs from brane tilings directly.  Given a Gale-Robinson sequence and quiver $\overline{Q_N^{(r,s)}}$, we described in the last section how to construct the associated brane tiling $\mathcal{T}_N^{(r,s)}$.  We now describe how to construct a family of finite subgraphs of $\mathcal{T}_N^{(r,s)}$, each of which we denote as $G_{n}^{(r,s,N)}$ for $n\geq N+1$.  

\begin{definition} [Gale-Robinson Brane Subgraphs] \label{Def:pinecones}

For $N+1 \leq n \leq N+r$, we define $G_n^{(r,s,N)}$ as the subgraph of $\mathcal{T}_N^{(r,s)}$ consisting of the square face labeled $n-N$.
If $n > N+r$, we instead build $G_n^{(r,s,N)}$ layer-by-layer.  For this construction, we need some notation.  For $n >N+r$, let $\overline{n} \in \{1,2,\dots, r\}$ denote the integer such that $n \equiv N+ \overline{n}~ (\mathrm{mod}~ r)$ and $\overline{\overline{n}} \in \{1,2,\dots, N-r\}$ denote the integer such that 
$n \equiv N + \overline{\overline{n}}~ (\mathrm{mod}~ (N-r))$.
Define the horizontal strip $H_n^{(r,N)}$ to be the induced subgraph of $\mathcal{T}_N^{(r,s)}$ obtained by taking the grid graph of unit height and width equal to 
$2\lfloor \frac{n-N-1}{r}\rfloor + 1$ starting with the square face labeled as $\overline{n}$ as the left-most face.  In particular, $H_n^{(r,N)}$ is defined to be empty if 
$n \leq N$ and  $H_n^{(r,N)}$ ends with the square or hexagonal face labeled 
as $\overline{\overline{n}}$ as the right-most face.

For $n > N+r$, we then construct a graph by using $H_n^{(r,N)}$ as a central horizontal strip, and then gluing to its top (resp. bottom) the strips 
$H_{n-(N-s)}^{(r,N)}$, $H_{n-2(N-s)}^{(r,N)}$, $\dots$ (resp.  $H_{n-s}^{(r,N)}$, $H_{n-2s}^{(r,N)}$, $\dots$) until the strips added 
above and below are empty.  We glue these together in the unique way so that successive strips, emanating out from the center, are contained in the interior of the more central strip.
This defines an induced subgraph of $\mathcal{T}_N^{(r,s)}$, that we denote as $G_n^{(r,s,N)}$.
\end{definition}

\begin{example} \label{Ex:237}
Consider the case $r=2$, $s=3$, and $N=7$.  The corresponding quiver $Q_7^{(2,3)}$ appears in Figure \ref{Fig:237-GR} and its brane tiling $\mathcal{T}_7^{(2,3)}$ appears in Figure \ref{Fig:237-BT}.  Then for $8 \leq n \leq 16$, the strips $H_n^{(2,7)}$ are given as:

\vspace{0.5em}

\includegraphics[height=0.3in]{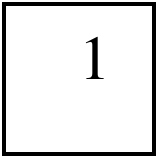}, \hspace{2em}
\includegraphics[height=0.3in]{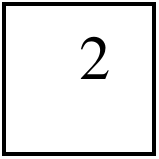}, \hspace{2em}
\includegraphics[height=0.3in]{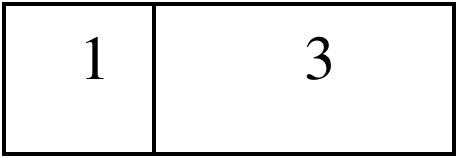}, \hspace{2em}
\includegraphics[height=0.3in]{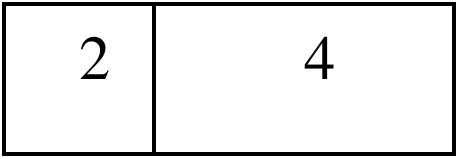}, \hspace{2em}
\includegraphics[height=0.3in]{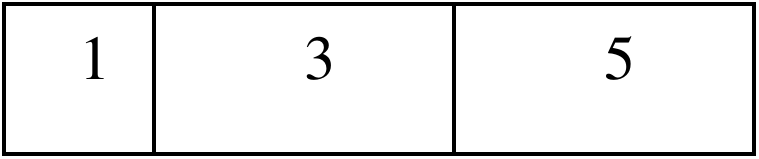}, 

\vspace{0.5em}

\includegraphics[height=0.3in]{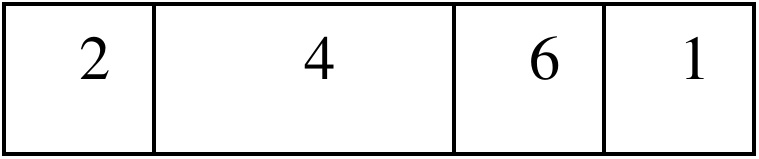}, \hspace{2em}
\includegraphics[height=0.3in]{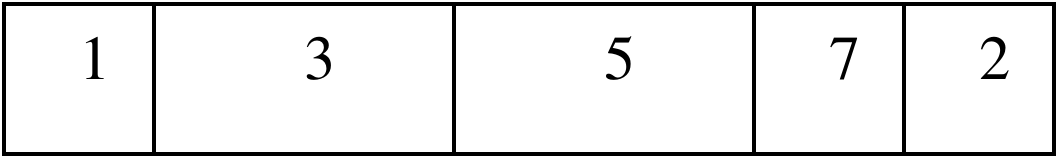}, 

\vspace{0.5em} 

\includegraphics[height=0.3in]{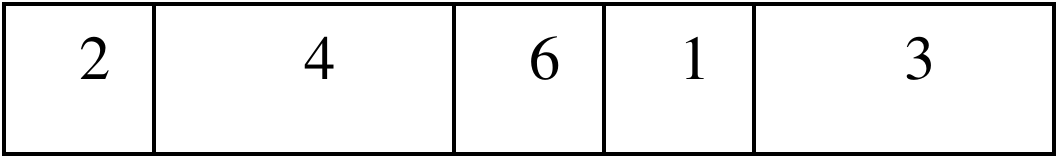}, \hspace{1em} and \hspace{1em} \includegraphics[height=0.3in]{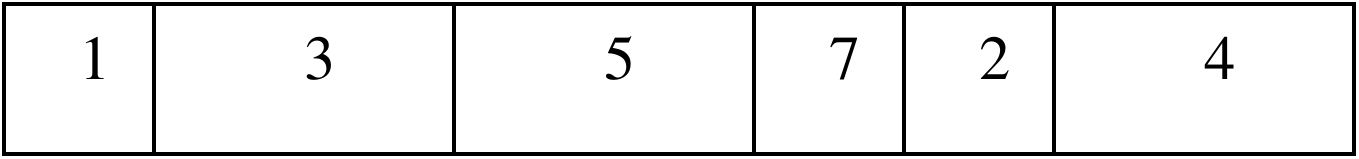}.

\vspace{1em}

\noindent Gluing these strips together as described above, we obtain the following Gale-Robinson Brane Subgraphs $\{G_n^{(2,3,7)}\}$ for $8 \leq n \leq 16$:

\vspace{0.5em}

\includegraphics[height=0.2in]{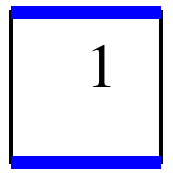}, \hspace{2em}
\includegraphics[height=0.2in]{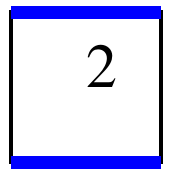}, \hspace{2em}
\includegraphics[height=0.2in]{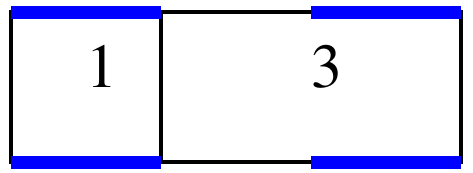}, \hspace{2em}
\includegraphics[height=0.4in]{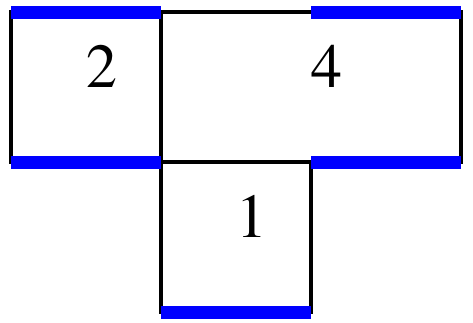}, \hspace{2em}
\includegraphics[height=0.6in]{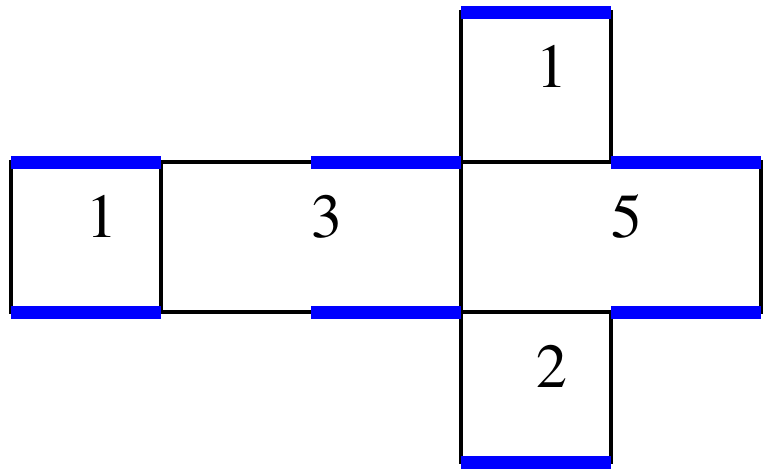}, 

\vspace{0.5em}

\includegraphics[height=0.6in]{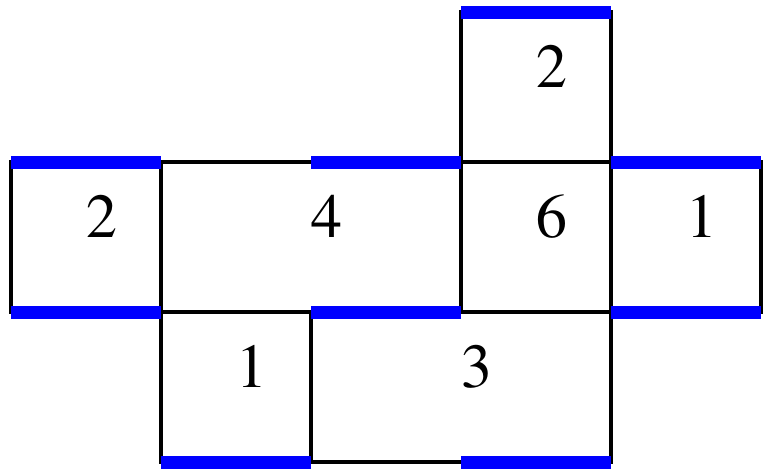}, \hspace{2em}
\includegraphics[height=0.8in]{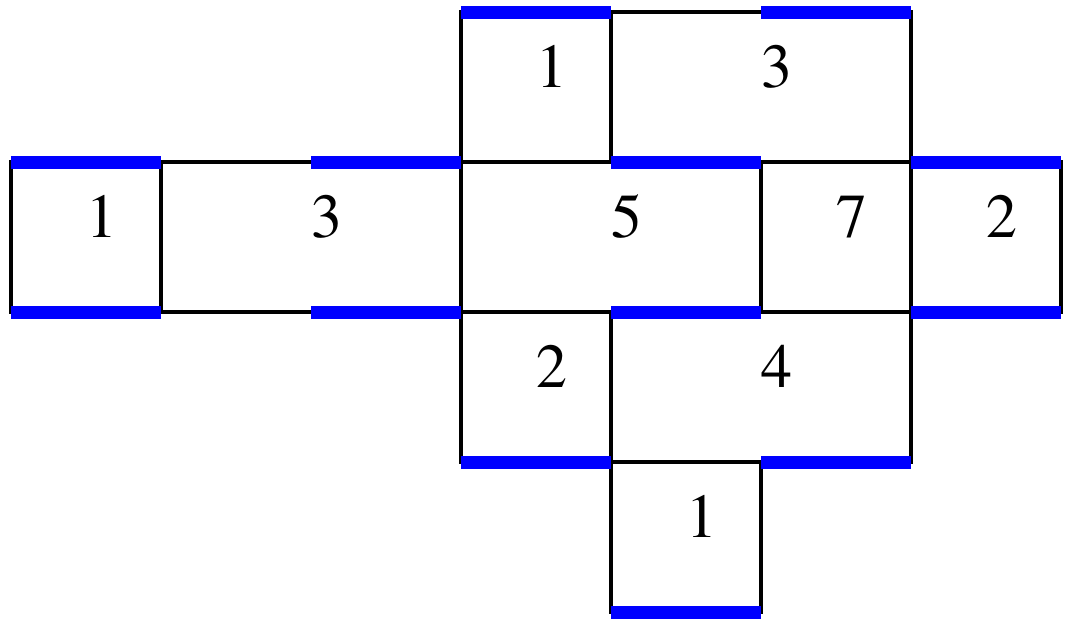}, 

\vspace{0.5em} 

\includegraphics[height=0.8in]{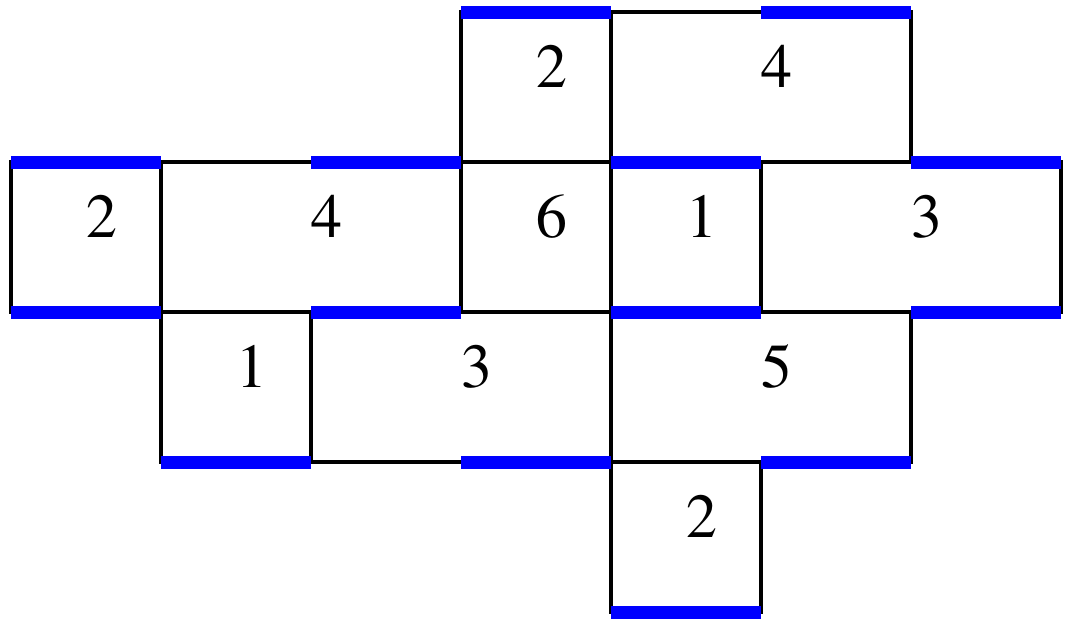}, \hspace{1em} and \hspace{1em} \includegraphics[height=1.0in]{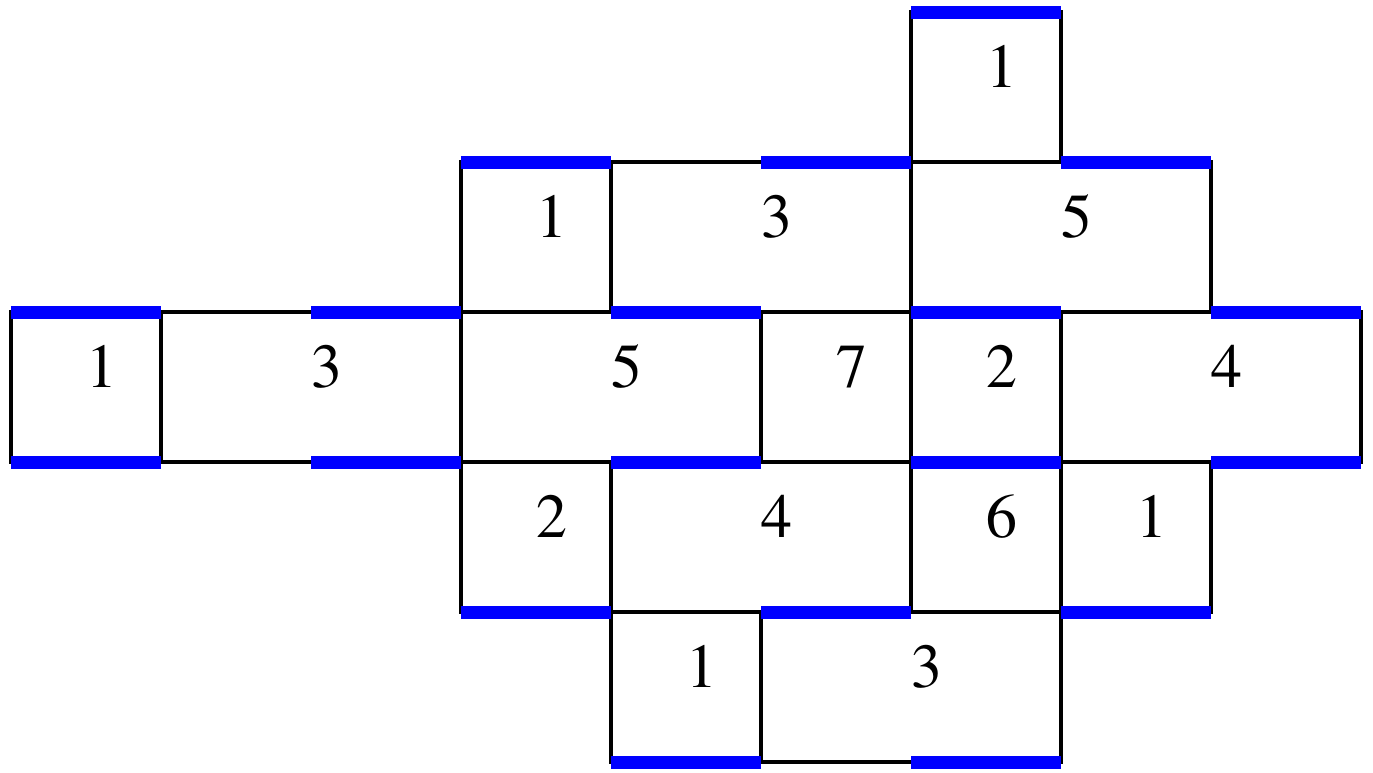}.

\noindent For example, the graph $G_{16}^{(2,3,7)}$ is obtained by gluing together the horizontal strips (from top to bottom) 
$H_{8}^{(2,7)}, H_{12}^{(2,7)}, H_{16}^{(2,7)}, H_{13}^{(2,7)}$, and $H_{10}^{(2,7)}$.  
(The highlighted edges are minimal matchings which are discussed further in Definition \ref{Def:Height}.) 
\end{example}

\begin{remark}\label{rem:Aztec-diamond}
The graphs $G_n^{(r,s,N)}$ can also be constructed by superimposing Aztec Diamonds of increasing sizes centered on top of a face (of the center row) of the brane tiling $\mathcal{T}_N^{(r,s)}$.  In particular, the first $r$ graphs are the squares labeled with $1\leq i \leq r$.  Subsequently, we have $r$ subsequences of Aztec Diamonds.  In particular, for $N' \geq 0$, the graph $G_{rN'+N+i}^{(r,s,N)}$ can be constructed by the following:

(i) locate a face of $\mathcal{T}_N^{(r,s)}$ labeled as $i$.  If it is a square, let $(a,b)$ denote this face, as viewed in the $\mathbb{Z}^2$ lattice.  If instead it is a hexagon, let $(a,b)$ denote the left-hand-side of this face.

(ii) Take the Aztec Diamond of size $(N'+1)$ (which has a central row of size $2N'+1$) and center it on top of 
the cell $(a+N', b)$.  

(iii) This superposition will usually result in a graph containing vertices of degree one.  By removing these, one-by-one, we obtain the desired subgraph $G_n^{(r,s,N)}$. See Figure \ref{Fig:237Core} for an example.
Note that this procedure is equivalent to taking the \emph{core} of a pinecone, as described in \cite[Section 2.4]{ProppMelouWest2009}. The procedure of taking core will be extensively used in Section \ref{sec:Step3}. 

The validity of this remark follows from Proposition \ref{Prop:borders}. In detail, the proposition (and its proof) shows that we can build \emph{recursively} the graphs $G_n^{(r,s,N)}$ from its four subgraphs $G_{n-s},G_{n-r},G_{n-N+s},G_{n-N+r}$. On the other hand, it is shown in \cite[Prop. 8 and the remark below it]{ProppMelouWest2009} that we can build $G_n$ (denoted $P(n)$ therein) from its four subgraphs in exactly the same way. The equality between $G_n$ and $P(n)$ follows once we notice that they are equal when $n\le N+r$.\footnote{We note that $N,r,s$ are denoted $m,j,k$ in \cite{ProppMelouWest2009}.}
\end{remark}

\begin{figure}

\includegraphics[height=0.42in]{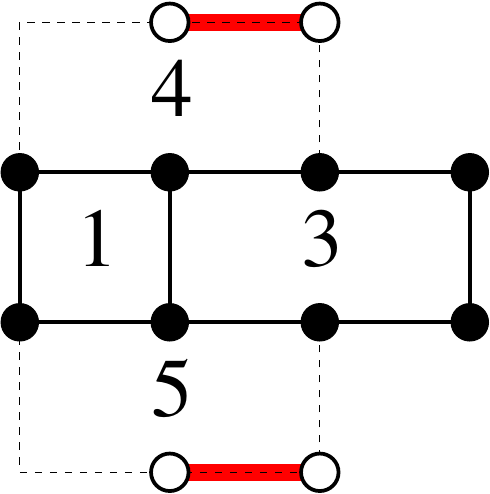} \hspace{1em}
\includegraphics[height=0.42in]{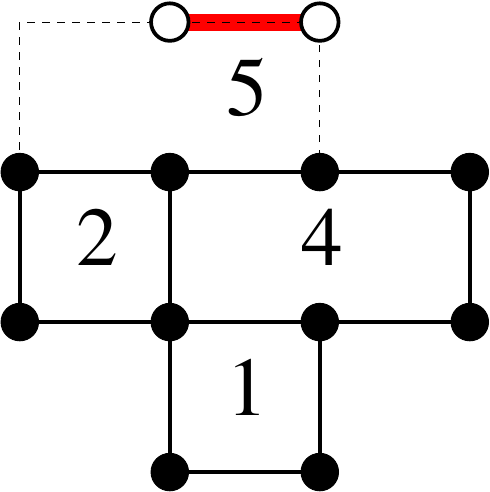} \hspace{1em}
\includegraphics[height=0.67in]{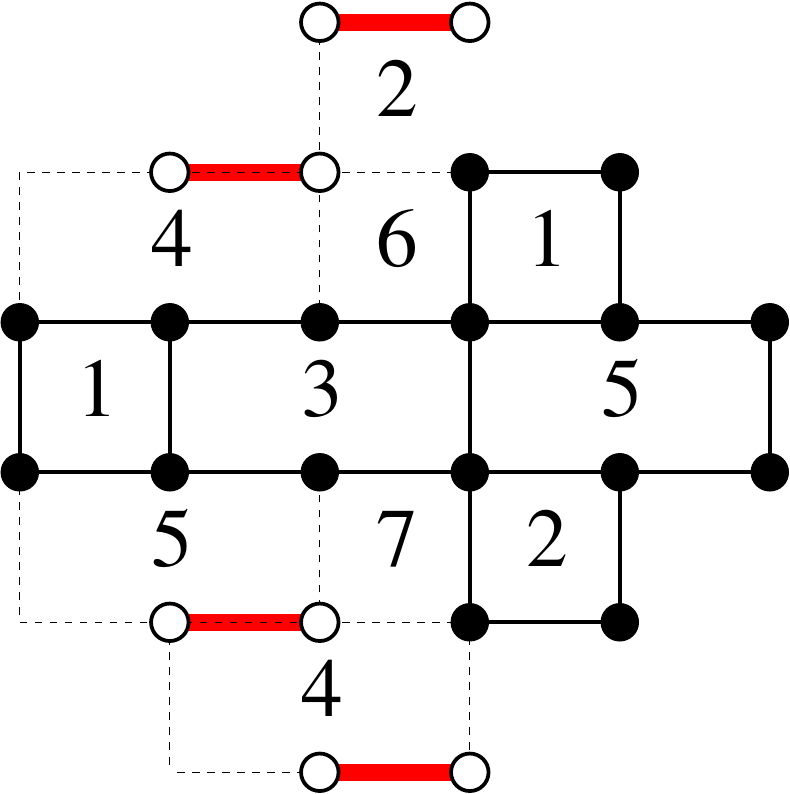} \hspace{1em}
\includegraphics[height=0.67in]{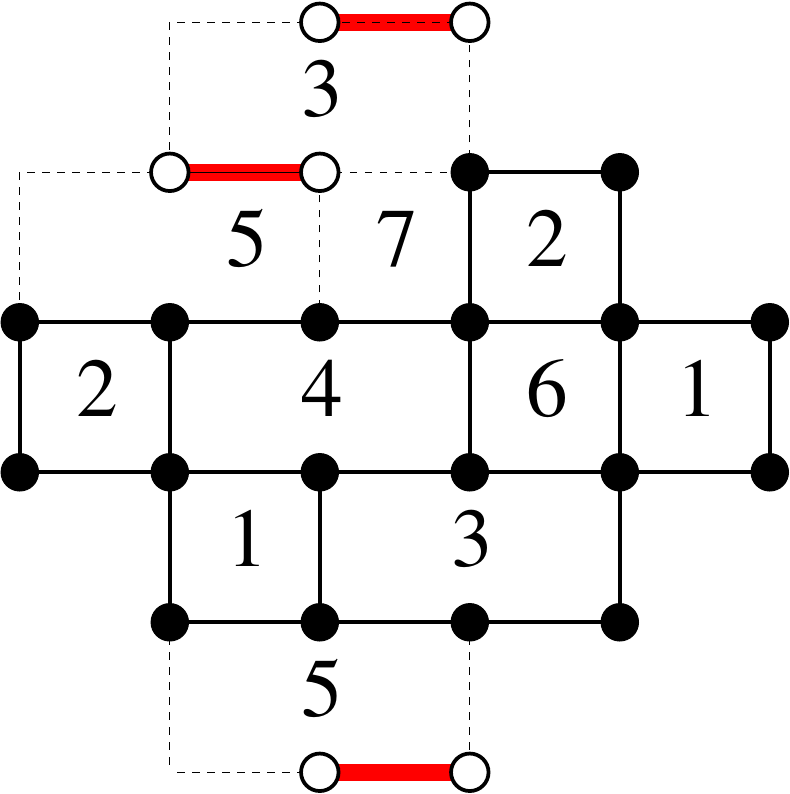} \hspace{1em}
\includegraphics[height=0.85in]{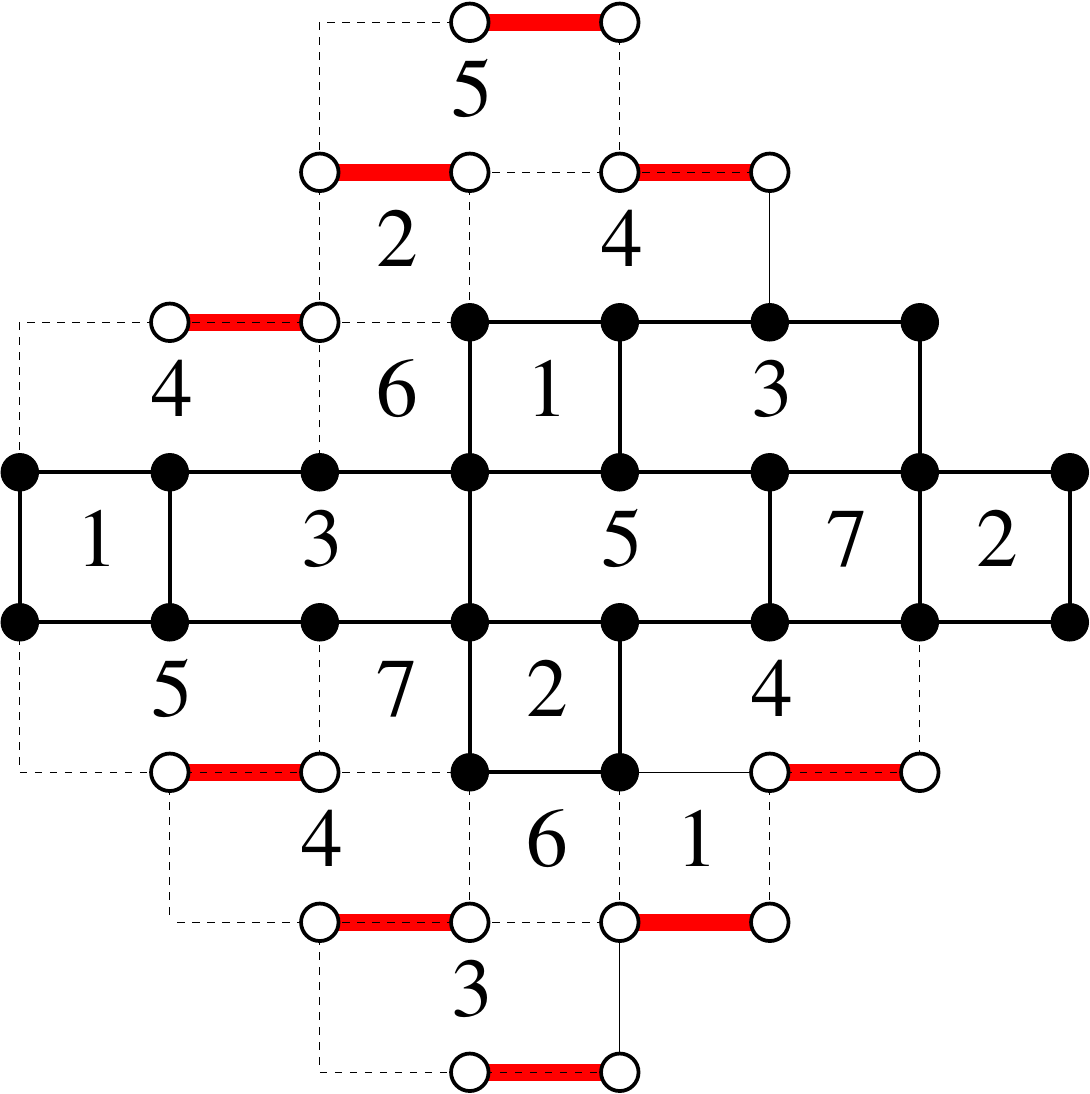} \hspace{1em}
\includegraphics[height=0.85in]{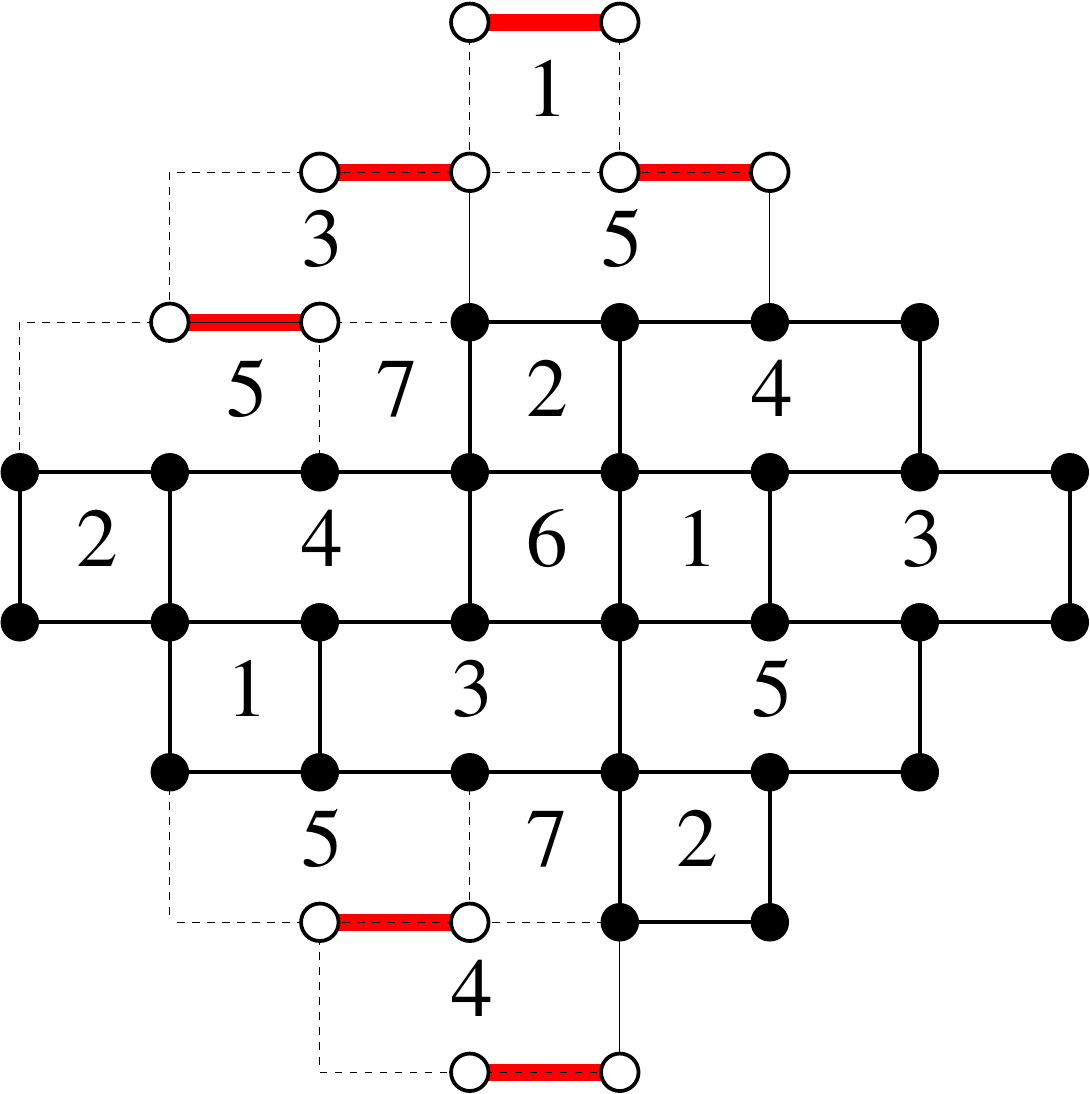}

\caption{Recovering the Gale-Robinson subgraph for $(r,s,N) = (2,3,7)$ from the associated Aztec Diamond for $10 \leq n \leq 15$.  Called the \emph{core} of a pinecone in \cite{ProppMelouWest2009}.  Compare with Example \ref{Ex:237} and $\mathcal{T}_7^{(2,3)}$ of Figure \ref{Fig:237-BT}.}
\label{Fig:237Core}
\end{figure}

\begin{remark}
A method for constrcting subgraphs of brane tilings also has appeared in the string theory literature.  For instance, see \cite[Sections 6, 7.3]{Eager2012} where a process is given for constructing the ``shadow of a pyramid''.  
\end{remark}
 
\begin{example} \label{Ex:Somos5}
Consider the two quivers in Figure \ref{Fig:dP2Opp}. 
The leftmost quiver is the Gale-Robinson Quiver 
$\overline{Q_5^{(1,2)}}$ with a $2$-cycle, and the rightmost quiver $Q_5^{(1,2)}$ has the $2$-cycle removed.
Mutating $Q_5^{(1,2)}$ periodically at $1,2,3,4,5,1,2,\dots$ yields the Somos-5 sequence as cluster variables.  However, to obtain a brane tiling interpretation corresponding to pinecones, we must unfold the quiver $\overline{Q_5^{(1,2)}}$ instead.  See 
Figure \ref{Fig:Somos5Brane} for the associated brane tiling $\mathcal{T}_5^{(1,2)}$.
This example is also discussed in detail in Section 9.4 of \cite{Eager2012}.  In contrast, see Figures \ref{Fig:Somos5AltQuiver} and \ref{Fig:Somos5AltBrane} for the brane tiling $\mathcal{{T}'}_5^{(1,2)}$associated to $Q_5^{(1,2)}$.
\end{example}
 
\begin{figure}
\includegraphics[height=1.3in]{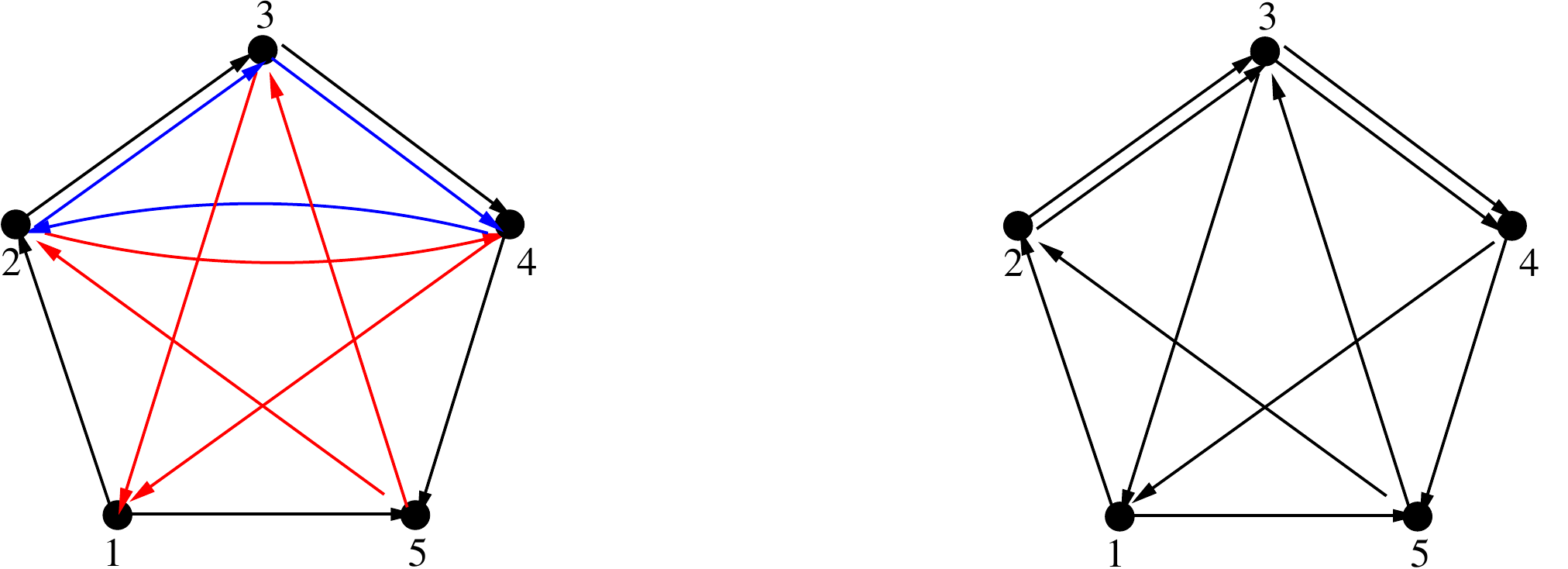}
\caption{(Left): Del-Pezzo $2$ quiver $\overline{Q_5^{(1,2)}}$; (Right): Psuedo-Del-Pezzo $2$ quiver $Q_5^{(1,2)}$.}
\label{Fig:dP2Opp}
\end{figure}

\begin{figure}
\includegraphics[height=1.5in]{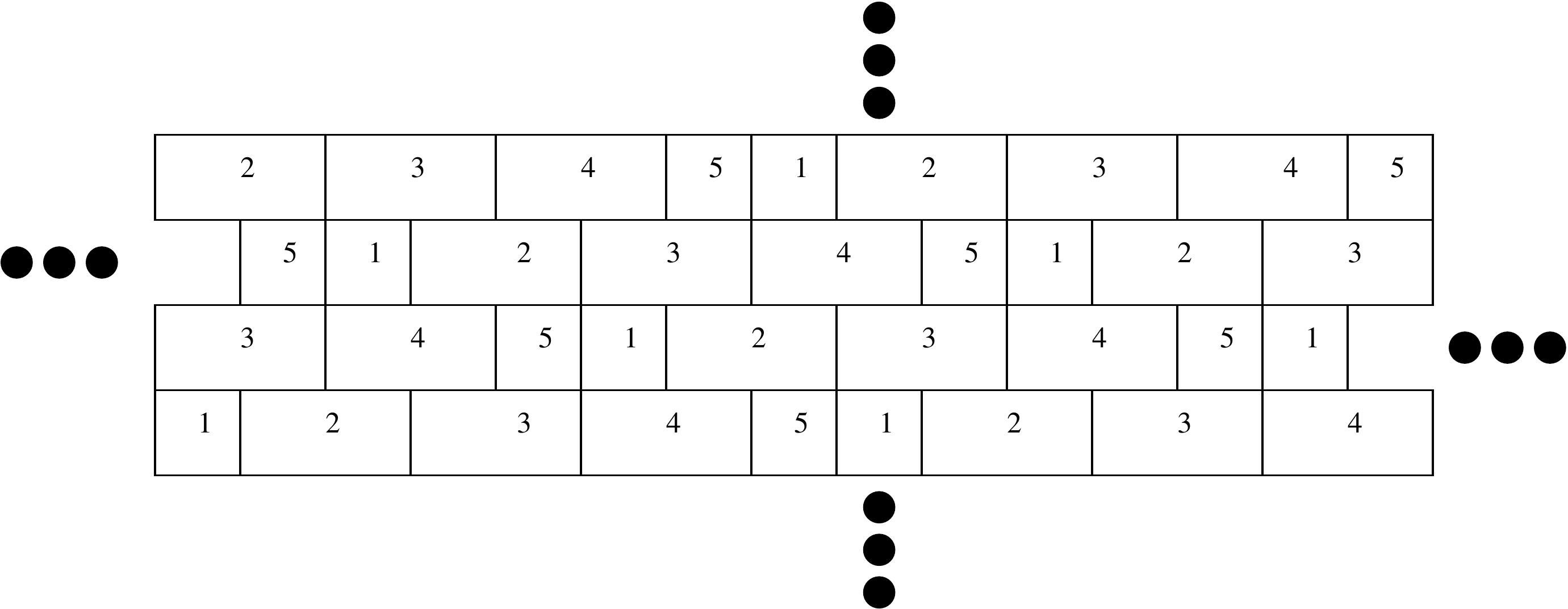}
\caption{The Brane Tiling $\mathcal{T}_5^{(1,2)}$.  Note that face $2$ borders face $4$ in two different ways, yielding the $2$-cycle in the associated quiver.}
\label{Fig:Somos5Brane}
\end{figure}
 
\begin{figure}
\includegraphics[height=2.6in]{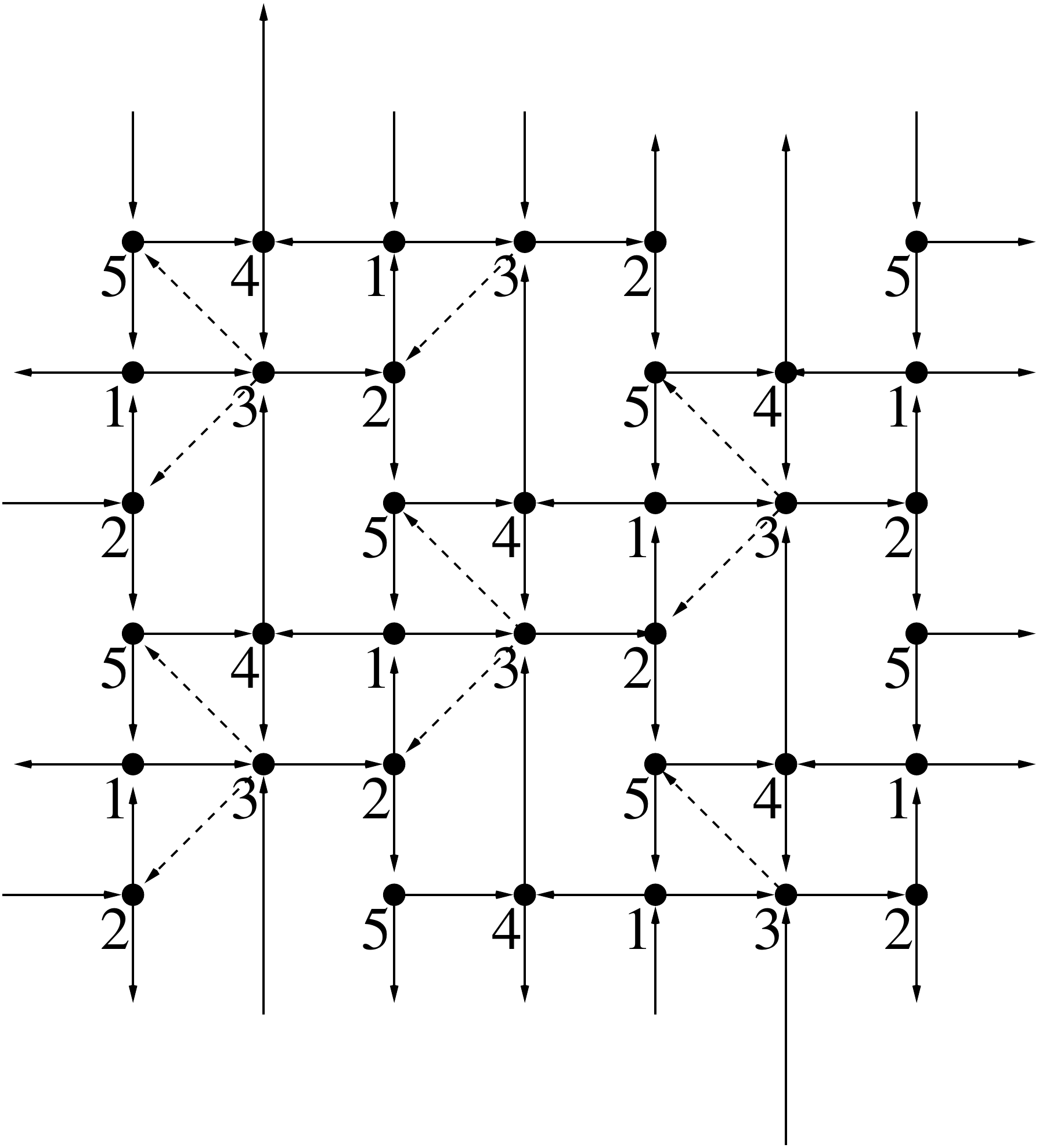}
\caption{The doubly-periodic quiver associated to this brane tiling.  Notice that this quiver is an unfolding of $Q_5^{(1,2)}$, but is different 
than the construction of Section \ref{Sec:Brane}.}
\label{Fig:Somos5AltQuiver}
\end{figure}

\begin{figure}
\includegraphics[height=2.6in]{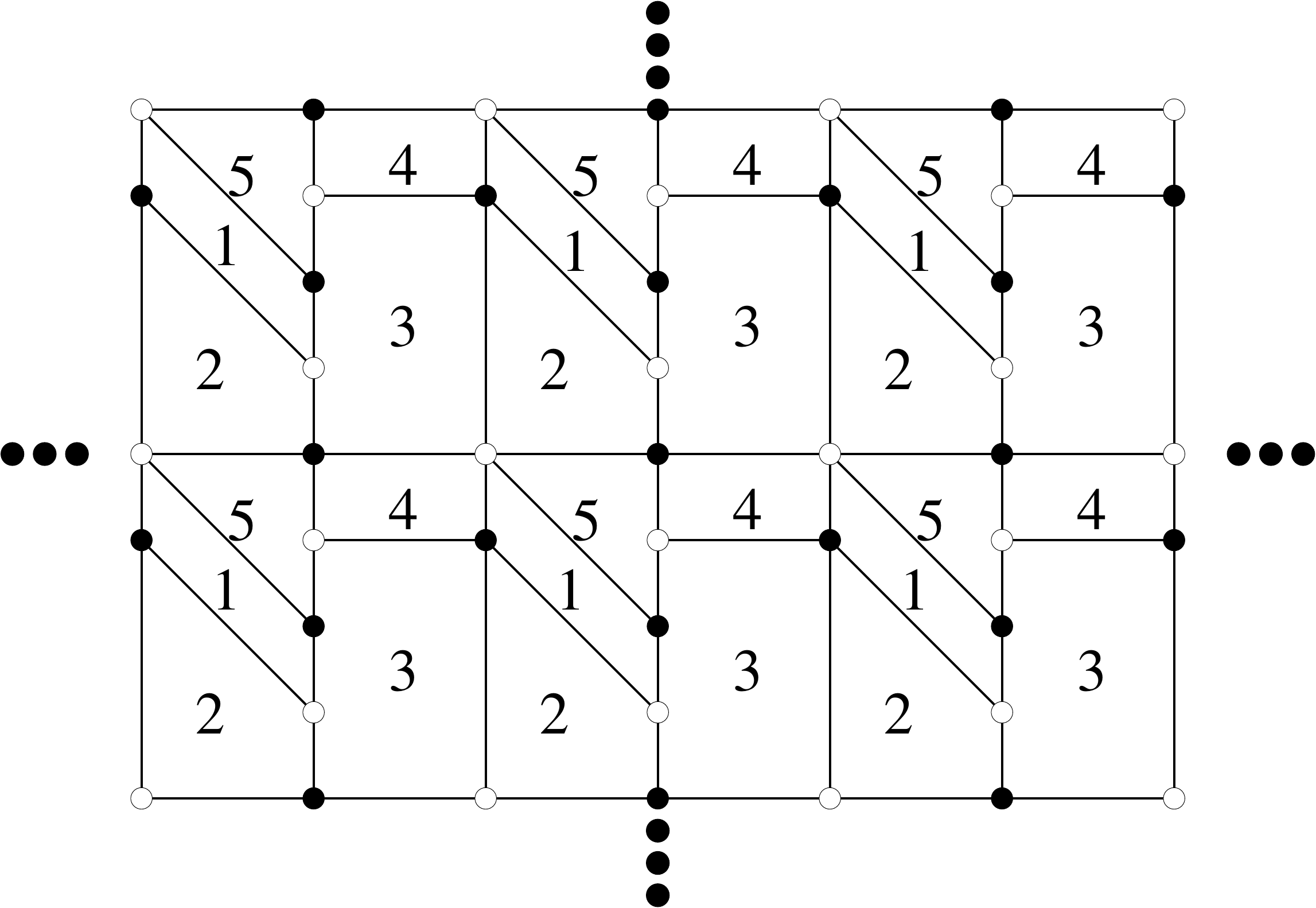}
\caption{
The Brane Tiling ${\mathcal{T}'}_5^{(1,2)}$ adapted from \cite[Figure 15]{Franco2006Jan2} up to relabeling vertices according to permutation $(1,2,5,3)(4)$.  
}
\label{Fig:Somos5AltBrane}
\end{figure}

\section{Main Results}

\label{sec:main}

Before stating our main result, we introduce certain functions that are necessary in the development. 
\begin{defi}[Weight functions of edges] \label{Def:Weight}
	Given a subgraph $G$ of a brane tiling (with face labels
	$F_i$), we define the weight $x(e)$ of an edge $e$ (straddling faces $F_i$ and $F_j$) to be $x(e) =
	\frac{1}
	{x_ix_j}$ . Given a
	perfect matching $M$ of $G$, we define $x(M) = \prod_{e\in M}
	x(e)$.
\end{defi}

For every subgraph $G$, we also have to define a certain monomial that is given by the labels of the faces appearing in $G$ and its boundary in the ambient tiling $\mathcal{T}$.

\begin{defi}[Covering monomial]\label{Def:CovMonom}
	Given a \emph{nonempty} subgraph $G$ of a brane tiling $T$ (with face
	labels $F_i$), let $\overline{G}$ denote the subgraph of $T$ consisting of all faces that are incident to an edge appearing
	in $G$. In particular, $\overline{G}$ contains $G$ as a proper subgraph, as well as a “ring” of outer faces. 
	Then for any face $F$ of $G$, with label $i$, we define $m(F) = x_i^{\frac{\# \text{edges in F}}{2}-1}$.
    
    We denote $I(G)=\prod_{F\in G} m(F)${, which stands for ``interior'' faces}.
For any of the open faces
	$F \in  \overline{G}\bs G$, with label $i$, we define $m(F) = x_i^{\lceil\frac{\# \text{edges in F incident to G}}{2} \rceil}.$ 
    We denote $O(G)=\prod_{F\in \overline{G}\bs G} m(F)${, which stands for ``outer" or ``open'' faces}.
Then the covering monomial of $G$
	is defined to be $cm(G) = \prod_{F \in G} m(F)=I(G)O(G)$.
\end{defi}

\begin{rem}\label{rem:base-case-x}
    For the Gale-Robinson sequence of type $(r,s,N)$, we \emph{declare} that the empty pinecone $G_n^{(r,s,N)}$ where $n\le N$ have $O(G_n^{(r,s,N)}):=x_n$. This validates our main Theorem \ref{Thm:GR-WH} even if the pinecone is empty. We will further set the convention of what the outer face is for the empty pinecone $G_n^{(r,s,N)}$ in Section \ref{sec:condensation-x}, and the declaration will be shown to be coherent with our proof method in Section \ref{sec:condensation-x}.
\end{rem}

We additionally utilize height functions, which have appeared elsewhere in the literature such as \cite{MUSIKER20112241positivity, Propp02, Thurston01101990}.

\begin{defi} \label{Def:Height}
	Fix the minimal matching $M_-$. The height $y(M)$ of a perfect matching $M$ is given by $$
	y(M)=\prod_{i=1}^N \prod_{\text {Face } F \text { of graph } G \text { labeled as } i} y_i^{\# \text { cycles of } M \oplus M_{-} \text {enclosing the face } F} .
	$$	
\end{defi}

Given the above definitions, the \emph{weight} of a graph $G$ is defined as 
\begin{equation} \label{Eq:GraphWeight} w(G)=cm(G) \cdot \sum_{M} x(M)y(M),\end{equation} where the sum is taken over all perfect matchings $M$ of $G$.
With our preliminary definitions and background material described, we now review our main theorem and the three step proof outlined in \cite{MusikerJeongZhang2013Jan}.

\begin{theorem} \label{Thm:GR-WH}
Let $\widehat{\mathcal{A}}_{Q_N^{(r,s)}} \subset \mathbb{Q}[y_1,y_2, \dots, y_N][x_1^\pm,x_2^\pm, \dots, x_N^\pm]$ denote the cluster algebra with principal coefficients associated to the Gale-Robinson quiver of type $(r,s,N)$.  For $n \in \{N+1,N+2,\dots\}$, define the cluster variables $\widehat{x_n}$ by mutating the initial seed $(\widehat{Q}_N^{(r,s)},\{x_1,x_2,\dots, x_N,y_1,y_2,\dots, y_N\})$ periodically by the sequence $1,2,3,\dots, N, 1,2,\dots$.  
Let $G_n^{(r,s,N)}$ (which equals $P(n;r,N-r,s,N-s)$ up to vertical reflection) be the pinecone graph constructed in Section \ref{Sec:pinecones} or in \cite{ProppMelouWest2009}.
Then, the Laurent expansion of $\widehat{x_n}$ is given by the following combinatorial formula
\begin{eqnarray} \label{eq:GR-WH} \widehat{x_n} = cm(G_n^{(r,s,N)})\left(\sum_{M \mathrm{~is~a~perfect~matching~of~} G_{n}^{(r,s,N)}} x(M)y(M)\right)\end{eqnarray}
where $x(M)$, $y(M)$ are the weights and heights of perfect matching $M$, respectively.  See Definitions \ref{Def:Weight}, 
\ref{Def:CovMonom}, and \ref{Def:Height}. 
\end{theorem}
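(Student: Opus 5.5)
We prove the theorem by strong induction on $n$, showing that the right-hand side of (\ref{eq:GR-WH}), i.e.\ the graph weight $w(G_n^{(r,s,N)})$ of (\ref{Eq:GraphWeight}), satisfies the same recurrence as the cluster variables $\widehat{x_n}$. Both sides are then determined by the initial values $x_1,\dots,x_N$. On the graph side these initial values are supplied by the convention of Remark \ref{rem:base-case-x} for the empty pinecones $n\le N$. For $N+1\le n\le N+r$ the pinecone is a single square face, and the formula can be checked directly against the first mutations.

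\textbf{Step 1 (the algebraic recurrence).} Period $1$ of $Q_N^{(r,s)}$ gives, with principal coefficients, an exchange relation of the form
\[
\widehat{x_n}\,\widehat{x_{n-N}} = Y_n^{+}\,\widehat{x_{n-r}}\,\widehat{x_{n-N+r}} + Y_n^{-}\,\widehat{x_{n-s}}\,\widehat{x_{n-N+s}},
\]
where $Y_n^{\pm}$ are monomials in $y_1,\dots,y_N$ determined by the $\mathbf{c}$-vector of the vertex being mutated (Remark \ref{rem:cvectors}). By sign-coherence, exactly one of the two terms carries the monomial $\mathbf{y}^{\mathbf{c}}$ for the current $\mathbf{c}$-vector. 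The real work here is to compute $\mathbf{c}_1^{(\ell)}$ explicitly along the periodic sequence. I would guess that it is a $0/1$ vector recording the face labels of the pinecone $G_n$, counted with multiplicity of enclosure. I would prove this by induction, tracking how arrows to the frozen vertices transform under $\mu_k$, and using the periodicity to reduce to the single mutation at vertex $1$. This is the content of Proposition \ref{prop:GRprinc}.

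\textbf{Step 2 (graph decomposition).} The graph $G_n$ is a union of the four subgraphs $G_{n-r},G_{n-N+r},G_{n-s},G_{n-N+s}$, sitting as translates of the central strip and the top and bottom borders. Its central part overlaps $G_{n-N}$ in the manner of Kuo condensation. One must verify two things (Propositions \ref{Prop:cent-strip} and \ref{Prop:borders}):
\begin{itemize}
\item the strips $H_m^{(r,N)}$ glue consistently, so that removing the appropriate boundary layer of $G_n$ produces each of the four smaller pinecones (after forced edges and removal of degree-one vertices, i.e.\ taking cores);
\item the covering monomials satisfy
\[
cm(G_n)\,cm(G_{n-N}) = cm(G_{n-r})\,cm(G_{n-N+r})\cdot(\text{forced-edge weights}),
\]
and similarly for the $s$-pair.
\end{itemize}
The outer-face exponents $\lceil\cdot/2\rceil$ are precisely what make these identities hold.

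\textbf{Step 3 (weighted Kuo condensation).} Apply Kuo's superposition argument to the pair $G_n, G_{n-N}$ (Proposition \ref{Prop:superpositions}). Superimposing matchings of $G_n$ and $G_{n-N}$ and recombining along alternating paths gives a bijection onto pairs of matchings of $(G_{n-r},G_{n-N+r})$ or $(G_{n-s},G_{n-N+s})$. The choice between the two is governed by how the four boundary vertices on the Aztec-diamond corners pair up. The $x$-weights match because edge weights are multiplicative and face-local. The delicate point is the heights: we must show that the height of the superposition differs from the sum of heights of the recombined pair by exactly $Y_n^{\pm}$. I would handle this by comparing the minimal matchings $M_-$ of all six graphs, checking that the superposition of the two minimal matchings in one class recombines to minimal matchings. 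Cycles enclosing faces then contribute identically, except for the single alternating path class that separates the faces counted by $\mathbf{c}_1$.

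I expect this height bookkeeping, reconciling Step 3 with the $\mathbf{c}$-vector computed in Step 1, to be the main obstacle. Once it is settled, combining Steps 1--3 shows $w(G_n)w(G_{n-N}) = Y_n^{+}w(G_{n-r})w(G_{n-N+r}) + Y_n^{-}w(G_{n-s})w(G_{n-N+s})$, and induction finishes the proof.
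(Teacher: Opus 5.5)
Your overall architecture is the same as the paper's: the $\mathbf{c}$-vector recurrence, the four smaller pinecones obtained as cores of $G_n$ with pairs of central-strip corner vertices removed, and weighted Kuo condensation. But the step you leave open is where the substance lies, and what you sketch for it is only the base case.

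\textbf{The height identity.} You propose checking two things. First, that the minimal matchings of $G_{n-r},G_{n-N+r}$ recombine to the minimal matchings of $G_n,G_{n-N}$. Second, that the minimal matchings of $G_{n-s},G_{n-N+s}$ recombine to a pair $(M_A,M_C)$ with $v_{M_A}=1_H$ and $v_{M_C}=0$. This does not by itself give $v_{M_N}+v_{M_S}+1_H=v_{M_A}+v_{M_C}$, or its $W/E$ analogue, for arbitrary matchings. Each graph's height is measured against its own minimal matching, and Kuo's bijection makes free choices on the closed cycles of the superposition. So ``cycles enclosing faces then contribute identically'' is not a justification.

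The paper supplies a propagation mechanism in three parts:
\begin{itemize}
\item The sum $v_{M_A}+v_{M_C}$ does not depend on how the cycles are distributed.
\item If $M_N$ is twistable at a face $F$, the decomposition can be chosen so that $M_A$ or $M_C$ is twistable at $F$, with $\tau_F(M_N)\oplus M_S\sim\tau_F(M_A)\oplus M_C$. This needs the lemma that a twistable face cannot meet both Kuo paths in $M_N$-edges.
\item The twist has the same direction ($\tau^+$ versus $\tau^-$) in $G_*$ as in $G$. This holds because every $G_*$ sits in $G$ at an even offset, so face parities agree.
\end{itemize}
Connectivity of the matching lattice under twists then carries the base case to all pairs.

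An alternative you could use is the local rule $v_M(F')-v_M(F)=\pm([e\in M]-[e\in M_-])$ across an edge $e$, with the sign fixed by the bipartite colouring (valid because $M_-$ is the bottom of the lattice). This makes $v_{M_A}+v_{M_C}-v_{M_N}-v_{M_S}$ depend only on the common superposition multiset, hence constant, so the base case suffices. Either way, some such argument must be given.

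\textbf{The $\mathbf{c}$-vector guess.} Your Step 1 guess is also wrong in a way that matters. The $\mathbf{c}$-vector of the mutated vertex is $E_{n-N}=\sum_j d(n-N-j,r,N-r)\,\mathbf{e}_j$. Its entries are restricted partition numbers, not $0/1$: for $(r,s,N)=(2,3,7)$ and $n=18$, the $y_1$-exponent is $d(10,2,5)=2$. Moreover, it records the labels of the faces of the central strip $H_n$ only, each counted once, not those of all of $G_n$.

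The identification of $d(n-N-i,r,N-r)$ with the number of faces labelled $i$ in $H_n$ (Proposition \ref{Prop:cent-strip}) is therefore a separate step missing from your plan. It is exactly what matches the algebraic monomial with the $1_H$ that condensation produces on the $s$-term. Computing $E_{n-N}$ also requires carrying all $N$ $\mathbf{c}$-vectors through the induction, as in Lemma \ref{Lem:cdef}, because the framed quiver is not periodic.

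\textbf{Covering monomials.} Your covering-monomial identity does not follow from face-locality alone. It depends on the boundary structure of pinecones: each outer face of $G_n$ that is not outer to the smaller graphs contains exactly one forced edge. It also depends on conventions for the outer face of an empty $G_*$. The paper verifies this case by case.
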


\begin{definition}
\label{def:func_d}
Given integers $m$, $a$, and $b$, we let
$d(m, a, b)$ denote the number of ways to choose nonnegative integers $A$ and $B$ such that 
$$m=A\cdot a + B \cdot b.$$
\end{definition}

\begin{remark} The expression $d(m,a,b)$ is also known as the \emph{restricted partition function}, and due to a Theorem of Popoviciu, can be written explicitly as 
$$d(m,a,b) = \frac{m}{ab} - \bigg\{ \frac{b^{-1}n}{a}\bigg\} - \bigg\{ \frac{a^{-1}n}{b}\bigg\} + 1.$$
Here $a^{-1}$ denotes the inverse of $a$ modulo $b$, $b^{-1}$ denotes the inverse of $b$ modulo $a$, and $\{\frac{c}{d}\}$ denotes the fractional part of 
$\frac{c}{d}$.  See Theorem 1.5 of \cite{Beck-Robbins} for more details.  We will not need this formula in this treatment however. 
\end{remark}

\begin{proposition}
\label{prop:GRprinc}
Starting with the quiver $\widehat{Q}_N^{(r,s)}$, with principal coefficients, and mutating periodically at $1,2,3,\dots, N,1,2,\dots$, we obtain a Gale-Robinson recurrence (with coefficients) of the following form:
\begin{equation} \label{Eq:GRprinc} x_n x_{n-N} = x_{n-r} x_{n-N+r} ~~+~~ \prod_{i=1}^N y_i^{d(n-N-i,r,N-r)}~~x_{n-s}x_{n-N+s}\end{equation}
where the exponent of $y_i$ involves the restricted partition function as defined in Definition \ref{def:func_d} where $m=n-N-i$, $a=r$, and $b=N-r$.
\end{proposition}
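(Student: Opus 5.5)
We prove the statement by tracking the $\mathbf{c}$-vectors of Remark \ref{rem:cvectors} along the periodic mutation sequence. Since $Q_N^{(r,s)}$ has period $1$, after $\ell = n-N-1$ mutations the mutable part of the quiver is $\rho^{\ell}(Q_N^{(r,s)})$. The vertex about to be mutated therefore plays the role of vertex $1$, with incoming arrows from the positions of $x_{n-s},x_{n-N+s}$ and outgoing arrows to those of $x_{n-r},x_{n-N+r}$ (or the reverse, in the opposite convention). The exchange relation with principal coefficients reads
\[
\widehat{x_n}\,\widehat{x_{n-N}} = \prod_j y_j^{[c_j]_+}\prod_{i\to k} x_i + \prod_j y_j^{[-c_j]_+}\prod_{k\to j} x_j ,
\]
where $\mathbf{c}=\mathbf{c}^{(\ell)}_{k}$ is the $\mathbf{c}$-vector of the vertex being mutated. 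By sign-coherence, only one of the two monomials carries $y$'s. The proposition therefore reduces to two claims: the current $\mathbf{c}$-vector is nonnegative, and it equals $\bigl(d(n-N-i,r,N-r)\bigr)_{i=1}^N$. Nonnegativity places the $y$-monomial on the term $x_{n-s}x_{n-N+s}$, once the arrow orientation is checked against Definition \ref{def:GRQ}.

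To compute the $\mathbf{c}$-vectors, we index them by the cluster variable they accompany. Let $\mathbf{c}(m)\in\ZZ^N$ be the $\mathbf{c}$-vector at the vertex holding $x_m$ at the moment $x_m$ is about to be mutated. Initially $\mathbf{c}(i)$ is tracked from $\mathbf{e}_i$. The $\mathbf{c}$-vector mutation rule is $\mathbf{c}_k' = -\mathbf{c}_k$ and $\mathbf{c}_j' = \mathbf{c}_j + [b_{jk}]_+\mathbf{c}_k + b_{jk}[-\mathbf{c}_k]_+$ (up to sign convention). If every $\mathbf{c}$-vector at the moment of mutation is nonnegative, this rule simplifies: mutating at $x_m$ adds $\mathbf{c}(m)$ to the vectors at the vertices with arrows pointing into the mutated vertex (in the appropriate direction). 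In the Gale-Robinson quiver these are the two vertices holding $x_{m+r}$ and $x_{m+N-r}$, because the relevant arrows are exactly the ones listed in Definition \ref{def:GRQ}(1), relabeled by $\rho$. The new variable starts with $-\mathbf{c}(m)$, which must be cancelled before its own turn.

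This gives a linear recurrence. The vertex mutated to produce $x_n$ held $x_{n-N}$. Its vector starts as $-\mathbf{c}(n-N)$. It then receives contributions $+\mathbf{c}(n-N)$ and $+\mathbf{c}(n-N)$ from mutations at positions $n-r$ and $n-N+r$, possibly with further contributions from the step-(3) arrows. I expect these to net to
\[
\mathbf{c}(n) = \mathbf{c}(n-r)+\mathbf{c}(n-N+r)-\mathbf{c}(n-N),
\]
or the equivalent generating-function identity $C(t)(1-t^r)(1-t^{N-r}) = $ initial terms. With initial data $\mathbf{c}(N+i)$ giving $\mathbf{e}_i$ at offset $i$, the solution is $\sum_i \mathbf{e}_i\, t^{N+i}/((1-t^r)(1-t^{N-r}))$. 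Its coefficients are exactly $d(n-N-i,r,N-r)$, which is Definition \ref{def:func_d}. We run the induction on $n$ and simultaneously verify nonnegativity, which is automatic since $d\ge 0$. This legitimizes the simplified mutation rule at each step, so sign-coherence is only used as a consistency check.

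The main obstacle is the middle step: verifying precisely which arrows incident to the mutated vertex point which way, including the diagonal arrows from step (3), the erased $2$-cycles, and the special case $s=N/2$ with doubled arrows. We must confirm that the $s$-arrows and diagonal arrows contribute nothing net, so that only the $r$- and $(N-r)$-shifts propagate $\mathbf{c}$-vectors. I would first carry this out in the framed quiver at a general step using the period-$1$ rotation, checking one step in detail, and then test the formula against small cases such as $(1,2,5)$ (Somos-5) and $(2,3,7)$ before writing the induction.
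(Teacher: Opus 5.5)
Your approach is correct. It organizes the same $\mathbf{c}$-vector bookkeeping differently from the paper.

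\textbf{The paper's route.} Lemma~\ref{Lem:cdef} proves a closed form for the whole $\mathbf{c}$-matrix after every step $\ell\ge r$. Each $\mathbf{c}^{(\ell)}_i$ is $-E_{\underline{i}}$, $F_{\underline{i}}$ or $E_{\underline{i}}$, according to where $\underline{i}$ lies relative to $\ell$. This requires a direct computation of the base case $\mathbf{c}^{(r)}$, the auxiliary identity $F_{\underline{i}}=E_{\underline{i}+N}-E_{\underline{i}+N-r}$, and a case check against the update rules (\ref{eq:c})--(\ref{eq:c-r}). The proposition is then read off from $\mathbf{c}^{(\ell)}_{\ell+1 \bmod N}=E_{\ell+1}$.

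\textbf{Your route.} You follow a single vertex over the lifetime of the variable it holds. The same update rules then collapse to one recurrence, $\mathbf{c}(m)=\mathbf{c}(m-r)+\mathbf{c}(m-N+r)-\mathbf{c}(m-N)$, with $\mathbf{c}(j)=0$ for $j\le 0$ and source term $\mathbf{e}_m$ for $m\le N$. Equivalently, $C(t)(1-t^r)(1-t^{N-r})=\sum_{i=1}^N \mathbf{e}_i t^i$.

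\textbf{Comparison.} Your version shows directly why the restricted partition function $d(\cdot,r,N-r)$ appears. It needs no guessed intermediate vectors and no separate base case. The nonnegativity that justifies the simplified update rule at each step comes for free from the nonnegative coefficients of $1/\bigl((1-t^r)(1-t^{N-r})\bigr)$. The paper's version yields more information: every $\mathbf{c}$-vector at every time. Its three regimes $-E$, $F$, $E$ are exactly the three stages of your lifetime: just born, after the addition from $x_{m-N+r}$, and after the addition from $x_{m-r}$.

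\textbf{Points to tighten.}

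First, state the orientation consistently. When $\mathbf{c}_k\ge 0$, mutation adds $\mathbf{c}_k$ to the \emph{out}-neighbours $j$ of $k$, via the $2$-paths $y_i\to x_k\to x_j$. It does not add to vertices with arrows into $k$. Your displayed rule uses $[b_{jk}]_+$; with the convention $b_{ij}>0 \iff i\to j$, it would propagate along the $s$-arrows and produce $d(\cdot,s,N-s)$. Your actual choice of vertices, those holding $x_{m+r}$ and $x_{m+N-r}$ from Definition~\ref{def:GRQ}(1), is the right one.

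Second, the ``main obstacle'' is immediate.
\begin{itemize}
\item Vertex $1$ is only the tail of $1\to 1+r$ and $1\to N-r+1$, and the head of $s+1\to 1$ and $N-s+1\to 1$.
\item Step (3) arrows have tails $\ge r+1$ and heads $\ge s+1$, so none of them touches vertex $1$.
\item No $2$-cycle at vertex $1$ is erased, because $r<s$ and $r+s<N$.
\item For $s=N/2$, the doubling affects only an incoming arrow. That arrow carries the $y$-monomial but does not propagate $\mathbf{c}$-vectors.
\end{itemize}

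Third, fix the indexing. You can index by the variable being mutated; then $\mathbf{e}_i$ enters at $t^i$ and $\widehat{x_n}$ uses $\mathbf{c}(n-N)$. Or you can index by the variable produced, which gives $t^{N+i}$. Do not mix the two. Also, the two contributions to the vertex of $x_n$ are $\mathbf{c}(n-r)$ and $\mathbf{c}(n-N+r)$, not $\mathbf{c}(n-N)$ twice.
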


Next, we show that the exponent $d(n-N-i,r,N-r)$ enumerates faces with a given labeling in the desired graph theoretical interpretation.

\begin{proposition} \label{Prop:cent-strip}
For $i \in \{1,2,\dots, N\}$, and an integer $n$ such that $n > N$, the number of faces labeled $i$ in the central strip $H_n^{(r,N)}$ of $G_n^{(r,s,N)}$ is given by the 
formula $d(n-N-i,r,N-r)$.
\end{proposition}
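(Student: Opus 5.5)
The plan is to prove the statement by induction on $n$, comparing the central strip $H_n^{(r,N)}$ with $H_{n-r}^{(r,N)}$. On the counting side, write $m=n-N-i$ and split the pairs $(A,B)$ with $m=Ar+B(N-r)$ according to whether $A\geq 1$ or $A=0$. Subtracting one copy of $r$ from those with $A\ge 1$ gives the recursion
$$d(m,r,N-r)=d(m-r,r,N-r)+\delta(m),$$
where $\delta(m)=1$ if $m$ is a nonnegative multiple of $N-r$ and $\delta(m)=0$ otherwise. It therefore suffices to establish three facts:
\begin{enumerate}
\item the base case $N+1\le n\le N+r$;
\item $H_n^{(r,N)}$ is obtained from $H_{n-r}^{(r,N)}$ by adjoining, on the right, faces whose labels are exactly those $i\in\{1,\dots,N\}$ with $n-N-i\in (N-r)\mathbb{Z}_{\ge 0}$;
\item each such label is adjoined exactly once.
\end{enumerate}

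The base case is immediate. For $N+1\le n\le N+r$ the strip is the single square labeled $n-N$. On the other side, $m=n-N-i<r\le N-r$, so the only possible representation is $A=B=0$, and it occurs exactly when $i=n-N$. For the inductive step with $n>N+r$, note that $\overline{n}=\overline{n-r}$, so the two strips start at the same left-most square. Moreover the width $2\lfloor\frac{n-N-1}{r}\rfloor+1$ increases by exactly $2$ when $n$ is replaced by $n+r$. Hence $H_n^{(r,N)}$ is $H_{n-r}^{(r,N)}$ extended by two unit cells on the right. This extension is either one hexagonal face or two square faces. By Corollary \ref{Cor:sqhex}, hexagons carry the labels $r+1,\dots,N-r$ and squares carry the labels $1,\dots,r$ and $N-r+1,\dots,N$.

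The labels $i\in\{1,\dots,N\}$ with $i\equiv n-N \pmod{N-r}$ and $i\le n-N$ are the following. First, $\overline{\overline{n}}\in\{1,\dots,N-r\}$. Second, if $\overline{\overline{n}}\le r$, also $\overline{\overline{n}}+(N-r)\in\{N-r+1,\dots,N\}$, provided it is at most $n-N$. So I must show that the two new cells are:
\begin{itemize}
\item the hexagon $\overline{\overline{n}}$ when $\overline{\overline{n}}>r$;
\item the squares $\overline{\overline{n}}+N-r$ followed by $\overline{\overline{n}}$ (in that order, since $H_n$ ends at $\overline{\overline{n}}$) when $\overline{\overline{n}}\le r$.
\end{itemize}
This is consistent with the definition, which says $H_n^{(r,N)}$ ends with the face labeled $\overline{\overline{n}}$. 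I also need $\overline{\overline{n}}+N-r\le n-N$ to hold automatically in the square case. That should follow because $n>N+r$ means the strip already has width at least $3$.

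The main obstacle is the local geometry: describing the right neighbors of a face in a horizontal row of $\mathcal{T}_N^{(r,s)}$. I would read this off from the dual quiver $\widetilde{Q}_N^{(r,s)}$. Horizontal quiver edges join $j$ and $j+r$ modulo $N$, and faces in one row of the tiling correspond to consecutive vertices along a horizontal line of the lattice, with labels increasing by $r$ modulo $N$. Under the convention of drawing a hexagon as a $1\times 2$ rectangle, I expect the following: the face to the right of the face labeled $j$ is the face labeled $j+r$ when $j+r\le N$, and is labeled $j+r-N$ after wrapping. Squares $j\le r$ should pair with $j+N-r$ inside a single width-two block. Concretely, I would verify that the right end of $H_{n-r}^{(r,N)}$, labeled $\overline{\overline{n-r}}$, is followed by the appropriate face(s) by a short case check on $\overline{\overline{n-r}}$. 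Here $\overline{\overline{n}}\equiv\overline{\overline{n-r}}+r \pmod{N-r}$, with cases according to whether $\overline{\overline{n-r}}+r$ exceeds $N-r$. The case analysis of Proposition \ref{Prop:sqorient} controls which diagonals appear, and hence which faces are squares and which are hexagons. Once this adjacency is settled, the induction closes using the recursion for $d$.
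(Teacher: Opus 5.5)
Your proposal is correct and follows essentially the same route as the paper: induct from $H_{n-r}^{(r,N)}$ to $H_n^{(r,N)}$ using the recursion $d(m,r,N-r)=d(m-r,r,N-r)+\delta(m)$. The appended width-two block is then either a single hexagon labeled $\overline{\overline{n}}$ or the two squares $N-r+\overline{\overline{n}}$ and $\overline{\overline{n}}$, by Corollary~\ref{Cor:sqhex}. Your version is more careful than the paper's in two respects: you plan to verify the row adjacency (labels increasing by $r$ modulo $N$), which the paper simply asserts, and you keep the nonnegativity condition in $\delta$, which the paper omits (harmlessly, since $n-N-i>-(N-r)$ whenever $n>N+r$).
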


With this description of the face labels of $H_n^{(r,N)}$ in hand, we obtain the following.

\begin{proposition} \label{Prop:borders}
The western borders of $G_{n-r}^{(r,s,N)}$ and $G_n^{(r,s,N)}$ agree, as do the eastern (resp. northern, southern) borders of $G_{n-(N-r)}^{(r,s,N)}$ (resp. 
$G_{n-(N-s)}^{(r,s,N)}$, $G_{n-s}^{(r,s,N)}$) and $G_n^{(r,s,N)}$.  Furthermore, placing these four subgraphs along these borders, we see that 
$$G_{n-r}^{(r,s,N)} \cap G_{n-(N-r)}^{(r,s,N)} = G_{n-s}^{(r,s,N)} \cap G_{n-(N-s)}^{(r,s,N)} = G_{n-N}^{(r,s,N)} \cap G_{n-(N-r)}^{(r,s,N)}$$
as subgraphs.
\end{proposition}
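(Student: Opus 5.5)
The plan is to work entirely in the $\ZZ^2$ coordinates of the unfolded quiver $\widetilde{Q}_N^{(r,s)}$. In these coordinates the face at lattice position $(A,B)$ carries the label $1+Ar+Bs \pmod N$, horizontal neighbours differ by $r$, and vertical neighbours differ by $s$. I will first record the shape of each central strip $H_m^{(r,N)}$ as an explicit interval of lattice cells in a single row. Its left end is the square face labeled $\overline{m}$, its width is $2\lfloor (m-N-1)/r\rfloor+1$ in the ``hexagon $=$ two cells'' convention, and its right end is the face labeled $\overline{\overline{m}}$. Each pinecone $G_m^{(r,s,N)}$ is then a stack of such intervals. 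The row at vertical offset $+k$ is $H_{m-k(N-s)}$ and the row at offset $-k$ is $H_{m-ks}$.

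The first key step is a \emph{translation lemma}. Moving from the centre row to the row above shifts every label by $s$, since vertical edges change labels by $s$. Likewise a horizontal shift by one lattice cell changes labels by $r$. Consequently the strip $H_{m-(N-s)}$ is a horizontal translate of a sub-interval of the strip sitting above $H_m$, with explicitly computable endpoints. I would prove this by comparing $\overline{m}$, $\overline{\overline{m}}$ and the widths of $H_m$, $H_{m-r}$, $H_{m-(N-r)}$, $H_{m-s}$ and $H_{m-(N-s)}$. The relevant congruences are elementary:
\begin{itemize}
\item $\overline{m-r}=\overline{m}$, and the width of $H_{m-r}$ is two less than that of $H_m$, so $H_{m-r}$ is $H_m$ with its rightmost hexagon (two cells) removed;
\item $\overline{\overline{m-(N-r)}}=\overline{\overline{m}}$, so $H_{m-(N-r)}$ is $H_m$ with its leftmost square-plus-cell removed, reading labels through the periodic tiling.
\end{itemize}
Applying these facts row by row, I will deduce that $G_{m-r}$ placed flush with the west end of $G_m$ shares its entire western boundary with $G_m$, and symmetrically for $G_{m-(N-r)}$ on the east. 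For the north and south borders I will use that $G_{m-(N-s)}$ consists exactly of the rows at offsets $\ge 1$ of $G_m$, shifted down by one. Its top rows therefore coincide with the top rows of $G_m$, and similarly for $G_{m-s}$ with the bottom rows. This proves the border statements. Here Proposition~\ref{Prop:cent-strip} serves as a consistency check: the label multiset of each strip is prescribed, and this pins down the translations.

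The second step is the intersection identities. With the four subgraphs placed as above, each is a union of row intervals, so their intersections are computed row by row. In row $k$, intersecting the west-aligned and east-aligned copies removes material from both ends of $G_m$'s row. By the translation lemma, the result is exactly the row-$k$ interval of $G_{m-N}$, since $m-N=(m-r)-(N-r)$. The same computation applies to the north/south pair, because $(m-s)+(m-(N-s))-m=m-N$. I would check that in every row both intersections equal the corresponding row of $G_{m-N}$, placed centrally. The statement's third term is presumably meant as $G_{n-N}$, which would be contained in all four subgraphs.

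The main obstacle is the boundary behaviour at the ends of the strips, where the floor in the width formula changes and where some rows become empty. The risk is a parity or off-by-one mismatch between ``square'' and ``hexagon'' ends, since squares occur only at labels $\le r$ or $\ge N-r$ (Corollary~\ref{Cor:sqhex}). I would handle this by splitting into the cases $m-N\le r$, where some of the graphs are single squares or empty, and $m-N>r$. In the second case I would use induction on $m$: assume the lemma for smaller indices, so that each row of $G_m$ is the centre strip of a smaller pinecone, and reduce every row comparison to the single centre-strip comparison verified directly above.
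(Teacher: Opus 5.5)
\textbf{Gap: the north/south description.} Your overall strategy, comparing the strip decompositions of $G_m$, $G_{m-r}$, $G_{m-(N-r)}$, $G_{m-s}$, $G_{m-(N-s)}$ and $G_{m-N}$ row by row, is the same as the paper's. However, two of the structural facts you feed into it are false, and the first breaks the north/south half of the intersection identity.

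You assert that $G_{m-(N-s)}$ ``consists exactly of the rows at offsets $\ge 1$ of $G_m$, shifted down by one.'' That is only its upper half. Centred on row $1$, $G_{m-(N-s)}$ also carries the strips $H_{m-(N-s)-s}=H_{m-N}$, $H_{m-N-s}$, $H_{m-N-2s},\dots$ on rows $0,-1,-2,\dots$. For $(r,s,N)=(2,3,7)$, $G_{12}^{(2,3,7)}$ consists of $H_8$, $H_{12}$ and the single square $H_9$.

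With your description, $G_{m-(N-s)}$ and $G_{m-s}$ would sit in disjoint sets of rows, so their intersection would be empty rather than $G_{m-N}$. The numerical identity $(m-s)+(m-(N-s))-m=m-N$ does not repair this. The missing step, which is exactly the paper's argument, is to write out $G_{m-(N-s)}=(\dots,H_{m-2(N-s)},\mathbf{H_{m-(N-s)}},H_{m-N},H_{m-N-s},\dots)$ and $G_{m-s}=(\dots,H_{m-N-(N-s)},H_{m-N},\mathbf{H_{m-s}},H_{m-2s},\dots)$. This uses $(m-(N-s))-s=m-N=(m-s)-(N-s)$. So $G_{m-(N-s)}$ is the part of $G_m$ in rows $\ge 1$ together with the part of $G_{m-N}$ in rows $\le 0$. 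Symmetrically, $G_{m-s}$ is the part of $G_m$ in rows $\le -1$ together with the part of $G_{m-N}$ in rows $\ge 0$. Row $0$ of both is then $H_{m-N}$. In every other row the intersection is the corresponding row of $G_{m-N}$, because $G_{m-N}\subseteq G_m$, which you already get from the west/east part.

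\textbf{Error: the width of $H_{m-(N-r)}$.} $H_{m-(N-r)}$ is not $H_m$ with two cells removed on the left. By the width formula it is shorter by $2\bigl(\lfloor\frac{m-N-1}{r}\rfloor-\lfloor\frac{m-2N+r-1}{r}\rfloor\bigr)$, which is usually more than $2$ and depends on $m$ modulo $r$. For example, $H_{16}^{(2,7)}$ has width $9$ while $H_{11}^{(2,7)}$ has width $3$. Taken literally, your two facts would give an intersection of width $w(H_m)-4$, which in general is not the width of $H_{m-N}$.

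What is true, and all the west/east argument needs, is the following. $H_{m-(N-r)}$ is a terminal segment of $H_m$, with the same last face $\overline{\overline{m}}$. $H_{m-r}$ is the initial segment two cells shorter; those two cells may be two squares rather than a hexagon.

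Because the left truncation varies from row to row, right-flushness of every row of $G_{m-(N-r)}$ does not follow from a uniform shave. It has to come from how strips are glued in Definition \ref{Def:pinecones}: the right end of each row sits exactly one column inside that of the adjacent more central row (compare the corners $(i,i)$ in Section \ref{sec:Step3}). Hence aligning row $0$ aligns every row.

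\textbf{Minor point: the translation lemma.} It is not needed, since the gluing is part of the definition. As stated, it also conflates the $\ZZ^2$ lattice of $\widetilde{Q}_N^{(r,s)}$ (one point per face) with the unit-cell grid, in which hexagons are two cells wide. Moving one cell right or up does not uniformly add $r$ or $s$ to the label.
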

Together with Remark \ref{rem:center-is-intersection}, this proposition justifies Remark \ref{rem:Aztec-diamond}.

\begin{proposition} \label{Prop:superpositions}
Let $w(G)=cm(G)\sum_M x(M)y(M)$ where the sum is taken over all perfect matchings $M$ of $G$. 
Then $x_n:=w(G_n^{r,s,N})$ satisfies the recurrence \begin{equation}
		x_n x_{n-N}=x_{n-r} x_{n-N+r}+\prod_{i=1}^N y_i^{d_i} x_{n-s} x_{n-N+s}
	\end{equation} where $d_i$ equals the number of faces labeled $i$ in the central strip $H^{(r;N)}_n$ of $G^{(r;s;N)}_n$.
\end{proposition}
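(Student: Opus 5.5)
We prove Proposition \ref{Prop:superpositions} by graphical condensation (Kuo's theorem), in the weighted and height-refined form. Proposition \ref{Prop:borders} supplies the geometry. Place $G_n:=G_n^{(r,s,N)}$ in $\mathcal{T}_N^{(r,s)}$ together with the four subgraphs $G_{n-r}$, $G_{n-(N-r)}$, $G_{n-s}$, $G_{n-(N-s)}$. They sit along its western, eastern, southern and northern borders, and their pairwise intersections coincide with (a translate of) $G_{n-N}$. After removing forced edges (taking cores, as in Remark \ref{rem:Aztec-diamond}), $G_n$ is then a graph with four distinguished boundary vertices $a,b,c,d$, alternating in colour around the outer face, such that:
\begin{itemize}
\item $G_n-\{a,b\}$ and $G_n-\{c,d\}$ reduce by forced edges to $G_{n-r}$ and $G_{n-N+r}$;
\item $G_n-\{a,d\}$ and $G_n-\{b,c\}$ reduce to $G_{n-s}$ and $G_{n-N+s}$;
\item $G_n-\{a,b,c,d\}$ reduces to $G_{n-N}$.
\end{itemize}
Kuo's balanced condensation identity then gives
\[
Z(G_n)Z(G_{n-N}) = Z(G_{n-r})Z(G_{n-N+r}) + Z(G_{n-s})Z(G_{n-N+s})
\]
at the level of superpositions of matchings. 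Here $Z$ is the edge-weighted partition function with the forced edges included. The proof is bijective: one decomposes the multigraph $M_1\cup M_2$ into cycles and doubled edges, and swaps along the path through $a$.

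Next we upgrade this to the weights $w(G)=cm(G)\sum_M x(M)y(M)$, handling the $x$-part and the $y$-part separately. For the $x$-part, the forced edges carry weights $1/(x_ix_j)$. We must check the monomial identity
\[
cm(G_n)\,cm(G_{n-N})\cdot(\text{forced edge weights}) = cm(G_{n-r})\,cm(G_{n-N+r})\cdot(\text{forced weights})
\]
and similarly for the $s$-term. This is a local bookkeeping of the interior factors $I(\cdot)$ and outer factors $O(\cdot)$ near the four borders. Faces interior to $G_n$ that become outer faces of a subgraph change exponent from $\tfrac{\#\mathrm{edges}}{2}-1$ to $\lceil \#\mathrm{edges\ incident}/2\rceil$, and the forced edges along the cut compensate exactly. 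The small cases $n\le N+r$ (single squares) and the empty-pinecone convention of Remark \ref{rem:base-case-x} are verified directly as base cases.

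For the $y$-part, the Kuo bijection preserves the multiset of cycles of $M_1\cup M_2$ away from the swapped path. Heights are measured against the minimal matchings $M_-$, so we need the minimal matching of $G_n$ to restrict to the minimal matchings of the subgraphs, together with the forced edges. Granting this, the first term involves matchings of $G_{n-r}$ and $G_{n-N+r}$ sitting to the west and east. Its height factors as $y(M_1)y(M_2)$ with no extra contribution, which explains why the first term carries no coefficient. In the second term, the swapped path separates north from south and encloses every face of the central strip $H_n^{(r,N)}$ relative to $M_-$. This produces the extra factor $\prod_i y_i^{d_i}$, where $d_i$ is the number of faces labeled $i$ in the central strip. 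To confirm these normalizations, it suffices to check that the minimal matchings themselves satisfy the claimed relation, which we verify directly.

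The main obstacle is this height bookkeeping: identifying the minimal matchings of the four subgraphs with restrictions of $M_-$ on $G_n$, and showing the symmetric-difference cycles change by exactly the central strip in the $s$-term. I would first establish a lemma describing $M_-$ explicitly, as the highlighted matchings in Example \ref{Ex:237}. It would state that $M_-$ on each horizontal strip uses only horizontal edges on the appropriate side, so that its restrictions are minimal. Then I would compute height differences via the standard height-function difference along the path.
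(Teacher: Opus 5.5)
Your treatment of the condensation identity and of the $x$-monomials follows the paper's route, though the latter is only outlined. The $y$-part, however, has a genuine gap. What must be proved is that for \emph{every} pair related by Kuo's bijection one has $v_{M_A}+v_{M_C}=v_{M_N}+v_{M_S}+1_H$, resp.\ $v_{M_A}+v_{M_C}=v_{M_W}+v_{M_E}$ (Lemma~\ref{lem:superimposition-difference}). Here each $v$ counts cycles of $M\oplus M_-$ against that graph's \emph{own} minimal matching. Your sentence ``it suffices to check that the minimal matchings themselves satisfy the claimed relation'' presupposes that the discrepancy $v_{M_A}+v_{M_C}-v_{M_N}-v_{M_S}$ is the same for all pairs. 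That independence is the entire content of this step, and the proposal never argues it. The reasons you give do not supply it:
\begin{itemize}
\item Kuo's bijection preserving the cycles of $M_1\cup M_2$ says nothing directly about heights, which are read off from $M\oplus M_-$ with four different reference matchings. Moreover, those cycles are distributed arbitrarily, so you would also need independence of that choice.
\item For a general pair the open paths wander through the upper and lower parts, so ``the swapped path encloses the central strip'' is not the mechanism.
\item Your proposed lemma, that $M_-$ of $G_n$ restricts (with forced edges) to the minimal matchings of the subgraphs, fails for the pair carrying the coefficient. In the paper's coordinates the forced edges of $G-\{c,d\}$ along the top of the central strip are $(0,2t-1)-(0,2t)$, whereas $M_-$ of $G$ uses $(0,2t)-(0,2t+1)$ there. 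This mismatch of reference matchings is precisely what produces $1_H$.
\end{itemize}

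The paper fills the gap by induction over Propp's distributive lattice, in four pieces:
\begin{itemize}
\item The base case is your minimal-matching computation (Lemma~\ref{lem:base-case}).
\item A separate lemma shows the cycle assignment does not affect $v_{M_A}+v_{M_C}$.
\item Another shows the two open paths cannot both contain $M_N$-edges of a face $F$ at which $M_N$ is twistable. Hence a twist of $M_N$ at $F$ is mirrored by a twist of $M_A$ or $M_C$ at $F$ (Lemma~\ref{lem:induction-step}).
\item These twists go in the same direction because every $G_*$ sits in $G$ at an even offset (Lemma~\ref{lem:subpinecone-place}).
\end{itemize}
Alternatively, your remark about ``the standard height-function difference'' can be turned into a proof, though not by working along the path. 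Identify $v_M$ with the Thurston height of $M$ relative to $M_-$, i.e.\ the face potential (normalised to vanish on the unbounded face, with a suitable sign convention) of the divergence-free flow $\omega_M-\omega_{M_-}$. Here $\omega_M$ is the unit white-to-black flow on the edges of $M$. This identification uses that $M_-$ is lattice-minimal, so all cycles of $M\oplus M_-$ are coherently oriented, and that the colourings of the embedded copies agree with their intrinsic ones. Since $\widehat{M_A}\uplus\widehat{M_C}=\widehat{M_N}\uplus\widehat{M_S}$ as multisets of edges of $G$, linearity shows that $v_{M_A}+v_{M_C}-v_{M_N}-v_{M_S}$ equals the potential of $\omega_{\widehat{M_-^N}}+\omega_{\widehat{M_-^S}}-\omega_{\widehat{M_-^A}}-\omega_{\widehat{M_-^C}}$, with forced edges included. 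This is a fixed function, which the explicit minimal matchings show equals $1_H$ (and $0$ in the $W/E$ case). That route avoids the twist induction and the cycle-choice lemma, but one of the two arguments has to be supplied.
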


Note that by Proposition \ref{Prop:cent-strip}, $d_i = d(n-N-i,r,N-r)$.
Putting these altogether allows us to prove the main result, Theorem \ref{Thm:GR-WH}, of our paper regarding Gale-Robinson sequences with principal coefficients, which will be done in Section \ref{sec:conclusions}. 

\section{Step 1: The recurrence with principal coefficients}

\label{sec:Step1}

Before proving these propositions, hence completing the proof of Theorem \ref{Thm:GR-WH}, we recall the $\bf{c}$-vectors introduced in Remark \ref{rem:cvectors}. In detail, we let $Q$ denote a quiver on the vertices $\{x_1,x_2,\dots, x_N, y_1, y_2, \dots, y_N\}$.  For $1 \leq i \leq N$, the {\bf c}\emph{-vector}, $\mathbf{c}_i$, is the $N$-length vector whose $j$th entry signifies the number of arrows $y_j \to x_i$ in $Q$.  This entry is negative if 
the arrows go from $x_i \to y_j$ instead.  In particular, for the initial quiver $\widehat{Q_N^{(r,s)}}$, each $\mathbf{c}_i$ equals the $i$th 
unit vector $\mathbf{e}_i$.  We let $\mathbf{c}_i^{(\ell)}$ denote the $i$th $\mathbf{c}$-vector after mutating $\ell$ times periodically along the sequence $1,2,\dots, N, 1,2, \dots$.    

Let $\underline{i} = \underline{i}(i,\ell)$ denote $\lfloor \frac{\ell + r -i}{N}\rfloor N + i$.  Holding $\ell$ constant, this function is piecewise-linear in $i$ with 
$$\underline{i} = \begin{cases}
i &\mathrm{~for~}1 \leq i \leq \ell+r \\ 
i-N &\mathrm{~for~}\ell+r+1 \leq i \leq N+\ell+r \\ 
i-2N &\mathrm{~for~}N+\ell + r + 1 \leq i \leq 2N+\ell+r, \mathrm{~etc.}
\end{cases}$$

We define two auxiliary vectors associated with index $\underline{i} = \underline{i}(i,\ell)$, namely we use Definition \ref{def:func_d} to define
 $E_{\underline{i}} = \sum_{j=1}^N d(\underline{i}-j,r,N-r)~ \mathbf{e}_j$ and we use $\chi(X) = \begin{cases} 1 &\mathrm{~if~}X\mathrm{~is~true} \\ 0 &\mathrm{~otherwise}\end{cases}$ to define $F_{\underline{i}} = \sum_{j=1}^N \chi(j \equiv \underline{i} +N \mod N-r) ~ \mathbf{e}_j$.  
 By the assumption $r \leq N/2$, it follows that 
$F_{\underline{i}}$ consists of one or two terms, with the latter case only happening if $r = s = N/2$.

\begin{lemma} \label{Lem:cdef}
For $\ell \geq r$, 

$$\mathbf{c}_i^{(\ell)} = \begin{cases} -E_{\underline{i}} \mathrm{~~if~~} \ell+1 -r \leq \underline{i} \leq \ell \\
                                        E_{\underline{i}} \mathrm{~~if~~} \ell+1 \leq \underline{i} \leq \ell + r \\
                                        F_{\underline{i}} \mathrm{~~otherwise.~~}
                          \end{cases}$$

\end{lemma}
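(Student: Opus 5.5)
Throughout, I will use that the $\mathbf{c}$-vectors transform under mutation at $k$ by the standard rule (Fomin--Zelevinsky): $\mathbf{c}_k' = -\mathbf{c}_k$, and for $i\neq k$, $\mathbf{c}_i' = \mathbf{c}_i + [b_{ik}]_+\,\mathbf{c}_k$ if $\mathbf{c}_k\geq 0$, and $\mathbf{c}_i' = \mathbf{c}_i + [-b_{ik}]_+\,(-\mathbf{c}_k)$ if $\mathbf{c}_k \le 0$. Here $b_{ik}$ is read from the mutable part. Sign-coherence (Remark \ref{rem:cvectors}) guarantees that exactly one of these cases applies. Because $Q_N^{(r,s)}$ is period $1$, after $\ell$ mutations the mutable part is $\rho^{\ell}(Q_N^{(r,s)})$. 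So the arrows at the vertex being mutated at step $\ell+1$ (namely vertex $\ell+1 \bmod N$) are the translates of the arrows at vertex $1$ in $Q_N^{(r,s)}$. These are: in-arrows from $r+1$, $N-r+1$; out-arrows to $s+1$, $N-s+1$ (arrows of type (1),(2) at vertex $1$, after the step-3 cancellations leave vertex 1 untouched). With the shift, mutation step $\ell+1$ at vertex $k=\ell+1$ (indices mod $N$) only changes $\mathbf{c}$-vectors of $k\pm r$ and $k\pm s$. It does so by adding $\mathbf{c}_k$ to the appropriate pair depending on its sign.

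The proof is by induction on $\ell$, indexing everything by the lifted label $\underline{i}$, which records "the variable currently sitting at vertex $i$". The three regimes are then read as follows. Indices $\underline i\in[\ell+1-r,\ell]$ are the $r$ most recently mutated vertices, which carry negative vectors. Indices $\underline i\in[\ell+1,\ell+r]$ are the next $r$ vertices to be mutated, carrying positive $E$. All others carry a single unit-type vector $F$.

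For the base case I will compute directly for $\ell=r$, or equivalently run the first $r$ mutations from $\mathbf{c}_i=\mathbf{e}_i$. For $1\le i\le r$, $d(i-j,r,N-r)$ is $1$ iff $j=i$ (since $0\le i-j<r$ forces $A=B=0$). Similarly, $F_{\underline i}$ for other $i$ picks out $\mathbf{e}_i$, because $\underline i = i-N$ and $j\equiv i \bmod (N-r)$ has the unique solution $j=i$ in range, except when $r=N/2$. Then I would check that the first $r$ mutations, which hit vertices $1,\dots,r$ carrying positive $\mathbf{e}_i$, add $\mathbf{e}_k$ to the vertex $k+r$, producing $E_{\underline i}$ for $r+1\le i\le 2r$. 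One should verify that the sign convention for "in-arrow from $k+r$" indeed adds here.

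For the inductive step $\ell\to\ell+1$, mutate at $k$ with $\underline k=\ell+1$, which carries $E_{\ell+1}\geq 0$. It becomes $-E_{\ell+1}$, matching the first case at $\ell+1$. The vertex $\underline i=\ell+1+r$ (previously in the "otherwise" regime with vector $F_{\ell+1+r}$) receives $+E_{\ell+1}$. I must show $F_{\ell+1+r}+E_{\ell+1}=E_{\ell+1+r}$. This is the key identity, the recursion $d(m,r,N-r)=d(m-r,r,N-r)+\chi(m\equiv 0 \bmod N-r,\,m\ge0)$, obtained by splitting on whether $A\ge1$; it requires matching $F$'s congruence condition $j\equiv \underline i+N \pmod{N-r}$ with $m=\underline i - j\equiv 0$. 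Simultaneously, the vertex with $\underline i=\ell+1-r$ leaves the negative window. It receives a contribution depending on which of $k-r$, $k\pm s$ get modified (only out-neighbours get $+\mathbf{c}_k$ when $\mathbf{c}_k\ge 0$). So I must track that the out-neighbours $k+s$, $k-s$ (that is, $k+N-s$) are adjusted consistently. This is where the identity for $\underline i=\ell+1-r$ turning into $F$ must come from the arrow $k\to$ the vertex $k-r+N$, via $-E_{\ell+1-r}+E_{\ell+1}=F_{\ell+1-r}$, again the same recursion read backwards.

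The main obstacle is bookkeeping rather than ideas. I must correctly identify, in $\rho^\ell(Q)$ with its cancelled $2$-cycles, exactly which neighbours of $k$ are in- versus out-, and then confirm that the $\pm s$ neighbours receive no net change or that changes cancel. Failing that, their $F$-vectors would have to be updated and consistency with the stated formula checked; I would first test the small cases $(1,2,5)$ and $(2,3,7)$ to pin down the sign conventions. The degenerate case $r=s=N/2$, where $F$ has two terms and arrows are doubled, is handled separately with the same recursion.
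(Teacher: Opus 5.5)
Your strategy is the paper's: induct on $\ell$, note that the vertex mutated at each step carries $E_{\ell+1}\ge 0$, and close the induction with the recursion $d(m,r,N-r)=d(m-r,r,N-r)+\chi\bigl(m\ge 0,\ (N-r)\mid m\bigr)$, which is the paper's identity $F_{\underline i}=E_{\underline i+N}-E_{\underline i+N-r}$. But the proposal goes wrong at the two points that carry the content.

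\textbf{Orientation.} You have the arrows at vertex $1$ backwards. By steps (1)--(2) of Definition \ref{def:GRQ}, $Q_N^{(r,s)}$ has $1\to 1+r$, $1\to N-r+1$, $s+1\to 1$ and $N-s+1\to 1$; step (3) does not touch vertex $1$. In the paper's convention, a mutation at $k$ with $\mathbf c_k\ge 0$ adds $\mathbf c_k$ only to the out-neighbours of $k$, via paths $y_j\to x_k\to x_i$. Hence exactly $\mathbf c_{k\pm r}$ change and $\mathbf c_{k\pm s}$ are untouched. This is the paper's observation that $\mathbf c_i^{(\ell)}=\mathbf c_i^{(\ell-1)}$ unless $\ell-i\equiv r,N-r,N \pmod N$.

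You leave this as the ``main obstacle'', and under the orientation you state it cannot be resolved. The neighbours $k\pm s$ would each gain the nonzero vector $E_{\ell+1}\ge 0$, nothing could cancel, and $\mathbf c_{k+r}$ would never become an $E$. So the lemma, whose exponents are governed by $r$ and $N-r$, would be false. Your inductive step silently uses the correct orientation, so the proposal is inconsistent at this point.

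The same omission breaks your base case. Each of the first $r$ mutations also adds $\mathbf e_k$ to vertex $N-r+k$. So at $\ell=r$ a vertex $i\ge N-r+1$ carries $\mathbf e_{i-(N-r)}+\mathbf e_i$, or three terms if $i\le 2r$. Accordingly, your claim that $j\equiv i \pmod{N-r}$ has the unique solution $j=i$ in range fails for $i>N-r$. Also, your rule for $\mathbf c_k\le 0$ has the wrong sign: one adds $\mathbf c_k$, not $-\mathbf c_k$, to in-neighbours. That branch is never used here.

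\textbf{Indexing of the key identity.} At time $\ell$, the vertex $i\equiv \ell+1+r \pmod N$ has lifted label $\underline i(i,\ell)=\ell+1+r-N$. It sits at the bottom of the ``otherwise'' range $[\ell+r+1-N,\ell-r]$, and its label jumps by $N$ exactly at this step. So you must show $F_{\ell+1+r-N}+E_{\ell+1}=E_{\ell+1+r}$.

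The identity you wrote, $F_{\ell+1+r}+E_{\ell+1}=E_{\ell+1+r}$, is false in general. $F_m$ depends on $m$ modulo $N-r$, and $N\equiv r\not\equiv 0\pmod{N-r}$. For example, with $(r,s,N)=(1,2,5)$ and $\ell=1$, one has $F_3=\mathbf e_4$ but $E_3-E_2=\mathbf e_3$; indeed $\mathbf c_3^{(1)}=\mathbf e_3=F_{-2}$.

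When you verify the corrected identity, also check the nonnegativity condition hidden in $d$. It holds because $\underline i+N=\ell+1+r$, so any $j\le N$ with $j\equiv \underline i+N$ satisfies $j\le \underline i+N$. Your other update, $-E_{\ell+1-r}+E_{\ell+1}=F_{\ell+1-r}$, is correct and matches the paper's last case.
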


Note that in the event $r = s = N/2$, then by abuse of notation, we still define the $\mathbf{c}_i^{(\ell)}$'s the same way, however it follows that they all equal $\pm E_{\underline{i}}$.

\begin{proof}
By assumption, $\mathbf{c}_i^{(0)} = \mathbf{e}_i$ for $1 \leq i \leq N$.  By direct computation via quiver mutation, we observe that 
$$\mathbf{c}_i^{(r)}  = \begin{cases} -\mathbf{e}_i \mathrm{~~if~~} 1 \leq i \leq r \\
                                       \mathbf{e}_{i-r} + \mathbf{e}_{i}  \mathrm{~~if~~} r+1 \leq i \leq \min(N-r,2r) \\
                                       \mathbf{e}_{i-(N-r)} + \mathbf{e}_{i-r} + \mathbf{e}_{i}  \mathrm{~~if~~} N-r + 1 \leq i \leq 2r \mathrm{~(if~possible)~}\\
                                       \mathbf{e}_i \mathrm{~~if~~} 2r+1 \leq i \leq N-r \mathrm{~(alternatively),~~and~~} \\
                                       \mathbf{e}_{i-(N-r)} + \mathbf{e}_i \mathrm{~~if~~} \max(N-r + 1,2r+1) \leq i \leq N.
                          \end{cases}$$

Furthermore, $\underline{i}(i, r) = i$ for $1 \leq i \leq 2r$ and $\underline{i}(i, r) = i - N$ for $2r+1 \leq i \leq N$.  Thus plugging in $\ell=r$, the computation via mutation 
agrees with the formula of Lemma \ref{Lem:cdef}.  We use the fact (for $1 \leq i \leq r$ and $1 \leq j \leq N$) that the equation $(i-j) = A\cdot r + B \cdot (N-r)$ only has a nonnegative integer solution when $i=j$ and $A=B=0$. Hence $d(i-j, r, N-r) = \chi(i=j)$ in this case. Also, the above list of cases is exhaustive since $r \leq N/2$ implies that $2(N-r) + 1 \leq i \leq 2r$ is impossible.

For $\ell > r$, we then note, by quiver mutation and the above formula by induction, that $\mathbf{c}_i^{(\ell)} = \mathbf{c}_i^{(\ell-1)}$ except when $\ell-i \equiv r$, $N-r$, or $N \mod N$.
More specifically, 
\begin{eqnarray}
\label{eq:c}\mathbf{c}_{\ell \mod N}^{(\ell)} &=& - \mathbf{c}_{\ell \mod N}^{(\ell-1)}, \\
\label{eq:c+r}\mathbf{c}_{\ell+r \mod N}^{(\ell)} &=& \mathbf{c}_{\ell \mod N}^{(\ell-1)} + \mathbf{c}_{\ell+r \mod N}^{(\ell-1)}, \mathrm{~and} \\ 
\label{eq:c-r}\mathbf{c}_{\ell-r \mod N}^{(\ell)} &=& \mathbf{c}_{\ell-r \mod N}^{(\ell-1)} + \mathbf{c}_{\ell \mod N}^{(\ell-1)}.
\end{eqnarray}

Next, we verify the identity $F_{\underline{i}} = E_{\underline{i}+N} - E_{\underline{i}+N-r}$.  This results from the following:  If $\underline{i} - j +N$ is not divisible by $N-r$, then 
any solution to $$A\cdot r + B\cdot (N-r) = \underline{i} - j + N,$$ with $A$ and $B$ nonnegative integers, must satisfy $A > 0$.  Consequently, the expression $(A-1)\cdot r + B\cdot(N-r)$ sums to $\underline{i} - j + (N-r)$, and it follows that the coefficient of $\mathbf{e}_j$ in $E_{\underline{i}+N}$ agrees with the coefficient of $\mathbf{e}_j$ in $E_{\underline{i}+N-r}$.  On the other hand, when 
$N-r$ divides $\underline{i} - j +N$, there is one additional solution, $0\cdot r + B\cdot (N-r) = \underline{i} - j + N$, hence $d(\underline{i}-j+N,r, N-r) = d(\underline{i}-j+N-r,r, N-r) + 1$ in these cases.  Comparing this result with the definition of $F_{\underline{i}}$ completes the verification.

Let $i \equiv \ell \mod N$.  We have $\mathbf{c}_i^{(\ell-1)} = E_{\underline{i}(i,\ell-1)}$ and by applying (\ref{eq:c}), and the fact that $i \not \equiv \ell + r \mod N$, we get $\mathbf{c}_i^{(\ell)} = - E_{\underline{i}(i,\ell-1)} = 
- E_{\underline{i}(i,\ell)}$ as desired.

Next, assume that $i \equiv \ell + r \mod N$.  We obtain
\begin{equation} 
\label{eq:cFE} 
\mathbf{c}_i^{(\ell-1)} = F_{\underline{i}(i,\ell-1)} = E_{\underline{i}(i,\ell-1)+N} - E_{\underline{i}(i,\ell-1)+N-r},
\end{equation} 
where the first equality is by induction and the second is by the above identity.  Furthermore, $\underline{i}(i,\ell) = \underline{i}(i,\ell-1) + N = \ell + r = \underline{i}(i+N-r,\ell-1)+r$ in this case, and hence (\ref{eq:cFE}) simplifies to 
$\mathbf{c}_i^{(\ell-1)} = E_{\underline{i}(i,\ell)} - \mathbf{c}_{i+N-r}^{(\ell-1)}$.
We conclude by (\ref{eq:c+r}) that  $\mathbf{c}_i^{(\ell)} = E_{\underline{i}(i,\ell)}$ as desired.  

Lastly, assume that $i \equiv \ell - r \mod N$.  Then $\mathbf{c}_i^{(\ell-1)} = - E_{\underline{i}(i,\ell-1)}$.  Using (\ref{eq:c-r}), we get 
$\mathbf{c}_i^{(\ell)} = -E_{\underline{i}(i,\ell-1)} + E_{\underline{i}(i+r,\ell-1)}$.  Since $i$ and $i+r \not\equiv \ell + r \mod N$, we rewrite this as 
$\mathbf{c}_i^{(\ell)} = -E_{\underline{i}(i,\ell)} + E_{\underline{i}(i,\ell)+r} = F_{\underline{i}(i,\ell)-(N-r)}$, which equals $F_{\underline{i}(i,\ell)}$ by definition.
\end{proof}

With Lemma \ref{Lem:cdef} in hand, we now proceed through the results necessary to prove Theorem \ref{Thm:GR-WH}, beginning with Proposition \ref{prop:GRprinc}.

\begin{proof} [Proof of Proposition \ref{prop:GRprinc}] Using straightforward computation for $0 \leq \ell < r$ and Lemma \ref{Lem:cdef} for $\ell \geq r$, we obtain $\mathbf{c}_{\ell+1 \mod N}^{(\ell)} = E_{\ell+1}$.  Re-indexing $\ell$ as $n - N - 1$ yields (\ref{Eq:GRprinc}), completing the proof. 
\end{proof}

\section{Step 2: Counting faces in horizontal strips of pinecones}

\label{sec:Step2}

We next illustrate that our construction of pinecones, and our construction of central strips $H_n^{(r,N)}$ in particular, utilize squares and rectangles with face labels whose frequencies match the enumerative counts given by the restricted partition function $d(n-N-i,r,N-r)$  This result was stated precisely as  Proposition  \ref{Prop:cent-strip}.

\begin{proof} [Proof of Proposition \ref{Prop:cent-strip}.]
First off, if $1 \leq n-N \leq r$, then $d(n-N-i,r,N-r) = \begin{cases} 1 \mathrm{~if~} i = n-N \\ 0 \mathrm{~otherwise}\end{cases}.$  
This agrees with $H_n^{(r,N)}$ consisting of a single square
labeled $\overline{n} = n-N$, in this case, as in Definition \ref{Def:pinecones}.

Then for $n-N > r$, we compare $d(n-N-i, r, N-r)$ to $d(n-N-i-r, r, N-r)$ and $H_n^{(r,N)}$ to $H_{n-r}^{(r,N)}$.  Analogous to the proof of 
$F_{\underline{i}} = E_{\underline{i}+N} - E_{\underline{i}+N-r}$, we observe that 

\small
$$d(n-N-i, r, N-r) = \begin{cases} d(n-N-i-r, r, N-r)+1 \mathrm{~~if~} n-N-i \mathrm{~is~divisible~by~} N-r,\\ d(n-N-i-r, r, N-r) \mathrm{~otherwise.}\end{cases}$$ 

\normalsize

\noindent Hence, to agree with (\ref{Eq:GRprinc}), the central strip $H_{n}^{(r,N)}$ should consist of $H_{n-r}^{(r,N)}$ plus (one or two) faces labeled $\underline{i}$ satisfying $\underline{i} \equiv n-N \mod N-r$.  Keeping in mind Corollary \ref{Cor:sqhex}, we obtain a rectangle of length two consisting of a single hexagon labeled $\underline{i}$ when $r+1\leq \underline{i} \leq N-r$ or two squares labeled left-to-right as $(N-r+\underline{i})$ then $\underline{i}$, for $1 \leq i\leq r$.  Thus, from a graph theoretic point of view, we have increased the size of the central strip by length two, while keeping the leftmost face label as $\overline{n} = n-N \mod r$, exactly agreeing with the description of $H_{n}^{(r,N)}$.  
\end{proof}

As a corollary of Proposition \ref{Prop:cent-strip}, we obtain a statement about the multiset of face labels arising as the three different superpositions of subgraphs that are relevant for Kuo Condensation.  See the statement in Proposition \ref{Prop:borders}.

\begin{proof} [Proof of Proposition \ref{Prop:borders}]
Since $r$, $s$, and $N$ are fixed, for notational convenience, we suppress the exponents of $H_n^{(r,N)}$ and $G_n^{(r,s,N)}$ for the remainder of this proof.  Recall that $G_{n}$ is constructed by gluing horizontal strips together as 
$$G_n = ( \dots \cup H_{n-2(N-s)} \cup H_{n-(N-s)} \cup \mathbf{H_{n}} \cup H_{n-s} \cup H_{n-2s} \cup \dots).$$    Let $0$ index the row (reading from top to bottom) containing $H_n$, denoted in boldface.  Centering $G_{n-N}$ on top of $G_{n}$, we  construct $G_{n-N}$ as 
$$G_{n-N} = (\dots \cup H_{n-N-2(N-s)} \cup H_{n-N-(N-s)} \mathbf{\cup H_{n-N}} \cup H_{n-N-s} \cup H_{n-N-2s} \cup \dots)$$ where $H_{n-N}$ appears on row $0$ (and to the right of the start of $H_n$).  

\vspace{0.5em}

By construction, see Definition \ref{Def:pinecones}, $G_{n-r}$ is a left-justified copy of $G_{n}$ in which each row of length at least three is exactly two units shorter, and any rows of length one are erased.  Analogously, $G_{n-(N-r)}$ is a right-justified copy of $G_n$ in which each row is an even number of units shorter, chosen so that row $0$ begins with a first square labeled $n \mod r$ (rather than label $\overline n = n-N \mod r$).  Since the overlap, in row $0$, of these two subgraphs is $H_{n-N}$, and the rows above and below shrink accordingly, it follows that $G_{n-r} \cap G_{n-(N-r)} = G_{n-N}$.  

\vspace{0.5em}

On the other hand, if we overlay $G_{n-(N-s)}$ (centered on row $1$) and $G_{n-s}$ (centered on row $-1$), then we construct these graphs via the collections
$$G_{n-(N-s)} = ( \dots \cup H_{n-2(N-s)} \cup \mathbf{H_{n-(N-s)}} \cup H_{n-N} \cup H_{n-N-s} \cup H_{n-N-2s} \cup \dots)$$ and 
$$G_{n-s} = ( \dots \cup H_{n-N-2(N-s)} \cup H_{n-N-(N-s)} \cup H_{n-N} \cup \mathbf{H_{n-s}} \cup H_{n-2s} \cup \dots).$$  The top (resp. bottom) of $G_{n-(N-s)}$ (resp. $G_{n-s}$ agrees with the top (resp. bottom) of $G_n$ as desired, and it is also clear that row $0$ of both is $H_{n-N}$.  Comparing the other intersections, row-by-row, we conclude 
$G_{n-(N-s)} \cap G_{n-s}  = G_{n-N}$.
\end{proof}

\section{Step 3: Kuo condensation for pinecones with weights and heights}
\label{sec:Step3}
We now turn to Proposition \ref{Prop:superpositions}, whose proof heavily relies on Kuo's condensation, which is the main tool \cite{ProppMelouWest2009} used to treat pinecones. After the necessary ingredients are introduced, from Section \ref{sec:condensation-x} on, we start proving the proposition.

\subsection{Kuo's graphical condensation}
\begin{defi}
	A perfect matching $M$ of a graph $G$ is a subgraph of  $G$ such that every vertex of $G$ is incident to exactly one edge of $M$. 
\end{defi}

\begin{thm}[\cite{KUO200429}, The condensation lemma]
	Let $G$ be a planar bipartite graph, and $a,b,c,d$ in clockwise order with \emph{$a,c$ black and $b,d$ white}. Denote $m(\cdot)$ the number of perfect matchings, then 
	$${m(G) m(G-\{a,b,c,d\})=m(G-\{a,b\})m(G-\{c,d\})+m(G-\{a,d\})m(G-\{b,c\})}$$
    where $G-V$ is the full subgraph of $G$ containing those vertices not contained in the vertex subset $V$.
\end{thm}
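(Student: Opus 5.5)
The plan is the standard superposition (double-counting) argument, run as a weight-preserving bijection between multisets of edges. I will use the hypothesis, implicit in Kuo's setting, that $a,b,c,d$ lie in this cyclic order on a common face of the planar embedding, which we may take to be the outer face. Write $\mathcal{L}$ for the set of pairs $(M_1,M_2)$ with $M_1$ a perfect matching of $G$ and $M_2$ a perfect matching of $G-\{a,b,c,d\}$. Let $\mathcal{R}_1$ be the set of pairs of perfect matchings of $(G-\{a,b\},\,G-\{c,d\})$, and $\mathcal{R}_2$ the analogous set for $(G-\{a,d\},\,G-\{b,c\})$. For each pair, form the multigraph union of the two matchings.

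\textbf{Structure of a superposition.} In a superposition, every vertex other than $a,b,c,d$ has degree $2$, and each of $a,b,c,d$ has degree $1$. So the union decomposes into doubled edges, alternating even cycles, and exactly two alternating paths whose endpoints are $\{a,b,c,d\}$.

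\textbf{Parity constraints.} Since $G$ is bipartite, a path with both endpoints of the same colour has even length, and a path with endpoints of different colours has odd length.
\begin{itemize}
\item In $\mathcal{L}$, every endpoint is covered only by $M_1$. A path therefore begins and ends with $M_1$-edges, has odd length, and joins a black vertex to a white one. Hence the pairing is $\{a,b\},\{c,d\}$ or $\{a,d\},\{b,c\}$.
\item In $\mathcal{R}_1$, the vertices $a,b$ are covered only by the second matching and $c,d$ only by the first. A path joining $a$ to $b$, or $c$ to $d$, has odd length, which is consistent with the colours. A path joining $a$ to $d$, or $b$ to $c$, begins and ends with different matchings, so it has even length and would need equal colours, which fails. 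A path joining $a$ to $c$ (both black) is parity-compatible. However, the pairing $\{a,c\},\{b,d\}$ would give two vertex-disjoint paths inside the disk joining interleaved boundary points, which planarity forbids. So $\mathcal{R}_1$ forces the pairing $\{a,b\},\{c,d\}$.
\item Symmetrically, $\mathcal{R}_2$ forces the pairing $\{a,d\},\{b,c\}$.
\end{itemize}
I expect this planarity step (no disjoint paths joining interleaved boundary vertices) to be the only delicate point, and I would cite the Jordan curve theorem for it.

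\textbf{The bijection.} Split $\mathcal{L}=\mathcal{L}_1\sqcup\mathcal{L}_2$ according to the pairing. Given $(M_1,M_2)\in\mathcal{L}_1$, swap the $M_1$- and $M_2$-edges along the path through $a$ and $b$. This produces a perfect matching of $G-\{a,b\}$ and one of $G-\{c,d\}$, so it lands in $\mathcal{R}_1$. The same swap on the $a$–$b$ path is an involution with inverse map $\mathcal{R}_1\to\mathcal{L}_1$, because the superposition multiset is unchanged. Likewise, swapping along the $a$–$d$ path gives a bijection $\mathcal{L}_2\leftrightarrow\mathcal{R}_2$.

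Counting the two sides gives the identity. Since the edge multiset is preserved, the bijection also respects any multiplicative edge weights. This is the form I will need later, together with the cycle structure, for the height statistics $y(M)$ in the proof of Proposition \ref{Prop:superpositions}.
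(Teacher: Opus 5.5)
Your proposal is correct and is essentially the superposition argument the paper sketches, following Kuo. Swapping along the single path through $a$ (the $a$--$b$ path or the $a$--$d$ path) is the paper's bijection $\Delta$ with one fixed assignment of the alternating cycles; the paper leaves that assignment free because its later twisting argument uses the freedom. You also make explicit two points the paper leaves implicit or only flags as nontrivial: the hypothesis that $a,b,c,d$ lie in cyclic order on a common face, and the parity-plus-planarity argument that superpositions from $(G-\{a,b\},G-\{c,d\})$ (resp.\ $(G-\{a,d\},G-\{b,c\})$) always pair $a$ with $b$ (resp.\ $a$ with $d$).
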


One can establish a bijection ($M(\cdot)$ denotes the set of perfect matchings) $$\Delta:{M(G) \times M(G-\{a,b,c,d\}}\to M(G-\{a,b\})\times M(G-\{c,d\}) \amalg M(G-\{a,d\})\times (G-\{b,c\})$$ to prove the condensation lemma. Since we will need this bijection during the development, we roughly describe below its construction.

$\implies$: Superimpose\footnote{The superimposition of two subgraphs of a graph is a multi-subgraph of the original graph. The multiplicity of an edge in the superimposition equals the times it appears in the two subgraphs.} the perfect matchings $M_A$ and $M_C$ given from the pair ${M(G) \times M(G-\{a,b,c,d\})}$. The result will be a spanning subgraph (possibly with doubled edges) denoted by $M_A\oplus M_C$ with all the vertices of degree $2$, except at $\{a,b,c,d\}$ whose degree is $1$. Therefore, in $M_A\oplus M_C$, there exist $3$ configurations: open paths,  closed cycles, or  doubled edges.

It is easy to show that there are exactly two open paths in $M_A\oplus M_C$, with two possibilities\footnote{The subscripts A, C, W, E, N, S stands for all, center, west, east, north, south respectively.}:
\begin{itemize}
	\item One  connecting $\{a,b\}$ and the other connecting $\{c,d\}$. This  will imply $$\Delta(M_A,M_C)=(M_N,M_S)\in  M(G-\{c,d\})\times  M(G-\{a,b\});$$
	\item Or one connecting $\{a,d\}$ and the other connecting $\{b,c\}$. This will imply $$\Delta(M_A,M_C)=(M_W,M_E)\in M(G-\{b,c\})\times M(G-\{a,d\}).$$
\end{itemize}  

\begin{rem}
	These implications are nontrivial and rely on the assumption {$a,c$ black and $b,d$ white}. If we instead require $a,b$ black and $c,d$ white, we cannot conclude which set $\Delta(M_A,M_C)$ lies in simply from how $\{a,b,c,d\}$ are connected. However, there is still a bijection described in \cite{KUO200429} in this case, but we will not need this case in our treatment.
\end{rem}
Let's assume without loss of generality that $\Delta(M_A,M_C)=(M_N,M_S)\in  M(G-\{c,d\})\times  M(G-\{a,b\})$.

The doubled edges are easy to handle. If $e$ is a doubled edge, then $e\in M_A$ and $e\in M_C$, and it will be contained in both $M_N$ and $M_S$.

The two paths connecting $\{a,b\}$ and $\{c,d\}$ are handled as follows: {Since $G$ is bipartite, and given the colors of $a,b,c,$ and $d$, one} can show that both paths are of \emph{odd} length. For the path $a\to b$ ($c\to d$), number the edges starting from one end. Then the odd edges in 
$a\to b$ {and the even edges of $c\to d$ belong to $M_N$, while the even edges in $a\to b$ and the odd edges of $c\to d$ belong to $M_S$.}  Since the paths are of odd length, it does not matter which end we start from.

It remains to consider the cycles appearing in $G-\{a,b,c,d\}$. Take an arbitrary cycle $Z$, and number clockwise the edges of $Z$ successively starting from an arbitrary point on the cycle. $Z$ has two perfect matchings, consisting of odd and even edges respectively,  each of which belongs to  $M_A$ or $M_C$ separately. It remain to choose which one belongs to $M_N$ or $M_S$.  

On one hand, we are \emph{free} to choose whether it belongs to  $M_N$ or $M_S$. On the other hand, there is no preferred way of such choice, and indeed Kuo only showed equicardinality of certain sets, without specifying an explicit bijection between them. We will establish later that our results do not depend on the choice of this bijection.

$\impliedby$: Superimpose the perfect matchings $M_N$ and $M_S$ given from $M(G-\{c,d\})\times  M(G-\{a,b\})$ (or $M_W$ and $M_E$ from $M(G-\{b,c\})\times  M(G-\{a,d\})$). We will arrive at two paths $a\to b$ and $c\to d$ (or $a\to d$ and $b\to c$), plus some doubled edges and cycles. The doubled edges are distributed to $M_A$ and $M_C$. The odd edges in \emph{both} paths belong to $M_A$, while the even edges in \emph{both} paths belong to $M_C$, regardless if we are superimposing $M_N$ and $M_S$ or $M_W$ and $M_E$. The two perfect matchings for each cycle $Z$ are free to be chosen to belong to either $M_A$ or $M_C$.

\begin{rem}
	Again, the assertion that we will arrive at two paths $a\to b$ and $c\to d$ from superimposing $M_N$ and $M_S$ (or arrive at $a\to d$ and $b\to c$ from superimposing $M_W$ and $M_E$) is nontrivial and relies on the assumption {$a,c$ black and $b,d$ white}.
\end{rem}

In any direction, we notice that the superimposition of the  two perfect matchings on each side are the same spanning subgraph of $G$. Therefore we introduce the following notations.

\begin{defi}
	Denote $M_A\oplus M_C \sim M_W\oplus M_E$ (or $M_N\oplus M_S$) if the superimposition of $M_A$ and $M_C$ is the same as  the superimposition of $M_W$ and $M_E$ (or  $M_N$ and $M_S$). 
	
	On the other hand, $M_A\oplus M_C \approx M_W\oplus M_E$ (or $M_N\oplus M_S$) if $\Delta(M_A,M_C)=(M_W,M_E)$ (or $(M_N,M_S)$). Notice that $\approx$ is finer than $\sim$ and depends on the exact bijection $\Delta$ used, i.e.,  how the bijection handles cycles arising in the superimposition. See the figure below for an illustration.
\end{defi}
\begin{figure}[H]
	\centering

\tikzset{every picture/.style={line width=0.75pt}} 

\begin{tikzpicture}[x=0.75pt,y=0.75pt,yscale=-0.35,xscale=0.5]

\draw    (41.14,11.42) -- (40.21,204.27) ;
\draw [color={rgb, 255:red, 208; green, 2; blue, 27 }  ,draw opacity=1 ][line width=2.25]    (191.16,11.91) -- (190.23,204.76) ;
\draw    (41.14,11.42) -- (191.16,11.91) ;
\draw    (40.21,204.27) -- (190.23,204.76) ;
\draw    (80.77,63.46) -- (80.31,151.73) ;
\draw  [dash pattern={on 0.84pt off 2.51pt}]  (155.78,63.68) -- (155.32,151.95) ;
\draw  [dash pattern={on 0.84pt off 2.51pt}]  (80.77,63.46) -- (155.78,63.68) ;
\draw  [dash pattern={on 0.84pt off 2.51pt}]  (80.31,151.73) -- (155.32,151.95) ;
\draw [color={rgb, 255:red, 208; green, 2; blue, 27 }  ,draw opacity=1 ][line width=2.25]    (41.14,11.42) -- (80.77,63.46) ;
\draw  [dash pattern={on 0.84pt off 2.51pt}]  (191.16,11.91) -- (155.78,63.68) ;
\draw [color={rgb, 255:red, 208; green, 2; blue, 27 }  ,draw opacity=1 ][line width=2.25]    (40.21,204.27) -- (80.31,151.73) ;
\draw  [dash pattern={on 0.84pt off 2.51pt}]  (190.23,204.76) -- (155.32,151.95) ;
\draw [color={rgb, 255:red, 74; green, 144; blue, 226 }  ,draw opacity=1 ][line width=2.25]    (280.53,15.19) -- (279.6,208.04) ;
\draw [color={rgb, 255:red, 144; green, 19; blue, 254 }  ,draw opacity=1 ][line width=2.25]    (430.54,15.68) -- (429.62,208.53) ;
\draw    (280.53,15.19) -- (430.54,15.68) ;
\draw    (279.6,208.04) -- (429.62,208.53) ;
\draw    (320.16,67.23) -- (319.7,155.5) ;
\draw [color={rgb, 255:red, 74; green, 144; blue, 226 }  ,draw opacity=1 ][line width=2.25]    (395.17,67.45) -- (394.7,155.72) ;
\draw    (320.16,67.23) -- (395.17,67.45) ;
\draw    (319.7,155.5) -- (394.7,155.72) ;
\draw [color={rgb, 255:red, 208; green, 2; blue, 27 }  ,draw opacity=1 ][line width=2.25]    (280.53,15.19) -- (320.16,67.23) ;
\draw    (430.54,15.68) -- (395.17,67.45) ;
\draw [color={rgb, 255:red, 208; green, 2; blue, 27 }  ,draw opacity=1 ][line width=2.25]    (279.6,208.04) -- (319.7,155.5) ;
\draw    (429.62,208.53) -- (394.7,155.72) ;
\draw [color={rgb, 255:red, 74; green, 144; blue, 226 }  ,draw opacity=1 ][line width=2.25]    (515.2,16.45) -- (514.27,209.3) ;
\draw [color={rgb, 255:red, 74; green, 144; blue, 226 }  ,draw opacity=1 ][line width=2.25]    (665.21,16.94) -- (664.29,209.79) ;
\draw    (515.2,16.45) -- (665.21,16.94) ;
\draw    (514.27,209.3) -- (664.29,209.79) ;
\draw  [dash pattern={on 0.84pt off 2.51pt}]  (554.83,68.48) -- (554.37,156.76) ;
\draw [color={rgb, 255:red, 74; green, 144; blue, 226 }  ,draw opacity=1 ][line width=2.25]    (629.84,68.71) -- (629.37,156.98) ;
\draw  [dash pattern={on 0.84pt off 2.51pt}]  (554.83,68.48) -- (629.84,68.71) ;
\draw  [dash pattern={on 0.84pt off 2.51pt}]  (554.37,156.76) -- (629.37,156.98) ;
\draw  [dash pattern={on 0.84pt off 2.51pt}]  (515.2,16.45) -- (554.83,68.48) ;
\draw    (665.21,16.94) -- (629.84,68.71) ;
\draw  [dash pattern={on 0.84pt off 2.51pt}]  (514.27,209.3) -- (554.37,156.76) ;
\draw    (664.29,209.79) -- (629.37,156.98) ;
\draw    (425.57,296.83) -- (424.65,489.68) ;
\draw [color={rgb, 255:red, 245; green, 166; blue, 35 }  ,draw opacity=1 ][line width=2.25]    (575.59,297.31) -- (574.67,490.16) ;
\draw    (425.57,296.83) -- (575.59,297.31) ;
\draw    (424.65,489.68) -- (574.67,490.16) ;
\draw    (465.21,348.86) -- (464.74,437.13) ;
\draw [color={rgb, 255:red, 245; green, 166; blue, 35 }  ,draw opacity=1 ][line width=2.25]    (540.21,349.08) -- (539.75,437.36) ;
\draw    (465.21,348.86) -- (540.21,349.08) ;
\draw    (464.74,437.13) -- (539.75,437.36) ;
\draw [color={rgb, 255:red, 245; green, 166; blue, 35 }  ,draw opacity=1 ][line width=2.25]    (425.57,296.83) -- (465.21,348.86) ;
\draw    (575.59,297.31) -- (540.21,349.08) ;
\draw [color={rgb, 255:red, 245; green, 166; blue, 35 }  ,draw opacity=1 ][line width=2.25]    (424.65,489.68) -- (464.74,437.13) ;
\draw    (574.67,490.16) -- (539.75,437.36) ;
\draw [color={rgb, 255:red, 65; green, 117; blue, 5 }  ,draw opacity=1 ][line width=2.25]    (131.94,298.08) -- (131.02,490.93) ;
\draw [color={rgb, 255:red, 65; green, 117; blue, 5 }  ,draw opacity=1 ][line width=2.25]    (281.96,298.57) -- (281.03,491.42) ;
\draw    (131.94,298.08) -- (281.96,298.57) ;
\draw    (131.02,490.93) -- (281.03,491.42) ;
\draw  [dash pattern={on 0.84pt off 2.51pt}]  (171.57,350.12) -- (171.11,438.39) ;
\draw  [dash pattern={on 0.84pt off 2.51pt}]  (246.58,350.34) -- (246.12,438.61) ;
\draw  [dash pattern={on 0.84pt off 2.51pt}]  (171.57,350.12) -- (246.58,350.34) ;
\draw  [dash pattern={on 0.84pt off 2.51pt}]  (171.11,438.39) -- (246.12,438.61) ;
\draw  [dash pattern={on 0.84pt off 2.51pt}]  (131.94,298.08) -- (171.57,350.12) ;
\draw  [dash pattern={on 0.84pt off 2.51pt}]  (281.96,298.57) -- (246.58,350.34) ;
\draw  [dash pattern={on 0.84pt off 2.51pt}]  (131.02,490.93) -- (171.11,438.39) ;
\draw  [dash pattern={on 0.84pt off 2.51pt}]  (281.03,491.42) -- (246.12,438.61) ;
\draw [color={rgb, 255:red, 74; green, 144; blue, 226 }  ,draw opacity=1 ][line width=2.25]    (341.41,91.18) -- (341.98,122.19) ;
\draw [color={rgb, 255:red, 208; green, 2; blue, 27 }  ,draw opacity=1 ][line width=2.25]    (341.41,91.18) -- (368.35,91) ;
\draw [color={rgb, 255:red, 208; green, 2; blue, 27 }  ,draw opacity=1 ][line width=2.25]    (341.98,122.19) -- (368.51,122.42) ;
\draw [color={rgb, 255:red, 74; green, 144; blue, 226 }  ,draw opacity=1 ][line width=2.25]    (368.35,91) -- (368.51,122.42) ;
\draw    (104.9,92.18) -- (105.47,123.19) ;
\draw [color={rgb, 255:red, 208; green, 2; blue, 27 }  ,draw opacity=1 ][line width=2.25]    (104.9,92.18) -- (131.85,92) ;
\draw [color={rgb, 255:red, 208; green, 2; blue, 27 }  ,draw opacity=1 ][line width=2.25]    (105.47,123.19) -- (132,123.42) ;
\draw    (131.85,92) -- (132,123.42) ;
\draw [color={rgb, 255:red, 74; green, 144; blue, 226 }  ,draw opacity=1 ][line width=2.25]    (578.9,99.18) -- (579.47,130.19) ;
\draw    (578.9,99.18) -- (605.85,99) ;
\draw    (579.47,130.19) -- (606,130.42) ;
\draw [color={rgb, 255:red, 74; green, 144; blue, 226 }  ,draw opacity=1 ][line width=2.25]    (605.85,99) -- (606,130.42) ;
\draw [color={rgb, 255:red, 65; green, 117; blue, 5 }  ,draw opacity=1 ][line width=2.25]    (194.9,381.18) -- (195.47,412.19) ;
\draw    (194.9,381.18) -- (221.85,381) ;
\draw    (195.47,412.19) -- (222,412.42) ;
\draw [color={rgb, 255:red, 65; green, 117; blue, 5 }  ,draw opacity=1 ][line width=2.25]    (221.85,381) -- (222,412.42) ;
\draw    (489.9,380.18) -- (490.47,411.19) ;
\draw [color={rgb, 255:red, 245; green, 166; blue, 35 }  ,draw opacity=1 ][line width=2.25]    (489.9,380.18) -- (516.85,380) ;
\draw [color={rgb, 255:red, 245; green, 166; blue, 35 }  ,draw opacity=1 ][line width=2.25]    (490.47,411.19) -- (517,411.42) ;
\draw    (516.85,380) -- (517,411.42) ;

\end{tikzpicture}

	\caption{Kuo's condensation of perfect matchings. First row: $M_W$, $M_W\oplus M_E$,  $M_E$. Second row: $M_C$ and $M_A$. The coloured edges are those edges contained in the perfect matchings. Note that we have attributed all the red edges in the inner cycle of superimposition to $M_A$, and blue to $M_C$. This will be denoted $M_A\oplus M_C\approx M_W\oplus M_E$.}
\end{figure}
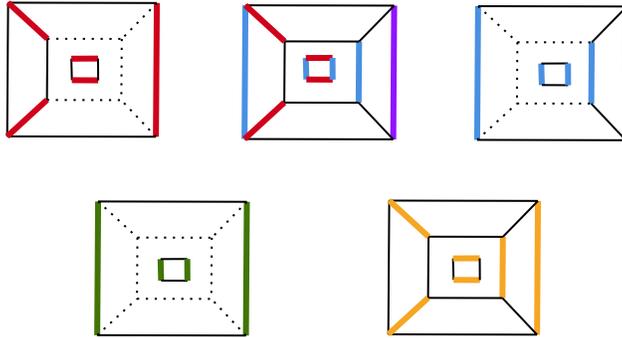
\subsection{Pinecones and their core}
We cite several properties of pinecones that will be useful later, and note that they are properties of pinecones defined in the \cite{ProppMelouWest2009}-fashion. After Remark \ref{rem:center-is-intersection}, we identify these pinecones with the Gale-Robinson subgraphs $G_n^{(r,s,N)}$ 
defined in Definition \ref{Def:pinecones}, as discussed in Remark \ref{rem:Aztec-diamond}. These pinecones are \emph{closed},  can be viewed as generalized Aztec diamonds, and can be placed on the $\ZZ^2$ square grid of vertices in the plane. Denote below $G$ the closed pinecone defined for the $n$th term of the Gale-Robinson sequence of type $(r,s,N)$.

\begin{defi}
    We refer to elements in $(\ZZ^2)^*$, the dual graph of the $\ZZ^2$ square grid, as \emph{cells}, and the closure of bounded connected components of $\R^2\bs G$ \emph{faces}. The boundary of each face of $G$ is either a quadrilateral called \emph{square}, or a hexagon called  \emph{rectangle}. We emphasize that square faces coincide with cells but rectangular faces span two cells. Using this correspondence, we will often let ``the face $(a,b)$'' for $(a,b) \in (\ZZ^2)^*$, refer to the face containing the cell $(a,b)$.  
\end{defi}

In the following, the  pinecone $G$ will be placed in such a way that the upper vertex of the rightmost edge (upper-right vertex for short) is at $(0,0)$. The rightmost cell will also be coordinated $(0,0)$, as the face on row $0$, column $0$. Declaring vertices $(a,b)$ with $a+b$ even being white and $a+b$ odd being black makes $G$ a planar bipartite graph (See also Figure 
\ref{fig:coordsystem-pinecone}).

\begin{rem}
    There are non-closed pinecones which are produced from closed pinecones by removing certain edges, without altering their placement on the square grid. See \cite{ProppMelouWest2009} for which edges we are allowed to remove. 
\end{rem}

\begin{figure}[H]
    \centering
    \includegraphics[scale=0.5]{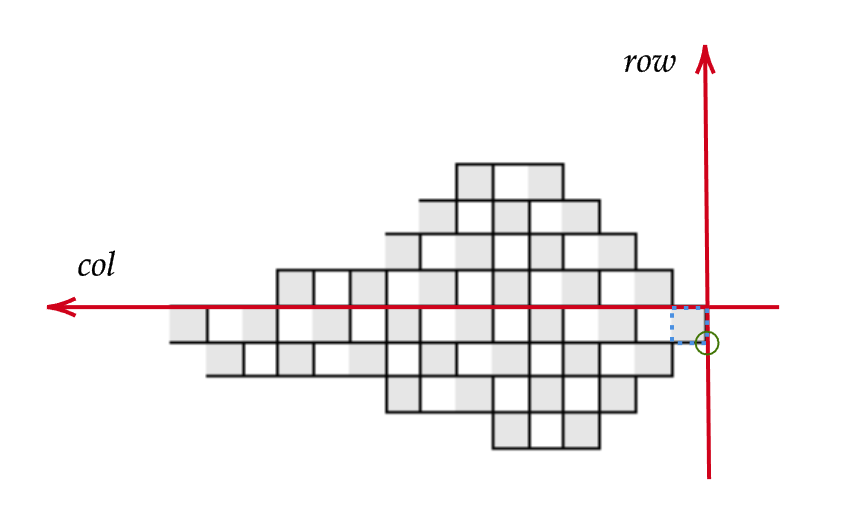}
    \caption{The coordinate system that we will use.  We always write row before column, so the circled vertex will have coordinate $(-1,0)\in \ZZ^2$. The grey cell enclosed in dashed box will have coordinate $(0,0)\in (\ZZ^2)^*$, and the white cell on its left will have coordinate $(0,1)\in (\ZZ^2)^*$. We remark that this system is a reflection of the one used in \cite{ProppMelouWest2009}.}
    \label{fig:coordsystem-pinecone}
\end{figure}

Then,
\begin{itemize}
	\item All the rectangles are horizontal, so that the pinecone can be split into 3 parts. The subgraph enclosing those faces with row coordinate $\ge 1$,  $\le -1$ or $=0$ will be called the upper part, lower part and \emph{central strip} respectively. In this way, the horizontal edges lying on row $0$ or $-1$ will be considered both in the upper (or lower) part and in the central strip.
	\item The central strip is always of \emph{odd} length, i.e., the four corners are given as $(0,0),(-1,0),(0,2k+1),(-1,2k+1)$. Once we fix a pinecone, we  keep $k$ for this parameter\footnote{See \cite{ProppMelouWest2009} for how to calculate this $k$, as well as the parameters $k_i$ and $l_i$ introduced later.}.
	\item The vertices $(0,1),(0,2k),(-1,2k),(-1,1)$ will be denoted $a,b,c,d$ in our development below.
	\item Not only the central strip, but also all the horizontal strips, are of odd length. Every horizontal strip in the upper part has corners $(i,i),(i-1,i)$ and $(i,i+2{k_i}+1),(i-1,i+2{k_i}+1)$. The integer sequence $k_i\ge 0$ is strictly decreasing as $i\ge 0$ increases. Similarly, every row in the lower part has corners $(-i,i),(-i-1,i)$ and $(-i,i+2{l_i}+1),(-i-1,i+2{l_i}+1)$. The integer sequence $l_i\ge0$ is strictly decreasing as $i\ge 1$ increases.
\end{itemize}

 \begin{defi}[\cite{ProppMelouWest2009}]\label{def:core}
	An edge $e$ of $G$ is  forced if it has to belong to every perfect matching of $G$. It is called forbidden if it does not belong to any perfect matching of $G$. The \emph{core} of $G$ is the subgraph of $G$ containing every edge that is neither forced nor forbidden, and the vertices that are incident to these edges.
\end{defi}

\begin{rem}[\cite{ProppMelouWest2009}]\label{rem:alternate-definition-core}
    There is an equivalent construction giving the core of $G$ when $G$ is a pinecone. Let $b_0$ be the column of the leftmost black square in row 0 of $G$, let $b_1$ be the column of leftmost black square in row 1 of $G$ that lies strictly to the right of $b_0$, let $b_2$ be the column of the leftmost black square in row 2 of $G$ that lies strictly to the right of $b_1$, and so on (proceeding upwards); likewise, let $b_{-1}$ be the column of the leftmost black square in row $-1$ of $G$ that lies strictly to the right of $b_0$, and so on (proceeding downwards). If at some point there is no black square that satisfies the requirement, we leave $b_m$ undefined. Consider all the faces of $G$ that lie in the same row as, and lie weakly to the right of, one of the $b_k$’s. This set of faces gives a closed pinecone, which equals the core of $G$.
\end{rem}

\begin{lem}[\cite{ProppMelouWest2009}]
	There is a bijection between the set  $\{\widehat{M}\}$ of perfect matchings of $G$ and that  $\{M\}$ of the core of $G$, by {deleting (resp. adding)} the forced edges of $G$. 
\end{lem}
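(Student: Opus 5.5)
The plan is to show that the forced edges of $G$ are the same in every perfect matching, and that removing them leaves exactly the perfect matchings of the core. Write $F$ for the set of forced edges and $V(F)$ for the vertices they cover. Define $\Phi(\widehat{M}) = \widehat{M}\setminus F$ on perfect matchings of $G$, and $\Psi(M)=M\cup F$ on perfect matchings of the core; these are the two maps in the bijection.

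First, the core is exactly the graph on $V(G)\setminus V(F)$. To see this, let $e$ be an edge that is neither forced nor forbidden. Since $e$ is not forbidden, some perfect matching $\widehat{M}$ contains it, and $\widehat{M}$ also contains all of $F$. So $e$ cannot share an endpoint with a forced edge, and the endpoints of $e$ avoid $V(F)$.

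Conversely, take a vertex $v\notin V(F)$, assuming $G$ has at least one perfect matching, which is implicit in talking about the core. In any perfect matching, $v$ is matched by some edge $e$. That $e$ is not forbidden, since it occurs in a matching. It is not forced either, since its endpoint $v$ is not covered by $F$. So $e$ lies in the core, and $v$ is a vertex of the core. Hence the vertex set of the core is $V(G)\setminus V(F)$.

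By Definition~\ref{def:core}, the core contains only the non-forced, non-forbidden edges. Any edge of $G$ joining two vertices outside $V(F)$ is not forced. If it is forbidden, it appears in no matching and plays no role. So perfect matchings of the core coincide with perfect matchings of $G-V(F)$ that avoid forbidden edges, which are simply the perfect matchings of $G-V(F)$ that extend by $F$.

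Second, check that the two maps are well defined and inverse to each other.
\begin{itemize}
\item $\Phi$: if $\widehat{M}$ is a perfect matching of $G$, then $F\subseteq\widehat{M}$. The remaining edges $\widehat{M}\setminus F$ cover $V(G)\setminus V(F)$ exactly, and each is neither forced nor forbidden, so $\widehat{M}\setminus F$ is a perfect matching of the core.
\item $\Psi$: if $M$ is a perfect matching of the core, then $M\cup F$ covers every vertex of $G$ exactly once, because the vertex sets are disjoint and $F$ is itself a matching. Two forced edges cannot share a vertex, since they coexist in any perfect matching.
\item The maps are obviously mutually inverse.
\end{itemize}

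The only subtle point is when the core is empty, where one uses the convention that the empty graph has one perfect matching. For pinecones one may also cite Remark~\ref{rem:alternate-definition-core}: the core is itself a closed pinecone, so it is nonempty whenever $G$ is.
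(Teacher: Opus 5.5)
Your argument is correct. It is the standard direct verification from the definitions of forced and forbidden edges, and it works for any graph having at least one perfect matching; the paper gives no proof of its own and simply cites \cite{ProppMelouWest2009}.

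The one thing to drop is your closing aside that the core of a pinecone is nonempty whenever $G$ is. In the paper's own application this fails: for instance $G_{14}^{(2,3,7)}-\{a,b,c,d\}$ is nonempty, but its core is $G_7^{(2,3,7)}=\emptyset$ (the $x_{14}$ case in Section~\ref{sec:condensation-x}). So the convention that the empty graph has exactly one perfect matching, which you already invoke, is what is needed there.
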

 Recall the vertices $a,b,c,d$ as above. Denote temporarily $G_n^{r,s,N}$ as the \cite{ProppMelouWest2009}-constructed pinecone for the $n$th term in the Gale-Robinson sequence, which is ultimately shown to be equivalent to that defined in Definition \ref{Def:pinecones}.
\begin{lem}\label{lem:subpinecone-properties}

	In the below list, the core of the former graph is isomorphic to the latter pinecone graph:\begin{itemize}
        \item {$G \hspace{5.5em} =: \widehat{G_A}$ and $G_{n}^{r,s,N} =: G_A$}\footnote{$G$ is closed, so $G_A$ and $\widehat{G_A}$ are the same.},
		\item  $G-\{a,b,c,d\}=:\widehat{G_C}$ and $G_{n-N}^{r,s,N}=:G_C$,
		\item  $G-\{a,b\}=:\widehat{G_S}$ and $G_{n-N+s}^{r,s,N}=:G_S$,
		\item $G-\{b,c\}=:\widehat{G_W}$ and $G_{n-N+r}^{r,s,N}=:G_W$,
		\item $G-\{c,d\}=:\widehat{G_N}$ and $G_{n-s}^{r,s,N}=:G_N$,
		\item $G-\{a,d\}=:\widehat{G_C}$ and $G_{n-r}^{r,s,N}=:G_E$.
	\end{itemize}

\end{lem}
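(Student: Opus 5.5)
The plan is to use the characterization of the core from Remark \ref{rem:alternate-definition-core} together with the row structure of a closed pinecone (central strip with corners $(0,0),(-1,0),(0,2k+1),(-1,2k+1)$, upper rows with parameters $k_i$, lower rows with $l_i$). For each of the six graphs, removing the specified vertices among $a=(0,1)$, $b=(0,2k)$, $c=(-1,2k)$, $d=(-1,1)$ creates degree-one vertices. I will propagate the resulting forced and forbidden edges row by row until what remains is a closed pinecone. Then I will check that its row lengths and face labels match the strip decomposition of $G_{n-N}$, $G_{n-N+s}$, $G_{n-N+r}$, $G_{n-s}$, $G_{n-r}$ given in Definition \ref{Def:pinecones}. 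The first item is trivial, since $G$ is closed and hence equal to its core.

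The key steps, in order, are as follows.

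First, take $\widehat{G_E}=G-\{a,d\}$. Removing $a$ and $d$ kills the leftmost square $(0,0)$ of the central strip: its remaining vertices $(0,0),(-1,0)$ are forced to match along the left vertical edge. This in turn forces edges in the adjacent cells, peeling off the leftmost two columns of the central strip and, by the staircase shape (row $i$ starting at column $i$), the leftmost two cells of every upper and lower row. Rows of length one disappear entirely. The result is the left-truncated copy of $G$ in which each row is two units shorter. By the proof of Proposition \ref{Prop:borders}, this is exactly $G_{n-r}$. The case $\widehat{G_W}=G-\{b,c\}$ is symmetric on the right and yields the right-justified shrinking, namely $G_{n-(N-r)}$.

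Second, take $\widehat{G_N}=G-\{c,d\}$ and $\widehat{G_S}=G-\{a,b\}$. Removing the two bottom (resp. top) interior corners of the central strip forces the horizontal edges along row $-1$ (resp. row $0$). This kills the lower (resp. upper) part and shifts the effective center up (resp. down) one row. By the rule for gluing strips in Definition \ref{Def:pinecones}, the remaining graph has rows $\dots,H_{n-(N-s)},H_{n-N},H_{n-N-s},\dots$ centered at $H_{n-(N-s)}$, which is $G_{n-s}$ up to the relabeling of names used in the lemma. I will identify the surviving region using the $b_m$ description of Remark \ref{rem:alternate-definition-core}: compute the leftmost black squares after deletion and confirm that they step right by one per row, as in a closed pinecone.

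Third, for $\widehat{G_C}=G-\{a,b,c,d\}$, combine the two previous analyses. The core is the intersection of the cores obtained from deleting $\{a,d\}$ and $\{b,c\}$, and Proposition \ref{Prop:borders} identifies this intersection with $G_{n-N}$.

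The main obstacle is the forcing propagation in the upper and lower parts, particularly for $\widehat{G_N}$ and $\widehat{G_S}$, where the rows change parity/colour relative to the new center. I expect to need the exact values of $k_i,l_i$ from \cite{ProppMelouWest2009} (or equivalently the strip lengths $2\lfloor (m-N-1)/r\rfloor+1$) and the bipartite colouring, so that the leftmost black squares $b_m$ land at the right columns. I would first verify this on the example $(2,3,7)$ of Example \ref{Ex:237}, then argue by induction on the row index using that the $k_i$ decrease strictly.
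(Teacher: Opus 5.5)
Your treatment of $G-\{a,d\}$ is essentially right, up to your left/right convention. Here $a,d$ lie on the side where row $i$ starts at column $i$. So the forced edges $(0,0)-(-1,0)$, $(i,i)-(i,i+1)$ and $(-i-1,i)-(-i-1,i+1)$ peel exactly two cells off every row and leave $G_{n-r}$. The gap is that the remaining cases, which carry all the content, rest on claims that are false or unargued.

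\emph{First}, $G-\{b,c\}$ is not symmetric to $G-\{a,d\}$. At $b,c$ the row ends do not form a unit staircase: consecutive upper rows are offset by $2(k_{i-1}-k_i)-1$ columns there, with $k_0=k$. Also, each strip there is an end square followed by length-two blocks, many of them hexagons. As a result the core is far from a two-cell trim. For $(r,s,N)=(2,3,7)$ and $n=16$, the core $G_{n-N+r}=G_{11}$ has central row $H_{11}$ of $3$ cells instead of the $9$ cells of $H_{16}$, and only one further cell survives. The forcing has to run through the hexagons of the central row and snake into rows $\pm1,\pm2$. Where it stops is governed by the square/hexagon pattern coming from the face labels, which you never compute.

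\emph{Second}, for $G-\{c,d\}$ it is not true that the horizontal edges of row $-1$ are forced and the lower part dies. Your own list of surviving rows puts $H_{n-N}$ in row $0$ and $H_{n-N-s}$ in row $-1$, and the latter is nonempty as soon as $n>2N+s$. So every row on that side is cut at both ends, and you must show by exactly how much. These rows also assemble to $G_{n-N+s}$, which the lemma assigns to $G-\{a,b\}$, not $G-\{c,d\}$. That mismatch is the vertical reflection between this section's coordinates and Definition \ref{Def:pinecones}; it has to be pinned down, not dismissed as a relabeling.

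\emph{Third}, ``the core of $G-\{a,b,c,d\}$ is the intersection of the two cores'' is not a general property of cores. You need the two forcing cascades to remain valid simultaneously and not collide. That is delicate precisely when $G_{n-N}$ is empty or tiny, where the cascades meet.

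So the heart of the lemma is deferred to ``check $(2,3,7)$, then induct on the row index.'' The paper does not reprove this lemma; it imports it from \cite{ProppMelouWest2009}. There it comes out of the Aztec-diamond description of pinecones (Remark \ref{rem:Aztec-diamond}), which the paper quotes in the proof of Lemma \ref{lem:outer-pattern-WE} as $\tilde G_S=\mathrm{core}(\tilde G\cap A_S)$. Deleting Kuo's vertices from the ambient Aztec diamond leaves, after forced edges, a smaller diamond $A_*$. The core of $G\cap A_*$ is then identified with the smaller pinecone. This treats all five cases uniformly and never tracks cascades row by row.

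If you want a direct proof instead, you need a genuine mechanism on the irregular side. For instance:
\begin{enumerate}
\item Show that each deleted graph, after removing its evidently forced edges, is a pinecone in the sense of \cite{ProppMelouWest2009}; Remark \ref{rem:alternate-definition-core} is only stated for pinecones.
\item Apply the $b_m$ description of the core from Remark \ref{rem:alternate-definition-core}.
\item Locate the $b_m$ using the square-then-length-two-blocks structure of each strip, from the proof of Proposition \ref{Prop:cent-strip}, together with the face-label arithmetic.
\end{enumerate}
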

\begin{rem}
    The original Kuo's condensation induces a bijection $${M(G_A)\times M(G_C)}\to M(G_N)\times M(G_S)\amalg M(G_W)\times M(G_E).$$
    Below, we will reserve $M_A\oplus M_C\approx M_N\oplus M_S$ or $M_W\oplus M_E$ for a correspondence from this bijection. 
    
    On the other hand, we will use $\widehat{M_A}\oplus\widehat{M_C}\approx\widehat{M_N}\oplus\widehat{M_S}$ or $\widehat{M_W}\oplus\widehat{M_E}$ for a correspondence from the bijection  $$
    {M(G)\times M(G-\{a,b,c,d\})}\to M(G-\{a,b\})\times M(G-\{c,d\}) \amalg M(G-\{a,d\})\times (G-\{b,c\}).$$

    So as multi-subgraphs of $G$, $\widehat{M_A}\oplus\widehat{M_C}$ and $\widehat{M_W}\oplus\widehat{M_E}$ (or  $\widehat{M_W}\oplus\widehat{M_E}$) are equal, but $M_A\oplus M_C$ and $ M_N\oplus M_S$ (or $M_W\oplus M_E$) are not.
\end{rem}
\begin{rem}[\cite{ProppMelouWest2009}]\label{rem:center-is-intersection}
    It holds that $G_N\cap G_S=G_W\cap G_E=G_C$.
\end{rem}
For later use we  record: \begin{lem}\label{lem:subpinecone-place}
As a subgraph of $G$, $G_*$ has its upper-right vertex  placed at \begin{itemize}
		\item $(0,0)$ if $*=W$,
		\item $(0,2)$ if $*=E$ or $C$,
		\item $(1,1)$ if $*=N$,
		\item $(-1,1)$ if $*=S$.
	\end{itemize}
\end{lem}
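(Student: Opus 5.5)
The plan is to read off the location of each $G_*$ directly from Lemma \ref{lem:subpinecone-properties}: $G_*$ is (isomorphic to) the core of $\widehat{G_*}$, which is $G$ with some of the vertices $a,b,c,d$ deleted. So it suffices to compute, inside the fixed coordinate system of $G$, which edges of $\widehat{G_*}$ become forced or forbidden. Then I locate the upper-right vertex of the surviving closed pinecone. Since the core of a pinecone is again a closed pinecone sitting in the same grid (Remark \ref{rem:alternate-definition-core}), its upper-right vertex is the upper endpoint of its rightmost vertical edge. I only need to track what happens near the right end of rows $0$, $1$ and $-1$.

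\textbf{Step 1 (local structure near the right end).} Using the listed shape of $G$, row $0$ occupies cells of columns $0,\dots,2k$. The upper rows have corners $(i,i),(i-1,i)$ and the lower rows have corners $(-i,i),(-i-1,i)$. Hence near column $0$ the graph looks like a staircase: the vertices $(1,0)$ and $(-2,0)$ are not in $G$. Consequently the vertex $(0,0)$ has neighbours only $(0,1)=a$ and $(-1,0)$, and $(-1,0)$ has neighbours only $(0,0)$ and $(-1,1)=d$. Symmetrically, at the left end of the central strip, $b$ and $c$ sit next to degree-two corner vertices.

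\textbf{Step 2 (case analysis).} For $W$, I remove $b,c$, which lie at the left end. No vertex near column $0$ loses a neighbour, and the rightmost vertical edge $(0,0)$--$(-1,0)$ still lies on an alternating cycle of the square $(0,0)$, so it is not forced. The upper-right vertex therefore stays at $(0,0)$.

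For $E$ (and likewise $C$, which also removes $a,d$), deleting $a$ and $d$ makes $(0,0)$--$(-1,0)$ forced. The now-isolated cell column $0$ disappears, and after the induced forcing the first column of row $0$ that still carries a non-forced vertical edge is column $2$. Since rows $\pm1$ begin at column $1$, removing them by one forced column keeps the staircase shape. The new upper-right vertex is thus $(0,2)$. I will check this against Remark \ref{rem:alternate-definition-core}: the leftmost black square $b_0$ shifts by two, which is consistent with the odd-length row condition.

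For $N$, deleting $c,d$ forces $(0,0)$--$(-1,0)$ again, and then propagates forcing along the bottom boundary of row $0$. The vertices of row $0$ become matched inside the upper rows, so the core loses the bottom layer and its rightmost vertical edge becomes the one at column $1$ in row $1$, giving upper-right vertex $(1,1)$. The case $S$ is the vertical reflection of $N$, giving $(-1,1)$.

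\textbf{Step 3 (cross-check and main obstacle).} As an independent check, I will compare with Proposition \ref{Prop:borders} and Remark \ref{rem:center-is-intersection}, which place $G_{n-r},G_{n-N+r},G_{n-s},G_{n-N+s}$ along the west, east, north and south borders of $G_n$. These placements force the same offsets, including $G_C=G_W\cap G_E$ sharing the right end of $G_E$. The main obstacle is the $N$/$S$ case. There the forcing propagates along a whole strip rather than a single column, and I must verify carefully, via the black-square description of the core, that the result is exactly the closed pinecone with upper-right vertex $(1,1)$ and not something shifted further. I would first try to do this by identifying the $b_m$'s of $\widehat{G_N}$ row by row.
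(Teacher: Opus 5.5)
The paper does not prove this lemma. It is only recorded ("For later use we record"), taken from the analysis of cores in \cite{ProppMelouWest2009}, and it is encoded in the lists of forced edges printed right after it. Computing forced edges is a natural way to supply a proof, but as written your proposal has a genuine gap. Propagation from vertices that drop to degree one can only show that edges are forced or forbidden; it can never show that an edge lies in the core. In each case your decisive sentence is the whole content of the lemma, asserted rather than proved: ``$(0,0)$--$(-1,0)$ still lies on an alternating cycle of the square $(0,0)$'', ``the first column of row $0$ that still carries a non-forced vertical edge is column $2$'', ``its rightmost vertical edge becomes the one at column $1$ in row $1$''. These are not local facts.

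\emph{Small $k$.} In $G_{10}^{(2,3,7)}$ (one row of three cells, $k=1$) we have $b=(0,2)$ adjacent to $a=(0,1)$, so deleting $b,c$ does touch the right end. Moreover $G_W=G_5$ and $G_C=G_3$ are empty, so the cores of $G-\{b,c\}$ and $G-\{a,b,c,d\}$ are empty; in the $C$ case the vertex $(0,2)$ is itself deleted.

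\emph{Larger $k$.} Even in $G_{12}^{(2,3,7)}$ ($k=2$), where no vertex near column $0$ loses a neighbour, the core of $G-\{b,c\}$ is isomorphic to $G_W=G_7=\emptyset$. So $(0,0)$--$(-1,0)$ is not in it.

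The lemma is vacuous in these cases, but your arguments never use that $G_*$ is nonempty, so they would prove false statements there. Also, the face containing cell $(0,0)$ need not be a square. The strips $H_n$ can end in a hexagon (e.g.\ $H_{16}^{(2,7)}$), and the paper itself uses the rectangle $R=(0,0)\cup(0,1)$ in the proof of Theorem~\ref{thm:trivial-coefficients}. In that case there is no edge $a$--$d$ and no alternating square at $(0,0)$.

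Your $N$ case, which you leave open, is also misdescribed. Since $G_C=G_N\cap G_S\subseteq G_N$ (Remark~\ref{rem:center-is-intersection}), the core of $G-\{c,d\}$ keeps the part of row $0$ and of the lower rows occupied by $G_C$, so it does not ``lose the bottom layer''. By the list after the lemma, the row-$0$ vertices outside $G_N$ are matched by the horizontal edges $(0,2t-1)$--$(0,2t)$ and $(-1,2t)$--$(-1,2t+1)$, not into the upper rows. The comparison with Proposition~\ref{Prop:borders} is only a consistency check. That proposition concerns the subgraphs of Definition~\ref{Def:pinecones}, and Lemma~\ref{lem:subpinecone-properties} identifies the cores only up to isomorphism, not their position inside $G$.

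What is missing is a global certificate that the claimed rightmost edge of $G_*$ is free. One way to supply it within your framework has three steps:
\begin{enumerate}
\item Prove that the edges listed after the lemma are forced in $\widehat{G_*}$. Near the right end this is the degree-one cascade $(i,i)$--$(i,i+1)$ and $(-i-1,i)$--$(-i-1,i+1)$ along the staircases; for $E$ and $C$ it removes exactly two cells from every row, not one column.
\item Check via Definition~\ref{Def:pinecones} that deleting the endpoints of these edges leaves precisely a translate of the closed pinecone $G_*$ at the stated position.
\item Use that a closed pinecone is its own core, so no edge of that translate is forced or forbidden.
\end{enumerate}
For $E$ this closes immediately, since the remainder is the left-justified copy described in the proof of Proposition~\ref{Prop:borders}. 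For $W$, $C$, $N$, $S$ the far ends need real work. Otherwise you must import the statement from \cite{ProppMelouWest2009}, as the paper implicitly does.
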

		
We also describe for each pair of graphs in Lemma \ref{lem:subpinecone-properties} which edges are forced. For all these pairs,  the horizontal edges $(i,j)-(i,j+1)$ with $i>0$ and $i+j$ even, $(i,j)-(i,j+1)$ with $i<-1$ and $i+j$ odd, and vertical edges $(0,0)-(-1,0)$ and $(0,2k+1)-(-1,2k+1)$, that do not lie in $G_*$, are forced. The remaining edges forced, which lie on the \emph{central strip $H$}, are
\begin{itemize}
	\item $G-\{a,b,c,d\}$ and $G_C$:   horizontal edges $(0,2t)-(0,2t+1)$ 
	
	 and $(-1,2t)-(-1,2t+1)$ for $0<t<k$ that lie outside $G_*$;
	\item $G-\{a,b\}$ and $G_S$:  horizontal edges $(0,2t)-(0,2t+1)$ for $0<t<k$, 
	
	and $(-1,2t-1)-(-1,2t)$ for $0<t \le k$ that lie outside $G_*$;
	\item $G-\{c,d\}$ and $G_N$: horizontal edges of form $(0,2t-1)-(0,2t)$ for $0<t\le k$, 
	
	and of form $(-1,2t)-(-1,2t+1)$ for $0<t<k$ that lie outside $G_*$;
	\item $G-\{b,c\}$ and $G_W$:  horizontal edges $(0,2t)-(0,2t+1)$ 
	
	and $(-1,2t)-(-1,2t+1)$ for $0<t<k$ that lie outside $G_*$;
	
	\item $G-\{a,d\}$ and $G_E$: horizontal edges $(0,2t)-(0,2t+1)$ 
	
	and $(-1,2t)-(-1,2t+1)$ for $0<t<k$ that lie outside $G_*$.
\end{itemize}
Note that the pattern of edges added among $G_C, G_W, G_E$  are  different from that in $G_N$ and $G_S$. This difference plays a role in Lemmas \ref{lem:superimposition-difference} and \ref{lem:induction-step}.

In the next two subsections, we will prove Proposition \ref{Prop:superpositions}, and Equation (\ref{Eq:GRprinc}) 
in particular, by using Kuo's condensation to treat $x$-variables and $y$-variables separately.

\subsection{Kuo's condensation for $x$-variables}\label{sec:condensation-x}

\begin{thm}[\cite{Speyer2007May}]\label{thm:trivial-coefficients}
	Let $w'(G)=cm(G)\sum x(M)$. Then $w_n:=w'(G_n^{r,s,N})$ equals the $n$th term in the Gale-Robinson sequence of type $(r,s,N)$ \begin{equation*}
		w_nw_{n-N} = w_{n-r}w_{n-N+r} + w_{n-s}w_{n-N+s}
	\end{equation*} with initial values $x_1,\cdots,x_N$.
\end{thm}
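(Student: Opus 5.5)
The plan is to apply Kuo's condensation lemma to the closed pinecone $G=\widehat{G_A}$ with the four boundary vertices $a,b,c,d$ of the central strip. Then I transport the identity to the pinecones $G_n,G_{n-N},G_{n-s},G_{n-N+s},G_{n-r},G_{n-N+r}$ via Lemma \ref{lem:subpinecone-properties}, and keep track of the covering monomials. Kuo's bijection $\Delta$ preserves the multiset of edges: $\widehat{M_A}\oplus\widehat{M_C}\sim \widehat{M_N}\oplus\widehat{M_S}$ or $\widehat{M_W}\oplus\widehat{M_E}$. Since the edge weight $x(e)$ is multiplicative, this gives
\[
\sum_{\widehat M_A,\widehat M_C}x(\widehat M_A)x(\widehat M_C)=\sum x(\widehat M_N)x(\widehat M_S)+\sum x(\widehat M_W)x(\widehat M_E),
\]
with sums over perfect matchings of $G$, $G-\{a,b,c,d\}$, $G-\{c,d\}$, $G-\{a,b\}$, $G-\{b,c\}$, $G-\{a,d\}$ respectively.

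Next I pass from $\widehat{G_*}$ to its core $G_*$. By the lemma on cores, each perfect matching of $\widehat{G_*}$ is a perfect matching of $G_*$ together with a fixed set $\mathcal F_*$ of forced edges, listed explicitly after Lemma \ref{lem:subpinecone-place}. So $\sum x(\widehat M_*)=x(\mathcal F_*)\sum_{M\in M(G_*)}x(M)$. Multiplying the identity by $cm(G_A)cm(G_C)$, the claim $w_nw_{n-N}=w_{n-s}w_{n-N+s}+w_{n-r}w_{n-N+r}$ reduces to two monomial identities:
\[
cm(G_A)cm(G_C)\,x(\mathcal F_A)^{-1}x(\mathcal F_C)^{-1}=cm(G_N)cm(G_S)\,x(\mathcal F_N)^{-1}x(\mathcal F_S)^{-1},
\]
and the same with $W,E$ in place of $N,S$. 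Since $G$ is closed, $\mathcal F_A=\emptyset$.

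To verify these, I use that $cm(H)\,x(\mathcal F)^{-1}$ can be read as a product of face labels over $\overline{\widehat H}$ (the forced edges, the faces of the core, and its ring). Each forced edge contributes the labels $x_ix_j$ of the two faces it straddles. I would check face by face, using the placements of Lemma \ref{lem:subpinecone-place} and the intersection identity $G_N\cap G_S=G_W\cap G_E=G_C$ of Remark \ref{rem:center-is-intersection} (equivalently Proposition \ref{Prop:borders}), that each face label of $\mathcal T_N^{(r,s)}$ appears with the same total exponent on both sides. Interior faces contribute $x_i^{\deg/2-1}$ and ring faces contribute $x_i^{\lceil\cdot/2\rceil}$. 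Away from the central strip and the boundary, the shapes match by inclusion–exclusion: a face lies in $G_A$ and $G_C$ exactly when it lies in both of $G_N$ and $G_S$. So the real work is the bookkeeping near the boundary rows and along rows $0,-1$.

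I expect the main obstacle to be this exponent bookkeeping on the central strip and on the outer ring. There the ceilings in $O(G)$, the hexagon/square distinction of Corollary \ref{Cor:sqhex}, and the differing forced-edge patterns for $N,S$ versus $W,E,C$ all interact. I would first handle rows $\ge 1$ and $\le -1$ row by row, since each row is a strip whose ends shift by the placements of Lemma \ref{lem:subpinecone-place}. I would then treat rows $0,-1$ by a direct local computation on the alternating forced horizontal edges. The base cases $n\le N+r$, where pinecones are single squares or empty, need the convention $O(G_m)=x_m$ of Remark \ref{rem:base-case-x} for $m\le N$. I would check there that the empty pinecone's outer face is the single face of label $m$, which makes the monomial identity hold trivially. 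Since this statement is due to \cite{Speyer2007May}, an alternative is to cite it directly; the computation above is nevertheless the template needed later for the $y$-variables.
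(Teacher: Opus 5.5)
Your proposal follows essentially the same route as the paper. It uses weighted Kuo condensation on the closed pinecone, factors out the matching-independent forced edges to pass to the cores (the paper's sets $S_1,S_2$ encode exactly your ratios $x(\mathcal F_N)x(\mathcal F_S)/x(\mathcal F_C)$ and $x(\mathcal F_W)x(\mathcal F_E)/x(\mathcal F_C)$), and checks the two monomial identities by pairing each forced edge with the two face labels it straddles.

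The bookkeeping you defer is what the paper actually carries out. It relies on three facts: every outer face enters with exponent one (pinecone corners are squares), the square/rectangle patterns of Lemmas~\ref{lem:outer-pattern-NS} and~\ref{lem:outer-pattern-WE}, and fixed placements for the outer face of an empty $G_*$. Note that empty sub-pinecones are not confined to $n\le N+r$ (e.g.\ $G_C=G_{n-N}$ is empty for every $n\le 2N$), and the paper checks these cases individually rather than having them hold trivially.
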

\begin{rem}
    Originally in \cite{Speyer2007May}, Speyer provided two proofs, one using urban renewal and one using Kuo Condensation.  However his proof using Kuo Condensation did not utilize edge weights, a feature necessary for us to keep track of.  Our proof below modifies Speyer's proof to use Kuo's condensation with edge weights.
\end{rem}
\begin{proof}
     By Kuo's condensation, once we have shown that $$cm(G)cm(G_C)x(M_A)x(M_C)=cm(G_N)cm(G_S)x(M_N)x(M_S)$$ for $M_A\oplus M_C\approx M_N\oplus M_S$ and the similar statement for $M_A\oplus M_C\approx M_W\oplus M_E$, we will arrive at Speyer's theorem.

   We first treat the case where $M_A\oplus M_C\approx M_N \oplus M_S$. Recall the notations in Definition \ref{Def:Weight} and \ref{Def:CovMonom}.
    \begin{itemize}
        \item Given the fact that $G_N\cap G_S=G_C$ but $G_N\cup G_S\subsetneq G_A$, we know the inner face labels satisfy 
        $$I(G_N)I(G_S)\cdot \prod_{F\in G_A\bs (G_N\cup G_S)} m(F) =I(G_C)I(G_A).$$ 
        
        \item 
        
        We have the tautological equation on edge weights that will be used later:
        $$x(M_N)x(M_S)\cdot \prod_{e\in S_1} x(e)=x(M_A)x(M_C).$$
        Here $S_1$ is the set $(M_A \uplus M_C)\bs (M_N\uplus M_S)$ with  $\uplus$ meaning multiset union.
        By the paragraph after Lemma \ref{lem:subpinecone-place}, we can calculate that $S_1$ consists of vertical edges $(0,0)-(-1,0)$ and $(0,2k+1)-(-1,2k+1)$, and those edges of form $(0, 2t-1)-(0,2t)$ or $(-1,2t-1)-(-1,2t)$ for $0<t<k$ that lie outside both $M_N$ and $M_S$. We therefore conclude that this set is independent of the perfect matching  $M_A$ and $M_C$.
        \item 
        \begin{figure}[H]
            \centering
            \includegraphics[scale=.4]{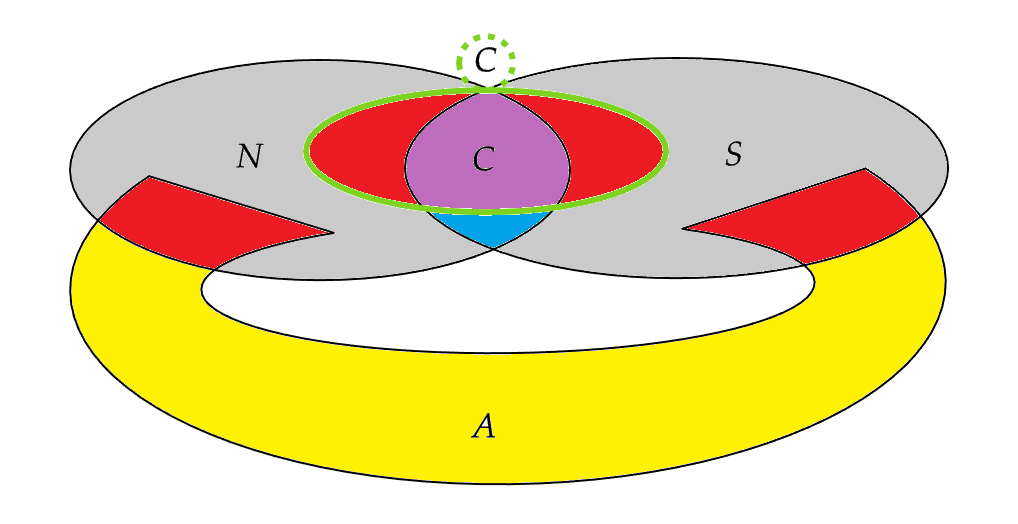}
            \caption{
            The Venn diagram for outer labels of $G_A, G_C, G_N$ and $G_S$, with those of $G_C$ enclosed by the green oval. The red parts do not contribute to any term in Equation (\ref{eqn:outer-eqn}). The blue part contributes to $O_{NS}$. The outer labels of $G_C$ are always outer labels of $G_N$ or $G_S$ except possibly when $G_C$ is empty, and are never outer labels of $G_A$. This diagram is also valid if we replace $N$ and $S$ with $W$ and $E$, with the modification that the blue part disappears  and $O_{NS}=1$.}
            \label{fig:venn-outereqn}
        \end{figure}

        Starting from the trivial equation $$O(G_A)O(G_C)/O(G_A)O(G_C)=O(G_N)O(G_S)/O(G_N)O(G_S),$$ if we cancel out from the denominator every outer face label  that contributes exactly once to both  products $O(G_A)O(G_C)$ and $O(G_N)O(G_S)$, we will arrive at the equation:\begin{equation}\label{eqn:outer-eqn}
            O(G_A)O(G_C)/ O_A O_C= O(G_N)O(G_S)/O_{N/S}O_{NSC}O_{NS},
        \end{equation}
        where we denote by $O_A $  the product of labels of faces that are outer to $G_A$ but to neither $G_N$ nor $G_S$ (part yellow in Figure \ref{fig:venn-outereqn}),
        $O_C$ the face label that is outer to $G_C$ but neither $G_N$ nor $G_S$ (It corresponds to part $C$ in dashed circle, which only happens when $G_k$ is just a square, see below the convention). $O_{N/S}$  the product of labels of faces that are outer to either $G_N$ or $G_S$ but to neither $G_A$ nor $G_C$ (part grey), $O_{NSC}$  the product of labels of faces that are outer to $G_N$, $G_S$ and $G_C$ (part purple), and $O_{NS}$  the \emph{square} product of labels of faces that are outer to both $G_N$ and $G_S$ but to neither $G_A$ nor $G_C$ (part blue). 
        
        When the relevant pinecone $G_*$ is an empty graph, it is  declared to have as outer face the cell $(0,1)$ (whether or not it is a face) when $*=C$,  the cell $(1,0)$ when $*=N$, the cell $(0,1)$ when $*=S$, the face to the left of the central strip of $G$ when $*=W$, and the face containing cell $(-1,0)$ when $*=E$. When the graph $G_*=G_n^{(r,s,N)}$ is empty, by the construction of the brane tiling, the face associated to $G_*$ is indeed labeled as $n$.
    \end{itemize}  

    \begin{rem}
    For any of the open faces
	$F \in  \overline{G}\bs G$, with label $i$, we have $m(F) = x_i$. 
    \end{rem}
    \begin{proof}
    By the definition of outer face labels, the only way the contrary holds is to have some face that has at least 3 edges incident to $G$. Given the shape of the brane tiling and the fact that faces are squares or rectangles, that could happen only when there's a rectangle placed on the corner. However, as explained in \cite[Section 2.3]{ProppMelouWest2009}, on that corner, by considering the parity, we must have a square. (See part (3) of their definition of a standard pinecone.) 
    \end{proof}
    
    Since each term $cm(G)x(M)$ is equal to $I(G)O(G)x(M)$,  we expect that the above three equations multiply together and the extra terms cancel out.
    In other words, we need to prove that $$\prod_{F\in G_A\bs (G_N\cup G_S)} m(F)\cdot \prod_{e\in S_1} x(e) \cdot \frac{1}{ O_{N/S}O_{NSC}O_{NS}} =\frac{1}{O_AO_C} .$$ Since $x_e=\frac{1}{x_ix_j}$ we would rather show that \begin{equation}\label{eqn:cancellation-face-labels-NS}
        \prod_{F\in G_A\bs (G_N\cup G_S)} m(F) \cdot O_AO_C=\prod_{e\in S_1} \frac{1}{x(e)}\cdot O_{N/S}O_{NSC}O_{NS}.
    \end{equation}
    
    \begin{rem}\label{rem:correspondence}
        We will prove this equation using the correspondence that the face labels of the two (inner or outer) faces sharing one edge $e\in (M_A \uplus M_C)\bs (M_N\uplus M_S)$ correspond to the (inverse) edge label $\frac{1}{x(e)}$. In this correspondence, we will regard an \emph{inner} rectangle $F$ labeled $i$ as providing \emph{two} face labels $x_i$, since we have $m(F)=x_i^2$.
    \end{rem}

    We first deal with the case where the superimposition decomposes as $M_N\oplus M_S$. In this case, the faces of $ G_A\bs (G_N\cup G_S)$ are all contained the central strip $H$.
    
    When $G_C=G_N\cap G_S$ is nonempty, by Proposition 4 of  \cite{ProppMelouWest2009}, the faces are: square + multiple rectangles+ square on the left / a rectangle or two squares on the right. 
    
    The right rectangle can be split into two squares. One gives an outer face label, the other, together with the outer face on its right, offsets the edge label of the rightmost edge which is included in $S_1=(M_A \uplus M_C)\bs (M_N\uplus M_S)$

    To treat the left strip we need the following lemma.

    \begin{lemma}\label{lem:outer-pattern-NS}
        Suppose row 0 occupies columns $[0,b]$, row 1 occupies columns $[1,b_+]$ and row -1 occupies columns $[1,b_-]$. Then face $(1,b_++1)$ is a square, and  row 1 contains only rectangles between columns $b_++2$ and $b$. Similarly, $(-1,b_-+1)$ is a square, and row -1 contains only rectangles between columns $b_-+2$ and $b$.
    \end{lemma}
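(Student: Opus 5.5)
I will prove Lemma \ref{lem:outer-pattern-NS} by reading the face shapes of rows $0$, $1$ and $-1$ off the brane tiling $\mathcal{T}_N^{(r,s)}$. The shapes are controlled by the labels through Corollary \ref{Cor:sqhex}: labels in $\{1,\dots,r\}\cup\{N-r+1,\dots,N\}$ are squares and all other labels are hexagons (rectangles).

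The first step is to make the row structure explicit. Moving one cell horizontally changes the label by $r$, and moving one row vertically changes it by $s$. A horizontal strip $H_m^{(r,N)}$, read from its anchoring square $\overline{m}$, is a block of consecutive rectangles and square pairs. By the proof of Proposition \ref{Prop:cent-strip}, each unit of length two added to a strip is either one hexagon labeled $\underline{i}$ with $r+1\le \underline{i}\le N-r$, or two squares labeled $N-r+\underline{i}$ and $\underline{i}$.

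Next I record how row $\pm1$ sits relative to row $0$. By Proposition \ref{Prop:borders} and Lemma \ref{lem:subpinecone-place}, row $1$ of $G$ is the central strip of $G_N=G_{n-s}$ shifted to start at column $1$. The eastern border of row $0$ agrees with that of $G_{n-(N-r)}$. In the \cite{ProppMelouWest2009} coordinates, every strip ends at an odd column offset and strips strictly shrink outward. So $b_+<b$, and the cells of row $1$ in columns $b_++1,\dots,b$ lie just outside $G$ in the ambient tiling, which is the relevant region for the outer labels $O_{N/S}$.

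The key step is to show that the face of $\mathcal{T}_N^{(r,s)}$ containing cell $(1,b_++1)$ is a square. Row $1$ of $G$ ends at column $b_+$ with a complete face, by the odd-length and closedness properties. The cell immediately past it therefore begins a new face of the tiling. Its label is obtained from the label of row $0$ at the corresponding column by adding $s$. I would compare this with the last label $\overline{\overline{m}}$ of $H_{n-s}$ and show it is congruent to a value in $\{1,\dots,r\}\cup\{N-r+1,\dots,N\}$.

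Concretely, I plan to use the fact that a strip ends precisely where the next added unit would be a square pair. This follows from the recursion $H_m=H_{m-r}\cup(\text{unit})$ together with the definition of $\overline{\overline{m}}$, which makes the right endpoint the square labeled $\overline{\overline{m}}$ when $\overline{\overline{m}}\le r$. Equivalently, I can use the black-square parity condition in part (3) of the standard pinecone definition of \cite{ProppMelouWest2009}, which is already invoked in the preceding remark. I expect this to be the main obstacle, since it needs careful index bookkeeping modulo $r$ and $N-r$.

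For the claim that row $1$ has only rectangles between columns $b_++2$ and $b$, I will argue as follows. Any square in that range would give a black square in row $1$ strictly inside the column range of row $0$. That would contradict the core description in Remark \ref{rem:alternate-definition-core} applied to $G$ and to $G_{n-s}$, since row $1$ of the core would then extend further to the right. Equivalently, such a square would contradict $k_1<k_0$ together with the exact length of $H_{n-s}$ computed in Proposition \ref{Prop:cent-strip}. The statement for row $-1$ is symmetric: replace $G_N$ by $G_S=G_{n-(N-s)}$ and $s$ by $N-s$.
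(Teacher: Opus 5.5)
There is a genuine gap, and it begins with an orientation error. In the coordinates of Section~\ref{sec:Step3}, column indices increase to the \emph{left}: column $0$ is the rightmost cell. So every row is flush with the staircase on the right, and column $b_+$ is the \emph{left} end of row $1$. By Definition~\ref{Def:pinecones}, the left-most face of any strip $H_m^{(r,N)}$ is the square labeled $\overline{m}\in\{1,\dots,r\}$. The face $\overline{\overline{m}}$ sits at the staircase end, which is the opposite end.

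Your key step works at the $\overline{\overline{m}}$ end and relies on the principle that a strip ends precisely where the next added unit would be a square pair. That principle is false. In Example~\ref{Ex:237}, $H_{14}^{(2,7)}$ ends with the square $2=\overline{\overline{14}}$, yet $H_{16}^{(2,7)}$ is obtained from it by appending the hexagon labeled $4$. Worse, the conclusion itself fails at that end: in $G_{16}^{(2,3,7)}$, the strip $H_{13}$ (faces $2,4,6,1$) is adjacent to the centre, and the face just past its right end is the hexagon $3$. So no index bookkeeping can make the step work as planned.

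At the correct end the claim is immediate. Horizontally adjacent faces differ in label by $r$, so the face at $(1,b_++1)$ has label $\overline{m}-r$. Modulo $N$ this lies in $\{N-r+1,\dots,N\}$, so the face is a square by Corollary~\ref{Cor:sqhex}. Equivalently, squares in a tiling row come in adjacent pairs labeled $N-r+i$ then $i$, and every strip begins with the right member of such a pair.

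The second claim also needs repair. Remark~\ref{rem:alternate-definition-core} applied to $G$ (or to $G_{n-s}$) only sees faces of that graph. The faces of row $1$ in columns $b_++2,\dots,b$ are not in $G$, so no contradiction arises. You must apply the remark to a graph whose core is $G$ and whose row $1$ reaches column $b-1$. The natural choice is the Aztec-diamond superposition of Remark~\ref{rem:Aztec-diamond}, which is what the paper does in Lemma~\ref{lem:outer-pattern-WE}.

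You also need to justify that a square in that range forces a \emph{black} square strictly between $b_+$ and $b$. Squares come in horizontal pairs of adjacent cells of opposite colours, and $b_+=b_1$ is black. Since $b-b_+$ is odd, the black member of the next pair to the left sits in a column $c\equiv b_+ \pmod 2$ with $b_+<c\le b-1$. This contradicts the choice of $b_1$. Your alternative via $k_1<k_0$ and the length of $H_{n-s}$ is not an argument, because strip lengths inside $G$ say nothing about faces outside it.

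For comparison, the paper avoids both issues by a different embedding. It places $G$ as $\tilde G_S$ in a larger pinecone $\tilde G$, so that row $1$ of $G$ becomes part of the central strip of $\tilde G$. Proposition~4 of \cite{ProppMelouWest2009} then gives the shape of that strip to the left of $\tilde G_C$: a square, then rectangles, then a square. Since the central strip extends farther left than the adjacent row (row $0$ of $G$), only the first square and rectangles fall in columns $b_++1,\dots,b$.
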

    \begin{proof}
        We prove the statement on row 1. Place $G$ inside some bigger pinecone $\tilde{G}$ such that $G\cong \tilde{G}_S$. It follows from Proposition 4 of  \cite{ProppMelouWest2009} that the faces of row 0 of $\tilde{G}$ are: square + multiple rectangles + square on the left of the center of $G$ / a rectangle on the right. Now this row 0 of $\tilde{G}$ is row 1 of our pinecone which is $\tilde{G}_S$, and the left of the center of $G$ is the left of the boundary of row 1 in $\tilde{G}_S$. Since the center (row 0) is always more extended to the left than row 1, the lemma follows. Similar proof follows for row $-1$ once we place $G$ as $\tilde{G}_N$.
    \end{proof}
    For two squares on the left strip, the square on the right serves as an outer face of $G_N$, $G_S$ and $G_C$; the label of the other square on the left, together with the label of the outer face on its left, correspond to the leftmost edge label. For the other rectangles on the left strip, they can be incident to either $G_N$ or $G_S$ or both or neither.
    \begin{itemize}
        \item If it is incident to $G_N$ or  $G_S$, then it contains one forced edge in $(M_A \uplus M_C)\bs (M_N\uplus M_S)$. One $x$ of its label $x^2$ serves as the outer face label of $G_N$ or  $G_S$, the other $x$, together with the face label of the outer rectangle that is below or above it, corresponds to the edge label it contains;
        \item If it is incident to both $G_N$ and $G_S$, then $x^2$ serves as the product of outer face labels for both $G_N$ and  $G_S$;
        \item If it is incident to neither of $G_N$ or  $G_S$, then it contains two forced edges. Its label $x^2$, together with the face labels of two outer rectangles that sandwich it, correspond to the two edge labels. 
    \end{itemize}  
     By the correspondence \ref{rem:correspondence}, it remains to show that all outer faces of $G_A$ that contribute to $O_A$ contain exactly one forced edge. This is due to the above Lemma \ref{lem:outer-pattern-NS}, since every other edge on the top or bottom boundary is forced. Collecting these facts together, we arrive at Equation (\ref{eqn:cancellation-face-labels-NS}). We  illustrate the proof below by using the term $x_{16}$ in Example \ref{Ex:237}.

    \begin{figure}[H]
        \centering
        \includegraphics[width=0.8\linewidth]{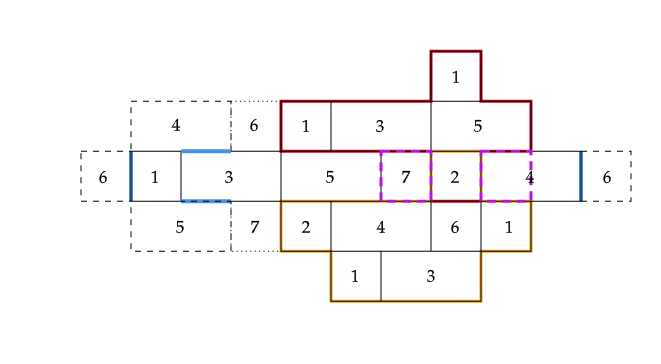}
        \caption{Here through Figure \ref{fig:WE-Eempty}, we illustrate the cases by the pinecones as in Example \ref{Ex:237}. Here the term is $x_{16}$. $G_N$ and $G_S$ are enclosed in the red and orange boxes respectively. The blue edges are those forced edges of $S_1$. The two purple inner squares contribute to $O_{NSC}$, all the outer faces contributing to $O_A$ are dashed, and the dotted squares illustrate Lemma \ref{lem:outer-pattern-NS}.}
        \label{fig:NS-full}
    \end{figure}
When $G_N\cap G_S$ is empty, the $G\bs(G_N\cup G_S)$ is the whole central strip. In this case, by Proposition 4 of \cite{ProppMelouWest2009}, if row 0 occupies columns $[0,b]$, then $(0,b)$ is a square, and there are only rectangles between column 2 and $b-1$ on row 0.

 If $G_N\cap G_S$ is empty but neither $G_N$ nor $G_S$ is empty, then after the above process, we are left with the cell $(0,1)$ on the central strip which is both an inner face and an outer face of $G_N,G_S$ and $G_C$, so Equation (\ref{eqn:cancellation-face-labels-NS}) still holds. See an illustration below using the term $x_{14}$ in Example \ref{Ex:237}.
 
\begin{figure}[H]
    \centering
    \includegraphics[width=0.8\linewidth]{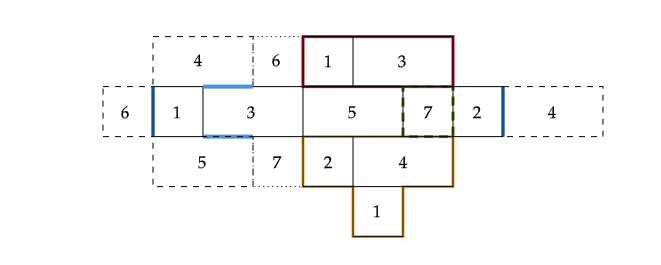}
    \caption{This corresponds to term $x_{14}$. Though $G_C$ is empty in this case, its corresponding face label is $x_7$ as in the Gale-Robinson sequence, which is the label of the green dashed square. So it is both an inner face and an outer face of $G_N,G_S$ and $G_C$, which is balanced on the two sides of Equation (\ref{eqn:cancellation-face-labels-NS}.)}
    \label{fig:NS-Cempty}
\end{figure}

  If $G_N$ is empty, then the forced edges are: $(0,0)-(-1,0)$, $(0,2k+1)-(-1,2k+1)$ as well as those edges of form $(0,2t-1)-(0,2t)$ and $(-1,2t'-1)-(-1,2t')$ for $0<t\le k$ and $l<t'\le k$ , where $l\ge1$ is defined so that $(-1,2t'-1)-(-1,2t')$ lie outside $G_S$. Using Lemma \ref{lem:outer-pattern-WE} below, we associate to each edge $(0,2t-1)-(0,2t)$ with the inner cell $(0,2t-1)$ and the outer face $(1,2t-1)$, edge $(0,0)-(-1,0)$ with inner cell $(0,0)$ and outer face $(0,-1)$, edge $(0,2k+1)-(-1,2k+1)$ with inner square $(0,2k)$ and outer face $(0,2k+1)$, and $(-1,2t'-1)-(-1,2t')$ with inner cell $(0,2t'-2)$ and outer face $(-1,2t'-1)$. Then all the terms in $O_A$ have been associated. Equation (\ref{eqn:cancellation-face-labels-NS}) holds once we notice that $(1,0)$ nor $(0,1)$ contribute to any outer term in the equation, since $(1,0)$ is outer to $G_A$ and $G_N$, and $(0,1)$ is outer to $G_C$ and $G_S$. See an illustration below using the term $x_{11}$ in Example \ref{Ex:237}.
\begin{figure}[H]
    \centering
    \includegraphics[width=0.8\linewidth]{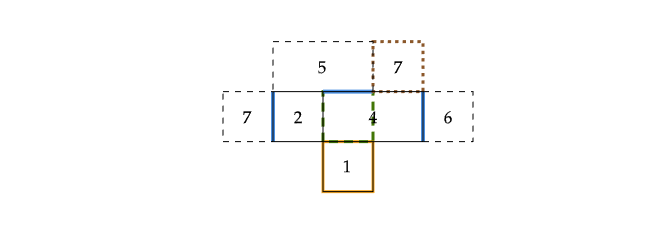}
    \caption{
    Seemingly only outer to $G_A$, the brown dotted square has the face label $x_7$, which is the outer face label of $G_N$ by our convention. Similarly, seemingly only outer to $G_S$, the green dashed square is also by convention outer to $G_C$. Hence in this pinecone corresponding to $x_{11}$, our Equation (\ref{eqn:cancellation-face-labels-NS}) essentially records the correspondence \ref{rem:correspondence} for three blue edges.
    }
    \label{fig:NS-Nempty}
\end{figure}
    If both $G_N$ and $G_S$ are empty then $G_A$ has only one row. In this case the forced edges are: $(0,0)-(-1,0)$, $(0,2k+1)-(-1,2k+1)$ as well as those edges of form $(0,2t-1)-(0,2t)$ and $(-1,2t-1)-(-1,2t)$ for $0<t\le k$. We can still associate edges to inner cells and outer faces like above. Equation (\ref{eqn:cancellation-face-labels-NS}) holds once we notice that the faces $(\pm 1,0)$ do not contribute to any outer term in the equation, since $(1,0)$ is outer to $G_A$ and $G_N$, and $(-1,0)$ is outer to $G_A$ and $G_S$, and $X=x_k$ is a face label for face $(0,1)$ in this case. See an illustration below using the term $x_{10}$ in Example \ref{Ex:237}.
\begin{figure}[H]
    \centering
    \includegraphics[width=0.8\linewidth]{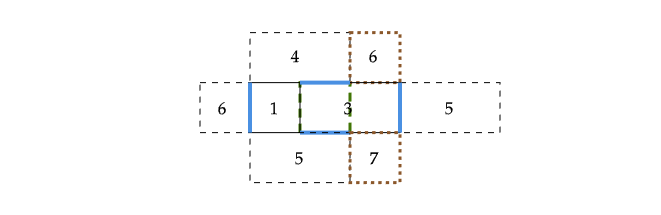}
    \caption{This corresponds to term $x_{10}$. The explanation of two brown dotted squares is similar to the one in Figure \ref{fig:NS-Nempty}. $x_3$ appears 3 times in edge labels, 2 times in inner face labels, and 1 time as $O_C$, so it is balanced on the two sides of Equation (\ref{eqn:cancellation-face-labels-NS}).}
    \label{fig:NSempty}
\end{figure}

    Then we treat the case where $M_A\oplus M_C\approx M_W\oplus M_E$.
    This time, the equations we have are:
    \begin{itemize}
        \item Inner face labels satisfy $$I(G_W)I(G_E)\cdot \prod_{F\in G_A\bs (G_W\cup G_E)} m(F) =I(G_C)I(G_A)\,.$$ 
        \item The set $S_2=(M_A \uplus M_C)\bs (M_W\uplus M_E)$ consists of  the horizontal edges $(i,j)-(i,j+1)$ with $i>0$ and $i+j$ even, $(i,j)-(i,j+1)$ with $i<-1$ and $i+j$ odd that do not lie in $G_W\cup G_E$;  This set does not depend on the perfect matchings as well. The
        edge labels satisfy $$x(M_W)x(M_E)\cdot \prod_{e\in S_2} x(e)=x(M_A)x(M_C).$$
         
        \item Outer face labels satisfy $$O(G_A)O(G_C)/ O_AO_C = O(G_W)O(G_E)/O_{W/E}O_{WEC}, O_{WE}=1.$$
    \end{itemize}

    The equation we want to show is \textbf{ \begin{equation}\label{eqn:cancellation-face-labels-WE}
        \prod_{F\in G_A\bs (G_W\cup G_E)} m(F) \cdot O_AO_C=\prod_{e\in S_2} \frac{1}{x(e)}\cdot O_{W/E}O_{WEC}.
    \end{equation}}
    
    We also have a lemma similar to Lemma \ref{lem:outer-pattern-NS} whose proof is different:

    \begin{lem}\label{lem:outer-pattern-WE}
        If the top row $a$ of $G$ occupies columns $[a,b]$, then the row $a+1$ above the top of $G$ contains only rectangles between columns $a+1$ and $b$\footnote{Row $a$ starts from column $a$, and $b-a$ is always even, by construction.}. Similar result holds for the row below $G$.
    \end{lem}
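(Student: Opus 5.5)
The plan is to derive Lemma \ref{lem:outer-pattern-WE} from the structure of the brane tiling $\mathcal{T}_N^{(r,s)}$ along a horizontal line. It does not need the nesting trick of Lemma \ref{lem:outer-pattern-NS}. I treat the row above the top of $G$; the row below follows by the vertical symmetry of the construction, exchanging $s$ and $N-s$.

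First I will pin down the labels in a row. By construction (1b) of $\widetilde{Q}_N^{(r,s)}$, moving one cell to the right multiplies nothing but adds $r$ to the label modulo $N$. By Corollary \ref{Cor:sqhex}, a cell is a square exactly when its label lies in $\{1,\dots,r\}\cup\{N-r+1,\dots,N\}$; otherwise it is half of a horizontal rectangle. Squares therefore appear in adjacent pairs labeled $N-r+j$ then $j$ (this is the pattern used in the proof of Proposition \ref{Prop:cent-strip}). Rectangles occupy two consecutive cells whose labels are identified.

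Next I describe the top row $a$ of $G$. By Definition \ref{Def:pinecones}, it is a strip $H_m^{(r,N)}$ with $m\le N+r$, or with $m$ such that the strip above, $H_{m-(N-s)}$, is empty, i.e. $m-(N-s)\le N$. The row above it, row $a+1$, is the neighboring row of the tiling, shifted by one column. By construction (1b), its cell labels differ from those directly below by $s$, or equivalently are obtained by the usual shift. So column $j$ of row $a+1$ carries the label that row $a$'s strip $H_{m'}$ would carry, with $m'=m-(N-s)\le N$.

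The key step is then to show that, between columns $a+1$ and $b$, the cells of row $a+1$ carry labels in $\{r+1,\dots,N-r\}$ and pair into rectangles. I would argue with Proposition \ref{Prop:cent-strip}'s recursive description. A strip $H_{m''}$ with $m''\le N$ would have negative index, so $d(m''-N-i,r,N-r)=0$ for every $i$. This means no square pair of the form ``$N-r+j$ then $j$'' can begin at an even offset from the left end. Equivalently, walking rightward from column $a+1$ in steps of two cells, each step adds $2r$ to the label. Since the first square pair in that row starts only after the index reaches the range counted by $d$, the column range $[a+1,b]$ has length $b-a$. This is even by the footnote, and it is short enough, because $b-a=2k_a$ is governed by the same floor $\lfloor (m-N-1)/r\rfloor$, that no square is reached. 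The parity of $b-a$ also guarantees that rectangles tile the interval exactly.

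The main obstacle is this last counting step: converting ``the strip above is empty'' into a precise statement that the labels of row $a+1$ in those columns avoid the square residues. I would first try it directly with the explicit labels $1+Ar+Bs \pmod N$, using $r<s\le N/2$. If that gets messy, I would fall back on checking that the wrap-around of labels past $N$ (which creates squares) can only occur at column $a$ or beyond $b$. I would also cross-check this against the corner square condition cited from \cite[Section 2.3]{ProppMelouWest2009}.
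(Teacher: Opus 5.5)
\textbf{There is a genuine gap, and it sits exactly at the step you flag as the main obstacle.}

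The inference ``$d(m''-N-i,r,N-r)=0$ for all $i$, so no square pair begins at an even offset in row $a+1$'' does not follow. For $m''\le N$ the strip $H_{m''}$ is empty by Definition~\ref{Def:pinecones}. Proposition~\ref{Prop:cent-strip} only counts faces inside strips of the pinecone, so its vanishing says nothing about which faces of the ambient tiling $\mathcal{T}_N^{(r,s)}$ occupy row $a+1$. That is the entire content of the lemma.

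The bookkeeping you set up to replace it is also inaccurate at the level of cells. Along a row it is faces, not cells, whose labels advance by $r$, since a hexagon spans two cells. The face over a given cell is also not uniformly shifted by $s$. For instance, in $\mathcal{T}_7^{(2,3)}$ the faces above the hexagon $3$ are $4,6$ (that is, $3+s-r,\,3+s$), while those above the hexagon $5$ are $1,3$ (that is, $5+s,\,5+s+r$ modulo $7$). So the proposal names the claim but does not prove it.

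\textbf{How to complete your direct route.} Use the right invariant.
\begin{itemize}
\item Partition each row of $\mathcal{T}_N^{(r,s)}$ into two-cell blocks. Each block is either a hexagon $y\in[r+1,N-r]$ or a square pair $(N-r+i,\,i)$ with $i\in[1,r]$. Give it the residue $y$, resp.\ $i$, modulo $N-r$.
\item Consecutive blocks in a row differ by $r$; this is the mechanism in the proof of Proposition~\ref{Prop:cent-strip}.
\item Blocks in adjacent rows are offset by one cell. Using the two hexagon cases above, check from the adjacencies of $\widetilde{Q}_N^{(r,s)}$ the following. The block in the row above covering the left cell of a block has residue larger by $s-r$. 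The block in the row below covering that cell has residue smaller by $s$. This matches $H_{n-(N-s)}$ and $H_{n-s}$ sitting above and below $H_n$ in the orientation of Definition~\ref{Def:pinecones}.
\end{itemize}

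Now let the top row be $H_m$ with $m=N+\bar m+qr$ and $q\ge 1$. Its leftmost cell is the right half of a square pair, and its $q$ full blocks have residues $\bar m+tr$. Hence the $q$ blocks of row $a+1$ over columns $a+1,\dots,b$ have residues $\bar m+s+(t-1)r$ for $1\le t\le q$. Their integer values lie in $[\bar m+s,\;m-N+s-r]\subseteq[r+1,N-r]$, precisely because ``the strip above is empty'' means $m\le 2N-s$. So all of these blocks are hexagons.

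For the bottom row, where $m\le N+s$, the residues are $\bar m+(t-1)r+N-s$, which lie in $[\bar m+N-s,\;m-r-s]\subseteq[r+1,N-r]$.

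\textbf{Comparison with the paper.} Completed this way, your argument is more elementary than the paper's. The paper instead realizes $G$ as $\tilde G_S$ inside a larger pinecone and uses $\tilde G_S=\mathrm{core}(\tilde G\cap A_S)$ from \cite{ProppMelouWest2009}, together with the row-by-row description of the core in Remark~\ref{rem:alternate-definition-core}. A square in row $a+1$ over those columns would then force the core to reach row $a+1$, a contradiction.
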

    \begin{rem}
        If the top row $a$ of $G$ consists of a single square, then there are no faces on row $a+1$ between  columns $a+1$ and $b$. In this case, the lemma holds vacuously.
    \end{rem}
    \begin{proof}
        Place $G$ inside some bigger pinecone $\tilde{G}$ such that $G\cong \tilde{G}_S$. By the result $\tilde{G}_S = core(\tilde{G} \cap A_S)$ in \cite{ProppMelouWest2009}, where $A$ is the Aztec diamond constructed for $\tilde{G}$ as in Remark \ref{rem:Aztec-diamond} and the description of core given in Remark \ref{rem:alternate-definition-core}, we know that if there were some square on the row $a+1$ between  columns $a+1$ and $b$, then it (together with those faces on the right of it, until column $a+1$) must also be contained in the core. This violates the definition of top row. The same proof applies to the row below $G$ if we place it as $\tilde{G}_N$.
    \end{proof}
    
    We notice that the edges in $S_2=(M_A \uplus M_C)\bs (M_W\uplus M_E)$ have an interlacing pattern, which means: If we split all the inner rectangles into squares, and distribute the two face labels of each inner rectangle to the two squares it split into, then it follows from the description of $S_2$ that
    \begin{enumerate}
        \item No face in $ G_A\bs (G_W\cup G_E)$ can contain two forced edges in $S_2$.
        \item If we denote by $L$ the set of all the faces in $ G_A\bs (G_W\cup G_E)$ that contain a forced edge in $S_2$, then $ G_A\bs (G_W\cup G_E \cup L)$ equals the set of faces that contribute to either $O_{W/E}$ or $O_{WEC}$. 
    \end{enumerate}
    Using the correspondence \ref{rem:correspondence}, it remains to  show that all outer faces of $G_A$ that contribute to $O_A$ contain exactly one forced edge, which similarly holds because of Lemma \ref{lem:outer-pattern-WE}. Collecting these facts together, we arrive at the Equation (\ref{eqn:cancellation-face-labels-WE}). We  illustrate the proof below by using again the term $x_{16}$ in Example \ref{Ex:237}.

    \begin{figure}[H]
        \centering
        \includegraphics[width=0.8\linewidth]{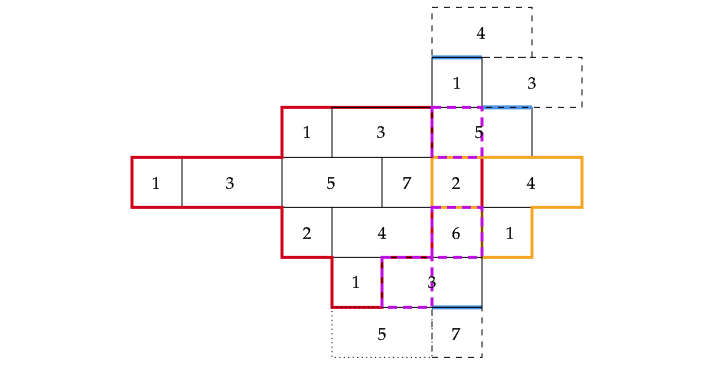}
        \caption{The legend of this figure is similar to the one of Figure \ref{fig:NS-full}, with the minor modification that red and orange box enclose $G_W$ and $G_E$, and the dotted  rectangle illustrates Lemma \ref{lem:outer-pattern-WE}. The two rectangles containing purple boxes are each split into two squares, one of which has a blue edge to form the correspondence \ref{rem:correspondence}.}
        \label{fig:WE-full}
    \end{figure}
    If $G_W\cap G_E$ is empty but neither $G_W$ nor $G_E$ are empty, then in this case, the face $(0,0)=G_E$ and $(0,1)$ are two squares by construction of brane tiling. After the above process, we are left with face $(0,1)$ on the central strip which is both an inner face and an outer face of $G_W,G_E$ and $G_C$, so Equation (\ref{eqn:cancellation-face-labels-WE}) still holds. See an illustration below using again the term $x_{14}$  in Example \ref{Ex:237}.
    \begin{figure}[H]
    \centering
    \includegraphics[width=0.8\linewidth]{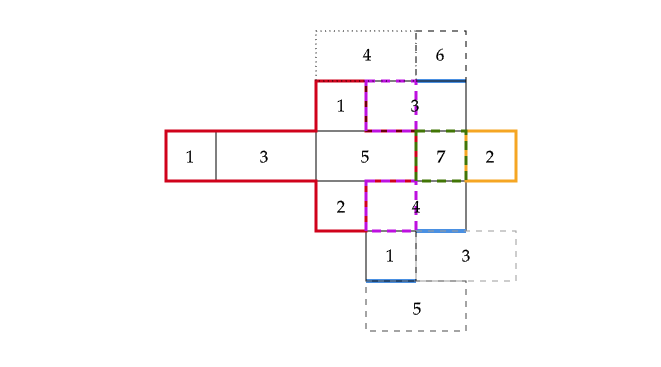}
    \caption{This corresponds to $x_{14}$ as well, and have similar legend and explanation as in Figure \ref{fig:NS-Cempty} and \ref{fig:WE-full}.}
    \label{fig:WE-Cempty}
\end{figure}
    If $G_E$ is empty but $G_W$ is nonempty, then  the central strip of $G\bs G_W$ consists of one rectangle $R=(0,0)\cup (0,1)$\footnote{This must be a rectangle. Otherwise, $G_E$ would be nonempty with face $(0,0)$, as in the case above Figure \ref{fig:WE-full}.}. Comparing with the case where $G_C$ is nonempty, there are two more forced edges $(0,0)-(0,1)$ and $(-1,0)-(-1,1)$ to consider. So after the above process we are left with two outer faces $(\pm1,0)$ of $G_A$, the inner rectangle $R$,  these two edges, one "outer face" $(0,1)$ to $C$, and one "outer face" $(0,-1)$ to $G_A$.  Equation (\ref{eqn:cancellation-face-labels-WE}) still holds once we notice that the faces $(0, \pm 1)$ do
not contribute to any outer term in the equation, since face $(0,-1)$ is outer to both $G_A$ and $G_E$, and face $(0,1)$ is outer to both $G_W$ and $G_C$, by our convention. See an illustration below using the term $x_{10}$ in Example \ref{Ex:237}.
    \begin{figure}[H]
    \centering
    \includegraphics[width=0.8\linewidth]{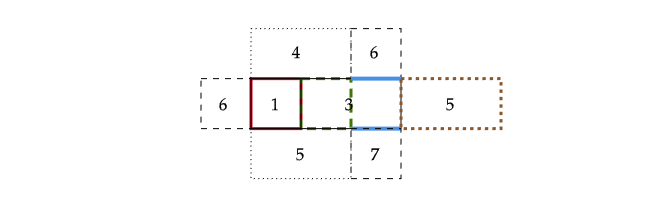}
    \caption{This corresponds to $x_{10}$. The legend and explanation are similar to those of Figure \ref{fig:NS-Nempty}.}
    \label{fig:WE-Eempty}
\end{figure}

    If both $G_W$ and $G_E$ are empty then $G_A$ itself is a square. In this case, the edges are $(0,0)-(0,1)$ and $(-1,0)-(-1,1)$, $O_A$ only consists of face labels of $(1,0)$ and $(-1,0)$ since face $(0,\pm1)$ are also outer to $G_W$ or $G_E$ respectively by our convention. Since in this case $X=x_k$, Equation (\ref{eqn:cancellation-face-labels-WE}) becomes trivial.
\end{proof}

\subsection{Kuo's condensation for $y$-variables}

\begin{defi}
	For a perfect matching $M$ of $G$, the \emph{log} height function $v_M$ defined on the set of faces of $G$, is given by: $v_M(F)={\# \text { cycles of } M^+ \text { enclosing the face } F}$. Here $F$ runs through all faces of $G$, and $M^+$ is the  $M\oplus M_-$ appearing in Definition \ref{Def:Height} of height function.
\end{defi}

Recall by Lemma \ref{lem:subpinecone-properties} we may identify $G_*$ as a subgraph of $G$. Therefore we may also regard $v_{M_*}$ as a function defined on a \emph{subset} of faces of $G$. We extend $v_{M_*}$ by setting them as zero on the other faces to get a function defined on all faces of $G$, by abusing of notation, still denoted by $v_{M_*}$. We are going to establish Proposition \ref{Prop:superpositions} via the following:

\begin{lem}\label{lem:superimposition-difference}
	   Denote by  $1_H$  the characteristic function whose values for  central strip faces are $1$, and other faces are $0$. 

           If $M_A\oplus M_C\approx M_N\oplus M_S$, then \begin{equation}
               v_{M_N}+v_{M_S}+1_H=v_{M_A}+v_{M_C}.
           \end{equation}
           
         If $M_A\oplus M_C\approx M_W\oplus M_E$, then 
           \begin{equation}
               v_{M_W}+v_{M_E}=v_{M_A}+v_{M_C}.
           \end{equation}
     \end{lem}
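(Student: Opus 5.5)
We pass from log heights to a statement about winding of the superimposed graphs. For a perfect matching $M$ of any of the graphs $G_*$, the function $v_M$ evaluated at a face $F$ equals the number of cycles of $M\oplus M_-$ enclosing $F$. Equivalently, it is the height difference $h_M(F)-h_{M_-}(F)$ of Thurston's height function, normalized to vanish on the outer face, up to a factor of the face-degree normalization. So it suffices to prove the identities with $v$ replaced by height differences measured from the respective minimal matchings.

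The plan is to work entirely inside $G$ through the hat versions. Adding forced edges to $M_*$ gives $\widehat{M_*}$, a perfect matching of $\widehat{G_*}$. Likewise adding forced edges to the minimal matching of $G_*$ gives the minimal matching of $\widehat{G_*}$. We will need that forced edges form doubled edges of $\widehat{M_*}\oplus\widehat{M_{*,-}}$ and so enclose nothing, which gives $v_{M_*}=v_{\widehat{M_*}}$ as functions on faces of $G$, extended by zero. This also needs a check that the minimal matchings are compatible; I would verify this from the explicit list of forced edges following Lemma \ref{lem:subpinecone-place}, which match the highlighted minimal matchings.

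Next we use the bijection at the level of $G$. We have $\widehat{M_A}\oplus\widehat{M_C}=\widehat{M_N}\oplus\widehat{M_S}$ (resp. $\widehat{M_W}\oplus\widehat{M_E}$) as multisets of edges. Consequently $\widehat{M_A}\oplus\widehat{M_C}\oplus\widehat{M_{A,-}}\oplus\widehat{M_{C,-}}$ and $\widehat{M_N}\oplus\widehat{M_S}\oplus\widehat{M_{N,-}}\oplus\widehat{M_{S,-}}$ differ only in the minimal parts. Height functions are additive under superposition: the sum $h_{M}+h_{M'}$ is determined by the multigraph $M\uplus M'$ as a $1$-chain, with each face's value equal to the flux of that chain across a path from the outside. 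The left-hand sums therefore agree up to the constant-in-$M$ correction
\[
(h_{\widehat{M_{A,-}}}+h_{\widehat{M_{C,-}}})-(h_{\widehat{M_{N,-}}}+h_{\widehat{M_{S,-}}}),
\]
and similarly for $W,E$. The choice of how cycles are distributed in $\Delta$ affects only which matching receives which half of a cycle; it does not affect the sum. This settles the independence from the bijection.

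The remaining and main step is computing this correction, i.e. applying the claim to the minimal matchings themselves. Since $M_{A,-}\oplus M_{C,-}$ is itself matched by $\Delta$ to some pair, it suffices to compute the difference of the two $1$-chains $\widehat{M_{A,-}}+\widehat{M_{C,-}}$ and $\widehat{M_{N,-}}+\widehat{M_{S,-}}$. This difference is supported on the central strip, coming from the different patterns of forced edges on rows $0,-1$ listed earlier. For $N,S$ the difference consists of the alternating edges $(0,2t-1)-(0,2t)$ versus $(0,2t)-(0,2t+1)$ on both boundaries, together with the two vertical edges. Together they trace the boundary of $H$ as one alternating cycle, which contributes exactly $1_H$. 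For $W,E$ the patterns on rows $0,-1$ coincide, so the difference is zero inside $H$, and outside $H$ the upper and lower parts give no contribution either. Whether the sign is $+1_H$ on the $N,S$ side needs checking against the "minimal" convention (the boundary cycle orientation), and it should be checked with the $x_{16}$ example. The degenerate cases where some $G_*$ is empty I would handle separately by direct inspection, as in Section \ref{sec:condensation-x}.
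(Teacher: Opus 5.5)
Your argument is correct, but it takes a genuinely different route from the paper.

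\textbf{The paper's route.} The paper argues by a base case plus induction in the distributive lattice. Lemma \ref{lem:base-case} verifies the identity when $M_N,M_S$ (resp.\ $M_W,M_E$) are minimal. Lemma \ref{lem:induction-step} shows that a twist of $M_N$ or $M_S$ at a face $F$ is mirrored by a twist of $M_A$ or $M_C$ at $F$ with the same sign. A separate lemma shows that the way $\Delta$ distributes cycles is irrelevant.

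\textbf{Your route.} You write each $v_{M_*}$ as the potential of the divergence-free chain $\omega_{\widehat{M_*}}-\omega_{\widehat{M_{*,-}}}$, i.e.\ the function on faces of $G$, vanishing outside, whose boundary is that chain. Then $\widehat{M_A}\uplus\widehat{M_C}=\widehat{M_N}\uplus\widehat{M_S}$ and linearity give at once that $v_{M_A}+v_{M_C}-v_{M_N}-v_{M_S}$ is a constant, independent of the matchings and of $\Delta$. The constant is read off from the four extended minimal matchings, which is exactly the computation in the paper's base case.

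\textbf{What each buys.} Your route gets independence from $\Delta$ for free. It removes the twisting machinery: the lemma on paths through $F$, the choice of a compatible decomposition, and connectivity of the lattice under twists. It also handles empty $G_*$ uniformly, since then $\widehat{G_*}$ simply has its unique forced matching. The paper's route stays in the combinatorial language of cycles and twists and never sets up flows on graphs with deleted vertices.

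\textbf{Points to tighten.}
\begin{enumerate}
\item Flux across a path is path-independent only for divergence-free chains. So state the additivity for the differences $\omega_{\widehat{M_*}}-\omega_{\widehat{M_{*,-}}}$, not for $h_M+h_{M'}$.
\item Using one orientation convention for all five graphs requires each $M_{*,-}$ to be the bottom of its lattice for the coloring inherited from $G$. The cancellation in the upper and lower parts also requires all the extended minimal matchings to use one common parity pattern of horizontal edges there. Both facts come from the even offsets in Lemma \ref{lem:subpinecone-place}; this is the same fact the paper uses in the third part of the proof of Lemma \ref{lem:induction-step}.
\item The sign needs no orientation bookkeeping. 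The chain $\omega_{\widehat{M_{A,-}}}+\omega_{\widehat{M_{C,-}}}-\omega_{\widehat{M_{N,-}}}-\omega_{\widehat{M_{S,-}}}$ equals $\omega_{M_{A,-}}-\omega_{M'}$, where $M'$ is the perfect matching of $G$ obtained by flipping $M_{A,-}$ along $\partial H$. Hence the constant is $v_{M'}$, which is $1_H$ directly from the definition of $v$, because $M'\oplus M_{A,-}$ is the single cycle $\partial H$.
\item Drop the sentence about $(M_{A,-},M_{C,-})$ being matched by $\Delta$ to some pair. It plays no role, and that pair in fact corresponds to $(M_{W,-},M_{E,-})$, not to $(M_{N,-},M_{S,-})$.
\end{enumerate}
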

\begin{proof}[Proof of Proposition \ref{Prop:superpositions} (assuming Lemma \ref{lem:superimposition-difference})]
	Each term in $x_nx_{n-N}$ is given by \\$cm(G)cm(G_C)x(M_A)x(M_C)y(M_A)y(M_C)$. If $M_A\oplus M_C\approx M_W\oplus M_E$, then  we have $$cm(G)cm(G_C)x(M_A)x(M_C)=cm(G_W)cm(G_E)x(M_W)x(M_E)$$ as shown in the proof of Speyer's Theorem \ref{thm:trivial-coefficients}, 
	and $y(M_A)y(M_C)=y(M_W)y(M_E)$ by the above lemma. If $M_A\oplus M_C\approx M_N\oplus M_S$, then $$cm(G)cm(G_C)x(M_A)x(M_C)=cm(G_N)cm(G_S)x(M_N)x(M_S)$$ by Speyer's Theorem as well, and $$y(M_A)y(M_C)=\prod_i y_i^{d(n-N-i, r, N-r)}y(M_W)y(M_E)$$ by the above lemma.
	The proposition thus follows since Kuo's condensation is a bijection.
\end{proof}
The proof of Lemma \ref{lem:superimposition-difference} goes by induction. We set the stage below for doing the induction.

\subsection{The poset structure on perfect matchings}\label{subsec:poset-structure-perfect-matchings}
\begin{thm}[\cite{Propp02}]
	The set of perfect matchings of $G$ form a distributive lattice.
\end{thm}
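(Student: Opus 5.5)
We will prove Propp's theorem for the planar bipartite graph $G$ in the classical way: encode perfect matchings by height functions on faces and show that pointwise max/min of height functions are again height functions of perfect matchings.

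\emph{Height functions.} Fix a perfect matching $M_0$ of $G$; for concreteness take the minimal matching $M_-$ of Definition \ref{Def:Height}, though any choice works. Orient each edge from its black endpoint to its white endpoint. For any perfect matching $M$, the symmetric difference $M\oplus M_0$ is a disjoint union of cycles. Each cycle alternates between edges of $M$ and edges of $M_0$, and we orient every cycle consistently (edges of $M$ black-to-white, edges of $M_0$ white-to-black). Define $h_M(F)$, for a bounded face $F$, as the signed number of these cycles winding around $F$: $+1$ for a counterclockwise cycle and $-1$ for a clockwise one. Set $h_M = 0$ on the outer face.
\begin{itemize}
\item The map $M\mapsto h_M$ is injective, since $M$ is recovered from $M_0$ and the level curves of $h_M$.
\item The functions arising this way are exactly those integer functions $h$ on faces satisfying a local rule. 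Across each edge $e$, the jump of $h$ (crossing with the black vertex on the left) equals $\chi(e\in M)-\chi(e\in M_0)$. Equivalently, around each vertex the jumps take a prescribed pattern: exactly one edge at the vertex carries the ``$M$'' increment.
\end{itemize}
This is the standard Thurston height function construction. Planarity guarantees that the local differences integrate consistently, because the sum of jumps around each vertex is zero.

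\emph{Lattice structure.} We order matchings by $M\le M'$ iff $h_M\le h_{M'}$ pointwise. The key step is to show that $\max(h_M,h_{M'})$ and $\min(h_M,h_{M'})$ again satisfy the local rule. We check this vertex by vertex. At a vertex $v$ with incident faces $F_1,\dots,F_d$ in cyclic order, the admissible height patterns are precisely those where the values around $v$ decrease by $1$ across every edge except one, where they increase by $d-1$ (up to the global normalization of the $M_0$-edge). We then argue, as in Propp's proof, that the pointwise max of two such patterns is again of this form.

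I expect this vertex check to be the main obstacle, since it needs care with the rectangle (hexagonal) faces and with boundary vertices of $G$, where the outer face is involved. I would first try the reformulation via ``$h(F)-h(F')\in\{-1,d-1\}$ across edges with a consistent reference,'' which makes max/min closure a short case check. Once closure holds, join and meet are the max and min of height functions, and these distribute over each other pointwise. So the perfect matchings form a distributive lattice.

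Finally, we identify the cover relations. They are twists of a single face $F$ whose boundary alternates between matched and unmatched edges; such a twist changes $h$ only at $F$, by $\pm 1$, and there is a twist raising $h$ whenever $M$ is not maximal. This gives the graded structure needed for the subsequent induction. It also shows that $M_-$ is the unique minimum, which justifies the normalization in Definition \ref{Def:Height}, where $v_M$ counts the cycles of $M\oplus M_-$.
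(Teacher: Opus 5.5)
The paper gives no proof here; it only cites Propp. Your overall route is the standard argument behind that citation: relative height functions, then closure of the set of height functions under pointwise $\max$ and $\min$, so that it is a sublattice of the distributive lattice $\ZZ^{\{\text{faces}\}}$. However, the step you call the main obstacle is never carried out. You defer it (``as in Propp's proof'') to a vertex-by-vertex check, and the vertex rule you state is wrong for these graphs.

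A pattern ``$-1$ across every edge but one, $+(d-1)$ across the matched edge'' cannot be consistent on a non-regular graph. Both the loop around the black endpoint $b$ and the loop around the white endpoint $w$ cross the edge $bw$ with the black vertex on the left. So the jump across a matched edge $bw$ would have to equal both $d(b)-1$ and $d(w)-1$. Pinecones, with the outer face treated as the single face where you set $h=0$, have vertices of degree $2$, $3$ and $4$, and matched edges join vertices of different degrees. With your $M_0$-normalization, the actual pattern at a vertex is: all jumps are $0$ except a $+1$ across the $M$-edge and a $-1$ across the $M_0$-edge, or all are $0$ if these two edges coincide.

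The fix is to use the edgewise rule from your first bullet, which makes any vertex check unnecessary.

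\emph{Setup.} For an edge $e$, let $F,F'$ be the faces on its two sides, ordered so that crossing from $F$ to $F'$ puts the black endpoint on your left. Put $c_e=-\chi(e\in M_0)$.

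\emph{Claim.} $h=h_M$ for some perfect matching $M$ if and only if $h$ vanishes on the outer face and $h(F')-h(F)\in\{c_e,c_e+1\}$ for every edge $e$.

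\emph{Necessity.} This is your jump formula.

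\emph{Sufficiency.} Let $M$ be the set of edges with $h(F')-h(F)=c_e+1$. Take a small loop around a vertex $v$, counterclockwise if $v$ is black and clockwise if white. It crosses each edge at $v$ once with the black endpoint on the left, and the jumps telescope to $0$. Hence $\sum_{e\ni v}\bigl(h(F'_e)-h(F_e)-c_e\bigr)=1$, with every summand in $\{0,1\}$, so $M$ is a perfect matching. Then $h_M=h$, since both have the same jumps and vanish on the outer face. Boundary vertices and hexagonal faces need no separate care: the outer face is just one of the faces the loop meets.

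\emph{Closure.} Each condition says $h(F')\in[h(F)+c_e,\,h(F)+c_e+1]$. Since $\max$ and $\min$ are monotone in each argument and commute with adding a constant, $\max(h,h')$ and $\min(h,h')$ satisfy the same condition whenever $h$ and $h'$ do. This completes the distributive-lattice statement.

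Separately, your last paragraph asserts more than it proves, though neither claim is needed for the statement itself. First, that every non-maximal $M$ admits an upward single-face twist needs a real argument. Picking $F$ to maximize $h_{M'}-h_M$ is not enough without a tie-breaking device. Second, the lattice argument only gives existence of a unique minimum. It does not show that this minimum is the particular $M_-$ highlighted in the paper.
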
 
We also recall other necessary results.

\begin{lem}
	If $v_B-v_A\ge 0$ then $A\le B$ in the poset.
\end{lem}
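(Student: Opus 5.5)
The plan is to reduce the statement to the way the lattice structure of \cite{Propp02} is built, namely through height functions. Recall that the distributive lattice on perfect matchings of a planar bipartite graph $G$ is constructed as follows. One fixes the minimal matching $M_-$ and encodes each perfect matching $M$ by its height function. Up to the normalization $v_M\equiv 0$ on the outer face, this is exactly our log height function $v_M$: the number of cycles of $M\oplus M_-$ enclosing a face.

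The first step is to record three facts from \cite{Propp02}.
\begin{enumerate}
\item The map $M\mapsto v_M$ is injective. Across any edge $e$, the jump of $v_M$ is determined by whether $e\in M$ and whether $e\in M_-$, so $v_M$ recovers $M$ edge by edge.
\item The set of height functions is closed under pointwise $\min$ and $\max$.
\item Under this injection, the lattice operations are $v_{A\wedge B}=\min(v_A,v_B)$ and $v_{A\vee B}=\max(v_A,v_B)$.
\end{enumerate}
Given these, the lemma is immediate. If $v_B-v_A\ge 0$ on every face, then $\min(v_A,v_B)=v_A$, so $v_{A\wedge B}=v_A$. Injectivity then gives $A\wedge B=A$, which is exactly $A\le B$.

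If one prefers not to lean on the identification of meets with pointwise minima, I would instead argue by induction on $\Phi(A,B)=\sum_F \bigl(v_B(F)-v_A(F)\bigr)\ge 0$, where the sum runs over faces. The order on matchings is generated by upward face flips. Such a flip rotates an $A$-alternating face $F$ in the direction that raises $v_A(F)$ by exactly $1$ and leaves all other faces unchanged.

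If $\Phi=0$, then $v_A=v_B$, and injectivity gives $A=B$. If $\Phi>0$, I would exhibit a face $F$ with $v_B(F)>v_A(F)$ that is upward-flippable in $A$. The resulting matching $A'$ then satisfies $v_A\le v_{A'}\le v_B$ and $\Phi(A',B)=\Phi(A,B)-1$, so induction yields $A\le A'\le B$.

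To find $F$, let $m=\max_F(v_B-v_A)>0$ and let $R$ be the region where this maximum is attained. Among the faces of $R$, choose $F$ minimizing $v_A$. The edge rules for height functions (the jump across an edge is $\pm1$ or $0$, according to membership in $A$ and in $M_-$) force every boundary edge of $F$ to alternate appropriately in $A$. On the boundary of $R$ this uses $v_B-v_A<m$ outside $R$; inside $R$ it uses the minimality of $v_A(F)$. Together these show that $F$ is an $A$-alternating cycle oriented for an upward flip.

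I expect this local verification to be the main obstacle. It is where planarity and bipartiteness, through the $\pm$ jump rule and the black/white coloring, enter. It is also where faces of different sizes (the squares and the hexagonal rectangles) must be handled uniformly, so I would phrase the argument in terms of cycles of the planar dual rather than specific face shapes.
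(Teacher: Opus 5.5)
Your main route is correct and is essentially the paper's. The paper gives no proof of this lemma; it simply recalls it from Propp's height-function description of the lattice, where meet and join are pointwise $\min$ and $\max$, and from that description the statement follows exactly as you say. Note, however, that if $\le$ is the order generated by upward face flips (which is how the paper uses it in the cover-relation and twisting lemmas), your fact (3) \emph{is} the assertion. So this route is a citation; the real argument is your fallback, and as written it has a gap.

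The selection rule (maximize $v_B-v_A$, then minimize $v_A$) need not give a flippable face. Take the $2\times 3$ grid graph with top vertices $t_1,t_2,t_3$, bottom vertices $b_1,b_2,b_3$, and square faces $F_1$ (left) and $F_2$ (right). Its perfect matchings are the all-vertical one $V$, $P=\{t_1t_2,b_1b_2,t_3b_3\}$ and $P'=\{t_1b_1,t_2t_3,b_2b_3\}$. The only flips are $V\leftrightarrow P$ at $F_1$ and $V\leftrightarrow P'$ at $F_2$, so the lattice is a chain with ends $P$ and $P'$. Say $M_-=P'$ (otherwise reflect), and take $A=P'$, $B=P$. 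Then $A\oplus B$ is a single $6$-cycle around both faces, so $v_B-v_A\equiv 1$, $R=\{F_1,F_2\}$, and $v_A\equiv 0$ on $R$. Your rule permits $F=F_1$, but $F_1$ is not flippable in $A$ at all, since $t_2,b_2$ are matched to $t_3,b_3$. The central face of the order-$2$ Aztec diamond, with $A$ and $B$ the all-horizontal and all-vertical matchings, fails in the same way. The culprit is the shared edge $t_2b_2$, which lies in none of $A$, $B$, $M_-$. Neither $v_A$ nor $v_B$ jumps across it, so neither extremality condition says anything about it. In general, heights relative to $M_-$ only register edges lying in exactly one of $A$, $M_-$ (or of $B$, $M_-$), so the local check you anticipate cannot be completed with $v_A$.

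To repair it, break ties with an absolute height.
\begin{itemize}
\item Fix a fractional perfect matching $\omega$ with $0<\omega(e)<1$ on every edge, e.g.\ the average of all perfect matchings. This works when no edge is forced or forbidden, as for the closed pinecones used here, which are their own cores.
\item Crossing an edge $e$ out of a face $F$, $v_M$ changes by $\pm(\chi_M(e)-\chi_{M_-}(e))$. The sign depends only on $(e,F)$ and alternates around $\partial F$. Call $e$ upward for $F$ when the sign is $+$; then $F$ is upward-flippable in $A$ iff $A$ contains all upward edges of $\partial F$.
\item Define $H_A$ by the same sign rule with jumps $\pm(\chi_A(e)-\omega(e))$. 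It is well defined because $\sum_{e\ni u}(\chi_A(e)-\omega(e))=0$ at every vertex $u$.
\end{itemize}
Now pick $F\in R$ minimizing $H_A$, and suppose an upward edge $e$ of $\partial F$ is not in $A$. Across $e$ the function $v_B-v_A$ changes by $\chi_B(e)\ge 0$. Maximality therefore forces the neighbouring face $F'$ to be a bounded face in $R$. But $H_A(F')=H_A(F)-\omega(e)<H_A(F)$, a contradiction. Hence $F$ is upward-flippable in $A$, and your induction on $\Phi$ goes through.
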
 
It is therefore natural to ask what is the meaning of cover relations in this poset. 

\begin{lem}\label{lem:cover-relation}
	$A\lessdot B$ if and only if there exists exactly one face $F$ which is enclosed not by $A^+$ but by $B^+$. In terms of log height functions, we have $v_B-v_A=1_F$. To emphasize this face we may also denote $A\lessdot_FB$.
\end{lem}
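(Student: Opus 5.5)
The plan is to reduce the statement to Propp's description of the distributive lattice \cite{Propp02} in terms of Thurston-type height functions \cite{Thurston01101990}. Propp shows that the partial order is the pointwise order on normalized height functions. He also shows that cover relations are exactly \emph{face twists}: a face $F$ whose boundary alternates between edges of $A$ and edges not in $A$ is rotated, and this changes the normalized height by exactly $1$ at $F$ and nowhere else. So the whole proof reduces to one identification. I will show that the log height $v_M$ of the present paper coincides with Propp's normalized height difference $h_M-h_{M_-}$ on faces of $G$. After that, the lemma is essentially a translation.

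\textbf{Step 1 (identification of heights).} I will fix the orientation convention in which matching edges of $M$ are traversed white-to-black and edges of $M_-$ black-to-white. Then every cycle of $M^+=M\oplus M_-$ is an oriented alternating cycle. Crossing such a cycle changes the normalized height by $\pm1$, with the sign given by the cycle's orientation, so $h_M-h_{M_-}$ at a face $F$ is the signed count of cycles enclosing $F$. The key claim is that, because $M_-$ is the \emph{minimal} matching, all these cycles carry the same orientation. If some cycle $Z$ had the opposite orientation, then replacing $M_-$ on $Z$ by $M$ would produce a matching strictly below $M_-$, which is a contradiction. Hence $h_M-h_{M_-}=v_M\ge 0$ on faces. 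This also recovers the previously stated lemma that $v_B-v_A\ge0$ implies $A\le B$, and gives its converse.

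\textbf{Step 2 (the ``if'' direction).} Suppose $v_B-v_A=1_F$. Then $A<B$ by the preceding lemma. If $A<C<B$, then by Step 1 and Propp's order we would have $v_A\le v_C\le v_B$ pointwise. Since the values are integers and $v_B-v_A$ is supported on the single face $F$ with value $1$, $v_C$ must equal $v_A$ or $v_B$. A matching is determined by its height function, so $C=A$ or $C=B$, contradicting strictness. Therefore $A\lessdot B$.

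\textbf{Step 3 (the ``only if'' direction), which I expect to be the main obstacle.} Suppose $A\lessdot B$. I will invoke Propp's theorem that in this lattice every cover is realized by a single face twist at some face $F$. Twisting $F$ toggles the boundary cycle $\partial F$ inside $A^+$, producing $B^+$. The difficulty is checking that this toggle changes the number of enclosing cycles by exactly one at $F$ and by zero elsewhere. It might a priori merge $\partial F$ with an adjacent cycle of $A^+$, or split one off, in a way that alters the enclosure counts of other faces. I will handle this through Step 1 rather than by a combinatorial case analysis of cycles. The twist changes the height function by exactly $1_F$ (Propp's local computation, normalized so that a twist contributes $1$ rather than $4$). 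Since $v=h-h_{M_-}$, the conclusion $v_B-v_A=1_F$ follows. In particular $F$ is the unique face enclosed by $B^+$ but not by $A^+$.
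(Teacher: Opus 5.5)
Your proposal is correct and follows the paper's route: the paper gives no separate argument but recalls this lemma from Propp's distributive-lattice theory, and your Step~1 (minimality of $M_-$ forces every cycle of $M\oplus M_-$ to have the same orientation, so $v_M$ is the normalized height difference $h_M-h_{M_-}$) makes explicit the translation the paper leaves implicit. When you invoke Propp in Step~3 you should state that $G$ is a closed pinecone, hence its own core with no forced or forbidden edges: for a general planar bipartite graph a cover twists the boundary of a bounded face of the graph with all forced and forbidden edges deleted, such a face can be a union of several faces of $G$, and then $v_B-v_A$ is not a single $1_F$.
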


There is also a twisting operation on the set of perfect matchings. 
It is easy to see that there are two different perfect matchings on a square (and on a rectangle).
Consider a perfect matching $M$. If $M$ contains a perfect matching of a face $F$, then we call $F$ twistable (for $M$), and the twisting $\tau_F(M)$ is given by replacing the perfect matching of $F$ (which is a square or rectangle) with the other one.

\begin{lem}\label{lem:twist-properties}
	The followings hold:
	\begin{enumerate}
		\item $\tau_F(\tau_F(M))=M.$
		\item The twisting operation is transitive on the set of perfect matchings.
		\item Exactly one of the following holds: 
		$M\lessdot_F \tau_F(M)$, denoted by  $\tau_F(M)=\tau_F^+(M)$, or $\tau_F(M)\lessdot_F M$, denoted by  $\tau_F(M)=\tau_F^-(M)$.
	\end{enumerate}
\end{lem}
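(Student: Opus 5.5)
We prove the three parts of Lemma \ref{lem:twist-properties} by looking at the superposition $M\oplus\tau_F(M)$ and using Lemma \ref{lem:cover-relation}.

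\emph{Part (1).} $\tau_F$ replaces the restriction of $M$ to the boundary of $F$ by the other perfect matching of that cycle. The boundary cycle of a square or rectangle has even length and exactly two perfect matchings. After the twist, $\tau_F(M)$ again contains a perfect matching of $\partial F$, so $F$ stays twistable, and twisting again restores the original edges. All edges away from $\partial F$ are never touched, so $\tau_F(\tau_F(M))=M$.

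\emph{Part (3).} First, $M$ and $\tau_F(M)$ differ exactly on $\partial F$, so $M\oplus\tau_F(M)$ is the single cycle $\partial F$ together with doubled edges. Next, compare log height functions, using the standard fact that $v_{M'}-v_M$ is, up to sign, the sum of indicators of the regions enclosed by the cycles of $M\oplus M'$. This fact follows from Definition \ref{Def:Height}: the heights are measured against the common reference $M_-$, and the orientation convention (black-to-white along $M$, white-to-black along $M'$) makes the enclosure counts of $M^+$ and $M'^+$ differ by the winding number of $M\oplus M'$. Since $\partial F$ encloses only $F$, we get $v_{\tau_F(M)}-v_M=\pm 1_F$. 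By Lemma \ref{lem:cover-relation}, a difference of $1_F$ means $M\lessdot_F\tau_F(M)$, and a difference of $-1_F$ means $\tau_F(M)\lessdot_F M$. The sign is nonzero, so exactly one case occurs; it is decided by which of the two matchings of $\partial F$ is the ``clockwise'' one relative to the bipartite coloring.

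\emph{Part (2).} Here transitivity means any two perfect matchings are joined by a sequence of twists. We use the distributive lattice structure of \cite{Propp02}: every perfect matching lies on a saturated chain to the minimal matching $M_-$. By Lemma \ref{lem:cover-relation}, each cover $A\lessdot_F B$ changes the height function only on the face $F$. Hence $A\oplus B$ is a single cycle whose interior is $F$, and since faces here are bounded by their boundary cycles, that cycle is $\partial F$. So $A$ and $B$ both contain a perfect matching of $\partial F$, agree elsewhere, and satisfy $B=\tau_F(A)$. Walking down from $M$ to $M_-$ and then up to $M'$ connects any two matchings by twists.

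\emph{Main obstacle.} The delicate step is the claim in part (2) that a cover relation corresponds to a single face boundary. A cycle of $A\oplus B$ could a priori enclose only the face $F$ but still differ from $\partial F$, for instance by passing around a region that does not belong to $G$. This cannot happen here because pinecones are simply connected, so every bounded region of the plane inside $G$ is a face of $G$. We will justify this point using the row-by-row structure of pinecones (each row is a contiguous interval of cells), and may cite \cite{Propp02}, where it is shown that covers are face flips.
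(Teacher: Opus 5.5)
The paper gives no proof of this lemma (it is recalled from the lattice theory of \cite{Propp02}), so your argument has to stand on its own. Part (1) is fine. Your plan for (2) and (3) is also the natural one: compute $v_{\tau_F(M)}-v_M$, invoke Lemma~\ref{lem:cover-relation}, and join any two matchings through $M_-$ by chains of covers. The gap is in the ``standard fact'' that both parts rest on.

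Definition~\ref{Def:Height} counts the cycles of $M\oplus M_-$ enclosing a face \emph{without} orientation. So no orientation convention on $M\oplus M'$ can by itself make these counts additive, and relative to an arbitrary reference matching they are not additive. For instance, suppose $M\oplus M_-$ had two nested cycles of opposite orientation, and twisting a face of the annulus between them merged them into one cycle. Then the count on the inner region would drop by $2$, not by $\pm 1_F$. Relatedly, your ``up to sign'' has to be a sign per cycle, not a global one.

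What is missing is coherence, and this is exactly where minimality of $M_-$ enters. Orient $M$-edges black-to-white and $M_-$-edges white-to-black. Then every cycle of $M\oplus M_-$ winds in the same (height-raising) direction. Indeed, if some cycle $Z$ wound the other way, replacing the $M_-$-edges on $Z$ by the $M$-edges would give a matching whose signed height is smaller by $1$ on every face inside $Z$, i.e.\ a matching strictly below the minimum $M_-$ in the lattice of \cite{Propp02}.

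Granting this, $v_M(F)$ is the winding number around $F$ of the $1$-cycle $c_M=\vec{M}-\vec{M_-}$. Since colours alternate around $\partial F$, $c_{\tau_F(M)}-c_M=\pm\partial F$. Linearity of winding numbers then gives $v_{\tau_F(M)}-v_M=\pm 1_F$, and your appeal to Lemma~\ref{lem:cover-relation} completes (3).

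For (2), use the same device to get from $v_B-v_A=1_F$ to $A\oplus B=\partial F$; as written, your ``hence'' is unjustified.
\begin{enumerate}
\item The nontrivial cycles of $A\oplus B$ are edge-disjoint, so the winding function of $c_B-c_A$ jumps by exactly $1$ across each of their edges.
\item That function equals $1_F$, so every such edge has $F$ on one side and a region of value $0$ on the other. Hence every such edge lies on $\partial F$.
\item Since $\partial F$ is a simple $4$- or $6$-cycle, the only nontrivial cycle is $\partial F$ itself.
\end{enumerate}
Thus $A$ and $B$ carry the two perfect matchings of $\partial F$ and agree elsewhere, i.e.\ $B=\tau_F(A)$. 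This also disposes of the obstacle you flagged. Neither simple connectivity of the pinecone nor its row structure is needed, since every bounded complementary region is a face by definition. What is used is only that each face is a square or rectangle with a simple boundary cycle, which the paper records.
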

One can further decide the sign of $\tau_F$. For this, we need to invoke a parity argument. Place the pinecone $G$ on the square grid $\ZZ^2$ such that the upper vertex of the leftmost edge (upper-left vertex) is placed at $(0,0)$. Call a face $F$ of $G$ even if $i+j$ is even, where $(i,j)$ is the coordinate  of the upper-left vertex of $F$ . Otherwise call it odd.
\begin{lem}\label{lem:above-or-below}
	$\tau_F^+$ is given by the following table
	
\begin{table}[H]
	\centering
	\begin{tabular}{|l|l|l|}
		\hline
	$F$ is & Square & Rectangle \\ \hline
		Odd                         & \begin{tikzpicture}
		\draw (0,0)--(1,0);\draw[red,line width=2pt] (1,0)--(1,1);\draw (1,1)--(0,1);\draw[red,line width=2pt] (0,1)--(0,0);\draw[->>](1.25,0.5)--(1.75,0.5);\draw[red,line width=2pt] (2,0)--(3,0);\draw (3,0)--(3,1);\draw[red,line width=2pt] (3,1)--(2,1);\draw (2,1)--(2,0);
		\end{tikzpicture}      &      \begin{tikzpicture}
		\draw (0,0)--(1,0);\draw[red,line width=2pt] (1,0)--(2,0);\draw (2,0)--(2,1);\draw[red,line width=2pt] (2,1)--(1,1);\draw (1,1)--(0,1);\draw[red,line width=2pt](0,1)--(0,0);\draw[->>](2.25,0.5)--(2.75,0.5);\draw[red,line width=2pt] (3,0)--(4,0);\draw (4,0)--(5,0);\draw[red,line width=2pt] (5,0)--(5,1);\draw (5,1)--(4,1);\draw[red,line width=2pt] (4,1)--(3,1);\draw(3,1)--(03,0);
		\end{tikzpicture}
		   
		     \\ \hline
		Even                        & 
		\begin{tikzpicture}
			\draw[red,line width=2pt] (0,0)--(1,0);\draw (1,0)--(1,1);\draw[red,line width=2pt] (1,1)--(0,1);\draw (0,1)--(0,0);\draw[->>](1.25,0.5)--(1.75,0.5);\draw (2,0)--(3,0);\draw[red,line width=2pt] (3,0)--(3,1);\draw (3,1)--(2,1);\draw[red,line width=2pt] (2,1)--(2,0);
		\end{tikzpicture}   
		       &   
		       \begin{tikzpicture}
		       	\draw[red,line width=2pt] (0,0)--(1,0);\draw (1,0)--(2,0);\draw[red,line width=2pt] (2,0)--(2,1);\draw (2,1)--(1,1);\draw[red,line width=2pt] (1,1)--(0,1);\draw(0,1)--(0,0);\draw[->>](2.25,0.5)--(2.75,0.5);\draw (3,0)--(4,0);\draw[red,line width=2pt] (4,0)--(5,0);\draw (5,0)--(5,1);\draw[red,line width=2pt] (5,1)--(4,1);\draw (4,1)--(3,1);\draw[red,line width=2pt](3,1)--(03,0);
		       \end{tikzpicture}
		       
		               \\ \hline
	\end{tabular}
\end{table}
	Here the bold red edges are chosen in the perfect matching.
	
	This construction is independent of the choice of matching $M$, which means the above statements hold for \emph{all} perfect matchings twistable at $F$.
\end{lem}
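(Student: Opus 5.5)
The plan is to compute, for a face $F$ that is twistable for $M$, the change in the log height function under $\tau_F$ directly from the symmetric difference with the minimal matching $M_-$. By Lemma \ref{lem:twist-properties}(3), $\tau_F(M)$ either covers $M$ or is covered by it, and by Lemma \ref{lem:cover-relation} the difference of log heights is $\pm 1_F$. So it suffices to decide the sign of $v_{\tau_F(M)}-v_M$ on $F$ and show it depends only on the parity and shape of $F$, not on $M$.

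First I will express $v_M(F)$ through a local, orientation-sensitive count. Superimposing $M$ and $M_-$ gives disjoint alternating cycles. Since $G$ is planar and bipartite, each cycle $Z$ has a canonical orientation: traverse it so that $M$-edges go from white to black. Under this orientation $Z$ winds $\pm1$ around any face it encloses. The first step is to fix the sign convention so that, because $M_-$ is minimal, every cycle of $M\oplus M_-$ is oriented the same way, say counterclockwise. This is the standard Thurston/Propp description; the minimality of $M_-$ is what forces the uniform orientation, since any clockwise cycle could be flipped to lower the height. With this in place, $v_M(F)$ equals the winding number of the flow $M-M_-$, oriented white$\to$black, around an interior point of $F$.

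The winding number changes only locally under $\tau_F$. The flow $M-M_-$ changes by the boundary cycle $\partial F$, oriented white$\to$black on the new matching edges and black$\to$white on the old ones. Hence $v_{\tau_F(M)}(F)-v_M(F)$ is the winding number of $\partial F$ under that orientation. This is $+1$ exactly when the edges of $\tau_F(M)$ on $\partial F$, traversed white$\to$black, run counterclockwise around $F$. That condition involves only the coloring of the corners of $F$ and which alternate set of boundary edges $\tau_F(M)$ uses. It is therefore independent of $M$, which gives the final sentence of the lemma.

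The last step is bookkeeping. With vertices colored white exactly when the coordinate sum is even (same parity convention as the statement), the upper-left corner of $F$ is white iff $F$ is even. For a square, I check which of the two edge pairs (the horizontal pair or the vertical pair), oriented white$\to$black, circulates counterclockwise; this depends only on that corner color. For a rectangle the boundary is a $6$-cycle whose corner colors are again determined by the upper-left corner, and the same check gives the table's entries.

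The main obstacle is getting the global sign convention right: confirming that minimality of $M_-$ means every cycle of $M\oplus M_-$ has the same orientation under the white$\to$black rule, and matching that orientation to the paper's coordinates. The paper's coordinates are a reflection of those in \cite{ProppMelouWest2009}, and rows are indexed upward, so a reflection flips clockwise and counterclockwise. I would settle this by testing a single square against the highlighted minimal matchings in Example \ref{Ex:237}, rather than trusting an abstract orientation argument.
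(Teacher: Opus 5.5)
The paper does not prove this lemma. It is listed among facts recalled from Propp's lattice theory, with only the remark that one must ``invoke a parity argument.'' Your argument is correct and fills that gap by a somewhat different route.

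In Propp's framework an up-move is \emph{defined} as flipping a face whose boundary, oriented by the bipartite coloring, has a fixed rotational sense. So on the paper's implicit route, only the parity bookkeeping of your last step remains.

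You instead start from the paper's concrete definition of $v_M$ as a count of cycles of $M\oplus M_-$. You identify it with the winding number of the circulation $M-M_-$, and observe that $\tau_F$ changes this circulation by $\pm\partial F$. This gives a self-contained proof that the sign depends only on the shape and parity of $F$, which is exactly what Lemma~\ref{lem:induction-step}(3) later uses. It also makes your opening appeal to Lemmas~\ref{lem:twist-properties}(3) and~\ref{lem:cover-relation} unnecessary, since it yields $v_{\tau_F(M)}-v_M=\pm 1_F$ directly.

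Two points to tighten.

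First, ``any clockwise cycle could be flipped to lower the height'' needs a height that is not measured from $M_-$, because $v$ is nonnegative by definition. Use Thurston's height, or the winding-number potential of $M-M_0$ for a fixed reference matching $M_0$. If some cycle $Z$ of $M\oplus M_-$ were clockwise, replacing the $M_-$-edges on $Z$ by the $M$-edges would give a matching whose potential is smaller by $1$ on the interior of $Z$. That contradicts $M_-$ being the bottom of the lattice.

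Second, the calibration. Your four local checks are consistent with a single rule: coloring white the vertices of even coordinate sum, $\tau_F^+$ is the twist whose new edges, oriented white$\to$black, run counterclockwise in the pictures. So exactly one global sign must be fixed. Testing a square of Example~\ref{Ex:237} fixes it only for that graph, while the lemma is claimed for every pinecone.

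A uniform calibration is available:
\begin{itemize}
\item The leftmost face of the central strip is always a square (Definition~\ref{Def:pinecones}).
\item Its upper-left vertex is the origin of the lemma's placement, so the face is even.
\item By the proof of Lemma~\ref{lem:base-case}, $M_-$ contains the central-strip edges $(0,2t)$--$(0,2t+1)$ and $(-1,2t)$--$(-1,2t+1)$ for $0\le t\le k$. In particular it contains this square's top and bottom edges.
\end{itemize}
Since $M_-$ is minimal, horizontal$\to$vertical must be an up-move at that square. This matches the even-square entry of the table and fixes the orientation for all pinecones at once, with no need to track the reflection issues you were worried about.
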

\subsection{Proof of Lemma \ref{lem:superimposition-difference}}
\begin{lem}
	Let   $M_N\in M(G_N)$ and $M_S\in M(G_S)$. If $M_N\oplus M_S\sim M_A\oplus M_C$ and $M_N\oplus M_S\sim M_A'\oplus M_C'$, then $$v_{M_A}+v_{M_C}= v_{M_A'}+v_{M_C'}.$$
\end{lem}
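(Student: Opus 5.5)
The plan is to show that the pair $(M_A,M_C)$ in the hat-setting is determined by the multigraph $U:=M_N\oplus M_S$ up to a choice, independently for each closed cycle $Z$ of $U$, of which alternating half of $Z$ goes into $M_A$. We then check that swapping this choice on a single cycle leaves $v_{M_A}+v_{M_C}$ unchanged.

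First we work in the hatted graphs $\widehat{G_*}$, where superimpositions agree as multigraphs, and pass back to the cores. Deleting and adding forced edges only changes the log height function by a fixed function, because $M_-$ restricts compatibly and the forced edges appear in both $M$ and $M_-$. So it suffices to compare the hatted versions. By the description of the inverse of $\Delta$, the doubled edges of $U$ go into both $\widehat{M_A}$ and $\widehat{M_C}$. The two paths $a\to b$ and $c\to d$ are split canonically: odd edges go to $\widehat{M_A}$ and even edges to $\widehat{M_C}$. Hence $(M_A,M_C)$ and $(M_A',M_C')$ agree off the cycles of $U$. On each cycle $Z$ they either agree or are swapped. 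By induction on the number of differing cycles, we may assume they differ on exactly one cycle $Z$.

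The main step is then the following. Write $Z_1,Z_2$ for the two alternating halves of $Z$. Then $M_A'=(M_A\setminus Z_1)\cup Z_2$ and $M_C'=(M_C\setminus Z_2)\cup Z_1$. Using $v_M(F)=\#\{\text{cycles of } M\oplus M_- \text{ enclosing } F\}$, we prefer the equivalent signed formulation: $v_{M}-v_{M'}$ equals $\pm 1$ on the faces inside $Z$ and $0$ elsewhere. The sign is determined by whether $M$ uses $Z_1$ or $Z_2$, because the height difference across an edge depends only on whether that edge is in the matching, with orientation fixed by bipartite colours. This is the standard fact that switching a matching along an alternating cycle shifts the height by $\pm 1_{\mathrm{int}(Z)}$, compatible with Lemma \ref{lem:cover-relation} and Lemma \ref{lem:above-or-below}. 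It gives $v_{M_A'}-v_{M_A}=\epsilon\, 1_{\mathrm{int}Z}$ and $v_{M_C'}-v_{M_C}=-\epsilon\, 1_{\mathrm{int}Z}$, where the same sign convention applies and the roles of $Z_1,Z_2$ are exchanged, so the two changes cancel.

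The obstacle I expect is justifying that the height is a genuine signed function. The counting definition in Definition \ref{Def:Height} counts cycles of $M\oplus M_-$, and a switch along $Z$ could merge or split cycles rather than simply adding one. I would handle this by identifying $v_M$ with the usual height function normalized to be $0$ at $M_-$, whose difference across each edge is $\pm(1-\text{[edge in }M])$ with a sign fixed by the colour of the endpoint. Since $M_-$ is minimal, this height is nonnegative and equals the number of nested cycles. Then $v_M$ restricted to the faces inside $Z$ shifts by a constant, as computed locally along $Z$. Finally, we must check that $Z$ lies in the region common to the relevant graphs, so that the extension-by-zero convention does not interfere. This holds because $Z$ consists of edges of $U$, which lie in $\widehat{G_C}\subset \widehat{G_A}$, and the hatted heights are all computed in $G$.
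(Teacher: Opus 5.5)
Your proof is correct. It shares the paper's reduction but handles the key step differently.

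\textbf{The shared reduction.} Both proofs note that the multigraph fixes the doubled edges and the split of the two paths, since $\widehat{M_A}$ must cover $a,b,c,d$ while $\widehat{M_C}$ cannot. So the two pairs differ only by exchanging halves on some cycles, and both proofs induct down to a single cycle $Z$.

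\textbf{The paper's single-cycle step.} At this point the paper works only with face twists, in two stages. It first exchanges the restrictions of $M_A$ and $M_C$ to the closed disc bounded by $Z$, giving $M_A^\circ$ and $M_C^\circ$. It then exchanges them back on the open interior. Each stage is realized by a twist sequence $\tau_{F_1}^{\varepsilon_1},\dots,\tau_{F_k}^{\varepsilon_k}$ on one matching and the reversed, sign-flipped sequence on its partner. Lemma \ref{lem:cover-relation} then gives changes $\sum_i\varepsilon_i 1_{F_i}$ and $-\sum_i\varepsilon_i 1_{F_i}$, which cancel. This stays inside the lemmas already recorded. The cost is that it needs twist-connectivity of matchings on the subregions, and the two-stage trick is what makes one twist sequence transferable to the partner matching.

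\textbf{Your single-cycle step.} You identify $v_M$ with the relative height $h_M-h_{M_-}$ and read the exchange off as the unit circulation around $Z$. Your identity $v_{M\triangle Z}-v_M=\pm 1_{\mathrm{int}Z}$ is the cycle-level version of Lemma \ref{lem:cover-relation}, so no decomposition into twists is needed. The merging and splitting of cycles of $M\oplus M_-$ that you flagged is absorbed by the linearity of relative heights.

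\textbf{Details to state more carefully.} There are three.

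First, the rule $\pm(1-[e\in M])$ is not a valid height normalization. What you actually use is that $h_M-h_{M_-}$ jumps by $\pm([e\in M]-[e\in M_-])$ across $e$, with the sign fixed by the colours.

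Second, nonnegativity of $h_M-h_{M_-}$ does not by itself give ``number of enclosing cycles'': nested cycles of opposite orientation also produce a nonnegative function. Argue instead as follows. If some cycle $Z'$ of $M\oplus M_-$ contributed $-1$, then $M_-\triangle Z'$ would be a perfect matching with height $h_{M_-}-1_{\mathrm{int}Z'}$, contradicting minimality of $M_-$.

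Third, the cancellation needs the heights on $\widehat{G_C}$ to use the colouring inherited from $G$. It also needs the paper's intrinsic minimal matching of $G_C$ to be minimal for that colouring. Both hold because $G_C$ sits at an even offset in $G$ (Lemma \ref{lem:subpinecone-place}). This is the same parity fact the paper uses in Lemma \ref{lem:induction-step}, and tacitly in its own proof of this lemma.
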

\begin{rem}
	This lemma amounts to saying that the specific choice involved for cycles as in Kuo's condensation does not affect the validity of Lemma \ref{lem:superimposition-difference}.
\end{rem}
\begin{proof}
	It follows from the procedure of Kuo's condensation that $ M_A\oplus M_C$ and $M_A'\oplus M_C'$ produce the same spanning subgraph of $G-\{a,b,c,d\}$, and there exist some cycles in the superimposition whose odd (say) edges belong to $M_A$ and $M_C'$, and even edges belong to $M_A'$ and $M_C$.
		
 By induction, we may assume that there exists exactly one such cycle $Z=A_1\cup A_2$, where $A_1\subset M_A$ and $M_C'$, and $A_2\subset M_C$ and $M_A'$. Denote all the edges enclosed by $Z$ which belong to $M_A$ as $B_1$, and similarly define $B_2$ for $M_C$.

	We first twist $M_A$ to $M_A^\circ$ such that $M_A^\circ$ contains $A_2$ and $B_2$. Reversing the twist makes $M_C$ to $M_C^\circ$ which contains $A_1$ and $B_1$. Then twist from $B_2$ to $B_1$ so that $M_A^\circ$ twists to $M_A'$ containing $A_2$ and $B_1$. The reverse of this twist turns $M_C^\circ$ to $M_C'$ which contains $A_1$ and $B_2$.
	
	In more detail, by Lemma \ref{lem:twist-properties}, there exist a sequence of twists such that $M_A=\tau_{F_k}^{\ve_k}\circ \cdots\circ \tau_{F_1}^{\ve_1} (M_A^\circ)$.  Consequently, $M_C=\tau_{F_1}^{-\ve_1} \circ \cdots\circ \tau_{F_k}^{-\ve_k}  (M_C^\circ)$.  Hence by Lemma \ref{lem:cover-relation} we have \begin{gather*}
		v_{M_A}=v_{M_A^\circ}+\sum \ve_i 1_{F_i},\\
		v_{M_C}=v_{M_C^\circ}-\sum \ve_i1_{F_i}
	\end{gather*}
	which gives $v_{M_A}+v_{M_C}=v_{M_A^\circ}+v_{M_C^\circ}$. The same reasoning shows $v_{M_A^\circ}+v_{M_C^\circ}=v_{M_A'}+v_{M_C'}$.
\end{proof}
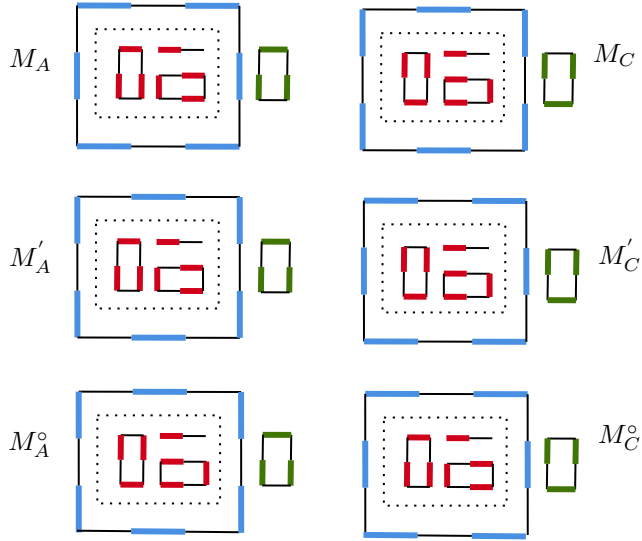
\begin{figure}[H]
	\centering
    
\tikzset{every picture/.style={line width=0.75pt}} 

\begin{tikzpicture}[x=0.75pt,y=0.75pt,yscale=-0.5,xscale=0.5]

\draw [color={rgb, 255:red, 74; green, 144; blue, 226 }  ,draw opacity=1 ][line width=2.25]    (94.23,43.75) -- (148.84,43.89) ;
\draw [color={rgb, 255:red, 74; green, 144; blue, 226 }  ,draw opacity=1 ][line width=2.25]    (203.45,44.02) -- (258.06,44.15) ;
\draw    (148.84,43.89) -- (203.45,44.02) ;
\draw    (94.23,43.75) -- (94.27,91.13) ;
\draw    (94.3,138.51) -- (94.34,185.89) ;
\draw [color={rgb, 255:red, 74; green, 144; blue, 226 }  ,draw opacity=1 ][line width=2.25]    (94.27,91.13) -- (94.3,138.51) ;
\draw [color={rgb, 255:red, 74; green, 144; blue, 226 }  ,draw opacity=1 ][line width=2.25]    (94.34,185.89) -- (148.95,186.03) ;
\draw [color={rgb, 255:red, 74; green, 144; blue, 226 }  ,draw opacity=1 ][line width=2.25]    (203.56,186.16) -- (258.17,186.29) ;
\draw    (148.95,186.03) -- (203.56,186.16) ;
\draw    (258.13,138.91) -- (258.17,186.29) ;
\draw [color={rgb, 255:red, 74; green, 144; blue, 226 }  ,draw opacity=1 ][line width=2.25]    (258.1,91.53) -- (258.13,138.91) ;
\draw    (258.06,44.15) -- (258.1,91.53) ;
\draw [color={rgb, 255:red, 0; green, 0; blue, 0 }  ,draw opacity=1 ][line width=0.75]    (159.74,87.89) -- (159.5,112.9) ;
\draw [color={rgb, 255:red, 208; green, 2; blue, 27 }  ,draw opacity=1 ][line width=2.25]    (159.5,112.9) -- (159.25,137.91) ;
\draw [color={rgb, 255:red, 208; green, 2; blue, 27 }  ,draw opacity=1 ][line width=2.25]    (159.74,87.89) -- (136.57,87.76) ;
\draw [color={rgb, 255:red, 208; green, 2; blue, 27 }  ,draw opacity=1 ][line width=2.25]    (136.57,112.61) -- (136.32,137.62) ;
\draw [color={rgb, 255:red, 0; green, 0; blue, 0 }  ,draw opacity=1 ][line width=0.75]    (136.82,87.6) -- (136.57,112.61) ;
\draw [color={rgb, 255:red, 0; green, 0; blue, 0 }  ,draw opacity=1 ][line width=0.75]    (159.5,137.75) -- (136.32,137.62) ;
\draw [color={rgb, 255:red, 0; green, 0; blue, 0 }  ,draw opacity=1 ][line width=0.75]    (176.86,114.38) -- (200,114.47) ;
\draw [color={rgb, 255:red, 208; green, 2; blue, 27 }  ,draw opacity=1 ][line width=2.25]    (200,114.47) -- (223.14,114.57) ;
\draw [color={rgb, 255:red, 208; green, 2; blue, 27 }  ,draw opacity=1 ][line width=2.25]    (176.86,114.38) -- (176.89,138.05) ;
\draw [color={rgb, 255:red, 208; green, 2; blue, 27 }  ,draw opacity=1 ][line width=2.25]    (199.89,137.89) -- (223.03,137.99) ;
\draw [color={rgb, 255:red, 0; green, 0; blue, 0 }  ,draw opacity=1 ][line width=0.75]    (176.74,137.8) -- (199.89,137.89) ;
\draw [color={rgb, 255:red, 0; green, 0; blue, 0 }  ,draw opacity=1 ][line width=0.75]    (222.99,114.31) -- (223.03,137.99) ;
\draw [color={rgb, 255:red, 0; green, 0; blue, 0 }  ,draw opacity=1 ][line width=0.75]    (306.66,88.22) -- (306.31,113.87) ;
\draw [color={rgb, 255:red, 65; green, 117; blue, 5 }  ,draw opacity=1 ][line width=2.25]    (306.31,113.87) -- (305.97,139.53) ;
\draw [color={rgb, 255:red, 65; green, 117; blue, 5 }  ,draw opacity=1 ][line width=2.25]    (306.66,88.22) -- (277.83,88.12) ;
\draw [color={rgb, 255:red, 65; green, 117; blue, 5 }  ,draw opacity=1 ][line width=2.25]    (277.79,113.61) -- (277.44,139.26) ;
\draw [color={rgb, 255:red, 0; green, 0; blue, 0 }  ,draw opacity=1 ][line width=0.75]    (278.14,87.95) -- (277.79,113.61) ;
\draw [color={rgb, 255:red, 0; green, 0; blue, 0 }  ,draw opacity=1 ][line width=0.75]    (306.27,139.36) -- (277.44,139.26) ;
\draw  [dash pattern={on 0.84pt off 2.51pt}] (113.07,67.71) -- (236.88,67.71) -- (236.85,156.63) -- (113.04,156.63) -- cycle ;
\draw [color={rgb, 255:red, 208; green, 2; blue, 27 }  ,draw opacity=1 ][line width=2.25]    (176.14,88.47) -- (199.28,88.56) ;
\draw [color={rgb, 255:red, 0; green, 0; blue, 0 }  ,draw opacity=1 ][line width=0.75]    (199.28,88.56) -- (222.43,88.64) ;
\draw [color={rgb, 255:red, 0; green, 0; blue, 0 }  ,draw opacity=1 ]   (382.27,47.74) -- (436.88,47.87) ;
\draw [color={rgb, 255:red, 0; green, 0; blue, 0 }  ,draw opacity=1 ]   (491.49,48.01) -- (546.1,48.14) ;
\draw [color={rgb, 255:red, 74; green, 144; blue, 226 }  ,draw opacity=1 ][line width=2.25]    (436.88,47.87) -- (491.49,48.01) ;
\draw [color={rgb, 255:red, 74; green, 144; blue, 226 }  ,draw opacity=1 ][line width=2.25]    (382.27,47.74) -- (382.3,95.12) ;
\draw [color={rgb, 255:red, 74; green, 144; blue, 226 }  ,draw opacity=1 ][line width=2.25]    (382.34,142.5) -- (382.37,189.88) ;
\draw [color={rgb, 255:red, 0; green, 0; blue, 0 }  ,draw opacity=1 ]   (382.3,95.12) -- (382.34,142.5) ;
\draw [color={rgb, 255:red, 0; green, 0; blue, 0 }  ,draw opacity=1 ]   (382.37,189.88) -- (436.98,190.02) ;
\draw [color={rgb, 255:red, 0; green, 0; blue, 0 }  ,draw opacity=1 ]   (491.59,190.15) -- (546.2,190.28) ;
\draw [color={rgb, 255:red, 74; green, 144; blue, 226 }  ,draw opacity=1 ][line width=2.25]    (436.98,190.02) -- (491.59,190.15) ;
\draw [color={rgb, 255:red, 74; green, 144; blue, 226 }  ,draw opacity=1 ][line width=2.25]    (546.17,142.9) -- (546.2,190.28) ;
\draw [color={rgb, 255:red, 0; green, 0; blue, 0 }  ,draw opacity=1 ]   (546.13,95.52) -- (546.17,142.9) ;
\draw [color={rgb, 255:red, 74; green, 144; blue, 226 }  ,draw opacity=1 ][line width=2.25]    (546.1,48.14) -- (546.13,95.52) ;
\draw [color={rgb, 255:red, 208; green, 2; blue, 27 }  ,draw opacity=1 ][line width=2.25]    (447.78,91.88) -- (447.53,116.89) ;
\draw [color={rgb, 255:red, 0; green, 0; blue, 0 }  ,draw opacity=1 ]   (447.53,116.89) -- (447.28,141.9) ;
\draw [color={rgb, 255:red, 0; green, 0; blue, 0 }  ,draw opacity=1 ]   (447.78,91.88) -- (424.61,91.75) ;
\draw [color={rgb, 255:red, 0; green, 0; blue, 0 }  ,draw opacity=1 ]   (424.6,116.6) -- (424.36,141.61) ;
\draw [color={rgb, 255:red, 208; green, 2; blue, 27 }  ,draw opacity=1 ][line width=2.25]    (424.85,91.58) -- (424.6,116.6) ;
\draw [color={rgb, 255:red, 208; green, 2; blue, 27 }  ,draw opacity=1 ][line width=2.25]    (447.53,141.74) -- (424.36,141.61) ;
\draw [color={rgb, 255:red, 208; green, 2; blue, 27 }  ,draw opacity=1 ][line width=2.25]    (464.89,118.36) -- (488.03,118.46) ;
\draw [color={rgb, 255:red, 0; green, 0; blue, 0 }  ,draw opacity=1 ]   (488.03,118.46) -- (511.18,118.56) ;
\draw [color={rgb, 255:red, 0; green, 0; blue, 0 }  ,draw opacity=1 ]   (464.89,118.36) -- (464.93,142.04) ;
\draw [color={rgb, 255:red, 0; green, 0; blue, 0 }  ,draw opacity=1 ]   (487.92,141.88) -- (511.07,141.98) ;
\draw [color={rgb, 255:red, 208; green, 2; blue, 27 }  ,draw opacity=1 ][line width=2.25]    (464.78,141.79) -- (487.92,141.88) ;
\draw [color={rgb, 255:red, 208; green, 2; blue, 27 }  ,draw opacity=1 ][line width=2.25]    (511.03,118.3) -- (511.07,141.98) ;
\draw [color={rgb, 255:red, 65; green, 117; blue, 5 }  ,draw opacity=1 ][line width=2.25]    (594.69,92.21) -- (594.35,117.86) ;
\draw [color={rgb, 255:red, 0; green, 0; blue, 0 }  ,draw opacity=1 ]   (594.35,117.86) -- (594,143.51) ;
\draw [color={rgb, 255:red, 0; green, 0; blue, 0 }  ,draw opacity=1 ]   (594.69,92.21) -- (565.86,92.11) ;
\draw [color={rgb, 255:red, 0; green, 0; blue, 0 }  ,draw opacity=1 ]   (565.82,117.6) -- (565.48,143.25) ;
\draw [color={rgb, 255:red, 65; green, 117; blue, 5 }  ,draw opacity=1 ][line width=2.25]    (566.17,91.94) -- (565.82,117.6) ;
\draw [color={rgb, 255:red, 65; green, 117; blue, 5 }  ,draw opacity=1 ][line width=2.25]    (594.31,143.35) -- (565.48,143.25) ;
\draw  [color={rgb, 255:red, 0; green, 0; blue, 0 }  ,draw opacity=1 ][dash pattern={on 0.84pt off 2.51pt}] (401.1,71.7) -- (524.92,71.7) -- (524.89,160.62) -- (401.07,160.62) -- cycle ;
\draw [color={rgb, 255:red, 208; green, 2; blue, 27 }  ,draw opacity=1 ][line width=2.25]    (464.17,92.46) -- (487.32,92.55) ;
\draw [color={rgb, 255:red, 0; green, 0; blue, 0 }  ,draw opacity=1 ]   (487.32,92.55) -- (510.46,92.63) ;
\draw [color={rgb, 255:red, 0; green, 0; blue, 0 }  ,draw opacity=1 ]   (94.58,236.75) -- (149.19,236.89) ;
\draw [color={rgb, 255:red, 0; green, 0; blue, 0 }  ,draw opacity=1 ]   (203.8,237.02) -- (258.41,237.15) ;
\draw [color={rgb, 255:red, 74; green, 144; blue, 226 }  ,draw opacity=1 ][line width=2.25]    (149.19,236.89) -- (203.8,237.02) ;
\draw [color={rgb, 255:red, 74; green, 144; blue, 226 }  ,draw opacity=1 ][line width=2.25]    (94.58,236.75) -- (94.61,284.14) ;
\draw [color={rgb, 255:red, 74; green, 144; blue, 226 }  ,draw opacity=1 ][line width=2.25]    (94.65,331.52) -- (94.68,378.9) ;
\draw [color={rgb, 255:red, 0; green, 0; blue, 0 }  ,draw opacity=1 ]   (94.61,284.14) -- (94.65,331.52) ;
\draw [color={rgb, 255:red, 0; green, 0; blue, 0 }  ,draw opacity=1 ]   (94.68,378.9) -- (149.29,379.03) ;
\draw [color={rgb, 255:red, 0; green, 0; blue, 0 }  ,draw opacity=1 ]   (203.9,379.16) -- (258.51,379.29) ;
\draw [color={rgb, 255:red, 74; green, 144; blue, 226 }  ,draw opacity=1 ][line width=2.25]    (149.29,379.03) -- (203.9,379.16) ;
\draw [color={rgb, 255:red, 74; green, 144; blue, 226 }  ,draw opacity=1 ][line width=2.25]    (258.48,331.91) -- (258.51,379.29) ;
\draw [color={rgb, 255:red, 0; green, 0; blue, 0 }  ,draw opacity=1 ]   (258.44,284.53) -- (258.48,331.91) ;
\draw [color={rgb, 255:red, 74; green, 144; blue, 226 }  ,draw opacity=1 ][line width=2.25]    (258.41,237.15) -- (258.44,284.53) ;
\draw [color={rgb, 255:red, 65; green, 117; blue, 5 }  ,draw opacity=1 ][line width=2.25]    (598.08,290.19) -- (597.73,315.84) ;
\draw [color={rgb, 255:red, 0; green, 0; blue, 0 }  ,draw opacity=1 ]   (597.73,315.84) -- (597.38,341.5) ;
\draw [color={rgb, 255:red, 0; green, 0; blue, 0 }  ,draw opacity=1 ]   (598.08,290.19) -- (569.24,290.09) ;
\draw [color={rgb, 255:red, 0; green, 0; blue, 0 }  ,draw opacity=1 ]   (569.21,315.58) -- (568.86,341.23) ;
\draw [color={rgb, 255:red, 65; green, 117; blue, 5 }  ,draw opacity=1 ][line width=2.25]    (569.55,289.92) -- (569.21,315.58) ;
\draw [color={rgb, 255:red, 65; green, 117; blue, 5 }  ,draw opacity=1 ][line width=2.25]    (597.69,341.33) -- (568.86,341.23) ;
\draw  [color={rgb, 255:red, 0; green, 0; blue, 0 }  ,draw opacity=1 ][dash pattern={on 0.84pt off 2.51pt}] (113.41,260.71) -- (237.23,260.71) -- (237.2,349.63) -- (113.38,349.63) -- cycle ;
\draw [color={rgb, 255:red, 0; green, 0; blue, 0 }  ,draw opacity=1 ]   (95.95,433.55) -- (150.56,433.68) ;
\draw [color={rgb, 255:red, 0; green, 0; blue, 0 }  ,draw opacity=1 ]   (205.17,433.81) -- (259.78,433.95) ;
\draw [color={rgb, 255:red, 74; green, 144; blue, 226 }  ,draw opacity=1 ][line width=2.25]    (150.56,433.68) -- (205.17,433.81) ;
\draw [color={rgb, 255:red, 74; green, 144; blue, 226 }  ,draw opacity=1 ][line width=2.25]    (95.95,433.55) -- (95.99,480.93) ;
\draw [color={rgb, 255:red, 74; green, 144; blue, 226 }  ,draw opacity=1 ][line width=2.25]    (96.02,528.31) -- (96.06,575.69) ;
\draw [color={rgb, 255:red, 0; green, 0; blue, 0 }  ,draw opacity=1 ]   (95.99,480.93) -- (96.02,528.31) ;
\draw [color={rgb, 255:red, 0; green, 0; blue, 0 }  ,draw opacity=1 ]   (94.21,573.94) -- (148.82,574.45) ;
\draw [color={rgb, 255:red, 0; green, 0; blue, 0 }  ,draw opacity=1 ]   (203.43,574.96) -- (258.04,575.48) ;
\draw [color={rgb, 255:red, 74; green, 144; blue, 226 }  ,draw opacity=1 ][line width=2.25]    (148.82,574.45) -- (203.43,574.96) ;
\draw [color={rgb, 255:red, 74; green, 144; blue, 226 }  ,draw opacity=1 ][line width=2.25]    (259.85,528.71) -- (259.89,576.09) ;
\draw [color={rgb, 255:red, 0; green, 0; blue, 0 }  ,draw opacity=1 ]   (259.82,481.33) -- (259.85,528.71) ;
\draw [color={rgb, 255:red, 74; green, 144; blue, 226 }  ,draw opacity=1 ][line width=2.25]    (259.78,433.95) -- (259.82,481.33) ;
\draw [color={rgb, 255:red, 208; green, 2; blue, 27 }  ,draw opacity=1 ][line width=2.25]    (161.47,477.68) -- (161.22,502.7) ;
\draw [color={rgb, 255:red, 0; green, 0; blue, 0 }  ,draw opacity=1 ]   (161.22,502.7) -- (160.97,527.71) ;
\draw [color={rgb, 255:red, 0; green, 0; blue, 0 }  ,draw opacity=1 ]   (161.47,477.68) -- (138.29,477.55) ;
\draw [color={rgb, 255:red, 0; green, 0; blue, 0 }  ,draw opacity=1 ]   (138.29,502.4) -- (138.04,527.42) ;
\draw [color={rgb, 255:red, 208; green, 2; blue, 27 }  ,draw opacity=1 ][line width=2.25]    (138.54,477.39) -- (138.29,502.4) ;
\draw [color={rgb, 255:red, 208; green, 2; blue, 27 }  ,draw opacity=1 ][line width=2.25]    (161.22,527.55) -- (138.04,527.42) ;
\draw [color={rgb, 255:red, 208; green, 2; blue, 27 }  ,draw opacity=1 ][line width=2.25]    (178.58,504.17) -- (201.72,504.27) ;
\draw [color={rgb, 255:red, 0; green, 0; blue, 0 }  ,draw opacity=1 ]   (201.72,504.27) -- (224.86,504.36) ;
\draw [color={rgb, 255:red, 0; green, 0; blue, 0 }  ,draw opacity=1 ]   (178.58,504.17) -- (178.62,527.85) ;
\draw [color={rgb, 255:red, 0; green, 0; blue, 0 }  ,draw opacity=1 ]   (201.61,527.69) -- (224.75,527.79) ;
\draw [color={rgb, 255:red, 208; green, 2; blue, 27 }  ,draw opacity=1 ][line width=2.25]    (178.47,527.59) -- (201.61,527.69) ;
\draw [color={rgb, 255:red, 208; green, 2; blue, 27 }  ,draw opacity=1 ][line width=2.25]    (224.71,504.11) -- (224.75,527.79) ;
\draw [color={rgb, 255:red, 65; green, 117; blue, 5 }  ,draw opacity=1 ][line width=2.25]    (597.41,481) -- (597.06,506.65) ;
\draw [color={rgb, 255:red, 0; green, 0; blue, 0 }  ,draw opacity=1 ]   (597.06,506.65) -- (596.72,532.3) ;
\draw [color={rgb, 255:red, 0; green, 0; blue, 0 }  ,draw opacity=1 ]   (597.41,481) -- (568.58,480.9) ;
\draw [color={rgb, 255:red, 0; green, 0; blue, 0 }  ,draw opacity=1 ]   (568.54,506.39) -- (568.19,532.04) ;
\draw [color={rgb, 255:red, 65; green, 117; blue, 5 }  ,draw opacity=1 ][line width=2.25]    (568.88,480.73) -- (568.54,506.39) ;
\draw [color={rgb, 255:red, 65; green, 117; blue, 5 }  ,draw opacity=1 ][line width=2.25]    (597.02,532.14) -- (568.19,532.04) ;
\draw  [color={rgb, 255:red, 0; green, 0; blue, 0 }  ,draw opacity=1 ][dash pattern={on 0.84pt off 2.51pt}] (114.79,457.5) -- (238.6,457.5) -- (238.57,546.42) -- (114.76,546.42) -- cycle ;
\draw [color={rgb, 255:red, 208; green, 2; blue, 27 }  ,draw opacity=1 ][line width=2.25]    (177.86,478.27) -- (201,478.35) ;
\draw [color={rgb, 255:red, 0; green, 0; blue, 0 }  ,draw opacity=1 ]   (201,478.35) -- (224.15,478.44) ;
\draw [color={rgb, 255:red, 74; green, 144; blue, 226 }  ,draw opacity=1 ][line width=2.25]    (383.62,240.74) -- (438.23,240.87) ;
\draw [color={rgb, 255:red, 74; green, 144; blue, 226 }  ,draw opacity=1 ][line width=2.25]    (492.84,241) -- (547.45,241.14) ;
\draw    (438.23,240.87) -- (492.84,241) ;
\draw    (383.62,240.74) -- (383.65,288.12) ;
\draw    (383.68,335.5) -- (383.72,382.88) ;
\draw [color={rgb, 255:red, 74; green, 144; blue, 226 }  ,draw opacity=1 ][line width=2.25]    (383.65,288.12) -- (383.68,335.5) ;
\draw [color={rgb, 255:red, 74; green, 144; blue, 226 }  ,draw opacity=1 ][line width=2.25]    (383.72,382.88) -- (438.33,383.01) ;
\draw [color={rgb, 255:red, 74; green, 144; blue, 226 }  ,draw opacity=1 ][line width=2.25]    (492.94,383.14) -- (547.55,383.28) ;
\draw    (438.33,383.01) -- (492.94,383.14) ;
\draw    (547.51,335.9) -- (547.55,383.28) ;
\draw [color={rgb, 255:red, 74; green, 144; blue, 226 }  ,draw opacity=1 ][line width=2.25]    (547.48,288.52) -- (547.51,335.9) ;
\draw    (547.45,241.14) -- (547.48,288.52) ;
\draw [color={rgb, 255:red, 0; green, 0; blue, 0 }  ,draw opacity=1 ][line width=0.75]    (309.01,282.21) -- (308.67,307.86) ;
\draw [color={rgb, 255:red, 65; green, 117; blue, 5 }  ,draw opacity=1 ][line width=2.25]    (308.67,307.86) -- (308.32,333.51) ;
\draw [color={rgb, 255:red, 65; green, 117; blue, 5 }  ,draw opacity=1 ][line width=2.25]    (309.01,282.21) -- (280.18,282.11) ;
\draw [color={rgb, 255:red, 65; green, 117; blue, 5 }  ,draw opacity=1 ][line width=2.25]    (280.14,307.6) -- (279.8,333.25) ;
\draw [color={rgb, 255:red, 0; green, 0; blue, 0 }  ,draw opacity=1 ][line width=0.75]    (280.49,281.94) -- (280.14,307.6) ;
\draw [color={rgb, 255:red, 0; green, 0; blue, 0 }  ,draw opacity=1 ][line width=0.75]    (308.63,333.35) -- (279.8,333.25) ;
\draw  [dash pattern={on 0.84pt off 2.51pt}] (402.45,264.69) -- (526.26,264.69) -- (526.23,353.61) -- (402.42,353.61) -- cycle ;
\draw [color={rgb, 255:red, 74; green, 144; blue, 226 }  ,draw opacity=1 ][line width=2.25]    (384.98,435.77) -- (439.59,435.9) ;
\draw [color={rgb, 255:red, 74; green, 144; blue, 226 }  ,draw opacity=1 ][line width=2.25]    (494.2,436.03) -- (548.81,436.17) ;
\draw    (439.59,435.9) -- (494.2,436.03) ;
\draw    (384.98,435.77) -- (385.01,483.15) ;
\draw    (385.05,530.53) -- (385.08,577.91) ;
\draw [color={rgb, 255:red, 74; green, 144; blue, 226 }  ,draw opacity=1 ][line width=2.25]    (385.01,483.15) -- (385.05,530.53) ;
\draw [color={rgb, 255:red, 74; green, 144; blue, 226 }  ,draw opacity=1 ][line width=2.25]    (383.21,578.17) -- (437.82,578.69) ;
\draw [color={rgb, 255:red, 74; green, 144; blue, 226 }  ,draw opacity=1 ][line width=2.25]    (492.43,579.2) -- (547.04,579.71) ;
\draw    (437.82,578.69) -- (492.43,579.2) ;
\draw    (548.88,530.93) -- (548.91,578.31) ;
\draw [color={rgb, 255:red, 74; green, 144; blue, 226 }  ,draw opacity=1 ][line width=2.25]    (548.84,483.55) -- (548.88,530.93) ;
\draw    (548.81,436.17) -- (548.84,483.55) ;
\draw [color={rgb, 255:red, 0; green, 0; blue, 0 }  ,draw opacity=1 ][line width=0.75]    (450.49,479.9) -- (450.24,504.91) ;
\draw [color={rgb, 255:red, 208; green, 2; blue, 27 }  ,draw opacity=1 ][line width=2.25]    (450.24,504.91) -- (449.99,529.93) ;
\draw [color={rgb, 255:red, 208; green, 2; blue, 27 }  ,draw opacity=1 ][line width=2.25]    (450.49,479.9) -- (427.31,479.77) ;
\draw [color={rgb, 255:red, 208; green, 2; blue, 27 }  ,draw opacity=1 ][line width=2.25]    (427.31,504.62) -- (427.07,529.64) ;
\draw [color={rgb, 255:red, 0; green, 0; blue, 0 }  ,draw opacity=1 ][line width=0.75]    (427.56,479.61) -- (427.31,504.62) ;
\draw [color={rgb, 255:red, 0; green, 0; blue, 0 }  ,draw opacity=1 ][line width=0.75]    (450.24,529.77) -- (427.07,529.64) ;
\draw [color={rgb, 255:red, 0; green, 0; blue, 0 }  ,draw opacity=1 ][line width=0.75]    (467.6,506.39) -- (490.74,506.48) ;
\draw [color={rgb, 255:red, 208; green, 2; blue, 27 }  ,draw opacity=1 ][line width=2.25]    (490.74,506.48) -- (513.89,506.58) ;
\draw [color={rgb, 255:red, 208; green, 2; blue, 27 }  ,draw opacity=1 ][line width=2.25]    (467.6,506.39) -- (467.64,530.06) ;
\draw [color={rgb, 255:red, 208; green, 2; blue, 27 }  ,draw opacity=1 ][line width=2.25]    (490.63,529.91) -- (513.78,530) ;
\draw [color={rgb, 255:red, 0; green, 0; blue, 0 }  ,draw opacity=1 ][line width=0.75]    (467.49,529.81) -- (490.63,529.91) ;
\draw [color={rgb, 255:red, 0; green, 0; blue, 0 }  ,draw opacity=1 ][line width=0.75]    (513.74,506.33) -- (513.78,530) ;
\draw [color={rgb, 255:red, 0; green, 0; blue, 0 }  ,draw opacity=1 ][line width=0.75]    (310.37,477.23) -- (310.03,502.89) ;
\draw [color={rgb, 255:red, 65; green, 117; blue, 5 }  ,draw opacity=1 ][line width=2.25]    (310.03,502.89) -- (309.68,528.54) ;
\draw [color={rgb, 255:red, 65; green, 117; blue, 5 }  ,draw opacity=1 ][line width=2.25]    (310.37,477.23) -- (281.54,477.14) ;
\draw [color={rgb, 255:red, 65; green, 117; blue, 5 }  ,draw opacity=1 ][line width=2.25]    (281.5,502.62) -- (281.16,528.28) ;
\draw [color={rgb, 255:red, 0; green, 0; blue, 0 }  ,draw opacity=1 ][line width=0.75]    (281.85,476.97) -- (281.5,502.62) ;
\draw [color={rgb, 255:red, 0; green, 0; blue, 0 }  ,draw opacity=1 ][line width=0.75]    (309.99,528.38) -- (281.16,528.28) ;
\draw  [dash pattern={on 0.84pt off 2.51pt}] (403.81,459.72) -- (527.63,459.72) -- (527.59,548.64) -- (403.78,548.64) -- cycle ;
\draw [color={rgb, 255:red, 208; green, 2; blue, 27 }  ,draw opacity=1 ][line width=2.25]    (466.88,480.49) -- (490.03,480.57) ;
\draw [color={rgb, 255:red, 0; green, 0; blue, 0 }  ,draw opacity=1 ][line width=0.75]    (490.03,480.57) -- (513.17,480.65) ;
\draw [color={rgb, 255:red, 0; green, 0; blue, 0 }  ,draw opacity=1 ][line width=0.75]    (158.1,281.9) -- (157.85,306.92) ;
\draw [color={rgb, 255:red, 208; green, 2; blue, 27 }  ,draw opacity=1 ][line width=2.25]    (157.85,306.92) -- (157.6,331.93) ;
\draw [color={rgb, 255:red, 208; green, 2; blue, 27 }  ,draw opacity=1 ][line width=2.25]    (158.1,281.9) -- (134.92,281.77) ;
\draw [color={rgb, 255:red, 208; green, 2; blue, 27 }  ,draw opacity=1 ][line width=2.25]    (134.92,306.63) -- (134.68,331.64) ;
\draw [color={rgb, 255:red, 0; green, 0; blue, 0 }  ,draw opacity=1 ][line width=0.75]    (135.17,281.61) -- (134.92,306.63) ;
\draw [color={rgb, 255:red, 0; green, 0; blue, 0 }  ,draw opacity=1 ][line width=0.75]    (157.85,331.77) -- (134.68,331.64) ;
\draw [color={rgb, 255:red, 0; green, 0; blue, 0 }  ,draw opacity=1 ][line width=0.75]    (175.21,308.39) -- (198.35,308.49) ;
\draw [color={rgb, 255:red, 208; green, 2; blue, 27 }  ,draw opacity=1 ][line width=2.25]    (198.35,308.49) -- (221.5,308.58) ;
\draw [color={rgb, 255:red, 208; green, 2; blue, 27 }  ,draw opacity=1 ][line width=2.25]    (175.21,308.39) -- (175.25,332.07) ;
\draw [color={rgb, 255:red, 208; green, 2; blue, 27 }  ,draw opacity=1 ][line width=2.25]    (198.24,331.91) -- (221.39,332.01) ;
\draw [color={rgb, 255:red, 0; green, 0; blue, 0 }  ,draw opacity=1 ][line width=0.75]    (175.1,331.82) -- (198.24,331.91) ;
\draw [color={rgb, 255:red, 0; green, 0; blue, 0 }  ,draw opacity=1 ][line width=0.75]    (221.35,308.33) -- (221.39,332.01) ;
\draw [color={rgb, 255:red, 208; green, 2; blue, 27 }  ,draw opacity=1 ][line width=2.25]    (174.49,282.49) -- (197.64,282.57) ;
\draw [color={rgb, 255:red, 0; green, 0; blue, 0 }  ,draw opacity=1 ][line width=0.75]    (197.64,282.57) -- (220.78,282.66) ;
\draw [color={rgb, 255:red, 208; green, 2; blue, 27 }  ,draw opacity=1 ][line width=2.25]    (447.15,287.89) -- (446.9,312.9) ;
\draw [color={rgb, 255:red, 0; green, 0; blue, 0 }  ,draw opacity=1 ]   (446.9,312.9) -- (446.65,337.91) ;
\draw [color={rgb, 255:red, 0; green, 0; blue, 0 }  ,draw opacity=1 ]   (447.15,287.89) -- (423.97,287.75) ;
\draw [color={rgb, 255:red, 0; green, 0; blue, 0 }  ,draw opacity=1 ]   (423.97,312.61) -- (423.73,337.62) ;
\draw [color={rgb, 255:red, 208; green, 2; blue, 27 }  ,draw opacity=1 ][line width=2.25]    (424.22,287.59) -- (423.97,312.61) ;
\draw [color={rgb, 255:red, 208; green, 2; blue, 27 }  ,draw opacity=1 ][line width=2.25]    (446.9,337.75) -- (423.73,337.62) ;
\draw [color={rgb, 255:red, 208; green, 2; blue, 27 }  ,draw opacity=1 ][line width=2.25]    (464.26,314.37) -- (487.4,314.47) ;
\draw [color={rgb, 255:red, 0; green, 0; blue, 0 }  ,draw opacity=1 ]   (487.4,314.47) -- (510.55,314.56) ;
\draw [color={rgb, 255:red, 0; green, 0; blue, 0 }  ,draw opacity=1 ]   (464.26,314.37) -- (464.3,338.05) ;
\draw [color={rgb, 255:red, 0; green, 0; blue, 0 }  ,draw opacity=1 ]   (487.29,337.89) -- (510.44,337.99) ;
\draw [color={rgb, 255:red, 208; green, 2; blue, 27 }  ,draw opacity=1 ][line width=2.25]    (464.15,337.8) -- (487.29,337.89) ;
\draw [color={rgb, 255:red, 208; green, 2; blue, 27 }  ,draw opacity=1 ][line width=2.25]    (510.4,314.31) -- (510.44,337.99) ;
\draw [color={rgb, 255:red, 208; green, 2; blue, 27 }  ,draw opacity=1 ][line width=2.25]    (463.54,288.47) -- (486.68,288.56) ;
\draw [color={rgb, 255:red, 0; green, 0; blue, 0 }  ,draw opacity=1 ]   (486.68,288.56) -- (509.83,288.64) ;

\draw (23,84.4) node [anchor=north west][inner sep=0.75pt]    {$M_{A}$};
\draw (612,79.4) node [anchor=north west][inner sep=0.75pt]    {$M_{C}$};
\draw (23,278.4) node [anchor=north west][inner sep=0.75pt]    {$M_{A}^{'}$};
\draw (617,276.4) node [anchor=north west][inner sep=0.75pt]    {$M_{C}^{'}$};
\draw (22,471.22) node [anchor=north west][inner sep=0.75pt]    {$M_{A}^{\circ }$};
\draw (617,464.22) node [anchor=north west][inner sep=0.75pt]    {$M_{C}^{\circ }$};

\end{tikzpicture}

	\caption{An illustration of the above proof: $(M_A,M_C)$ and $(M_A',M_C')$ differ only because of distribution of edges on the outer cycle $Z$. The sets $B_1$ and $B_2$ in the proof correspond to the inner edges enclosed in the dashed box. We omit the edges that have an intersection with the dashed box. From $(M_A,M_C)$ to $(M_A^\circ,M_C^\circ)$ we ``switch" the blue and red edges. Then From $(M_A^\circ,M_C^\circ)$ to $(M_A',M_C')$ we switch the red edges only. The validity of switching is shown in the last paragraph of the proof.}
\end{figure}
\begin{lem}
	Suppose $M_N$ is twistable at $F$, then it cannot happen that each of the two paths $a\to b$ and $c\to d$ both contain one or more edge of $F$ lying in $M_N$. Similar statements hold if we switch $N$ to $W,S$ or $E$.
\end{lem}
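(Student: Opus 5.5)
The plan is a contradiction argument combining bipartite parity with the planarity of the superimposition, in the setting $M_A\oplus M_C\approx M_N\oplus M_S$. In $\widehat{M_N}\oplus\widehat{M_S}$ there are two odd-length open paths $P_{ab}$ from $a$ to $b$ and $P_{cd}$ from $c$ to $d$, together with doubled edges and even cycles. The edges of $M_N$ on $P_{ab}$ are its odd-numbered edges, and those on $P_{cd}$ are its even-numbered edges.

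First, suppose $M_N$ is twistable at $F$, so that $M_N$ restricted to $\partial F$ is one of the two perfect matchings of the boundary cycle of $F$: every vertex of $\partial F$ is matched in $M_N$ along $\partial F$. Assume, for contradiction, that $P_{ab}$ contains an $M_N$-edge $e_1\subset\partial F$ and $P_{cd}$ contains an $M_N$-edge $e_2\subset\partial F$.

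Second, I will study how each path meets $\partial F$. At each endpoint of $e_1$, the path continues with an $M_S$-edge, unless that endpoint is $a$ or $b$. Going around $\partial F$, the boundary alternates between $M_N$-edges and the other edges of $\partial F$. So both $P_{ab}$ and $P_{cd}$ enter and leave the closed disk bounded by $\partial F$ (a square or a hexagon) at vertices of $\partial F$. Because $F$ is a face, no edges lie in its interior. Hence each path, locally near $\partial F$, consists of $M_N$-edges of $\partial F$ linked by $M_S$-edges that either run along $\partial F$ or leave it.

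Third comes the planarity step. Since $P_{ab}$ and $P_{cd}$ are vertex-disjoint curves in the plane, and $F$ is a face lying on both, the four terminals $a,b,c,d$ on the outer boundary (in clockwise order, $a,c$ black and $b,d$ white) force a separation. The region between $P_{ab}$ and $P_{cd}$ is cut out by these paths together with boundary arcs of $G$. I will compare colours: on $P_{ab}$, $M_N$-edges traversed from $a$ go black$\to$white, while on $P_{cd}$, $M_N$-edges traversed from $c$ go white$\to$black, since $c$ is black and its first edge lies in $M_S$. Orienting $\partial F$ clockwise, the twist table of Lemma \ref{lem:above-or-below} shows that all $M_N$-edges on $\partial F$ have the same colour orientation (say black$\to$white clockwise). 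Combined with the directions in which $P_{ab}$ and $P_{cd}$ cross around $F$, this forces the two paths to traverse $F$ in opposite rotational senses. I expect that to contradict the Jordan-curve separation, because $F$ would have to lie simultaneously to the left of $P_{ab}$ and to the right of $P_{cd}$ while being on the side of the boundary arcs joining $b$ to $c$ and $d$ to $a$.

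The main obstacle is this third step, namely making the planar and orientation bookkeeping rigorous. This is especially delicate for a hexagonal face, where a single path may touch $\partial F$ in two separate $M_N$-edges. If the orientation argument becomes unwieldy, my fallback is a counting argument. Apply $\tau_F$ to $M_N$ to obtain $M_N'$. Then $M_N'\oplus M_S$ would still have $\{a,b\}$ and $\{c,d\}$ paired by paths, but the twist reroutes through $\partial F$ and links an endpoint piece of $P_{ab}$ with one of $P_{cd}$. This produces a path from $a$ to $d$ or from $a$ to $c$, and both contradict the statement, recalled in the remark after Kuo's lemma, that superimposing a matching of $G-\{c,d\}$ with one of $G-\{a,b\}$ always yields paths pairing $a$ with $b$ and $c$ with $d$. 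The case $a$ to $c$ is impossible by colour, since $a$ and $c$ are both black. The cases $W$, $S$, $E$ follow by the same argument with the roles of the terminals permuted.
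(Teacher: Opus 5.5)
Your step 3 is the right argument, and once written out it gives a cleaner proof than the paper's. Your fallback is essentially the paper's route, but it has a concrete hole for hexagonal faces and cannot substitute for step 3.

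\textbf{The paper's route versus your step 3.} The paper twists: $\tau_F(M_N)$ is still a perfect matching of $G-\{c,d\}$, yet $\tau_F(M_N)\oplus M_S$ pairs the terminals differently. This is checked by pictures for the square and for several rectangle configurations, and it contradicts the pairing statement recalled after Kuo's lemma. Your step 3 needs neither the twist nor a case list. It closes as follows.
\begin{enumerate}
\item Close each path through the outer face, say via an auxiliary vertex joined to $a,b,c,d$, and apply the Jordan curve theorem. Since $a,b,c,d$ are clockwise and $P_{ab}\cap P_{cd}=\emptyset$, the side of $P_{ab}$ containing $P_{cd}$ is its right side when oriented $a\to b$. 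Likewise, the side of $P_{cd}$ containing $P_{ab}$ is its right side when oriented $c\to d$.
\item The open face $F$ has an edge on each path, so it lies in both of these sides. Hence $F$ is the face to the right of $e_1$ and of $e_2$, i.e.\ both edges are traversed clockwise around $F$.
\item But $e_1$ is traversed black$\to$white and $e_2$ white$\to$black. On the even cycle $\partial F$ the colours alternate and $M_N|_{\partial F}$ uses every other edge, so all its edges have the same colour orientation clockwise. This is a contradiction; you do not need the twist table for this.
\end{enumerate}
Only one $M_N$-edge per path is used, so hexagons need no extra care. The $W,S,E$ versions go through after checking directions. What this buys you is a uniform, local argument. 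The paper instead leans on the global pairing statement of Kuo's lemma, which itself rests on this same parity-plus-planarity reasoning, and then still needs an enumeration of pictures.

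\textbf{First problem with the fallback: the colour remark is backwards.} An alternating path from $a$ (first edge in $M_N$) to $c$ (last edge in $M_S$) has even length, which is consistent with $a,c$ both being black. Parity forbids $a$--$d$, not $a$--$c$. For a square, the twisted superimposition in fact pairs $\{a,c\},\{b,d\}$, and this is excluded by planarity. The paper writes $a\to d$, $b\to c$, but the contradiction is the same.

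\textbf{Second, more serious problem: for a hexagon the twist need not join an end piece of $P_{ab}$ to one of $P_{cd}$.} Take the following colour-consistent configuration:
\begin{itemize}
\item $\partial F=v_1\cdots v_6$ with $v_1$ black, and $M_N|_{\partial F}=\{v_1v_2,v_3v_4,v_5v_6\}$;
\item $P_{ab}=a\cdots v_1v_2\cdots v_5v_6\cdots b$, with the stretch from $v_2$ to $v_5$ running outside $F$;
\item $P_{cd}=c\cdots v_4v_3\cdots d$.
\end{itemize}
After the twist to $\{v_6v_1,v_2v_3,v_4v_5\}$, the components are $a\cdots v_1v_6\cdots b$ and $c\cdots v_4v_5\cdots v_2v_3\cdots d$. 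The pairing is unchanged, so Kuo gives no contradiction. This configuration cannot occur, but only because the stretch from $v_2$ to $v_5$, together with half of $\partial F$, bounds a region that cuts some terminal off from the outer face. That is a planarity argument of exactly the kind in your step 3; the paper's list of pictures silently omits this case. So write up step 3 as the proof rather than relying on the fallback.
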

\begin{proof}
  First suppose $F$ is a square. If $a\to b$ and $c\to d$ both contain some edge of $F$ which lies in $M_N$, then $M_N\oplus M_S$, being an element of $M(G_N)\times M(G_S)$, produces paths $a\to b$ and $c\to d$. The twisted $\tau_F(M_N)$ still belongs to $M(G_N)$, but $\tau_F(M_N)\oplus M(G_S)$ produces paths $a\to d$ and $b \to c$, as shown in the figure. This contradicts the Kuo's condensation lemma, as a superimposition with paths $a\to d$ and $b \to c$ should correspond to a pair in $M(G_W)\times M(G_E)$ instead of $M(G_N)\times M(G_S)$. Hence this will not happen.
  
  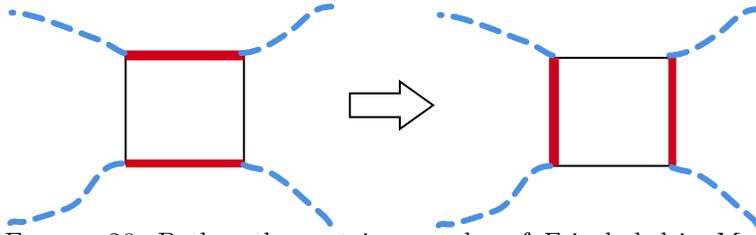
\begin{figure}[H]
  	\centering

  	\tikzset{every picture/.style={line width=0.75pt}} 
  	
  	\begin{tikzpicture}[x=0.75pt,y=0.75pt,yscale=-.6,xscale=.6]
  		
  		\draw   (114.54,98.09) -- (214,98.09) -- (214,189) -- (114.54,189) -- cycle ;
  		\draw [color={rgb, 255:red, 208; green, 2; blue, 27 }  ,draw opacity=1 ][line width=3.75]    (114.54,98.09) -- (214,98.09) ;
  		\draw [color={rgb, 255:red, 208; green, 2; blue, 27 }  ,draw opacity=1 ][line width=3]    (114.54,189) -- (214,189) ;
  		\draw [color={rgb, 255:red, 74; green, 144; blue, 226 }  ,draw opacity=1 ][line width=2.25] [line join = round][line cap = round] [dash pattern={on 6.75pt off 4.5pt}]  (113.54,96.09) .. controls (110.45,96.09) and (97.99,86.91) .. (92.54,85.09) .. controls (71.68,78.14) and (38.07,62.09) .. (18.54,62.09) ;
  		\draw [color={rgb, 255:red, 74; green, 144; blue, 226 }  ,draw opacity=1 ][line width=2.25] [line join = round][line cap = round] [dash pattern={on 6.75pt off 4.5pt}]  (213.54,190.09) .. controls (219.8,193.22) and (227.75,192.92) .. (234.54,196.09) .. controls (248.83,202.76) and (259.81,215.71) .. (271.54,225.09) .. controls (276.76,229.27) and (283.65,234.3) .. (286.54,240.09) ;
  		\draw [color={rgb, 255:red, 74; green, 144; blue, 226 }  ,draw opacity=1 ][line width=2.25] [line join = round][line cap = round] [dash pattern={on 6.75pt off 4.5pt}]  (111.54,189.09) .. controls (87.46,189.09) and (85.82,219.49) .. (64.54,227.09) .. controls (52.73,231.31) and (40.75,235.18) .. (28.54,238.09) .. controls (24.96,238.95) and (20.15,236.49) .. (17.54,239.09) ;
  		\draw [color={rgb, 255:red, 74; green, 144; blue, 226 }  ,draw opacity=1 ][line width=2.25] [line join = round][line cap = round] [dash pattern={on 6.75pt off 4.5pt}]  (211.54,96.09) .. controls (226.17,96.09) and (242.48,85.39) .. (253.54,77.09) .. controls (266.57,67.32) and (270.96,57.09) .. (287.54,57.09) ;
  		\draw   (474.54,100.09) -- (574,100.09) -- (574,191) -- (474.54,191) -- cycle ;
  		\draw [color={rgb, 255:red, 208; green, 2; blue, 27 }  ,draw opacity=1 ][line width=3.75]    (474.54,100.09) -- (474.54,191) ;
  		\draw [color={rgb, 255:red, 208; green, 2; blue, 27 }  ,draw opacity=1 ][line width=3]    (574,100.09) -- (574,191) ;
  		\draw [color={rgb, 255:red, 74; green, 144; blue, 226 }  ,draw opacity=1 ][line width=2.25] [line join = round][line cap = round] [dash pattern={on 6.75pt off 4.5pt}]  (473.54,98.09) .. controls (470.45,98.09) and (457.99,88.91) .. (452.54,87.09) .. controls (431.68,80.14) and (398.07,64.09) .. (378.54,64.09) ;
  		\draw [color={rgb, 255:red, 74; green, 144; blue, 226 }  ,draw opacity=1 ][line width=2.25] [line join = round][line cap = round] [dash pattern={on 6.75pt off 4.5pt}]  (573.54,192.09) .. controls (579.8,195.22) and (587.75,194.92) .. (594.54,198.09) .. controls (608.83,204.76) and (619.81,217.71) .. (631.54,227.09) .. controls (636.76,231.27) and (643.65,236.3) .. (646.54,242.09) ;
  		\draw [color={rgb, 255:red, 74; green, 144; blue, 226 }  ,draw opacity=1 ][line width=2.25] [line join = round][line cap = round] [dash pattern={on 6.75pt off 4.5pt}]  (471.54,191.09) .. controls (447.46,191.09) and (445.82,221.49) .. (424.54,229.09) .. controls (412.73,233.31) and (400.75,237.18) .. (388.54,240.09) .. controls (384.96,240.95) and (380.15,238.49) .. (377.54,241.09) ;
  		\draw [color={rgb, 255:red, 74; green, 144; blue, 226 }  ,draw opacity=1 ][line width=2.25] [line join = round][line cap = round] [dash pattern={on 6.75pt off 4.5pt}]  (571.54,98.09) .. controls (586.17,98.09) and (602.48,87.39) .. (613.54,79.09) .. controls (626.57,69.32) and (630.96,59.09) .. (647.54,59.09) ;
  		\draw   (303,130) -- (345,130) -- (345,120) -- (373,140) -- (345,160) -- (345,150) -- (303,150) -- cycle ;

  	\end{tikzpicture}
  	
  	\caption{Both paths contain one edge of $F$ included in $M_N$, which is shown to be   impossible.}
  \end{figure}
  
  Then suppose $F$ is a rectangle.	If both paths $a\to b$ and $c\to d$ contain one (or more edges) of $F$ lying in $M_N$, then we can list all the possibilities and show that, in all these cases, while $M_N\oplus M_S$, being an element of $M(G_N)\times M(G_S)$, produces paths $a\to b$ and $c\to d$, the twisted  $\tau_F(M_N)\oplus M(G_S)$ produces paths $a\to d$ and $b \to c$, which leads to a similar contradiction as above when $F$ is a square.
  
  The following figures show the forbidden possibilities. Using reflections when necessary, we extend to all the possibilities for the perfect matching of $F$ on the left hand side.

  \begin{figure}[H]
  	\centering

  	\tikzset{every picture/.style={line width=0.75pt}} 
  	
  	\begin{tikzpicture}[x=0.75pt,y=0.75pt,yscale=-.4,xscale=.5]
  		
  		\draw   (80.23,99.37) -- (202.49,99.37) -- (202.49,168.81) -- (80.23,168.81) -- cycle ;
  		\draw  [color={rgb, 255:red, 74; green, 144; blue, 226 }  ,draw opacity=1 ][dash pattern={on 6.75pt off 4.5pt}][line width=2.25] [line join = round][line cap = round] (78.4,97.1) .. controls (69.43,87.65) and (56.71,81.97) .. (46.25,74.25) .. controls (44.5,72.96) and (33.54,68.82) .. (33.54,67.95) ;
  		\draw  [color={rgb, 255:red, 74; green, 144; blue, 226 }  ,draw opacity=1 ][dash pattern={on 6.75pt off 4.5pt}][line width=2.25] [line join = round][line cap = round] (78.4,168.02) .. controls (67.67,171.79) and (55.27,183.66) .. (47.75,190.88) .. controls (46.71,191.87) and (34.29,199.41) .. (34.29,201.12) ;
  		\draw  [color={rgb, 255:red, 74; green, 144; blue, 226 }  ,draw opacity=1 ][dash pattern={on 6.75pt off 4.5pt}][line width=2.25] [line join = round][line cap = round] (140.44,97.1) .. controls (140.44,91.05) and (148.06,85.13) .. (151.65,81.34) .. controls (156.26,76.48) and (159.38,70.83) .. (163.61,66.37) .. controls (170.91,58.68) and (181.71,52.02) .. (188.28,45.09) ;
  		\draw  [color={rgb, 255:red, 74; green, 144; blue, 226 }  ,draw opacity=1 ][dash pattern={on 6.75pt off 4.5pt}][line width=2.25] [line join = round][line cap = round] (143.43,167.24) .. controls (146.4,171.93) and (156.98,190.7) .. (160.62,191.66) .. controls (168.73,193.8) and (176.05,196.3) .. (185.29,195.6) .. controls (186.58,195.51) and (191.03,189.56) .. (192.02,188.51) .. controls (196.54,183.75) and (202.49,175.87) .. (202.49,168.02) ;
  		\draw [color={rgb, 255:red, 208; green, 2; blue, 27 }  ,draw opacity=1 ][line width=3]    (80.23,99.37) -- (141.94,99.47) ;
  		\draw [color={rgb, 255:red, 208; green, 2; blue, 27 }  ,draw opacity=1 ][line width=3.75]    (80.23,168.81) -- (142.68,168.02) ;
  		\draw [color={rgb, 255:red, 208; green, 2; blue, 27 }  ,draw opacity=1 ][line width=3.75]    (202.49,97.18) -- (202.49,168.81) ;
  		\draw   (296,123) -- (338,123) -- (338,113) -- (366,133) -- (338,153) -- (338,143) -- (296,143) -- cycle ;
  		\draw   (434.23,99.37) -- (556.49,99.37) -- (556.49,168.81) -- (434.23,168.81) -- cycle ;
  		\draw  [color={rgb, 255:red, 74; green, 144; blue, 226 }  ,draw opacity=1 ][dash pattern={on 6.75pt off 4.5pt}][line width=2.25] [line join = round][line cap = round] (432.4,97.1) .. controls (423.43,87.65) and (410.71,81.97) .. (400.25,74.25) .. controls (398.5,72.96) and (387.54,68.82) .. (387.54,67.95) ;
  		\draw  [color={rgb, 255:red, 74; green, 144; blue, 226 }  ,draw opacity=1 ][dash pattern={on 6.75pt off 4.5pt}][line width=2.25] [line join = round][line cap = round] (432.4,168.02) .. controls (421.67,171.79) and (409.27,183.66) .. (401.75,190.88) .. controls (400.71,191.87) and (388.29,199.41) .. (388.29,201.12) ;
  		\draw  [color={rgb, 255:red, 74; green, 144; blue, 226 }  ,draw opacity=1 ][dash pattern={on 6.75pt off 4.5pt}][line width=2.25] [line join = round][line cap = round] (494.44,97.1) .. controls (494.44,91.05) and (502.06,85.13) .. (505.65,81.34) .. controls (510.26,76.48) and (513.38,70.83) .. (517.61,66.37) .. controls (524.91,58.68) and (535.71,52.02) .. (542.28,45.09) ;
  		\draw  [color={rgb, 255:red, 74; green, 144; blue, 226 }  ,draw opacity=1 ][dash pattern={on 6.75pt off 4.5pt}][line width=2.25] [line join = round][line cap = round] (497.43,167.24) .. controls (500.4,171.93) and (510.98,190.7) .. (514.62,191.66) .. controls (522.73,193.8) and (530.05,196.3) .. (539.29,195.6) .. controls (540.58,195.51) and (545.03,189.56) .. (546.02,188.51) .. controls (550.54,183.75) and (556.49,175.87) .. (556.49,168.02) ;
  		\draw [color={rgb, 255:red, 208; green, 2; blue, 27 }  ,draw opacity=1 ][line width=3]    (434.23,99.37) -- (434.23,168.81) ;
  		\draw [color={rgb, 255:red, 208; green, 2; blue, 27 }  ,draw opacity=1 ][line width=3.75]    (556.49,168.81) -- (496.68,168.02) ;
  		\draw [color={rgb, 255:red, 208; green, 2; blue, 27 }  ,draw opacity=1 ][line width=3.75]    (556.49,97.18) -- (495.54,98.09) ;
  		\draw  [color={rgb, 255:red, 74; green, 144; blue, 226 }  ,draw opacity=1 ][dash pattern={on 6.75pt off 4.5pt}][line width=2.25] [line join = round][line cap = round] (200.54,96.85) .. controls (205.98,94.13) and (218.17,86.46) .. (224.54,88.85) .. controls (230.96,91.26) and (238.26,115.74) .. (239.54,120.85) .. controls (243.76,137.73) and (247.02,156.18) .. (249.54,173.85) .. controls (250.42,179.99) and (251.18,194.67) .. (257.54,197.85) .. controls (263.27,200.71) and (267.72,201.17) .. (274.54,201.85) ;
  		\draw  [color={rgb, 255:red, 74; green, 144; blue, 226 }  ,draw opacity=1 ][dash pattern={on 6.75pt off 4.5pt}][line width=2.25] [line join = round][line cap = round] (556.54,96.85) .. controls (561.98,94.13) and (574.17,86.46) .. (580.54,88.85) .. controls (586.96,91.26) and (594.26,115.74) .. (595.54,120.85) .. controls (599.76,137.73) and (603.02,156.18) .. (605.54,173.85) .. controls (606.42,179.99) and (607.18,194.67) .. (613.54,197.85) .. controls (619.27,200.71) and (623.72,201.17) .. (630.54,201.85) ;
  		\draw   (80.23,302.12) -- (202.49,302.12) -- (202.49,371.56) -- (80.23,371.56) -- cycle ;
  		\draw  [color={rgb, 255:red, 74; green, 144; blue, 226 }  ,draw opacity=1 ][dash pattern={on 6.75pt off 4.5pt}][line width=2.25] [line join = round][line cap = round] (78.4,299.85) .. controls (69.43,290.41) and (56.71,284.72) .. (46.25,277) .. controls (44.5,275.71) and (33.54,271.57) .. (33.54,270.7) ;
  		\draw  [color={rgb, 255:red, 74; green, 144; blue, 226 }  ,draw opacity=1 ][dash pattern={on 6.75pt off 4.5pt}][line width=2.25] [line join = round][line cap = round] (78.4,370.78) .. controls (67.67,374.55) and (55.27,386.41) .. (47.75,393.63) .. controls (46.71,394.62) and (34.29,402.16) .. (34.29,403.87) ;
  		\draw  [color={rgb, 255:red, 74; green, 144; blue, 226 }  ,draw opacity=1 ][dash pattern={on 6.75pt off 4.5pt}][line width=2.25] [line join = round][line cap = round] (140.44,299.85) .. controls (140.44,293.8) and (148.06,287.88) .. (151.65,284.09) .. controls (156.26,279.24) and (159.38,273.58) .. (163.61,269.12) .. controls (170.91,261.43) and (181.71,254.78) .. (188.28,247.85) ;
  		\draw [color={rgb, 255:red, 208; green, 2; blue, 27 }  ,draw opacity=1 ][line width=3]    (80.23,302.12) -- (141.94,302.22) ;
  		\draw [color={rgb, 255:red, 208; green, 2; blue, 27 }  ,draw opacity=1 ][line width=3.75]    (80.23,371.56) -- (142.68,370.78) ;
  		\draw [color={rgb, 255:red, 208; green, 2; blue, 27 }  ,draw opacity=1 ][line width=3.75]    (202.49,299.93) -- (202.49,371.56) ;
  		\draw  [color={rgb, 255:red, 74; green, 144; blue, 226 }  ,draw opacity=1 ][dash pattern={on 6.75pt off 4.5pt}][line width=2.25] [line join = round][line cap = round] (140.54,373.85) .. controls (140.54,393.02) and (154.81,405.64) .. (169.54,414.85) .. controls (174.71,418.08) and (181.68,425.85) .. (187.54,425.85) ;
  		\draw [color={rgb, 255:red, 74; green, 144; blue, 226 }  ,draw opacity=1 ][line width=2.25] [line join = round][line cap = round] [dash pattern={on 6.75pt off 4.5pt}]  (202.54,298.85) .. controls (205.94,295.45) and (209.53,293.63) .. (213.54,291.85) .. controls (219.1,289.38) and (223.82,285.7) .. (229.54,286.85) .. controls (239.91,288.92) and (243.12,314.58) .. (242.54,323.85) .. controls (241.76,336.31) and (230.89,352.15) .. (225.54,362.85) .. controls (221.36,371.22) and (202.54,390.42) .. (202.54,368.85) ;
  		\draw   (435.23,296.12) -- (557.49,296.12) -- (557.49,365.56) -- (435.23,365.56) -- cycle ;
  		\draw  [color={rgb, 255:red, 74; green, 144; blue, 226 }  ,draw opacity=1 ][dash pattern={on 6.75pt off 4.5pt}][line width=2.25] [line join = round][line cap = round] (433.4,293.85) .. controls (424.43,284.41) and (411.71,278.72) .. (401.25,271) .. controls (399.5,269.71) and (388.54,265.57) .. (388.54,264.7) ;
  		\draw  [color={rgb, 255:red, 74; green, 144; blue, 226 }  ,draw opacity=1 ][dash pattern={on 6.75pt off 4.5pt}][line width=2.25] [line join = round][line cap = round] (433.4,364.78) .. controls (422.67,368.55) and (410.27,380.41) .. (402.75,387.63) .. controls (401.71,388.62) and (389.29,396.16) .. (389.29,397.87) ;
  		\draw  [color={rgb, 255:red, 74; green, 144; blue, 226 }  ,draw opacity=1 ][dash pattern={on 6.75pt off 4.5pt}][line width=2.25] [line join = round][line cap = round] (495.44,293.85) .. controls (495.44,287.8) and (503.06,281.88) .. (506.65,278.09) .. controls (511.26,273.24) and (514.38,267.58) .. (518.61,263.12) .. controls (525.91,255.43) and (536.71,248.78) .. (543.28,241.85) ;
  		\draw [color={rgb, 255:red, 208; green, 2; blue, 27 }  ,draw opacity=1 ][line width=3]    (557.49,293.93) -- (496.94,296.22) ;
  		\draw [color={rgb, 255:red, 208; green, 2; blue, 27 }  ,draw opacity=1 ][line width=3.75]    (435.23,365.56) -- (435.23,296.12) ;
  		\draw [color={rgb, 255:red, 208; green, 2; blue, 27 }  ,draw opacity=1 ][line width=3.75]    (497.68,364.78) -- (557.49,365.56) ;
  		\draw  [color={rgb, 255:red, 74; green, 144; blue, 226 }  ,draw opacity=1 ][dash pattern={on 6.75pt off 4.5pt}][line width=2.25] [line join = round][line cap = round] (495.54,367.85) .. controls (495.54,387.02) and (509.81,399.64) .. (524.54,408.85) .. controls (529.71,412.08) and (536.68,419.85) .. (542.54,419.85) ;
  		\draw [color={rgb, 255:red, 74; green, 144; blue, 226 }  ,draw opacity=1 ][line width=2.25] [line join = round][line cap = round] [dash pattern={on 6.75pt off 4.5pt}]  (557.54,292.85) .. controls (560.94,289.45) and (564.53,287.63) .. (568.54,285.85) .. controls (574.1,283.38) and (578.82,279.7) .. (584.54,280.85) .. controls (594.91,282.92) and (598.12,308.58) .. (597.54,317.85) .. controls (596.76,330.31) and (585.89,346.15) .. (580.54,356.85) .. controls (576.36,365.22) and (557.54,384.42) .. (557.54,362.85) ;
  		\draw   (299,324.75) -- (341,324.75) -- (341,314.75) -- (369,334.75) -- (341,354.75) -- (341,344.75) -- (299,344.75) -- cycle ;
  		\draw   (78.23,524.15) -- (200.49,524.15) -- (200.49,593.59) -- (78.23,593.59) -- cycle ;
  		\draw  [color={rgb, 255:red, 74; green, 144; blue, 226 }  ,draw opacity=1 ][dash pattern={on 6.75pt off 4.5pt}][line width=2.25] [line join = round][line cap = round] (76.4,521.88) .. controls (67.43,512.43) and (54.71,506.75) .. (44.25,499.03) .. controls (42.5,497.73) and (31.54,493.6) .. (31.54,492.73) ;
  		\draw  [color={rgb, 255:red, 74; green, 144; blue, 226 }  ,draw opacity=1 ][dash pattern={on 6.75pt off 4.5pt}][line width=2.25] [line join = round][line cap = round] (138.44,521.88) .. controls (138.44,515.83) and (146.06,509.91) .. (149.65,506.12) .. controls (154.26,501.26) and (157.38,495.61) .. (161.61,491.15) .. controls (168.91,483.46) and (179.71,476.8) .. (186.28,469.87) ;
  		\draw [color={rgb, 255:red, 208; green, 2; blue, 27 }  ,draw opacity=1 ][line width=3]    (78.23,524.15) -- (139.94,524.25) ;
  		\draw [color={rgb, 255:red, 208; green, 2; blue, 27 }  ,draw opacity=1 ][line width=3.75]    (78.23,593.59) -- (140.68,592.8) ;
  		\draw [color={rgb, 255:red, 208; green, 2; blue, 27 }  ,draw opacity=1 ][line width=3.75]    (200.49,521.96) -- (200.49,593.59) ;
  		\draw   (295,542.78) -- (337,542.78) -- (337,532.78) -- (365,552.78) -- (337,572.78) -- (337,562.78) -- (295,562.78) -- cycle ;
  		\draw  [color={rgb, 255:red, 74; green, 144; blue, 226 }  ,draw opacity=1 ][dash pattern={on 6.75pt off 4.5pt}][line width=2.25] [line join = round][line cap = round] (202.54,521.91) .. controls (207.98,519.18) and (220.17,511.52) .. (226.54,513.91) .. controls (232.96,516.31) and (240.26,540.79) .. (241.54,545.91) .. controls (245.76,562.78) and (249.02,581.24) .. (251.54,598.91) .. controls (252.42,605.04) and (253.18,619.73) .. (259.54,622.91) .. controls (265.27,625.77) and (269.72,626.22) .. (276.54,626.91) ;
  		\draw  [color={rgb, 255:red, 74; green, 144; blue, 226 }  ,draw opacity=1 ][dash pattern={on 6.75pt off 4.5pt}][line width=2.25] [line join = round][line cap = round] (78.54,593.07) .. controls (78.54,613.41) and (84.95,617.6) .. (100.54,621.07) .. controls (122.56,625.96) and (138.54,616.85) .. (138.54,594.07) ;
  		\draw  [color={rgb, 255:red, 74; green, 144; blue, 226 }  ,draw opacity=1 ][dash pattern={on 6.75pt off 4.5pt}][line width=2.25] [line join = round][line cap = round] (198.54,593.07) .. controls (191.36,602.04) and (184.64,630.02) .. (174.54,635.07) .. controls (170.58,637.05) and (166.95,640.92) .. (162.54,644.07) .. controls (151.1,652.24) and (134.95,660.37) .. (120.54,663.07) .. controls (103.47,666.27) and (80.65,666.73) .. (63.54,666.07) .. controls (49.56,665.53) and (33.45,659.07) .. (18.54,659.07) ;
  		\draw   (427.23,521.95) -- (549.49,521.95) -- (549.49,591.39) -- (427.23,591.39) -- cycle ;
  		\draw  [color={rgb, 255:red, 74; green, 144; blue, 226 }  ,draw opacity=1 ][dash pattern={on 6.75pt off 4.5pt}][line width=2.25] [line join = round][line cap = round] (425.4,519.68) .. controls (416.43,510.23) and (403.71,504.54) .. (393.25,496.82) .. controls (391.5,495.53) and (380.54,491.39) .. (380.54,490.52) ;
  		\draw  [color={rgb, 255:red, 74; green, 144; blue, 226 }  ,draw opacity=1 ][dash pattern={on 6.75pt off 4.5pt}][line width=2.25] [line join = round][line cap = round] (487.44,519.68) .. controls (487.44,513.62) and (495.06,507.7) .. (498.65,503.92) .. controls (503.26,499.06) and (506.38,493.41) .. (510.61,488.94) .. controls (517.91,481.26) and (528.71,474.6) .. (535.28,467.67) ;
  		\draw [color={rgb, 255:red, 208; green, 2; blue, 27 }  ,draw opacity=1 ][line width=3]    (427.23,521.95) -- (427.23,591.39) ;
  		\draw [color={rgb, 255:red, 208; green, 2; blue, 27 }  ,draw opacity=1 ][line width=3.75]    (549.49,591.39) -- (489.68,590.6) ;
  		\draw [color={rgb, 255:red, 208; green, 2; blue, 27 }  ,draw opacity=1 ][line width=3.75]    (549.49,521.95) -- (488.54,523.01) ;
  		\draw  [color={rgb, 255:red, 74; green, 144; blue, 226 }  ,draw opacity=1 ][dash pattern={on 6.75pt off 4.5pt}][line width=2.25] [line join = round][line cap = round] (551.54,519.7) .. controls (556.98,516.98) and (569.17,509.31) .. (575.54,511.7) .. controls (581.96,514.11) and (589.26,538.59) .. (590.54,543.7) .. controls (594.76,560.58) and (598.02,579.03) .. (600.54,596.7) .. controls (601.42,602.84) and (602.18,617.52) .. (608.54,620.7) .. controls (614.27,623.56) and (618.72,624.02) .. (625.54,624.7) ;
  		\draw  [color={rgb, 255:red, 74; green, 144; blue, 226 }  ,draw opacity=1 ][dash pattern={on 6.75pt off 4.5pt}][line width=2.25] [line join = round][line cap = round] (427.54,590.86) .. controls (427.54,611.21) and (433.95,615.4) .. (449.54,618.86) .. controls (471.56,623.76) and (487.54,614.64) .. (487.54,591.86) ;
  		\draw  [color={rgb, 255:red, 74; green, 144; blue, 226 }  ,draw opacity=1 ][dash pattern={on 6.75pt off 4.5pt}][line width=2.25] [line join = round][line cap = round] (547.54,590.86) .. controls (540.36,599.83) and (533.64,627.81) .. (523.54,632.86) .. controls (519.58,634.85) and (515.95,638.72) .. (511.54,641.86) .. controls (500.1,650.03) and (483.95,658.16) .. (469.54,660.86) .. controls (452.47,664.06) and (429.65,664.52) .. (412.54,663.86) .. controls (398.56,663.32) and (382.45,656.86) .. (367.54,656.86) ;
  		\draw   (79.23,765.64) -- (201.49,765.64) -- (201.49,835.08) -- (79.23,835.08) -- cycle ;
  		\draw  [color={rgb, 255:red, 74; green, 144; blue, 226 }  ,draw opacity=1 ][dash pattern={on 6.75pt off 4.5pt}][line width=2.25] [line join = round][line cap = round] (206.44,762.37) .. controls (206.44,756.31) and (214.06,750.4) .. (217.65,746.61) .. controls (222.26,741.75) and (225.38,736.1) .. (229.61,731.64) .. controls (236.91,723.95) and (247.71,717.29) .. (254.28,710.36) ;
  		\draw [color={rgb, 255:red, 208; green, 2; blue, 27 }  ,draw opacity=1 ][line width=3]    (79.23,765.64) -- (140.94,765.73) ;
  		\draw [color={rgb, 255:red, 208; green, 2; blue, 27 }  ,draw opacity=1 ][line width=3.75]    (79.23,835.08) -- (141.68,834.29) ;
  		\draw [color={rgb, 255:red, 208; green, 2; blue, 27 }  ,draw opacity=1 ][line width=3.75]    (201.49,763.45) -- (201.49,835.08) ;
  		\draw  [color={rgb, 255:red, 74; green, 144; blue, 226 }  ,draw opacity=1 ][dash pattern={on 6.75pt off 4.5pt}][line width=2.25] [line join = round][line cap = round] (200.54,836.36) .. controls (200.54,855.54) and (214.81,868.15) .. (229.54,877.36) .. controls (234.71,880.59) and (241.68,888.36) .. (247.54,888.36) ;
  		\draw   (296,788.27) -- (338,788.27) -- (338,778.27) -- (366,798.27) -- (338,818.27) -- (338,808.27) -- (296,808.27) -- cycle ;
  		\draw  [color={rgb, 255:red, 74; green, 144; blue, 226 }  ,draw opacity=1 ][dash pattern={on 6.75pt off 4.5pt}][line width=2.25] [line join = round][line cap = round] (78.54,763.09) .. controls (57.67,763.09) and (53.87,795.03) .. (56.54,811.09) .. controls (56.75,812.36) and (60.57,818.14) .. (61.54,820.09) .. controls (63.62,824.25) and (69.5,831.06) .. (72.54,834.09) .. controls (73.35,834.9) and (77.54,836.29) .. (77.54,837.09) ;
  		\draw  [color={rgb, 255:red, 74; green, 144; blue, 226 }  ,draw opacity=1 ][dash pattern={on 6.75pt off 4.5pt}][line width=2.25] [line join = round][line cap = round] (137.54,765.09) .. controls (108.25,735.8) and (84.55,729.09) .. (41.54,729.09) ;
  		\draw  [color={rgb, 255:red, 74; green, 144; blue, 226 }  ,draw opacity=1 ][dash pattern={on 6.75pt off 4.5pt}][line width=2.25] [line join = round][line cap = round] (138.54,833.09) .. controls (124.04,866.93) and (84.11,869.93) .. (49.54,872.09) .. controls (43.5,872.47) and (37.29,875.09) .. (31.54,875.09) ;
  		\draw   (427.23,771.72) -- (549.49,771.72) -- (549.49,841.16) -- (427.23,841.16) -- cycle ;
  		\draw  [color={rgb, 255:red, 74; green, 144; blue, 226 }  ,draw opacity=1 ][dash pattern={on 6.75pt off 4.5pt}][line width=2.25] [line join = round][line cap = round] (554.44,768.45) .. controls (554.44,762.39) and (562.06,756.48) .. (565.65,752.69) .. controls (570.26,747.83) and (573.38,742.18) .. (577.61,737.72) .. controls (584.91,730.03) and (595.71,723.37) .. (602.28,716.44) ;
  		\draw [color={rgb, 255:red, 208; green, 2; blue, 27 }  ,draw opacity=1 ][line width=3]    (427.23,771.72) -- (427.23,841.16) ;
  		\draw [color={rgb, 255:red, 208; green, 2; blue, 27 }  ,draw opacity=1 ][line width=3.75]    (549.49,841.16) -- (489.68,840.37) ;
  		\draw [color={rgb, 255:red, 208; green, 2; blue, 27 }  ,draw opacity=1 ][line width=3.75]    (549.49,771.72) -- (487.54,772.12) ;
  		\draw  [color={rgb, 255:red, 74; green, 144; blue, 226 }  ,draw opacity=1 ][dash pattern={on 6.75pt off 4.5pt}][line width=2.25] [line join = round][line cap = round] (548.54,842.44) .. controls (548.54,861.62) and (562.81,874.23) .. (577.54,883.44) .. controls (582.71,886.67) and (589.68,894.44) .. (595.54,894.44) ;
  		\draw  [color={rgb, 255:red, 74; green, 144; blue, 226 }  ,draw opacity=1 ][dash pattern={on 6.75pt off 4.5pt}][line width=2.25] [line join = round][line cap = round] (426.54,769.17) .. controls (405.67,769.17) and (401.87,801.11) .. (404.54,817.17) .. controls (404.75,818.44) and (408.57,824.22) .. (409.54,826.17) .. controls (411.62,830.33) and (417.5,837.14) .. (420.54,840.17) .. controls (421.35,840.98) and (425.54,842.37) .. (425.54,843.17) ;
  		\draw  [color={rgb, 255:red, 74; green, 144; blue, 226 }  ,draw opacity=1 ][dash pattern={on 6.75pt off 4.5pt}][line width=2.25] [line join = round][line cap = round] (485.54,771.17) .. controls (456.25,741.88) and (432.55,735.17) .. (389.54,735.17) ;
  		\draw  [color={rgb, 255:red, 74; green, 144; blue, 226 }  ,draw opacity=1 ][dash pattern={on 6.75pt off 4.5pt}][line width=2.25] [line join = round][line cap = round] (486.54,839.17) .. controls (472.04,873.01) and (432.11,876.01) .. (397.54,878.17) .. controls (391.5,878.55) and (385.29,881.17) .. (379.54,881.17) ;

  	\end{tikzpicture}
  	
  	\caption{Several impossibilities when $F$ is a rectangle}
  \end{figure}
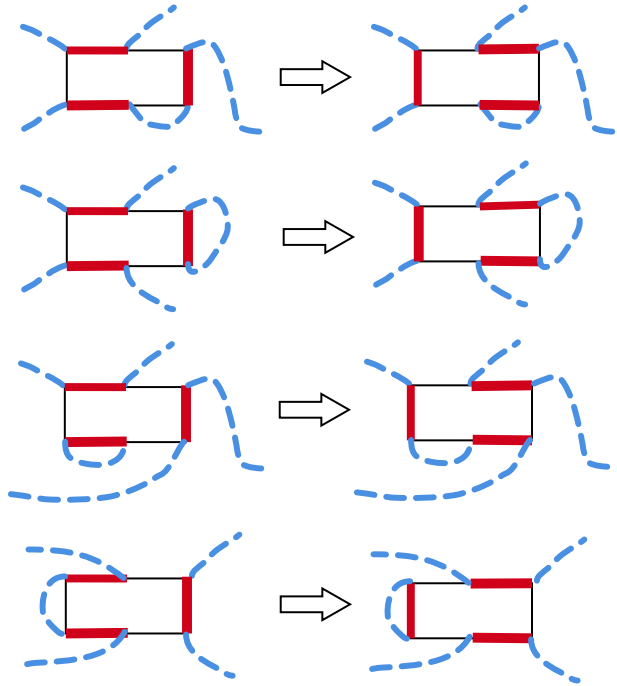

	Similar arguments hold for $W,S$ and $E$. The proof of lemma is thus complete.
\end{proof}
\begin{lem}\label{lem:induction-step}
	
	Let   $M_N\in M(G_N)$ and $M_S\in M(G_S)$.  Suppose $M_N$ is twistable at $F$. 
	
	\begin{itemize}
		\item We can choose a decomposition $M_N\oplus M_S\sim M_A\oplus M_C$ so that $M_A$ or $M_C$ is also twistable at $F$. 
		\item  Given the above decomposition, $\tau_F(M_N)\oplus M_S\sim \tau_F(M_A)\oplus M_C$ if $M_A$ is twistable at $F$, or $M_A \oplus \tau_F(M_C)$ if $M_C$ is twistable at $F$. 
		\item Assume without loss of generality that $\tau_F(M_N)=\tau_F^+(M_N)$, then in the above decomposition after twist, we have $\tau_F(M_A)=\tau_F^+(M_A)$ (or $\tau_F(M_C)=\tau_F^+(M_C)$ respectively).
	\end{itemize}
	
	 Similar statements hold if we switch to perfect matchings of $G_W$ and $G_E$, $\tau^+$ to $\tau^-$, or twisting $M_S$ instead of $M_N$, as long as the statements are well defined.
\end{lem}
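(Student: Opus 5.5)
We work entirely inside the superimposition $U:=M_N\oplus M_S$, a spanning multi-subgraph of $G$ (after adding forced edges, i.e.\ $\widehat{M_N}\oplus\widehat{M_S}$). It consists of the two paths $a\to b$ and $c\to d$, doubled edges, and cycles. Kuo's reverse map assigns the odd edges of both paths to $M_A$ and the even edges to $M_C$. Doubled edges go to both, and each cycle can be split either way, so we are free to choose.

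\emph{First bullet.} Let $F$ be twistable for $M_N$, and let $e_1,e_2$ (or $e_1,e_2,e_3$ if $F$ is a rectangle) be the edges of $F$ in $M_N$. Each $e_i$ is either doubled in $U$, lies on a cycle of $U$, or lies on one of the two paths. The preceding lemma says that these edges cannot meet both paths. So the edges of $F\cap M_N$ lying on paths all lie on a single path, say $a\to b$.

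We treat the cases separately.
\begin{itemize}
\item A doubled edge goes to both $M_A$ and $M_C$.
\item For each cycle through some $e_i$, we choose the split so that $e_i$ goes to $M_A$. This needs compatibility when two $e_i$ lie on the same cycle. We check it by a parity argument: consecutive $M_N$-edges along any alternating cycle or path have the same parity. This is because $M_N$-edges alternate with $M_S$-edges, so edges of the same matching sit at positions of equal parity.
\item For the single path, the $M_N$-edges of $a\to b$ all have the same parity, odd by the rule defining $M_N$. Hence they all land in $M_A$.
\end{itemize}
Since all cycle choices are free, we make them so that every $e_i$ lies in $M_A$, possibly after replacing $A$ by $C$ globally, which is why the statement says ``$M_A$ or $M_C$''. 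Then $M_A$ contains the full matching of $F$, so $M_A$ is twistable at $F$.

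\emph{Second bullet.} Twisting replaces $F\cap M_N$ by the complementary edges of $F$ in both $M_N$ and $M_A$. The multiset $\tau_F(M_N)\uplus M_S$ then differs from $M_N\uplus M_S$ exactly as $\tau_F(M_A)\uplus M_C$ differs from $M_A\uplus M_C$. Hence the superimpositions agree as spanning multi-subgraphs, which gives $\sim$.

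One point needs care: Kuo's condensation requires the new pair to still produce the paths $a\to b$, $c\to d$ and not $a\to d$, $b\to c$. This follows because $\tau_F(M_N)\in M(G_N)$ and $M_S\in M(G_S)$, together with the color condition on $a,b,c,d$.

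\emph{Third bullet.} This is immediate from Lemma \ref{lem:above-or-below}. Whether $\tau_F$ is $\tau_F^+$ depends only on $F$ (its parity and its shape) and on which matching of $F$ is present. We have $M_N\cap F=M_A\cap F$ as local matchings, so both twists have the same sign.

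The one subtlety is the coordinate shift between $G_N$ and $G$ (Lemma \ref{lem:subpinecone-place}). The parity of $F$ must be computed in the ambient $G$, so we verify that the offsets of $G_N,G_S,G_W,G_E$ (by $(\pm1,1)$, $(0,0)$, $(0,2)$) preserve the parity $i+j$. They do, since each offset has even coordinate sum.

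\emph{Main obstacle.} The main obstacle is the compatibility in the first bullet when the edges of $F\cap M_N$ lie partly on a path and partly on cycles, or on several cycles. I would first try the parity argument above. If that fails, I would fall back on a case analysis over the few local configurations of a square or rectangle, as in the preceding lemma's figures.

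The analogous statements for $W,E$, for $\tau^-$, and for twisting $M_S$ follow by the same argument, exchanging the roles of the paths.
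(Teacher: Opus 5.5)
Your proposal is correct and follows essentially the same route as the paper. The first bullet uses the preceding lemma (the $M_N$-edges of $F$ meet at most one of the two Kuo paths), together with the freedom to split each cycle and the fact that $M_N$-edges have the same parity along a component. The second bullet is the multiset identity $\tau_F(M_N)\uplus M_S=\tau_F(M_A)\uplus M_C$, and the third is Lemma~\ref{lem:above-or-below} combined with the even offsets of Lemma~\ref{lem:subpinecone-place}.

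Your parity argument already handles the mixed path/cycle configurations you flagged as the main obstacle, so no fallback case analysis is needed. A path fixes which side its $M_N$-edges land on, and each cycle can then be split independently to match.
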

\begin{rem}
	This lemma is the induction step for Lemma \ref{lem:superimposition-difference}. Indeed, by twisting at some face $F$, both sides either increases $1_F$, or decreases $1_F$ by Lemma \ref{lem:cover-relation}.
\end{rem}
\begin{proof}

\begin{enumerate}

\item 	For the first statement, recall that in the superimposition $M_N\oplus M_S$, there are two paths $a\to b$ and $c\to d$, and some closed cycles (including doubled edges).
	
	 First suppose $F$ being a square, with the two edges chosen in $M_N$ denoted by $e_1$ and $e_2$. 
	 If $e_1$ and $e_2$ are on the same connected component in $M_N\oplus M_S$, then the first statement holds since $e_1$ and $e_2$, both belonging to $M_N$, have the same parity on that component. Hence they also both belong to $M_A$ or $M_C$. 
	 Otherwise, consider the two paths $a\to b$ and $c \to d$. From the above lemma, there are two possibilities: No path, or exactly one path, contains some edge of $F$ lying in $M_N$.
	
	\begin{figure}[h]
		
		\centering

		\tikzset{every picture/.style={line width=0.75pt}} 
		
		\begin{tikzpicture}[x=0.75pt,y=0.75pt,yscale=-.5,xscale=.6]
			
			\draw   (102.54,101.09) -- (202,101.09) -- (202,192) -- (102.54,192) -- cycle ;
			\draw [color={rgb, 255:red, 208; green, 2; blue, 27 }  ,draw opacity=1 ][line width=3.75]    (102.54,101.09) -- (202,101.09) ;
			\draw [color={rgb, 255:red, 208; green, 2; blue, 27 }  ,draw opacity=1 ][line width=3]    (102.54,192) -- (202,192) ;
			\draw [color={rgb, 255:red, 74; green, 144; blue, 226 }  ,draw opacity=1 ][line width=3] [line join = round][line cap = round] [dash pattern={on 7.88pt off 4.5pt}]  (101.54,99.09) .. controls (97.23,94.78) and (94.34,86.29) .. (91.54,82.09) .. controls (90.23,80.12) and (87.63,76.46) .. (87.54,74.09) .. controls (87.17,64.08) and (82.46,51.18) .. (89.54,44.09) .. controls (103.32,30.32) and (154.55,28.59) .. (173.54,29.09) .. controls (177.49,29.2) and (181.64,30.9) .. (185.54,31.09) .. controls (200.9,31.86) and (212.68,30.23) .. (221.54,39.09) .. controls (231.94,49.49) and (224.69,72.88) .. (216.54,85.09) .. controls (215.54,86.6) and (214.93,90.7) .. (213.54,92.09) .. controls (211.8,93.84) and (202.54,99.63) .. (202.54,101.09) ;
			\draw [color={rgb, 255:red, 74; green, 144; blue, 226 }  ,draw opacity=1 ][line width=3] [line join = round][line cap = round] [dash pattern={on 7.88pt off 4.5pt}]  (100.54,191.09) .. controls (95.71,191.09) and (93.27,195.61) .. (89.54,198.09) .. controls (76.82,206.57) and (81.25,217.66) .. (89.54,230.09) .. controls (100.36,246.33) and (126.97,254.63) .. (146.54,255.09) .. controls (159.54,255.4) and (172.55,255.41) .. (185.54,255.09) .. controls (191.8,254.94) and (219.11,249.39) .. (220.54,245.09) .. controls (223.23,237.02) and (225.34,211.89) .. (218.54,205.09) .. controls (217.55,204.1) and (214.15,203.3) .. (213.54,202.09) .. controls (211.84,198.68) and (206.24,191.09) .. (201.54,191.09) ;
			\draw   (427.54,98.09) -- (527,98.09) -- (527,189) -- (427.54,189) -- cycle ;
			\draw [color={rgb, 255:red, 208; green, 2; blue, 27 }  ,draw opacity=1 ][line width=3.75]    (427.54,98.09) -- (427.54,189) ;
			\draw [color={rgb, 255:red, 208; green, 2; blue, 27 }  ,draw opacity=1 ][line width=3]    (527.54,98.09) -- (527,189) ;
			\draw [color={rgb, 255:red, 74; green, 144; blue, 226 }  ,draw opacity=1 ][line width=3] [line join = round][line cap = round] [dash pattern={on 7.88pt off 4.5pt}]  (426.54,96.09) .. controls (422.23,91.78) and (419.34,83.29) .. (416.54,79.09) .. controls (415.23,77.12) and (412.63,73.46) .. (412.54,71.09) .. controls (412.17,61.08) and (407.46,48.18) .. (414.54,41.09) .. controls (428.32,27.32) and (479.55,25.59) .. (498.54,26.09) .. controls (502.49,26.2) and (506.64,27.9) .. (510.54,28.09) .. controls (525.9,28.86) and (537.68,27.23) .. (546.54,36.09) .. controls (556.94,46.49) and (549.69,69.88) .. (541.54,82.09) .. controls (540.54,83.6) and (539.93,87.7) .. (538.54,89.09) .. controls (536.8,90.84) and (527.54,96.63) .. (527.54,98.09) ;
			\draw [color={rgb, 255:red, 74; green, 144; blue, 226 }  ,draw opacity=1 ][line width=3] [line join = round][line cap = round] [dash pattern={on 7.88pt off 4.5pt}]  (425.54,188.09) .. controls (420.71,188.09) and (418.27,192.61) .. (414.54,195.09) .. controls (401.82,203.57) and (406.25,214.66) .. (414.54,227.09) .. controls (425.36,243.33) and (451.97,251.63) .. (471.54,252.09) .. controls (484.54,252.4) and (497.55,252.41) .. (510.54,252.09) .. controls (516.8,251.94) and (544.11,246.39) .. (545.54,242.09) .. controls (548.23,234.02) and (550.34,208.89) .. (543.54,202.09) .. controls (542.55,201.1) and (539.15,200.3) .. (538.54,199.09) .. controls (536.84,195.68) and (531.24,188.09) .. (526.54,188.09) ;
			\draw   (272,134.77) -- (334.73,134.77) -- (334.73,116) -- (376.54,153.55) -- (334.73,191.09) -- (334.73,172.32) -- (272,172.32) -- cycle ;

		\end{tikzpicture}
		\caption{No path contains edge of $F$ lying in $M_N$. Therefore, both $e_1$ and $e_2$ are included in some cycle.}
	\end{figure}
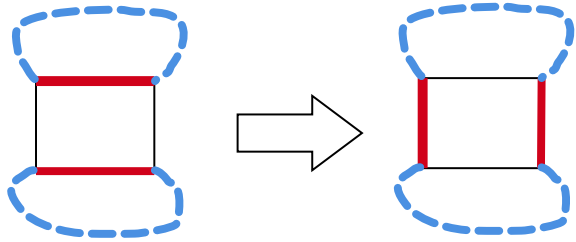
	
	In the case where both edges are included in some cycle, we have the freedom to declare the edge $e_1$ either in $M_A$ or in $M_C$, and the same for $e_2$. We thus choose both edges to belong in $M_A$, and the first statement holds for $M_A$.
	
	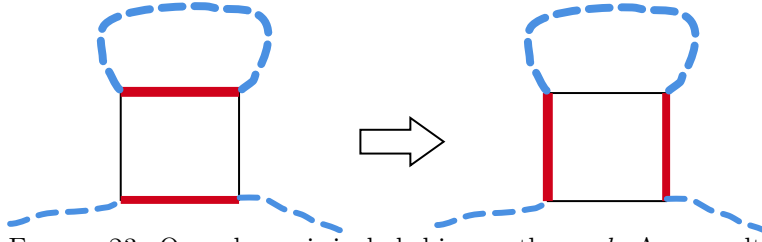
\begin{figure}[H]
		\centering

		\tikzset{every picture/.style={line width=0.75pt}} 
		
		\begin{tikzpicture}[x=0.75pt,y=0.75pt,yscale=-.6,xscale=.6]
			
			\draw   (111.54,99.09) -- (211,99.09) -- (211,190) -- (111.54,190) -- cycle ;
			\draw [color={rgb, 255:red, 208; green, 2; blue, 27 }  ,draw opacity=1 ][line width=3.75]    (111.54,99.09) -- (211,99.09) ;
			\draw [color={rgb, 255:red, 208; green, 2; blue, 27 }  ,draw opacity=1 ][line width=3]    (111.54,190) -- (211,190) ;
			\draw [color={rgb, 255:red, 74; green, 144; blue, 226 }  ,draw opacity=1 ][line width=3] [line join = round][line cap = round] [dash pattern={on 7.88pt off 4.5pt}]  (110.54,97.09) .. controls (106.23,92.78) and (103.34,84.29) .. (100.54,80.09) .. controls (99.23,78.12) and (96.63,74.46) .. (96.54,72.09) .. controls (96.17,62.08) and (91.46,49.18) .. (98.54,42.09) .. controls (112.32,28.32) and (163.55,26.59) .. (182.54,27.09) .. controls (186.49,27.2) and (190.64,28.9) .. (194.54,29.09) .. controls (209.9,29.86) and (221.68,28.23) .. (230.54,37.09) .. controls (240.94,47.49) and (233.69,70.88) .. (225.54,83.09) .. controls (224.54,84.6) and (223.93,88.7) .. (222.54,90.09) .. controls (220.8,91.84) and (211.54,97.63) .. (211.54,99.09) ;
			\draw [color={rgb, 255:red, 74; green, 144; blue, 226 }  ,draw opacity=1 ][line width=2.25] [line join = round][line cap = round] [dash pattern={on 6.75pt off 4.5pt}]  (18.54,210.12) .. controls (27.11,210.12) and (37.12,203.99) .. (45.54,202.12) .. controls (57.04,199.57) and (69.41,200.47) .. (81.54,199.12) .. controls (86.53,198.57) and (94.3,196.9) .. (100.54,196.12) .. controls (103.35,195.77) and (107.84,192.12) .. (111.54,192.12) ;
			\draw [color={rgb, 255:red, 74; green, 144; blue, 226 }  ,draw opacity=1 ][line width=2.25] [line join = round][line cap = round] [dash pattern={on 6.75pt off 4.5pt}]  (211.54,188.12) .. controls (216.88,188.12) and (222.22,187.81) .. (227.54,188.12) .. controls (235.54,188.59) and (242.38,195.26) .. (249.54,198.12) .. controls (254.38,200.06) and (260.71,200.7) .. (265.54,203.12) .. controls (268.64,204.67) and (274.22,208.54) .. (276.54,209.12) .. controls (282.98,210.73) and (289.03,212.37) .. (294.54,215.12) .. controls (295.21,215.45) and (296.54,215.38) .. (296.54,216.12) ;
			\draw   (470.54,100.09) -- (570,100.09) -- (570,191) -- (470.54,191) -- cycle ;
			\draw [color={rgb, 255:red, 208; green, 2; blue, 27 }  ,draw opacity=1 ][line width=3.75]    (470.54,100.09) -- (470.54,191) ;
			\draw [color={rgb, 255:red, 208; green, 2; blue, 27 }  ,draw opacity=1 ][line width=3]    (570,100.09) -- (570,191) ;
			\draw [color={rgb, 255:red, 74; green, 144; blue, 226 }  ,draw opacity=1 ][line width=3] [line join = round][line cap = round] [dash pattern={on 7.88pt off 4.5pt}]  (469.54,98.09) .. controls (465.23,93.78) and (462.34,85.29) .. (459.54,81.09) .. controls (458.23,79.12) and (455.63,75.46) .. (455.54,73.09) .. controls (455.17,63.08) and (450.46,50.18) .. (457.54,43.09) .. controls (471.32,29.32) and (522.55,27.59) .. (541.54,28.09) .. controls (545.49,28.2) and (549.64,29.9) .. (553.54,30.09) .. controls (568.9,30.86) and (580.68,29.23) .. (589.54,38.09) .. controls (599.94,48.49) and (592.69,71.88) .. (584.54,84.09) .. controls (583.54,85.6) and (582.93,89.7) .. (581.54,91.09) .. controls (579.8,92.84) and (570.54,98.63) .. (570.54,100.09) ;
			\draw [color={rgb, 255:red, 74; green, 144; blue, 226 }  ,draw opacity=1 ][line width=2.25] [line join = round][line cap = round] [dash pattern={on 6.75pt off 4.5pt}]  (377.54,211.12) .. controls (386.11,211.12) and (396.12,204.99) .. (404.54,203.12) .. controls (416.04,200.57) and (428.41,201.47) .. (440.54,200.12) .. controls (445.53,199.57) and (453.3,197.9) .. (459.54,197.12) .. controls (462.35,196.77) and (466.84,193.12) .. (470.54,193.12) ;
			\draw [color={rgb, 255:red, 74; green, 144; blue, 226 }  ,draw opacity=1 ][line width=2.25] [line join = round][line cap = round] [dash pattern={on 6.75pt off 4.5pt}]  (570.54,189.12) .. controls (575.88,189.12) and (581.22,188.81) .. (586.54,189.12) .. controls (594.54,189.59) and (601.38,196.26) .. (608.54,199.12) .. controls (613.38,201.06) and (619.71,201.7) .. (624.54,204.12) .. controls (627.64,205.67) and (633.22,209.54) .. (635.54,210.12) .. controls (641.98,211.73) and (648.03,213.37) .. (653.54,216.12) .. controls (654.21,216.45) and (655.54,216.38) .. (655.54,217.12) ;
			\draw   (313,131) -- (355,131) -- (355,121) -- (383,141) -- (355,161) -- (355,151) -- (313,151) -- cycle ;

		\end{tikzpicture}
		\caption{One edge $e_2$ is included in a path $a\to b$. As a result, the other is included in a cycle.}
	\end{figure}
	In the case where exactly one edge is included in a path, the parity (whether in $M_A$ or $M_C$) of edge $e_2$ is already determined by its position in the path $a\to b$. But we are still free to choose whether $e_1$ is in $M_A$ or $M_C$. Thus we choose the one that is the same as the parity of $e_2$. The first statement also holds.

	If $F$ is a rectangle, we also have the fact that no or exactly one path contains some edge of $F$ lying in $M_N$. One can see that the statements above for squares treating several edges being in one connected component, as well as those dealing with no or exactly one path containing some edge of $F$ lying in $M_N$, still hold for $F$ being rectangle. 
	Also, the statements can be adjusted easily for $W,S$ and $E$. The first statement is thus proven in general.

\item	The second statement follows easily from Kuo's condensation, since the only difference between the superimposition $\tau_F(M_N)\oplus M_S$ and $M_N\oplus M_S$ are the edges of $F$.

\item 	The third statement $\tau_F=\tau_F^+$ is a consequence of the relative position of $G_*$ inside $G$. Recall in Lemma \ref{lem:subpinecone-place} that $G_*$ is copied in $G$ at \begin{itemize}
		\item $(0,0)$ if $*=W$,
		\item $(0,2)$ if $*=E$ or $C$,
		\item $(1,1)$ if $*=N$,
		\item $(-1,1)$ if $*=S$.
	\end{itemize}
	Here the vector $(i,j)$ denotes the coordinate of the upper-left vertex of each subgraph $G_*$ as a copy in $G$. Note that all the vectors appearing above are \emph{even}, i.e., $i+j$ are even. Therefore, the parity of a face $F$ in $G_*$ is the same as it is in $G$. The coherence $\tau_F=\tau_F^+$ thus follows from Lemma \ref{lem:above-or-below}. 
\end{enumerate}
\end{proof}

\begin{lem}\label{lem:base-case}
	Lemma \ref{lem:superimposition-difference} holds when $M_N$ and $M_S$ (resp. $M_W$ and $M_E$) are \emph{minimal} matchings.
\end{lem}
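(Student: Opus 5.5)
We take $M_N=M_N^-$ and $M_S=M_S^-$ to be the minimal matchings of $G_N$ and $G_S$; the $W$/$E$ case is treated in the same way. By Definition~\ref{Def:Height} both log height functions $v_{M_N}$ and $v_{M_S}$ then vanish identically. The claim to prove is therefore that any Kuo decomposition $M_N^-\oplus M_S^-\approx M_A\oplus M_C$ satisfies
\[
v_{M_A}+v_{M_C}=1_H .
\]
In the $W$/$E$ case the claim is instead that $v_{M_A}+v_{M_C}=0$. By the lemma showing that the choice of cycle distribution does not affect $v_{M_A}+v_{M_C}$, we may fix any convenient distribution of the cycles.

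\textbf{Step 1: describe the minimal matchings explicitly.} We use the parity rule of Lemma~\ref{lem:above-or-below}. A matching is minimal exactly when no face is twistable in the $\tau^-$ direction. For a pinecone this is the matching that uses, in every row, the horizontal edges of the ``lower'' parity together with the appropriate boundary verticals; these are the highlighted minimal matchings of Example~\ref{Ex:237}. Next we add the forced edges listed after Lemma~\ref{lem:subpinecone-place}. This gives $\widehat{M_N^-}$ and $\widehat{M_S^-}$ as explicit edge sets of $G$ and $G-\{a,b,c,d\}$ (more precisely of $G-\{c,d\}$ and $G-\{a,b\}$). Outside the central strip $H$, both sets agree with the minimal matching $\widehat{M_A^-}$ of $G$ row by row. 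This uses Lemma~\ref{lem:subpinecone-place}: the subpinecones sit at even offsets, so parities are preserved.

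\textbf{Step 2: compute the superimposition and read off $M_A,M_C$.} Off $H$, the superimposition consists only of doubled edges. On $H$, the two forced patterns listed for $G_N$ and $G_S$ are shifted relative to each other: one uses $(0,2t-1)$--$(0,2t)$ and $(-1,2t)$--$(-1,2t+1)$, and the other uses the complementary parities. As a result, rows $0$ and $-1$ form the two open paths $a\to b$ and $c\to d$ running along the top and bottom of the strip. There are no closed cycles. Assigning alternate edges of these paths, we find that one of $M_A,M_C$ is the minimal matching $M_A^-$ (resp. $M_C^-$). The other coincides with the minimal matching except on the central strip, where its horizontal edges on rows $0,-1$ are of the opposite parity and it uses the two end verticals. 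Concretely, this matching is obtained from the minimal one by twisting every face of $H$ once. The twists are performed left to right, each in the $\tau^+$ direction by Lemma~\ref{lem:above-or-below}, since every face of $H$ has the same parity class relative to its matching. By Lemma~\ref{lem:cover-relation}, each twist adds $1_F$, so the total change in $v_{M_A}+v_{M_C}$ is exactly $1_H$.

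\textbf{Step 3: the $W$/$E$ case.} Here $M_W^-\oplus M_E^-$ yields paths $a\to d$ and $b\to c$, which are just the short vertical connections at the two ends of $H$, plus doubled edges. Distributing alternate edges gives $M_A=M_A^-$ and $M_C=M_C^-$ directly, so $v_{M_A}+v_{M_C}=0$.

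The main obstacle is Step 2: checking carefully, with the coordinates of Lemma~\ref{lem:subpinecone-place} and the forced-edge lists, that the non-minimal matching really differs from the minimal one by a single upward twist on each face of $H$ and on no other face. This includes the degenerate situations where $G_N$, $G_S$ or $G_C$ is empty or consists of a single square, which we would handle using the conventions of Section~\ref{sec:condensation-x}.
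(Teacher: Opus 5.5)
Your setup is the paper's own argument. You extend the minimal matchings by the forced edges, note that the superposition is doubled edges plus two open paths, and read off $M_A$ and $M_C$ by Kuo's rule. Your $W/E$ case matches the paper. The $N/S$ case, however, ends with a step that fails as written.

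\textbf{The twist sweep fails.} Your claim is that $M_A$ is reached from $M_A^-$ by twisting the faces of $H$ upward from left to right. This breaks whenever $H$ contains two adjacent squares. In $G_{16}^{(2,3,7)}$ the central strip reads: square $1$, hexagon $3$, hexagon $5$, squares $7$ and $2$, hexagon $4$. After you twist $1,3,5$, square $7$ has exactly one matched edge. It becomes twistable only after square $2$, which is twistable from the start, has been flipped. Hexagon $4$ is not twistable initially either, so no one-directional sweep works.

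\textbf{The parity remark does not rescue it.} Squares $7$ and $2$ have opposite parity in Lemma~\ref{lem:above-or-below}: one flips horizontal-to-vertical, the other vertical-to-horizontal. Invoking the lattice structure to get the existence of some valid order would be circular, since that presupposes $v_{M_A}-v_{M_A^-}=1_H$.

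\textbf{You must say which of $M_A,M_C$ is minimal.} Kuo's rule gives $M_A$ the odd edges counted from $a=(0,1)$ and $d=(-1,1)$, namely $(0,2t-1)$--$(0,2t)$ and $(-1,2t-1)$--$(-1,2t)$. This is the opposite of the minimal pattern $(0,2t)$--$(0,2t+1)$. So $M_C$ is the minimal matching of $G_C$ and $M_A$ is the non-minimal one. This matters because $1_H$ is supported on faces of $G$, not all of which lie in $G_C$.

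\textbf{The fix: no twisting is needed.} By Definition~\ref{Def:Height}, $v_{M_A}(F)$ counts the cycles of $M_A\oplus M_A^-$ enclosing $F$. Off $H$ the two matchings coincide. On $H$ their union is the two end verticals together with every horizontal edge of vertex rows $0$ and $-1$. That is exactly the single cycle bounding $H$. Hence $v_{M_A}=1_H$ directly, and with $v_{M_C}=0$ you get $v_{M_A}+v_{M_C}=1_H$, which is how the paper concludes.
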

 
\begin{proof}
	The minimal matching $M=M_-$ satisfies $v_{M_-}=0$.  The description of minimal matching of pinecones is given right after Definition \ref{Def:Height}. 
	
	We first treat the case where $M_W$ and $M_E$ are miminal matchings of $G_W$ and $G_E$ respectively. According to the description after Lemma \ref{lem:subpinecone-properties}, the superimposition $M_W\oplus M_E$ (copied and extended to the perfect matching of $G-*$) consists of: \emph{Doubled} edges $(i,j)-(i,j+1)$ with $i>0$ and $i+j$ even; \emph{Doubled} edges $(i,j)-(i,j+1)$ with $i<-1$ and $i+j$ odd; On the central strip: two paths $(0,1)-(0,0)-(-1,0)-(-1,1)$ and $(0,2k)-(0,2k+1)-(-1,2k+1)-(-1,2k)$ and doubled edges $(0,2t)-(0,2t+1)$ and $(-1,2t)-(-1,2t+1)$ where $0<t<k$. Then using Kuo's condensation, one can show that $M_A$ and $M_C$ are \emph{minimal} matchings of $G_A$ and $G_C$ respectively. It follows that $$	v_{M_W}+v_{M_E}=v_{M_A}+v_{M_C}=0\,.$$
	
	On the other hand, if $M_N$ and $M_S$ are miminal matchings of $G_N$ and $G_S$ respectively, then $M_N\oplus M_S$ consists of : \emph{Doubled} edges $(i,j)-(i,j+1)$ with $i>0$ and $i+j$ even; \emph{Doubled} edges $(i,j)-(i,j+1)$ with $i<-1$ and $i+j$ odd; On the central strip: vertical edges $(0,0)-(-1,0)$ and $(0,2k+1)-(-1,2k+1)$ are doubled; two paths $(0,1)-(0,2)-\cdots-(0,2k-1)-(0,2k)$ and   $(-1,1)-(-1,2)-\cdots-(-1,2k-1)-(-1,2k)$. It follows from Kuo's condensation that $M_A$ consists of: Edges $(i,j)-(i,j+1)$ with $i>0$ and $i+j$ even; edges $(i,j)-(i,j+1)$ with $i<-1$ and $i+j$ odd; vertical edges  $(0,0)-(-1,0)$  and $(0,2k+1)-(-1,2k+1)$ and  edges $(0,2t-1)-(0,2t)$ with $0<t\le k$, and $M_C$ is the minimal matching of $G_C$. This $M_A$ exactly satisfies $v_{M_A}=1_H$. Hence it follows that 	$$v_{M_N}+v_{M_S}+1_H=v_{M_A}+v_{M_C}=1_H\,.$$
\end{proof}
\begin{figure}[H]
	\centering
	\includegraphics[scale=0.25]{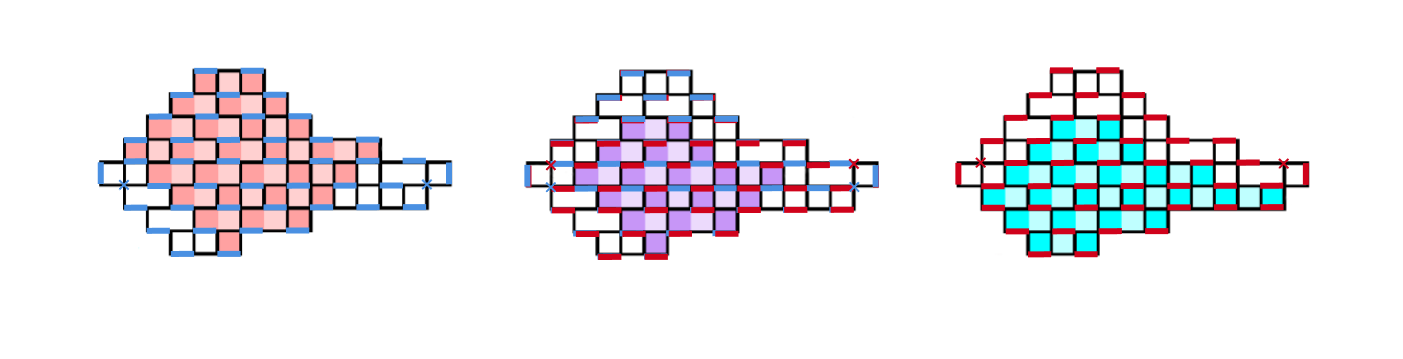}

	\caption{The minimal matching of $G_N$ and $G_S$, and their superimposition in the middle. Based on Figure 9 of \cite{ProppMelouWest2009}. On the two paths in the superimposition: The blue edges in the above path, and the red edges in the below path, belong to $M_A$ by Kuo's condensation. One calculates this $M_A$ satisfies $v_{M_A}=1_H$.}
\end{figure}
\begin{proof}[Proof of Lemma \ref{lem:superimposition-difference}]
	Apply Lemma \ref{lem:induction-step} to the above with $S$ fixed, we see Lemma \ref{lem:superimposition-difference} holds when $S$ is minimal and $N$ is an arbitrary perfect matching by Lemma \ref{lem:cover-relation} and \ref{lem:twist-properties}. Then we fix the $N$ thus got and twist $S$, we see by the same argument that Lemma \ref{lem:superimposition-difference} holds for arbitrary pairs of $(N,S)\in M(G_N)\times M(G_S)$. Similar arguments hold for $(W,E)\in M(G_W)\times M(G_E)$.
\end{proof}

\section{Conclusion and further remarks}
\label{sec:conclusions}

Using the above propositions, we  now conclude this paper with the proof of our main theorem.

\begin{proof}[Proof of Theorem \ref{Thm:GR-WH}]
    We have Proposition \ref{prop:GRprinc} which gives a recurrence of the sequence $x_n$. We also have Proposition \ref{Prop:superpositions} which gives a recurrence of the sequence $\widehat{x_n}$. These two recurrences are equal in form by applying Proposition \ref{Prop:cent-strip}.  Consequently, we have the equality $\widehat{x_n}=w(G_n^{r,s,N}) = x_n$ for every $n\in\mathbb{N}$ thus follows since the initial condition is  fulfilled, i.e., $\widehat{x_n}=x_n$ when $n\le N$, as in Remark \ref{rem:base-case-x}.
\end{proof}

We conjecture that formulas for
a large class of examples from the physics literature \cite{Eager2011,Eager2012,Davey2010,Franco2006Jan2,Hanany2012Jun} can
be generalized similarly as in our formula \ref{eq:GR-WH} where the usage of height functions is introduced. Moreover, in the theory of surface cluster algebras \cite{MUSIKER20112241positivity}, the expansion formula for the cluster variables with principal coefficients is also of the form of summing the weight times height functions of the perfect matchings of a certain graph (either snake or band graph). Given the ubiquitous appearance of perfect matching models in planar bipartite graphs, we wish to ask in what generality can we use the height function to calculate the principal coefficients for cluster variables.  To compute height functions of perfect matchings for other cluster algebras arising from brane tilings, it is necessary to identify the minimal matchings.  For example, in the case of the $dP_3$ quiver and brane tiling, this is obtained in recent work of Chiang-Musiker-Nguyen \cite{CMN}. 
\printbibliography

\end{document}